\title{An important paper}
\tikzstyle arrowstyle=[scale=2]
\tikzstyle directed=[postaction={decorate,decoration={markings,
    mark=at position .65 with {\arrow[arrowstyle]{stealth}}}}]
\tikzstyle reverse directed=[postaction={decorate,decoration={markings,
    mark=at position .65 with {\arrowreversed[arrowstyle]{stealth};}}}]
\theoremstyle{plain}
\newtheorem{thm}{Theorem}[section]
\theoremstyle{plain}
\newtheorem{lem}[thm]{Lemma}
\newtheorem{prop}[thm]{Proposition}
\newtheorem{cor}[thm]{Corollary}
\theoremstyle{definition}
\newtheorem{defi}[thm]{Definition}
\newtheorem{RQ}[thm]{Remark}
\begin{document}
\title{\textbf{Hypoelliptic and spectral estimates for the  linearized Landau operator}}
\author{\scshape{Mohamad} Rachid\footnote{Université de Nantes,
Laboratoire de Mathématiques Jean Leray,
 2 rue de la Houssinière\\
BP 92208 F-44322 Nantes Cedex 3, France.\\
Email: \href{mailto: Mohamad.Rachid@univ-nantes.fr}{Mohamad.Rachid@univ-nantes.fr}
 }}


\date{}
\maketitle
\begin{abstract}
We are interested in the inhomogeneous Landau equation which describes the evolution of a particle density $f=f(t,x,v)$ representing at time  $t\geq 0$, the density of particles at position $x\in \mathbb{R}^{3}$ and  velocity $v\in \mathbb{R}^{3}$. The study  is motivated by the linearization of the Landau equation near  Maxwellian distribution. In this article, we establish hypoelliptic estimates, a localization of the spectrum and estimates of the resolvent of the  the linearized Landau operator with hard potentials and Maxwellian molecules.
The proof is based on a multiplier method and requires  refined   pseudo-differential calculus tools.
\end{abstract}
\tableofcontents

\newpage
\section{Introduction}
\subsection{{The model.}}In this paper, we study hypoellipticity and  spectral properties associated to the spatially inhomogeneous Landau equation. This equation is a kinetic model in plasma physics that describes the evolution of the density function  $F=F(t, x, v)$ representing at time $t\in \mathbb{R^{+}}$, the density of particles at position  $x\in \mathbb{R}^{3}$ and  velocity $v\in \mathbb{R}^{3}$. This equation is given by 
\begin{align}\label{N1}
 \left\{
    \begin{array}{ll}
    \partial_{t}F+v\cdot\nabla_{x}F=Q(F,F)\\
    {F}_{|t=0}=F_{0},
     \end{array}
\right.
    \end{align}
    where $Q$ is the so-called Landau collision operator which acts on the variable $v$  and which contains   diffusion in velocity. More precisely, the Landau operator is defined by
    \begin{align}\label{W0}
        Q(G,F)= \partial_{i}\int_{\mathbb{R}^{3}}a_{ij}(v-v_{*})[G_{*}\partial_{j}F-F\partial_{j}G_{*}]\hspace{0.1cm}\mathrm{d}v_{*},
    \end{align}
   where we use the convention of summation of repeated indices, and the derivatives are in the velocity variable $v$ i.e. $\partial_{i}=\partial_{v_{i}}$. Hereafter we use the shorthand notations $G_{*}=G(v_{*})$, $F=F(v)$, $\partial_{j}G_{*}=\partial_{{v_{*}}_{j}}G(v_{*})$, $\partial_{j}F=\partial_{v_{j}}F(v)$, etc. The matrix $A(v)=(a_{ij}(v))_{1\leq  i,j \leq 3}$ is symmetric, positive,  definite, depends on the interaction between particles and is given by
    $$ a_{ij}(v)=\vert v \vert ^{\gamma+2}\left(\delta_{ij}-\frac{v_{i}v_{j}}{\vert v \vert^{2}}\right),\hspace{0.2cm} \gamma \in [{-3,1}].$$
    We recall the standard classification: we call hard potentials if $\gamma \in ({0,1}]$, Maxwellian molecules if $\gamma=0$, moderately soft potentials if $\gamma \in [{-2,0})$, very soft potentials if $\gamma \in ({-3,-2})$ and Coulombian potential if $\gamma=-3$. Hereafter we shall consider the cases of hard potentials, Maxwellian molecules, i.e. $\gamma \in [{0,1}]$. We denote by 
    $$ \mu(v)=(2\pi)^{-3/2}e^{-\vert v \vert^{2}/2}$$
    the normalized Maxwellian which is a global equilibrium. We linearize the Landau equation around $\mu$ with the perturbation
    $$ F=\mu+\mu^{1/2}f.$$
   The Landau equation (\fcolorbox{red}{ white}{\ref{N1}}) for $f=f(t, x, v)$ takes the form
    \begin{align}\label{N2}
 \left\{
    \begin{array}{ll}
    \partial_{t}f+v\cdot\nabla_{x}f -\mu^{-1/2}Q(\mu^{1/2}f,\mu)-\mu^{-1/2}Q(\mu,\mu^{1/2}f)=\mu^{-1/2}Q(\mu^{1/2}f,\mu^{1/2}f)\\
    {f}_{|t=0}=f_{0}=\mu^{-1/2}(F_{0}-\mu),
     \end{array}
\right.
    \end{align}
    since $Q(\mu,\mu)=0$. Using the notation
    $$\Gamma(f,g)=\mu^{-1/2}Q(\mu^{1/2}f,\mu^{1/2}g),$$
    we may rewrite the above equation as
      \begin{align}\label{N3}
 \left\{
    \begin{array}{ll}
    \partial_{t}f +\mathcal{P}f=\Gamma(f,f)\\
    {f}_{|t=0}=f_{0},
     \end{array}
\right.
    \end{align}
    where the linearized Landau operator
   $\mathcal{P}$ takes the form
    \begin{align}\label{N4}
        \mathcal{P}= v\cdot\nabla_{x} -\mathcal{L}    
        \end{align}
    with
    $$\mathcal{L}=\mathcal{L}_{1}+\mathcal{L}_{2},\hspace{0.3cm}\mathcal{L}_{1}=\Gamma(\sqrt{\mu},f),\hspace{0.3cm}\mathcal{L}_{2}=\Gamma(f,\sqrt{\mu}).$$
    Operator $\mathcal{P}$ acts only in variables $(x,v)$, is non selfadjoint, and consists of a transport
part which is skew-adjoint, a diffusion part acting only in the $v$ variable and a compact part (see for example Proposition 2.1 in \cite{degond1997dispersion}). Using for example \cite{Wu2013ExponentialTD}, \cite{strain2008exponential},   the diffusion part $\mathcal{L}_{1}$ is written as follows
    $$\mathcal{L}_{1}f=\nabla_{v}\cdot [\mathbf{A}(v)\nabla_{v}f] -\Big(\mathbf{A}(v)\frac{v}{2}\cdot \frac{v}{2}\Big)f + \nabla_{v}\cdot\Big[ \mathbf{A}(v)\frac{v}{2}\Big]f,$$
   where $\mathbf{A}(v)=(\overline{a}_{ij}(v))_{1\leq  i,j \leq 3}$ is a symmetric matrix with
    $$ \overline{a}_{ij}={a}_{ij}*_{v}\mu,$$
    and the compact part $\mathcal{L}_{2}$ is given by
    $$ \mathcal{L}_{2}f=-\mu^{-1/2}\partial_{i}\left\{\mu \left[a_{ij}*_{v}\left\{\mu^{1/2}\Big[\partial_{j}f+\frac{v_{j}}{2}f\Big]\right\}\right]\right\}.$$
    \begin{RQ}
    Here we do not follow the same convention as the one in \cite{Guo2002} for operators $\mathcal{L}_{1}$ and $\mathcal{L}_{2}$.
    \end{RQ}
    \subsection{{Notations.}}
    Throughout the paper we shall adopt the following notations: we work in dimension
    $n=3$ and denote by $(x,v)\in \mathbb{R}_{x}^{3}\times\mathbb{R}_{v}^{3}$ the space-velocity variables. For $v\in \mathbb{R}^{3}$ we denote ${\langle v \rangle}=(1+\vert v \vert^{2})^{1/2}$,  where we recall that $\vert v \vert$ is the canonical Euclidian norm of $v$ in $\mathbb{R}^{3}$. The gradient in velocity (resp. space) will be denoted by $\partial_{v}$ (resp. $\partial_{x}$). We
shall also denote $D_{v}=\frac{1}{i}\partial_{v}$ (resp. $D_{x}=\frac{1}{i}\partial_{x}$), and denote $\xi$ the dual variable of $x$, $\eta$ the dual variable of $v$. For simplicity of notations, $a\sim b$ means that there exist constants  $c_{1}$, $c_{2}> 0$ such that $c_{1}b\leq a\leq c_{2}b$; we abbreviate ``$\leq C\hspace{0.1cm}$'' to ``$\lesssim$'', where $C$ is a positive constant depending only on fixed number. Finally, the space of distributions on $\Omega$ is denoted by $\mathcal{D'}(\Omega)$ where $\Omega\subseteq \mathbb{R}^{n}$ is an open set.
     \subsection{{Main results and comments.}}
In this article, we will show a localization property of the  pseudospectrum of the Landau operator $\mathcal{P}$, that is to say the region of the complex plane where its resolvent is a priori large (in fact we more precisely give a description of a large region where the resolvent \it is \rm controlled, which is included in the complementary of the pseudospectrum).  This result is given by the following theorem.
 \begin{thm}\label{N5}
 Let $\mathcal{P}$  be the Landau operator on      $L^{2}(\mathbb{R}_{x}^{3}\times\mathbb{R}_{v}^{3})$ defined in (\fcolorbox{red}{ white}{\ref{N4}}) with $\gamma \in [{0,1}]$. Then there are two constants $C_{\mathcal{P}}>0$ and $Q_{\mathcal{P}}>0$  so that:
 \begin{itemize}
    \item[a)] The spectrum of $\mathcal{P}$  verifies  $$\sigma(\mathcal{P})\subset S_{\mathcal{P}}\cap \lbrace{\Re{e}\hspace{0.05cm} z \geq 0\rbrace},$$
with
\begin{align}\label{N112}
 S_{\mathcal{P}}=\left\{z\in \mathbb{C},\hspace{0.1cm}\vert z+1 \vert^{1/3}\leq C_{\mathcal{P}} \big( \Re{e}\hspace{0.05cm}z +1\big),\hspace{0.1cm}\Re{e}\hspace{0.05cm} z\geq  -\frac{1}{2}  \right\}.
 \end{align}
    \item[b)] For any $z\not\in S_{\mathcal{P}}$ with $\Re{e}\hspace{0.05cm}z\geq -\frac{1}{2}$, the resolvent is estimated by
    \begin{align}\label{N113}
{\Vert (z-\mathcal{P})^{-1} \Vert}_{\mathcal{B}(L^{2}_{x,v})} \leq Q_{\mathcal{P}}\vert z+1 \vert^{-1/3}.
    \end{align}
    Notice that if $\Re{e}\hspace{0.05cm}z\leq -\frac{1}{2}$ then
    \begin{align}\label{W3}
    {\Vert (z-\mathcal{P})^{-1} \Vert}_{\mathcal{B}(L^{2}_{x,v})} \leq \vert \Re{e}\hspace{0.05cm}z \vert^{-1}.
      \end{align}
\end{itemize}
\end{thm}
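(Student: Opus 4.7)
\textbf{Proof plan for Theorem \ref{N5}.}

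My plan is to derive \eqref{N113} as a parameter-dependent hypoelliptic estimate, from which both the spectral inclusion~(a) and the resolvent bound~(b) follow by standard functional-analytic arguments; the bound~\eqref{W3} for $\Re z\le -1/2$ is separate and comes directly from the accretivity of $\mathcal{P}+\tfrac12$. More precisely, given $g\in L^2_{x,v}$ and $f$ such that $(z-\mathcal{P})f=g$, I would combine three ingredients: (i) the coercivity of the dissipative part $\mathcal{L}$, which for hard potentials / Maxwellian molecules ($\gamma\in[0,1]$) provides an anisotropic bound of the form $\Re(\mathcal{L}f,f)_{L^2_v}\gtrsim \|f\|_{H^{1,*}_v}^{2}-C\|f\|_{L^2_v}^{2}$, where $H^{1,*}_v$ is the natural weighted Sobolev norm associated with the matrix $\mathbf{A}(v)$; (ii) the skew-adjointness of the transport $v\cdot\nabla_x$; and (iii) a Kolmogorov-type hypoelliptic gain of $1/3$ of a $D_x$-derivative obtained via a multiplier argument.

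The core of the proof is the construction of a pseudo-differential multiplier $\mathcal{M}=\mathcal{M}(x,v,D_x,D_v)$ tailored to $\mathcal{P}$, along the lines of Hérau--Nier and Hérau--Pravda-Starov. Writing $\xi$ for the Fourier variable dual to $x$, I would choose $\mathcal{M}$ of symbolic type $\varepsilon\,\psi(\xi,\eta)\,\langle\xi\rangle^{-2/3}$ with $\psi$ essentially a commutator-generating term such as $\eta\cdot\xi/\langle\xi\rangle^{2/3}$, truncated so as to respect the polynomial weights in $v$ coming from $\mathbf{A}(v)\sim\langle v\rangle^{\gamma+2}$. Computing $\Re((z-\mathcal{P})f,(1+\mathcal{M})f)_{L^2_{x,v}}$ and using $[v\cdot\nabla_x,\partial_v]=-\partial_x$, the main commutator term produces $\|\langle D_x\rangle^{1/3}f\|_{L^2}^{2}$, while the dissipative part handled via coercivity absorbs the remainder terms modulo lower-order contributions controlled by a clever pseudodifferential calculus.

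Adding up these inequalities and using $\Re z\,\|f\|^2=\Re(zf,f)-\Re(\mathcal{P}f,f)+\Re(\mathcal{L}f,f)$, a parameter-dependent estimate of the type
\begin{equation*}
(\Re z+1)\|f\|_{L^2_{x,v}}^{2}+\|f\|_{H^{1,*}_v}^{2}+\|\langle D_x\rangle^{1/3}f\|_{L^2_{x,v}}^{2}\;\lesssim\;\|g\|_{L^2_{x,v}}\|f\|_{L^2_{x,v}}
\end{equation*}
should emerge, valid for $\Re z\ge -1/2$. Interpolating the velocity-regularity with the $D_x^{1/3}$-regularity along the kinetic scaling $|\Im z|\sim|\eta|\cdot|v|$ gives the key gain $|z+1|^{1/3}\|f\|_{L^2}\lesssim \|g\|_{L^2}$ precisely when $(\Re z+1)\ge C_{\mathcal{P}}^{-1}|z+1|^{1/3}$, i.e.\ outside $S_{\mathcal{P}}$. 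This estimate proves \eqref{N113} and, together with accretivity of $\mathcal{P}$, shows $(z-\mathcal{P})$ is invertible for $z\notin S_{\mathcal{P}}\cap\{\Re z\ge 0\}$, yielding~(a). Finally, for $\Re z\le -1/2$, the identity $\Re((\mathcal{P}-z)f,f)=\Re(\mathcal{L}f,f)-\Re z\,\|f\|^2\ge|\Re z|\,\|f\|^2$ gives \eqref{W3} at once.

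The main obstacle I anticipate is the construction and symbolic analysis of the multiplier $\mathcal{M}$: one must reconcile the anisotropic weights $\langle v\rangle^{\gamma+2}$ in the diffusion with the $\langle\xi\rangle^{2/3}$-scaling needed to extract the $1/3$-gain, all while keeping commutator errors with $\mathcal{L}_2$ (the compact part) and with the $v$-multiplication in the transport term under control. Establishing that these error terms can be absorbed in either the coercivity of $\mathcal{L}_1$ or in $\|f\|_{L^2}$ up to an $\varepsilon$-factor — so that the positive hypoelliptic term survives — is the delicate step where the pseudodifferential calculus developed in the paper will have to be used in full strength.
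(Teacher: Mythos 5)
Your plan correctly identifies the hypoelliptic ingredients behind the theorem (a multiplier-method estimate giving a $\langle D_x\rangle^{1/3}$-gain, plus accretivity of $\mathcal{P}$), and your treatment of (\ref{W3}) via $\Re{e}\left((\mathcal{P}-z)f,f\right)\ge -\Re{e}\,z\,\|f\|^2$ is right. However there is a genuine gap in the passage from the hypoelliptic estimate to the resolvent bound (\ref{N113}): the estimate you display has $(\Re{e}\,z+1)\|f\|^2$ on the left but carries no information about $|\operatorname{Im}z|$, and the proposed ``interpolation along the kinetic scaling $|\operatorname{Im}z|\sim|\eta|\cdot|v|$'' is not an actual argument. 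The gain $\|\langle D_x\rangle^{1/3}f\|$ does not by itself control any power of $|\operatorname{Im}z|\,\|f\|$, and writing $i\operatorname{Im}z\,f=(z-\mathcal{P})f-(\Re{e}\,z)f+\mathcal{P}f$ would require a bound on the full $\|\mathcal{P}f\|$, which the hypoelliptic estimate does not supply (it only gives a $2/3$-fractional weight, far weaker than the second-order operator $\mathcal{P}$). Thus your plan does not actually produce the $|z+1|^{1/3}$ factor in (\ref{N113}) for large $|\operatorname{Im}z|$.

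The paper closes this gap by an abstract inequality, Lemma \ref{M164}, valid for any maximally accretive $\mathcal{A}$ and $\eta\in(0,1)$:
\[
|z+1|^{2\eta}\|u\|^2\le 4\bigl(\big((\mathcal{A}+1)^*(\mathcal{A}+1)\big)^\eta u,u\bigr)+4\|(\mathcal{A}-z)u\|^2,\qquad \Re{e}\,z\ge -1,
\]
applied with $\eta=\tfrac13$. To exploit it, besides the lower bound $\|\Lambda^{2/3}_Ku\|^2\lesssim\|(\mathcal{P}-i\kappa)u\|^2+\|u\|^2$ (Theorem \ref{N102}), the paper needs two ingredients absent from your plan: the maximal accretivity of $\mathcal{A}$ (Theorem \ref{N15}), and, crucially, the \emph{upper} bound $\|\mathcal{P}u\|^2\lesssim\|\Lambda^2_Ku\|^2$ of (\ref{N111}), which together with operator monotonicity of $T\mapsto T^{1/3}$ gives $\bigl(\big((\mathcal{A}+1)^*(\mathcal{A}+1)\big)^{1/3}u,u\bigr)\lesssim(\Lambda_K^{4/3}u,u)\lesssim\|\Lambda_K^{2/3}u\|^2$, after which (\ref{M150}) absorbs it. Without both Lemma \ref{M164} and the upper bound (\ref{N111}), the argument does not close; this abstract step, not the multiplier construction, is the missing idea.
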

The results of Theorem \fcolorbox{red}{ white}{\ref{N5}} can be illustrated by the following figure:
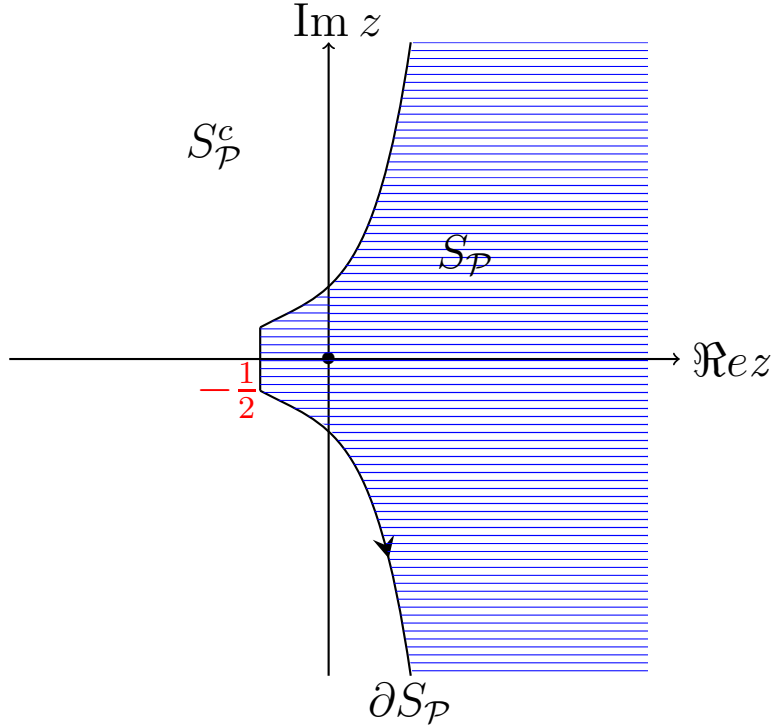
\begin{figure}[H]
\begin{center}
\begin{tikzpicture}[scale=0.6]
\draw[thick,->] (-7,0) -- (7.7,0);
\draw (7.6,0) node[scale=1.5,right] {$\Re e\textit{z}$};
\draw [thick,->] (0,-7) -- (0,7);
\draw (0.2,7.5) node[scale=1.5] {$\operatorname{Im}\hspace{0.01cm}z $};
\draw [black](0,0) node {$\bullet$};
\draw[red] (-2.2,-0.7) node[scale=1.5] {$-\frac{1}{2}$};
   \draw[thick, black] (-1.5, 0.7) .. controls (0, 1.5) and (1, 1.5) .. (1.8, 7);
    \draw[thick, black , directed] (-1.5, -0.7) .. controls (0, -1.5) and (1, -1.5) .. (1.8, -7);
    
    \draw[thick, black] [thick] (-1.5, 0.7) -- (-1.5,-0.7);  
    
    \fill [thick, pattern= horizontal lines, pattern color=blue] (-1.5, 0.7) --  (-1.5, -0.7) -- (0,-1.5) -- (0,+1.5)  -- cycle;
    
 \fill [thick, pattern= horizontal lines,  pattern color=blue]  (0, 1.5) -- (0, -1.5) -- (0.7, -2.55) -- ( 7,-2.55) -- (7,2.55) --(0.7,2.55)-- (0, 1.5)-- cycle;

  \fill [thick, pattern= horizontal lines,  pattern color=blue]  (0.7, -2.55) -- (1.24, -4) -- (7, -4) -- ( 7,-2.55)-- cycle;
 
 \fill [thick, pattern= horizontal lines,  pattern color=blue]  (0.7, 2.55) -- (1.24, 4) -- (7, 4) -- ( 7,2.55)-- cycle;
 
 \fill [thick, pattern= horizontal lines,  pattern color=blue]  (1.24, -4) -- (1.85, -7) -- (7, -7) -- ( 7,-4)-- cycle;
 
 \fill [thick, pattern= horizontal lines,  pattern color=blue]  (1.24, 4) -- (1.85, 7) -- (7, 7) -- ( 7,4)-- cycle;
 
 
\draw(-2.5,4.7) node[scale=1.5]
{${S_\mathcal{P}^{c}}$};
\draw (3,2.3) node[scale=1.5] {$\displaystyle{S_\mathcal{P}}$};
\draw (1.8,-7.6) node[scale=1.5] {$\partial S_\mathcal{P}$};
\end{tikzpicture}
\caption{Localization of the pseudospectrum of the Landau operator $\mathcal{P}.$ }\label{M176}
\end{center}
\end{figure}

In this figure  $\partial S_{\mathcal{P}}$ oriented from $+i\infty$ to $-i\infty$. The hatched part is where the spectrum is localized and the non hatched part is the zone where we have good resolvent estimates (see (\fcolorbox{red}{ white}{\ref{N113}}) and (\fcolorbox{red}{ white}{\ref{W3}})).\\
Let us give now some comments and motivations of this result. The cuspidal form of the pseudospectrum of linear or linearized kinetic operators was first shown in \cite{article}, \cite {herau_isotropic_2004} for the Fokker-Planck operator, and then extended to operators appearing in statistical mechanics by \cite{Eckmann2000NonEquilibriumSM}. The main motivation for this type of study is to be able to understand the so-called pseudospectrum properties and derive possible
trend to the equilibrium or regularization properties for the related evolution equation thanks to Cauchy formulae (see e.g. \cite {herau_isotropic_2004} for a complete result in the Fokker-Planck case). Anyway in this article, as a first step of a complete study of properties of this type,  we only focus on the pseudospectral localization, but in the case of the much more complicated linearized Landau operator.

The aim of this article is double. First we show that the linearized Landau operator has indeed a pseudospectrum of cuspidal form, which is a good clue for thinking that this is a very general property of general linear or linearized kinetic operators. Second we propose a very robust and self contained pseudo-differential framework (see Section \fcolorbox{red}{ white}{\ref{W1}}) in order to show this type of result. Our hope is that it can be used for many other kinetic models. These tools are greatly inspired by previous works (see \cite{alexandre_global_2012}, \cite{herau_anisotropic_2011}, \cite{herau:hal-01596009}) concerning kinetic equations, following fundamental ideas in \cite{lerner_metrics_2011} on Wick and Weyl pseudodifferential calculus. Indeed the main remark done in all these works, following preliminary works by Alexandre and Villani \cite{Alexandre02onthe} is that the linearized Landau operator $L$
is, up to controlled/bounded operators,  a pseudodifferential operator, namely that there exists a symbol, in a Hörmander-type class of symbol (see Section \fcolorbox{red}{ white}{\ref{W1}}) such that
$$
L \equiv a^w + \textrm{ controlled terms},
$$
where $w$ stands for the Weyl quantization. The equivalence above can be rigourosly stated and we will do it in section 2 below. The understanding of Landau type operators has been greatly improved in the past twenty years, following in particular works by Guo (see \cite{Guo2002}).

Using these tools, the proof of the result given in Theorem \fcolorbox{red}{ white}{\ref{N5}} will follow the lines
of \cite{herau_anisotropic_2011} (see also \cite{alexandre_global_2012},\cite {herau_isotropic_2004}), where a multiplier 
method is used for proving regularization (hypoelliptic) properties of the linearized landau operator. Let us emphasize here  that no regularization property is shown in this article, since we only focus on spectral estimates. Mention anyway -  and this is a remarkable feature of all these hypoelliptic/hypocoercive techniques -  that the same (in spirit) methods give very strong and precise results as the one given in the main theorem.

\textbf{Organization of the article.} In Section \fcolorbox{red}{ white}{\ref{z1}} we give some properties of the Landau operator. In Section  \fcolorbox{red}{ white}{\ref{z2}}, we prove hypoelliptic estimates with respect to the velocity variable for a parametric operator. In Section \fcolorbox{red}{ white}{\ref{z3}}
we give hypoelliptic estimates for the linearized Landau operator. Section \fcolorbox{red}{ white}{\ref{z4}} is devoted to the proof of Theorem \fcolorbox{red}{ white}{\ref{N5}}. An appendix is devoted to a short review of some
tools used in this work (Weyl-Hörmander quantization, Wick quantization and the proof of the Theorem \fcolorbox{red}{ white}{\ref{M36}} (Basic Theorem)).

\section{Properties of the Landau operator}\label{z1}

In this section, we first  present the decomposition of the linearized Landau operator $ \mathcal {P} $ then we exhibit a simpler form of this operator. To end up we show that $ \mathcal {P} $ is a generator of a strongly continuous semigroup. Throughout this section, we work with $\gamma \in [{-3,1}] $.

\subsection{Splitting of the linearized operator.} Consider a smooth positive function $\chi\in C_{c}^{\infty}(\mathbb{R}_{v}^{3})$ such that $0\leq \chi(v)\leq 1$, $\chi(v)=1$ for $\vert v \vert\leq 1$ and $\chi(v)=0$ for $\vert v \vert > 2$. For any $R\geq 1$ we define $\chi_{R}(v)=\chi(\frac{v}{R})$ and in the sequel we shall consider the function $M\chi_{R}$, for some constant $M>0$. Then, we introduce the decomposition of the operator $\mathcal{P}$ as $\mathcal{P}=\mathcal{A}+\mathcal{K}$ with
\begin{align}\label{N14}
   \mathcal{A}= - \mathcal{L}_{1} + v\cdot\nabla_{x} + M\chi_{R},\hspace{0.3cm} \mathcal{K}=- \mathcal{L}_{2}-M\chi_{R},
\end{align}
where $M>0$ and $R>0$  will be chosen later.\\
We define the function $F(v)$ as
\begin{align}\label{N0}
F(v)=\Big(\mathbf{A}(v)\frac{v}{2}\cdot \frac{v}{2}\Big) - \nabla_{v}\cdot\Big[ \mathbf{A}(v)\frac{v}{2}\Big]+ M\chi_{R}.
\end{align}
Then, we can rewrite $\mathcal{A}$ as follows
\begin{align}
   \mathcal{A}f=  v\cdot\nabla_{x}f- \nabla_{v}\cdot [\mathbf{A}(v)\nabla_{v}f] +F(v)f.
\end{align}

\subsection{Preliminaries.}  We have the following results concerning the matrix $\mathbf{A}(v)$.
\begin{lem}\label{N6}
The following properties hold:
\begin{itemize}
    \item[a)] For  $ v \in  \mathbb{R}^{3}\backslash\lbrace{0}\rbrace$, the matrix $\mathbf{A}(v)$ has a simple eigenvalue $\ell_{1}(v)>0$ associated with the eigenvector $v$ and a double eigenvalue $\ell_{2}(v)>0$ associated with the eigenspace $v^{\perp}$. Moreover, when $\vert v \vert\rightarrow +\infty$ we have
    $$ \ell_{1}(v)\sim 2{\langle v \rangle}^{\gamma  }\hspace{0.2cm}\text{and}\hspace{0.2cm}  \ell_{2}(v)\sim {\langle v \rangle}^{\gamma+2}. $$
     \item[b)]The function $ \overline{a}_{ij}$ is smooth, for any multi-index
      $ \alpha \in \mathbb{N}^{3}$, there exists $C_{\alpha}>0$ such that for all  $v\in \mathbb{R}^{3}$, we have
$$\vert \partial_{v}^{\alpha} \overline{a}_{ij}(v)\vert +\vert \partial_{v}^{\alpha}(\overline{a}_{ij}(v)v_{j})\vert  \leq C_{\alpha}\hspace{0.05cm}{\langle v \rangle}^{\gamma+2-\vert \alpha \vert },$$
 \item[c)] For  $ v \in  \mathbb{R}^{3}\backslash\lbrace{0}\rbrace$, we have
$$  \overline{a}_{ij}(v)v_{i}v_{j}=\ell_{1}(v)\vert v \vert^{2},$$
  $$\overline{a}_{ii}(v)=\text{tr}(\overline{a}(v))=\ell_{1}(v)+2\ell_{2}(v),$$
 $$\overline{a}_{ij}(v)\eta_{i}\eta_{j}=\ell_{1}(v)\vert P_{v}\eta  \vert^{2}+\ell_{2}(v)\vert (I-P_{v})\eta  \vert^{2},$$
 with $\eta\in \mathbb{R}^{3}$ and $P_{v}$ is the projection on  $v$, i.e. $P_{v}\eta=\big(\eta\cdot\frac{v}{\vert v \vert}\big)\frac{v}{\vert v \vert}.$
   \item[d)] For $\vert v \vert>1$, we have   $$\vert \partial_{v}^{\alpha}\ell_{1}(v)\vert\leq C_{\alpha}\hspace{0.05cm}{\langle v \rangle}^{\gamma-\vert \alpha \vert } \hspace{0.3cm}\text{and}\hspace{0.3cm} \vert\partial_{v}^{\alpha}\ell_{2}(v)\vert \leq C_{\alpha}\hspace{0.05cm}{\langle v \rangle}^{\gamma+2-\vert \alpha \vert }.$$
    \end{itemize}
\end{lem}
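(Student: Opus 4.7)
For (a) I would exploit the rotational symmetry of the problem. The matrix $a(v) = |v|^{\gamma+2}(I - v\otimes v/|v|^{2})$ satisfies $a(Rv) = R\, a(v)\, R^{T}$ for every $R\in O(3)$, and since $\mu$ is radial, the convolution $\overline{a}(v) = (a *_v \mu)(v)$ inherits this equivariance: $\overline{a}(Rv) = R\, \overline{a}(v)\, R^{T}$. Taking $R$ to be a rotation fixing the axis through $v$ forces $\overline{a}(v)$ to commute with every such $R$, so the orthogonal decomposition $\mathbb{R}^{3} = \mathrm{span}(v) \oplus v^{\perp}$ diagonalizes $\overline{a}(v)$, yielding a simple eigenvalue $\ell_{1}(v)$ along $v$ and a double eigenvalue $\ell_{2}(v)$ on $v^{\perp}$. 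Positivity of both eigenvalues follows because $a(w)$ is positive semi-definite with kernel exactly $\mathrm{span}(w)$, which varies with $w$; hence $\overline{a}(v)\xi\cdot\xi > 0$ for every $\xi\neq 0$. For the asymptotics, I would use the identity $a_{ij}(w)w_{i}=0$ to rewrite
\[
\overline{a}_{ij}(v)\,v_{i}v_{j} \;=\; \int a_{ij}(v-v_{*})\, v_{*i}v_{*j}\,\mu(v_{*})\,\mathrm{d}v_{*},
\]
and then replace $a_{ij}(v-v_{*})$ by its leading-order expansion $|v|^{\gamma+2}(\delta_{ij}-v_{i}v_{j}/|v|^{2})$ on the region $\{|v_{*}|\leq |v|/2\}$, the contribution of $\{|v_{*}|\geq |v|/2\}$ being absorbed by the Gaussian decay of $\mu$. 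Using $\int|v_{*}|^{2}\mu=3$ and $\int (v\cdot v_{*})^{2}\mu=|v|^{2}$ gives $\ell_{1}(v)|v|^{2}\sim 2|v|^{\gamma+2}$, hence $\ell_{1}(v)\sim 2\langle v\rangle^{\gamma}$. Testing with a unit vector $\eta\perp v$ produces $\ell_{2}(v)\sim \langle v\rangle^{\gamma+2}$ in the same way.

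For (b) smoothness is immediate from $\mu\in\mathcal{S}$ and local integrability of $a_{ij}$ (valid for $\gamma>-3$). The polynomial decay of derivatives is obtained by the same near/far splitting: on $\{|v_{*}|\leq |v|/2\}$ we have $|v-v_{*}|\sim |v|$ and the pointwise bound $|\partial^{\alpha}a_{ij}(w)|\lesssim |w|^{\gamma+2-|\alpha|}$ applies away from zero; on the far region Gaussian decay of $\mu$ absorbs the polynomial growth. For the $\overline{a}_{ij}v_{j}$ term I would first establish
\[
\overline{a}_{ij}(v)v_{j} \;=\; -\int a_{ij}(v-v_{*})\,\partial_{v_{*j}}\mu(v_{*})\,\mathrm{d}v_{*} \;=\; 2\int (v-v_{*})_{i}\,|v-v_{*}|^{\gamma}\,\mu(v_{*})\,\mathrm{d}v_{*},
\]
which follows from $v_{*j}\mu=-\partial_{v_{*j}}\mu$, integration by parts, and the divergence identity $\partial_{j}a_{ij}(w)=-2w_{i}|w|^{\gamma}$. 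The right-hand side is again a convolution of a polynomially bounded function with a Schwartz function, and the same splitting yields the claimed estimate.

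Item (c) is then essentially algebraic. The first two identities follow from (a): since $\overline{a}(v)v=\ell_{1}(v)v$, contraction with $v$ gives $\overline{a}_{ij}v_{i}v_{j}=\ell_{1}(v)|v|^{2}$, and the trace is the sum of eigenvalues counted with multiplicity. The quadratic form identity follows by writing $\eta=P_{v}\eta+(I-P_{v})\eta$ and using that $\overline{a}(v)$ acts as the scalar $\ell_{1}$ on $\mathrm{span}(v)$ and as $\ell_{2}$ on $v^{\perp}$, with vanishing cross terms by orthogonality. For (d), in the smooth region $|v|>1$ I would use (c) to write
\[
\ell_{1}(v)=\frac{\overline{a}_{ij}(v)\,v_{i}v_{j}}{|v|^{2}}, \qquad \ell_{2}(v)=\frac{1}{2}\bigl(\overline{a}_{ii}(v)-\ell_{1}(v)\bigr),
\]
and then apply Leibniz using the (b)-estimates together with $|\partial^{\alpha}(|v|^{-2})|\lesssim \langle v\rangle^{-2-|\alpha|}$ on $|v|>1$. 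The gain of two powers in $\ell_{1}$ versus $\ell_{2}$ comes precisely from the $|v|^{-2}$ factor, which reflects the kernel direction of the original matrix $a$.

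The main technical obstacle is the derivative bound in (b) with the full gain $\langle v\rangle^{\gamma+2-|\alpha|}$: a naive differentiation under the convolution only yields $\langle v\rangle^{\gamma+2}$, so one really needs the near/far splitting combined with control of the singularity of $\partial^{\alpha}a_{ij}$ at the origin when $\gamma<0$, and for the $\overline{a}_{ij}v_{j}$ bound the preliminary reduction above is essential. Once (b) is secured, (a)--(c)--(d) reduce to rotational invariance and Leibniz's rule.
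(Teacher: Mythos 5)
The paper gives no proof of this lemma at all; it simply refers to \cite[Lemma 2.4]{carrapatoso:hal-01143343}, \cite[Lemma 3]{Guo2002} and \cite[Proposition 1]{Wu2013ExponentialTD}. Your proposal reconstructs the standard argument from those references — rotational equivariance of $\overline{a}=a*\mu$ to diagonalize $\mathbf{A}(v)$ on $\mathrm{span}(v)\oplus v^{\perp}$, the shift to the moment form $\overline{a}_{ij}v_iv_j=\int a_{ij}(v-v_*)v_{*i}v_{*j}\mu\,dv_*$ via $a_{ij}(w)w_j=0$ for the asymptotics, a near/far splitting of the convolution for the derivative bounds in (b), algebra for (c), and the formulas $\ell_1=\overline{a}_{ij}v_iv_j/|v|^2$, $\ell_2=\tfrac12(\overline{a}_{ii}-\ell_1)$ for (d) — and parts (a), (b), (c) and the $\ell_2$-bound are sound as written.

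There is, however, a one-power gap in your argument for the $\ell_1$-bound in (d). If you Leibniz-differentiate $\ell_1 = v_i\,(\overline{a}_{ij}v_j)\,|v|^{-2}$ using only the bounds stated in (b), namely $|\partial^{\beta}(\overline{a}_{ij}v_j)|\lesssim \langle v\rangle^{\gamma+2-|\beta|}$, together with $|\partial^{\beta}v_i|\lesssim\langle v\rangle^{1-|\beta|}$ and $|\partial^{\beta}(|v|^{-2})|\lesssim\langle v\rangle^{-2-|\beta|}$, the output is $\langle v\rangle^{\gamma+1-|\alpha|}$, not the claimed $\langle v\rangle^{\gamma-|\alpha|}$: the term where the full derivative falls on $\overline{a}_{ij}v_j$ leaves an uncompensated factor $|v_i|\cdot|v|^{-2}\lesssim\langle v\rangle^{-1}$. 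To close this you must carry one of the convolution representations you already derived back into (d). Either note that $\overline{a}_{ij}v_iv_j=\int a_{ij}(v-v_*)v_{*i}v_{*j}\mu(v_*)\,dv_*$ is itself a convolution of $a_{ij}$ against a Schwartz function, so the near/far splitting gives directly $|\partial^{\alpha}(\overline{a}_{ij}v_iv_j)|\lesssim\langle v\rangle^{\gamma+2-|\alpha|}$ (the explicit $v_iv_j$ never appears under a derivative) and then division by $|v|^2$ gives $\langle v\rangle^{\gamma-|\alpha|}$; or observe from $\overline{a}_{ij}v_j=2\int(v-v_*)_i\,|v-v_*|^{\gamma}\mu\,dv_*$ that the kernel $2w_i|w|^{\gamma}$ has order $\gamma+1$, so in fact $|\partial^{\beta}(\overline{a}_{ij}v_j)|\lesssim\langle v\rangle^{\gamma+1-|\beta|}$, one power sharper than what (b) records, which does close the Leibniz computation. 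The ingredients are already in your write-up; the point is that the coarse bound stated in (b) is not sufficient for (d) and the finer convolution structure must be invoked explicitly.
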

\begin{proof}
See for example \cite[Lemma 2.4]{carrapatoso:hal-01143343}, \cite[Lemma 3]{Guo2002} and \cite[Proposition 1]{Wu2013ExponentialTD}.
\end{proof}
\begin{lem}\label{N7}
 For all $v\in \mathbb{R}^{3}$ with $\vert v \vert>1$ , we have
   $$\ell_{1}(v)\gtrsim{\langle v \rangle}^{\gamma} \hspace{0.3cm}\text{and}\hspace{0.3cm} \ell_{2}(v)\gtrsim{\langle v \rangle}^{\gamma+2}.$$ 
\end{lem}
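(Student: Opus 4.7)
The plan is to combine the asymptotic equivalences in Lemma \ref{N6}(a) with a compactness argument on the annulus $\{1\le |v|\le R_{0}\}$. The only non-trivial input needed beyond Lemma \ref{N6} is that $\ell_{1}$ and $\ell_{2}$ are \emph{strictly} positive everywhere, not merely at infinity; once this is established, continuity plus compactness closes the gap between finite $|v|$ and the asymptotic regime.

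First I would verify that $\mathbf{A}(v)$ is positive definite for every $v\in\mathbb{R}^{3}$. Using the explicit form of $a_{ij}$ and expanding the convolution with $\mu$, one gets, for any $\xi\in\mathbb{R}^{3}$,
$$
\overline{a}_{ij}(v)\xi_{i}\xi_{j}=\int_{\mathbb{R}^{3}}|v-v_{*}|^{\gamma}\bigl|\xi\times(v-v_{*})\bigr|^{2}\mu(v_{*})\,dv_{*}.
$$
The integrand is non-negative and strictly positive on the open set of $v_{*}$ where $v-v_{*}$ is not collinear with $\xi$, so the integral is $>0$ whenever $\xi\neq 0$. Hence $\ell_{1}(v),\ell_{2}(v)>0$ for all $v$. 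Moreover, thanks to Lemma \ref{N6}(c), we have the smooth representations $\ell_{1}(v)=\overline{a}_{ij}(v)v_{i}v_{j}/|v|^{2}$ and $\ell_{2}(v)=\tfrac{1}{2}(\mathrm{tr}\,\mathbf{A}(v)-\ell_{1}(v))$, which show that both eigenvalues are continuous (indeed smooth) on $\mathbb{R}^{3}\setminus\{0\}$.

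Next, the asymptotic equivalences of Lemma \ref{N6}(a) provide a radius $R_{0}>1$ and a constant $c>0$ such that $\ell_{1}(v)\ge c\langle v\rangle^{\gamma}$ and $\ell_{2}(v)\ge c\langle v\rangle^{\gamma+2}$ whenever $|v|\ge R_{0}$. On the compact annulus $K=\{v:1\le|v|\le R_{0}\}$, continuity and strict positivity give $\min_{K}\ell_{1}=m_{1}>0$ and $\min_{K}\ell_{2}=m_{2}>0$; since $\langle v\rangle$ is bounded above on $K$ by some $C_{0}$, one has $\ell_{1}(v)\ge m_{1}C_{0}^{-|\gamma|}\langle v\rangle^{\gamma}$ and $\ell_{2}(v)\ge m_{2}C_{0}^{-|\gamma+2|}\langle v\rangle^{\gamma+2}$ on $K$. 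Combining the two regimes yields the claimed lower bounds for every $|v|>1$.

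The only potential obstacle is the pointwise positivity of the eigenvalues on the compact annulus, which is precisely what the cross-product identity above resolves in one line; the rest is a standard compactness/continuity argument. In particular, no additional pseudo-differential machinery is required for this lemma — it is purely a quantitative refinement of the qualitative statement in Lemma \ref{N6}(a).
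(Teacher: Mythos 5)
Your proof is correct and follows essentially the same route as the paper: use the asymptotics from Lemma \ref{N6}(a) to control $\ell_{1},\ell_{2}$ for $|v|$ large, and then use continuity and strict positivity on the compact annulus to close the gap, adjusting the constant. The only difference is that you spell out the positivity of $\mathbf{A}(v)$ via the explicit integral representation $\overline{a}_{ij}(v)\xi_{i}\xi_{j}=\int|v-v_{*}|^{\gamma}|(v-v_{*})\times\xi|^{2}\mu(v_{*})\,dv_{*}$ and the continuity of the eigenvalues via Lemma \ref{N6}(c), whereas the paper simply cites these as known facts; your version is a harmless amplification of the same argument.
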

\begin{proof}
Using (a) in Lemma \fcolorbox{red}{ white}{\ref{N6}},  when $\vert v \vert\rightarrow +\infty$, we have
     $\ell_{1}(v)\sim 2{\langle v \rangle}^{\gamma}$. In particular, there is a constant  $N>0$ such that for all $\vert v \vert> N$, we have    $$\ell_{1}(v)\geq {\langle v \rangle}^{\gamma}.$$
  We have that  $ \ell_{1}(v)$ is continuous since $\mathbf{A}(v)$ is a positive definite symmetric matrix and continuous (due to convolution with  $\mu$), hence the existence of a constant  $C>0$ such that for $1\leq\vert v \vert\leq N$ \begin{align*}
\ell_{1}(v)&\geq C \hspace{0.05cm}{\langle v \rangle}^{\gamma },
\end{align*}
and then for all $v\in \mathbb{R}^{3}$, perhaps with changing $C$,
$$ \ell_{1}(v)\geq C\hspace{0.05cm}{\langle v \rangle}^{\gamma}.$$
The proof will be the same for $\ell_{2}(v).$
\end{proof}
\begin{lem}
    Let $F(v)$ be defined in (\fcolorbox{red}{ white}{\ref{N0}}). Then, we can choose  $M$ and $R$ big enough such that for all $v\in \mathbb{R}^{3}$, we have
   $$F(v)\gtrsim{\langle v \rangle}^{\gamma+2}.$$
\end{lem}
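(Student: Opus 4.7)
The plan is to split the analysis into a large-velocity region (where the quadratic form alone provides the coercivity, independently of $M$ and $R$) and a bounded region (where the cutoff $M\chi_R$ provides the bound). The key algebraic input is Lemma \ref{N6}(c), which identifies the quadratic form $\mathbf{A}(v)\frac{v}{2}\cdot\frac{v}{2}$ explicitly.

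First I would compute, using Lemma \ref{N6}(c), that
\[
\mathbf{A}(v)\frac{v}{2}\cdot\frac{v}{2} \;=\; \frac{1}{4}\,\overline{a}_{ij}(v)v_i v_j \;=\; \frac{1}{4}\,\ell_1(v)\,|v|^2.
\]
Combining this with Lemma \ref{N7}, which gives $\ell_1(v)\gtrsim \langle v\rangle^{\gamma}$ for $|v|\geq 1$, I obtain
\[
\mathbf{A}(v)\frac{v}{2}\cdot\frac{v}{2} \;\gtrsim\; \langle v\rangle^{\gamma+2} \qquad \text{for } |v|\geq 1.
\]
Next, I would use Lemma \ref{N6}(b) applied with $|\alpha|=1$ to the components $\overline{a}_{ij}(v)v_j$ to bound the divergence term:
\[
\bigl|\nabla_v \cdot [\mathbf{A}(v)\tfrac{v}{2}]\bigr| \;=\; \tfrac{1}{2}\bigl|\partial_i(\overline{a}_{ij}(v)v_j)\bigr| \;\lesssim\; \langle v\rangle^{\gamma+1}.
\]
Since $\gamma+1<\gamma+2$, there exists $R_0\geq 1$ such that for every $|v|\geq R_0$ the divergence term is absorbed by half of the quadratic form, yielding $F(v)\geq c_0\langle v\rangle^{\gamma+2}$ for some $c_0>0$, and this bound is valid for \emph{any} choice of $M\geq 0$ and $R\geq 0$ since the cutoff contribution is nonnegative.

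It then remains to handle the compact region $|v|\leq R_0$. On this region, both $\mathbf{A}(v)\frac{v}{2}\cdot\frac{v}{2}\geq 0$ and the divergence term is bounded by some constant $C_0=C_0(R_0)$, so $F(v)\geq -C_0 + M\chi_R(v)$. Choosing $R\geq R_0$ ensures $\chi_R(v)=\chi(v/R)=1$ on $\{|v|\leq R_0\}$, hence $F(v)\geq M-C_0$ there. Since $\langle v\rangle^{\gamma+2}$ is bounded by some constant $K_0=K_0(R_0)$ on this region, it suffices to take $M$ large enough that $M-C_0\geq c_0 K_0$ to conclude $F(v)\geq c_0\langle v\rangle^{\gamma+2}$ on $|v|\leq R_0$ as well. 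Combining the two regions gives the desired bound on all of $\mathbb{R}^3$.

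There is no genuinely hard step here; the proof is a matching of orders of magnitude. The only point requiring mild care is checking that the divergence term is indeed of lower order $\langle v\rangle^{\gamma+1}$ rather than $\langle v\rangle^{\gamma+2}$, which follows from Lemma \ref{N6}(b) for the full product $\overline{a}_{ij}(v)v_j$ rather than for $\overline{a}_{ij}$ alone.
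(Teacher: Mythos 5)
Your proof is correct and follows essentially the same route as the paper: both lower-bound the quadratic term via Lemma \ref{N6}(c) and Lemma \ref{N7}, upper-bound the divergence term by $\langle v\rangle^{\gamma+1}$ via Lemma \ref{N6}(b), and then use the one power gap together with the cutoff $M\chi_R$ to conclude. The only difference is presentational: you make explicit the decomposition into $\{|v|\geq R_0\}$ and $\{|v|\leq R_0\}$, whereas the paper writes the inequality $F(v)\geq C_1\langle v\rangle^{\gamma+2}-C_2\langle v\rangle^{\gamma+1}+M\chi_R$ globally and then chooses $M$, $R$ in one stroke; your version is arguably more careful near $v=0$, where $\ell_1(v)|v|^2$ vanishes and Lemma \ref{N7} does not apply, but the underlying argument is the same.
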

\begin{proof}
Since
\begin{align*}
     F(v)\geq\dfrac{1}{4}\ell_{1}(v){\vert v \vert}^{2}-\Big\vert \nabla_{v}\cdot\Big[ \mathbf{A}(v)\dfrac{v}{2}\Big]\Big\vert+ M\chi_{R},
\end{align*}
according to Lemma \fcolorbox{red}{ white}{\ref{N6}}, we have
\begin{align}\label{Z0}
\Big\vert \nabla_{v}\cdot\Big[ \mathbf{A}(v)\dfrac{v}{2}\Big]\Big\vert\lesssim{\langle v \rangle}^{\gamma+1}.
    \end{align}
So, using Lemma \fcolorbox{red}{ white}{\ref{N7}} and (\fcolorbox{red}{ white}{\ref{Z0}}), there exist two positive constants
$C_{1}, C_{2}$ such that   
\begin{align*}
     F(v)\geq C_{1}{\langle v \rangle}^{\gamma +2}-C_{2}{\langle v \rangle}^{\gamma +1}+ M\chi_{R},
     \end{align*}
then  there exist  $M$ and $R$ such that for all  $v\in \mathbb{R}^{3}$ 
   $$F(v)\gtrsim\hspace{0.05cm}{\langle v \rangle}^{\gamma+2}.$$
\end{proof}
\begin{lem}\label{N80}
 For any multi-index
      $ \alpha \in \mathbb{N}^{3}$,  there   exists $C_{\alpha}>0$ such that for all $v\in \mathbb{R}^{3}$, we have  $$\vert \partial_{v}^{\alpha}F(v)\vert\leq C_{\alpha}\hspace{0.05cm}{\langle v \rangle}^{\gamma+2-\vert \alpha \vert }.$$
\end{lem}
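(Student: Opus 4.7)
I would decompose
\[
F(v) = \tfrac{1}{4}\,\overline{a}_{i,j}(v)\,v_i v_j \;-\; \tfrac{1}{2}\,\partial_{v_i}\bigl(\overline{a}_{i,j}(v)\,v_j\bigr) \;+\; M\chi_R(v)
\]
and bound each of the three pieces separately by $C_\alpha\langle v\rangle^{\gamma+2-|\alpha|}$. Two of the pieces are straightforward. The cutoff $M\chi_R$ lies in $C_c^\infty(\mathbb{R}^3_v)$ and is supported in $\{|v|\leq 2R\}$, on which $\langle v\rangle^{\gamma+2-|\alpha|}$ is bounded above and below by positive constants depending only on $R$ and $\alpha$, so the bound follows from the uniform boundedness of $\partial_v^\alpha(M\chi_R)$. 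For the divergence term, part (b) of Lemma~\ref{N6} applied with one extra derivative gives
\[
\bigl|\partial_v^\alpha\partial_{v_i}(\overline{a}_{i,j}(v)v_j)\bigr| \;=\; \bigl|\partial_v^{\alpha+e_i}(\overline{a}_{i,j}(v)v_j)\bigr| \;\leq\; C\,\langle v\rangle^{\gamma+1-|\alpha|} \;\leq\; C\,\langle v\rangle^{\gamma+2-|\alpha|},
\]
where the last inequality uses $\langle v\rangle\geq 1$.

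The main obstacle is the quadratic piece $\overline{a}_{i,j}(v)v_iv_j$. A naive Leibniz expansion writing it as $v_i\cdot(\overline{a}_{i,j}(v)v_j)$ and invoking part (b) produces a top-order contribution of size $\langle v\rangle\cdot\langle v\rangle^{\gamma+2-|\alpha|}=\langle v\rangle^{\gamma+3-|\alpha|}$, off by exactly one factor of $\langle v\rangle$. To recover the missing cancellation I would invoke the algebraic identity of part (c) of Lemma~\ref{N6}, namely $\overline{a}_{i,j}(v)v_iv_j=\ell_1(v)|v|^2$ (valid for $v\neq 0$, and trivially at $v=0$ since both sides vanish there).

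For $|v|>1$, applying Leibniz to the product $\ell_1(v)\cdot|v|^2$ and using part (d) of Lemma~\ref{N6} (giving $|\partial_v^\beta \ell_1(v)|\leq C\langle v\rangle^{\gamma-|\beta|}$) together with the observation that $|v|^2$ is a polynomial of degree two, so only multi-indices with $|\alpha-\beta|\leq 2$ contribute and each satisfies $\bigl|\partial_v^{\alpha-\beta}|v|^2\bigr|\lesssim\langle v\rangle^{2-|\alpha-\beta|}$, yields that every Leibniz summand is controlled by $\langle v\rangle^{\gamma+2-|\alpha|}$. For $|v|\leq 1$, part (b) of Lemma~\ref{N6} (with arbitrary $\alpha$) shows that $\overline{a}_{i,j}$ is smooth, so $\overline{a}_{i,j}(v)v_iv_j$ is smooth on the compact set $\{|v|\leq 1\}$; there all its derivatives are uniformly bounded and $\langle v\rangle^{\gamma+2-|\alpha|}\sim 1$, making the estimate automatic. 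Summing the three estimates concludes the proof; the only subtle point throughout is the use of the algebraic identity from part (c) to convert the raw quadratic form $\overline{a}_{i,j}v_iv_j$ into the better-behaved product $\ell_1(v)|v|^2$, without which the desired $\gamma+2$ scaling would fail by one derivative.
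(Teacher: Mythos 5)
Your proof is correct and follows essentially the same route as the paper: both use the eigenvalue identity $\overline{a}_{i,j}(v)v_iv_j=\ell_1(v)|v|^2$ from Lemma~\ref{N6}(c) to replace the quadratic form by $\ell_1(v)|v|^2$ (the paper writes $|v|^2=\langle v\rangle^2-1$ before applying Leibniz, a cosmetic difference), then invoke Lemma~\ref{N6}(d) for the $\ell_1$ derivatives and Lemma~\ref{N6}(b) for the divergence term, and dispose of the bounded region by smoothness. Your explicit remark that the naive bound via $v_i\cdot(\overline{a}_{i,j}v_j)$ would be off by one power of $\langle v\rangle$ is a useful observation the paper omits, but it does not change the substance of the argument.
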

\begin{proof}
For  $\vert v \vert>2R$, using Leibniz's formula, we have
\begin{align*}
 \partial_{v}^{\alpha}F(v)=\frac{1}{4}\sum_{\beta\leq \alpha}\binom{\alpha}{\beta}\partial^{\alpha-\beta}\ell_{1}(v)\hspace{0.05cm}\partial^{\beta}{\langle v \rangle}^{2}-\frac{1}{4}\partial^{\alpha}\ell_{1}(v)-\partial^{\alpha}\Big( \nabla_{v}\cdot\Big[ \mathbf{A}(v)\dfrac{v}{2}\Big]\Big),
 \end{align*}
Then, using  Lemma  \fcolorbox{red}{ white}{\ref{N6}} and  Lemma \fcolorbox{red}{ white}{\ref{N7}} we obtain
\begin{align*}
 \vert \partial_{v}^{\alpha}F(v)\vert &\leq  c_{\alpha}\hspace{0.05cm}{\langle v \rangle}^{\gamma+2-\vert \alpha \vert }. 
 \end{align*}
The function $F(v)$ being $C^{\infty}$ on $\vert v \vert\leq 2R$, the estimates for $\vert v \vert\leq 2R$ are immediate.
\end{proof}
\begin{lem}\label{N200}
\begin{itemize}
    \item[i)] $\mathbf{A}(v)$ is written as follows
\begin{align}\label{N8}
\mathbf{A}(v)= {B}^{\text{T}}(v){B}(v),
\end{align}
where ${B}(v)=(b_{ij}(v))_{1\leq  i,j \leq 3}$ is a matrix with real-valued smooth entries.
 \item[ii)] For any multi-index 
      $ \alpha \in \mathbb{N}^{3}$, there exists   $C_{\alpha}>0$ such that for all $v\in \mathbb{R}^{3}$, we have
\begin{equation}\label{N22}
  \vert \partial_{v}^{\alpha}b_{ij}(v)\vert \leq C_{\alpha}\hspace{0.05cm} {\langle v \rangle}^{\frac{\gamma}{2}+1-\vert \alpha \vert }. 
\end{equation}
 \item[iii)] There exists $c,C>0$ such that for all $v\in \mathbb{R}^{3}$,  for all  $ \eta \in  \mathbb{R}^{3}$ we have
 \begin{equation}\label{N501}
   c\hspace{0.05cm}{\langle v \rangle}^{\gamma}(\vert \eta \vert^{2}+ \vert v\wedge\eta\vert^{2})  \leq\mathbf{A}(v)\eta\cdot\eta =\vert B(v)\eta \vert^{2}\leq   C\hspace{0.05cm}{\langle v \rangle}^{\gamma}(\vert \eta \vert^{2}+ \vert v\wedge\eta\vert^{2})
 \end{equation}
\end{itemize}
\end{lem}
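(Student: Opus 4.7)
My strategy is to exploit the spectral decomposition of $\mathbf{A}(v)$ from Lemma \ref{N6}(a) throughout, splitting the analysis at $\vert v\vert = 1$ whenever the asymptotic and compact regimes must be reconciled.

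For \emph{(i)}, $\mathbf{A}(v)$ is a smooth symmetric matrix-valued function on $\mathbb{R}^{3}$, since Gaussian convolution regularizes the singularity of $a_{ij}$ at the origin for every $\gamma \in [-3,1]$. Positive definiteness at every $v$ follows from the identity $\mathbf{A}(v)\eta\cdot\eta = \int \vert v-v_*\vert^{\gamma}\vert (v-v_*)\wedge\eta\vert^{2}\mu(v_*)\,dv_*$, whose integrand is strictly positive on a set of full measure whenever $\eta\neq 0$. A smooth square root $B(v) := \mathbf{A}(v)^{1/2}$ is then supplied by holomorphic functional calculus, for instance via $B(v) = \frac{1}{2\pi i}\oint_{\Gamma(v)}\sqrt{z}\,(zI - \mathbf{A}(v))^{-1}\,dz$ with $\Gamma(v)$ a contour enclosing the spectrum of $\mathbf{A}(v)$ and avoiding $(-\infty,0]$; smoothness of the integrand transfers to $B$.

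For \emph{(ii)}, I work with the explicit representation $B(v) = \sqrt{\ell_1(v)}\,P_v + \sqrt{\ell_2(v)}\,(I-P_v)$, with $P_v = v\otimes v/\vert v\vert^{2}$, valid for $v\neq 0$. On $\vert v\vert \geq 1$, Lemma \ref{N6}(d) yields $\vert\partial_v^\beta\sqrt{\ell_1(v)}\vert \lesssim \langle v\rangle^{\gamma/2-\vert\beta\vert}$ and $\vert\partial_v^\beta\sqrt{\ell_2(v)}\vert \lesssim \langle v\rangle^{\gamma/2+1-\vert\beta\vert}$, while each derivative of the zero-degree homogeneous $P_v$ contributes a factor $\vert v\vert^{-1}\sim \langle v\rangle^{-1}$. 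Leibniz's rule together with the dominance of the $\sqrt{\ell_2}$ term then gives $\vert\partial_v^\alpha b_{ij}(v)\vert \lesssim \langle v\rangle^{\gamma/2+1-\vert\alpha\vert}$ on this range. For $\vert v\vert \leq 1$, $B$ is smooth on the compact set $\{\vert v\vert\leq 1\}$ and $\langle v\rangle^{\gamma/2+1-\vert\alpha\vert}\sim 1$ there, so the bound is automatic.

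For \emph{(iii)}, I start from Lemma \ref{N6}(c): $\mathbf{A}(v)\eta\cdot\eta = \ell_1(v)\vert P_v\eta\vert^{2} + \ell_2(v)\vert(I-P_v)\eta\vert^{2}$. The identities $\vert(I-P_v)\eta\vert^{2} = \vert v\wedge\eta\vert^{2}/\vert v\vert^{2}$ and $\vert P_v\eta\vert^{2} = \vert\eta\vert^{2} - \vert v\wedge\eta\vert^{2}/\vert v\vert^{2}$ imply $\vert\eta\vert^{2} + \vert v\wedge\eta\vert^{2} = \vert P_v\eta\vert^{2} + \langle v\rangle^{2}\vert(I-P_v)\eta\vert^{2}$. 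For $\vert v\vert > 1$, Lemma \ref{N6}(a) and Lemma \ref{N7} give $\ell_1(v)\sim \langle v\rangle^{\gamma}$ and $\ell_2(v)\sim \langle v\rangle^{\gamma+2}$, whence $\mathbf{A}(v)\eta\cdot\eta \sim \langle v\rangle^{\gamma}(\vert\eta\vert^{2} + \vert v\wedge\eta\vert^{2})$. For $\vert v\vert \leq 1$, uniform positive definiteness of $\mathbf{A}$ on this compact set, combined with $\langle v\rangle^{\gamma}\sim 1$ and $\vert v\wedge\eta\vert^{2}\leq \vert\eta\vert^{2}$, delivers the equivalence.

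The main obstacle lies in \emph{(ii)}: the spectral formula is singular at $v=0$ because $P_v$ has no limit there. Smoothness is rescued by rotational symmetry, which forces $\ell_1(0)=\ell_2(0)$ so that the singular contributions of $P_v$ collapse and $B(0)$ is simply a multiple of the identity. Carefully quantifying this cancellation so that it matches the stated derivative growth $\langle v\rangle^{\gamma/2+1-\vert\alpha\vert}$ smoothly across the overlap region near $\vert v\vert = 1$ is where the bulk of the technical work lies.
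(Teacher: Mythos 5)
Your proof is correct and follows essentially the same route as the paper: the spectral representation $B(v)=\sqrt{\ell_1(v)}\,P_v + \sqrt{\ell_2(v)}\,(I-P_v)$ for $|v|>1$, smoothness plus compactness for $|v|\leq 1$, and Lemma~\ref{N6}'s eigenvalue asymptotics for (ii) and (iii); your Riesz contour integral for the matrix square root in (i) is a different phrasing of the same fact the paper extracts from $\psi\circ\phi$, namely that $M\mapsto\sqrt{M}$ is smooth on $\mathbf{S}_3^{++}(\mathbb{R})$. Your use of degree-$0$ homogeneity, $|\partial_v^{\beta}P_v|\lesssim \langle v\rangle^{-|\beta|}$ for $|v|\geq 1$, is in fact the precise ingredient needed to close the Leibniz count in (ii) (the paper's stated bound $|\partial^{\alpha}P_v|\lesssim 1$ alone would not give the decay in $|\alpha|$), so you have tightened that step. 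Your closing worry about matching the derivative estimate "smoothly across the overlap region near $|v|=1$" is unfounded: on $\{|v|\leq 1\}$ the weight $\langle v\rangle^{\gamma/2+1-|\alpha|}$ is comparable to $1$, and the global smoothness of $B$ established in (i) immediately gives the bound there by compactness, exactly as your own two-region split already provides.
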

\begin{proof}
i) As $\mathbf{A}(v)$ is a positive definite symmetric matrix (denoted $\mathbf{S}_{3}^{++}(\mathbb{R})$) and according to the spectral theorem, there exists  $\mathbf{Q}$ an orthogonal matrix  such that
$$\mathbf{A}(v)=\mathbf{Q}^{\text{T}}(v)\mathbf{D}(v)\mathbf{Q}(v).$$
On $\big\{\vert v \vert>1\big\}$, $v \mapsto \mathbf{Q}(v)$ can be calculated explicitly and can be chosen to be smooth. If we set 
$$ B(v)=\mathbf{Q}^{\text{T}}(v)\sqrt{\mathbf{D}(v)}\mathbf{Q}(v)$$
with $\sqrt{\mathbf{D}(v)}=\text{diag}\big(\sqrt{\ell_{1}(v)}, \sqrt{\ell_{2}(v)}, \sqrt{\ell_{2}(v)}\big)$. We have that  $v \mapsto \sqrt{\mathbf{D}(v)} $ is of class $C^{\infty}$ for $\vert v \vert>1$ (because $\ell_{1}(v),\ell_{2}(v)$ are of class $C^{\infty}$ for  $\vert v \vert>1$). Then  the application 
$v \mapsto B(v)$ is of class $C^{\infty}$ on $\big\{\vert v \vert>1\big\}$.
  Regarding the case where $\vert v \vert\leq 1$, we consider the following two applications:
 $$\begin{array}{ccccc}
\phi & : & \mathbb{R}^{3}& \to & \mathbf{S}_{3}^{++}(\mathbb{R})  \\
 & & v & \mapsto &\mathbf{A}(v)  \\
\end{array},$$  
  $$\begin{array}{ccccc}\psi & : & \mathbf{S}_{3}^{++}(\mathbb{R})& \to & \mathbf{S}_{3}^{++}(\mathbb{R})  \\
 & & M & \mapsto &\sqrt{M}  \\
\end{array},$$ 
we note that $\phi, \psi$ are of class $C^{\infty}$, moreover $B(v)= \psi\circ \phi(v)$. Then we have that the application
$v \mapsto B(v)$ is of class $C^{\infty}.$\\
ii) For $\vert v \vert>1$, we have $B(v)=\sqrt{\ell_{1}(v)}P_{v}+\sqrt{\ell_{2}(v)}(I-P_{v})$, moreover using  Lemma \fcolorbox{red}{ white}{\ref{N6}},  we have
\begin{align}\label{Z2}
 \vert \partial_{v}^{\alpha}\sqrt{\ell_{1}(v)}\vert\lesssim{\langle v \rangle}^{\frac{\gamma}{2}-\vert \alpha \vert },\hspace{0.3cm} \vert\partial_{v}^{\alpha}\sqrt{\ell_{2}(v)}\vert \lesssim{\langle v \rangle}^{\frac{\gamma+2}{2}-\vert \alpha \vert }
 \end{align}
 and the fact that $\vert \partial_{v}^{\alpha}P_{v}\vert\lesssim 1$ ($P_{v}$
 and all its derivatives are bounded), so we get that for  $\vert v \vert>1$, 
 \begin{equation}\label{Z3}
  \vert \partial_{v}^{\alpha}b_{ij}(v)\vert \lesssim{\langle v \rangle}^{\frac{\gamma}{2}+1-\vert \alpha \vert }, 
\end{equation}
where the constants in  (\fcolorbox{red}{ white}{\ref{Z2}}), (\fcolorbox{red}{ white}{\ref{Z3}}) depend on $\alpha$.\\
The function $b_{ij}(v)$ being $C^{\infty}$ on $\vert v \vert\leq 1$,  the estimates for $\vert v \vert\leq 1$ are immediate.
Then, for all 
      $ \alpha \in \mathbb{N}^{3}$, there exists   $C_{\alpha}>0$ such that for all $v\in \mathbb{R}^{3}$, we have
\begin{equation}
  \vert \partial_{v}^{\alpha}b_{ij}(v)\vert \leq C_{\alpha}\hspace{0.05cm} {\langle v \rangle}^{\frac{\gamma}{2}+1-\vert \alpha \vert }. 
\end{equation}
iii) The estimate is immediate on
 $\vert v \vert\leq 1$ because   $\mathbf{A}(v)$ is a positive definite symmetric matrix and $\mathbf{A}= {B}^{\text{T}}{B}$. For $\vert v \vert>1$, using  Lemma \fcolorbox{red}{ white}{\ref{N6}}, we have
\begin{align*}
\overline{a}_{ij}(v)\eta_{i}\eta_{j}&=\ell_{1}(v)\vert P_{v}\eta  \vert^{2}+\ell_{2}(v)\vert (I-P_{v})\eta  \vert^{2}\\&\gtrsim{\langle v \rangle}^{\gamma}\vert \eta \vert^{2}{\cos^{2} (v,\eta)}+ {\langle v \rangle}^{\gamma+2} \frac{{\vert v\wedge\eta\vert}^{2}}{{\vert v\vert}^{2}}\\
&\gtrsim{\langle v \rangle}^{\gamma}\vert \eta \vert^{2}{\cos^{2} (v,\eta)}+{\langle v \rangle}^{\gamma}{\vert v\wedge\eta\vert}^{2}+{\langle v \rangle}^{\gamma} {\vert \eta\vert}^{2}\sin^{2} (v,\eta) \\
&\gtrsim{\langle v \rangle}^{\gamma}({\vert \eta\vert}^{2}+{\vert v\wedge\eta\vert}^{2}),
\end{align*}
on the other hand, we have
\begin{align*}
\overline{a}_{ij}(v)\eta_{i}\eta_{j}&=\ell_{1}(v)\vert P_{v}\eta  \vert^{2}+\ell_{2}(v)\vert (I-P_{v})\eta  \vert^{2}\\&\lesssim {\langle v \rangle}^{\gamma}\vert \eta \vert^{2}+ {\langle v \rangle}^{\gamma+2} \frac{{\vert v\wedge\eta\vert}^{2}}{{\vert v\vert}^{2}}\\
&\lesssim{\langle v \rangle}^{\gamma}({\vert \eta\vert}^{2}+{\vert v\wedge\eta\vert}^{2}).
\end{align*}
Hence the proof of (iii). 
\end{proof}
Using  Lemma \fcolorbox{red}{ white}{\ref{N200}}, 
we can rewrite   $ \mathcal{A}$ in the form
\begin{equation}\label{N9}
   \mathcal{A}=v\cdot\nabla_{x}+(B(v)\nabla_{v})^{*}\cdot B(v)\nabla_{v}+F(v),
   \end{equation}
   where $(B(v)D_{v})^{*}=  D_{v}B(v)^{T}$, is the formal adjoint of  $B(v)D_{v}$.
   \subsection{Study of the operator \texorpdfstring{$\mathcal{P}$}{}.}  
In this part, we will study the following problem:
  \begin{align}
 \left\{
    \begin{array}{ll}
    \partial_{t}f +\mathcal{P}f=0\\
    {f}_{|t=0}=f_{0},
     \end{array}
\right.
    \end{align}
we show that the above problem  is well-posed in the space $L^2(\mathbb{R}_{x}^{3}\times\mathbb{R}_{v}^{3})$ in the sense of semi-groups. By  Hille-Yosida Theorem, it is sufficient to show that $\mathcal{A}$ is maximal accretive in the space $L^2(\mathbb{R}_{x}^{3}\times\mathbb{R}_{v}^{3})$, then using the Bounded Perturbation Theorem in \cite[Theorem 1.3]{book}, we get that the operator $\mathcal{P}$ is a generator of a strongly continuous semigroup  (for more details on the semi-group theory see also \cite{pazy_semigroups_2012}).
First, we  start by recalling the basic definition of hypoellipticity. 
\begin{defi}
Let $P$  be a differential operator with   $C^{\infty}$ coefficients in an open set $\Omega \subset \mathbb{R}^{n}$. We say that $P$ is a hypoelliptic operator on $\Omega$, if,  for any open $\omega \subset \Omega$, any  
$u \in \mathcal{D'}(\Omega)$, such that $Pu \in C^{\infty}(\omega)$ belongs to $ C^{\infty}(\omega)$.
\end{defi}
\begin{lem}\label{N10}
Let $ \mathcal {A} $ be the operator defined in (\fcolorbox{red}{ white}{\ref{N14}}). Then,  $\mathcal{A}$  is a hypoelliptic operator.
\end{lem}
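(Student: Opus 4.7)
\textbf{Proof plan for Lemma \ref{N10}.}

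The plan is to recast $\mathcal{A}$ in Hörmander's sum-of-squares form and then invoke the classical Hörmander hypoellipticity theorem. Using the expression (\ref{N9}), I write
$$
\mathcal{A} = X_0 + \sum_{j=1}^{3} X_j^{*} X_j + F(v),
$$
where $X_0 = v\cdot\nabla_{x}$ is the transport drift and $X_j = \sum_{k=1}^{3} b_{j,k}(v)\, \partial_{v_k}$ is the $j$-th row of $B(v)\nabla_v$. By Lemma \ref{N200}(i) the coefficients $b_{j,k}$ are $C^{\infty}(\mathbb{R}^3)$, so each $X_j$ is a smooth real vector field on $\mathbb{R}^3_x\times\mathbb{R}^3_v$, and $F\in C^{\infty}$ by Lemma \ref{N80}. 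Hence $\mathcal{A}$ is a second-order differential operator with $C^{\infty}$ coefficients, of the form required by Hörmander.

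Next, I verify the Hörmander bracket condition at every point $(x,v)\in\mathbb{R}^3_x\times\mathbb{R}^3_v$. Since $b_{j,k}$ depends only on $v$ and $X_0$ only on $x$-derivatives, a direct computation gives
$$
[X_0, X_j] = \Bigl[\sum_i v_i\partial_{x_i},\sum_k b_{j,k}(v)\partial_{v_k}\Bigr] = -\sum_{k=1}^{3} b_{j,k}(v)\,\partial_{x_k},
$$
so the matrix of coefficients of $([X_0,X_1],[X_0,X_2],[X_0,X_3])$ in the basis $(\partial_{x_1},\partial_{x_2},\partial_{x_3})$ is precisely $-B(v)$. By Lemma \ref{N200}(iii), $|B(v)\eta|^2 \geq c\langle v\rangle^{\gamma}|\eta|^2 > 0$ for every $v\in\mathbb{R}^3$ and every $\eta\neq 0$, hence $B(v)$ is invertible everywhere; consequently $[X_0,X_1],[X_0,X_2],[X_0,X_3]$ span the $x$-directions at every point. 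Similarly, $B(v)$ being invertible means that $X_1,X_2,X_3$ span the $v$-directions. Therefore the Lie algebra generated by $X_0,X_1,X_2,X_3$ fills the full tangent space $\mathbb{R}^3_x\oplus\mathbb{R}^3_v$ at every point (in fact only brackets of length $\leq 2$ are needed).

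Hörmander's theorem on sums of squares (Hörmander 1967) then applies: for any operator of the form $X_0+\sum X_j^{*}X_j + c(x,v)$ with $C^{\infty}$ coefficients whose Lie algebra of fields satisfies the bracket-generating condition, the operator is hypoelliptic on $\mathbb{R}^3_x\times\mathbb{R}^3_v$. Adding the smooth zeroth-order term $F(v)$ does not affect hypoellipticity (the principal and subprincipal symbols are unchanged in what matters). This yields the conclusion. The only non-routine point is the invertibility of $B(v)$ uniformly on compacts, which is already handled by the lower bound in Lemma \ref{N200}(iii); no further technical obstacle is expected.
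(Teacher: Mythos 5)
Your proof follows the same route as the paper: rewrite $\mathcal{A}$ as $X_0+\sum_{j=1}^{3}X_j^{*}X_j+F(v)$ using (\ref{N9}), compute the commutators $[X_0,X_j]=-\sum_k b_{j,k}(v)\partial_{x_k}$, observe that $B(v)$ is pointwise invertible by Lemma \ref{N200}(iii) so that the fields $X_1,\ldots,X_3,[X_0,X_1],\ldots,[X_0,X_3]$ span the full tangent space of $\mathbb{R}^6_{x,v}$, and invoke Hörmander's theorem. Your write-up is slightly more explicit than the paper's about why the Lie-bracket condition holds (tying it to the strict lower bound in (\ref{N501})), but the decomposition, the brackets, and the final invocation of Hörmander are identical.
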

\begin{proof}
Using formula (\fcolorbox{red}{ white}{\ref{N9}}), we can rewrite $ \mathcal {A} $ in the following form:
\begin{align*}
 \mathcal{A}&=\underbrace{v\cdot\partial_{x}}_{X_{0}}+\underbrace{\big(\sum_{j=1}^{3}b_{1,j}(v)\partial_{v_{j}}\big)^{*}\big(\sum_{j=1}^{3}b_{1,j}(v)\partial_{v_{j}}\big)}_{{X_{1}^{*}X_{1}}}+\underbrace{\big(\sum_{j=1}^{3}b_{2,j}(v)\partial_{v_{j}}\big)^{*}\big(\sum_{j=1}^{3}b_{2,j}(v)\partial_{v_{j}}\big)}_{{X_{2}^{*}X_{2}}}\\
 &+\underbrace{\big(\sum_{j=1}^{3}b_{3,j}(v)\partial_{v_{j}}\big)^{*}\big(\sum_{j=1}^{3}b_{3,j}(v)\partial_{v_{j}}\big)}_{{X_{3}^{*}X_{3}}}+ F(v)\\
 &=X_{0}+\sum_{i=1}^{3}{X_{i}^{*}X_{i}}+ F(v).
\end{align*}
In addition, the coefficients  $b_{ij}(v)\hspace{0.1cm} \text{and}\hspace{0.1cm} F(v)$ are of class  $C^{\infty}$, taking the vector field square brackets we get
\begin{align*}
&Y_{k}=\left[ X_{k},X_{0}\right]=\sum_{j=1}^{3}b_{k,j}(v)\partial_{x_{j}}, \hspace{0.5cm} \text{for}  \hspace{0.1cm}  k \in \left \{1,2,3  \right \}. 
\end{align*}
From the above, $\mathcal{A}$  is a “type \MakeUppercase{\romannumeral 2} Hörmander’s  operators” (see for example \cite{article}, \cite{hormander1967hypoelliptic}).    
Moreover, the vector fields $\big\{X_{i},Y_{i},\hspace{0.2cm} i=1,\ldots,3\big\} $ generate all the space tangent to $\mathbb{R}^{6}_{x,v}$
so   $\mathcal{A}$ is hypoelliptic operator.
\end{proof}
\begin{thm}\label{N15}
Let $\gamma \in [{-3,1}]$ and $\mathcal{A}$ be the operator defined in (\fcolorbox{red}{ white}{\ref{N14}}). Then,  its closure $\overline{\mathcal{A}}$  on the space  $\mathcal{S}(\mathbb{R}^{6}_{x,v})$ 
 is maximally accretive.
  \end{thm}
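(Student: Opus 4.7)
My plan is to apply the Lumer--Phillips criterion: the closure $\overline{\mathcal{A}}$ is maximally accretive on $L^2(\mathbb{R}^6_{x,v})$ as soon as (i) $\mathcal{A}$ is accretive on the dense subspace $\mathcal{S}(\mathbb{R}^6_{x,v})$ and (ii) the range of $1+\mathcal{A}$ is dense in $L^2$. Step (i) is immediate from the intrinsic form (\ref{N9}): for $f\in\mathcal{S}$, an integration by parts gives
\begin{equation*}
\mathrm{Re}(\mathcal{A}f,f)_{L^2_{x,v}}=\mathrm{Re}(v\cdot\nabla_x f,f)+\|B(v)\nabla_v f\|_{L^2}^2+(F(v)f,f)_{L^2},
\end{equation*}
where the first term vanishes by skew-adjointness of the transport on $\mathcal{S}$. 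Combined with the coercivity $F(v)\gtrsim\langle v\rangle^{\gamma+2}\geq 0$ established earlier, this yields $\mathrm{Re}(\mathcal{A}f,f)\geq 0$.

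For step (ii), I take $u\in L^2$ orthogonal to $\mathrm{Range}(1+\mathcal{A})$. Then $u$ satisfies $(1+\mathcal{A}^{+})u=0$ in $\mathcal{D}'(\mathbb{R}^{6})$, where $\mathcal{A}^{+}=-v\cdot\nabla_x+(B(v)\nabla_v)^{*}B(v)\nabla_v+F(v)$ is the formal adjoint. Since reversing the sign of the drift leaves the Hörmander bracket computation of Lemma \ref{N10} unchanged (the vectors $Y_k=[X_k,X_0]$ are simply replaced by $-Y_k$ and still span the tangent space together with the $X_k$), $\mathcal{A}^{+}$ is hypoelliptic and $u\in C^{\infty}(\mathbb{R}^{6}_{x,v})$.

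To conclude that $u\equiv 0$, I introduce cutoffs $\chi_n(x,v)=\chi(x/n)\chi(v/n)$ with $\chi\in C_c^{\infty}$ equal to $1$ on the unit ball, pair the identity $(1+\mathcal{A}^{+})u=0$ against $\chi_n^{2}u$, and integrate by parts. The resulting energy identity
\begin{equation*}
\|\chi_n u\|_{L^2}^{2}+\|\chi_n B(v)\nabla_v u\|_{L^2}^{2}+(F(v)\chi_n^{2}u,u)_{L^2}=\mathcal{R}_n^{\mathrm{tr}}+\mathcal{R}_n^{\mathrm{diff}}
\end{equation*}
features two remainders. The transport remainder $\mathcal{R}_n^{\mathrm{tr}}$ is bounded by $C\int_{\{|x|\sim n,\,|v|\lesssim n\}}|u|^{2}$ and vanishes as $n\to\infty$ by dominated convergence. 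The diffusion remainder is split by Cauchy--Schwarz: half is reabsorbed into $\|\chi_n B\nabla_v u\|_{L^2}^{2}$, while the leftover contribution $\|B(v)(\nabla_v\chi_n)u\|_{L^2}^{2}$ is dominated by $Cn^{-2}(F\chi_n u,u)_{L^2}$ thanks to the pointwise comparison $|B(v)|^{2}\lesssim F(v)$ for large $|v|$, and is therefore also absorbed into the LHS. Letting $n\to\infty$ forces $\|u\|_{L^2}^{2}=0$, hence $u=0$, which gives density of the range.

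The main obstacle is the vanishing step. Since $|B(v)|^{2}$ and $F(v)$ have matching growth $\langle v\rangle^{\gamma+2}$, the diffusion commutator generated by the velocity cutoff does not decay pointwise in $v$, and a naive bound in terms of $\|u\|_{L^{2}}$ is insufficient whenever $\gamma>0$. It is the coercivity of $F$ itself, rather than any $1/n$ gain from the cutoff, that allows the remainder to be reabsorbed into the left-hand side; the Cauchy--Schwarz splitting must be arranged with a small enough parameter so that the absorption is genuine and uniform in $n$. Once this balance is secured, passage to the limit and the accretivity computation complete the proof.
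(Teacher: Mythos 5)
Your overall strategy (Lumer--Phillips on $\mathcal{S}$, hypoellipticity of the formal adjoint to upgrade the orthogonal element $u$ to $C^\infty$, then a localised energy estimate) is exactly the paper's route, and the accretivity and hypoellipticity steps are fine. Your treatment of the transport remainder is also valid: with a single scale $n$ the bound $C\int_{\{|x|\sim n\}}|u|^{2}$ goes to $0$ by dominated convergence (the paper instead uses two independent scales $k_1,k_2$ and sends $k_1\to\infty$ first, but both work).

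The gap is in the diffusion remainder. You bound
\begin{equation*}
\left\|B(v)(\nabla_v\chi_n)u\right\|_{L^2}^{2}\ \lesssim\ n^{-2}\int \mathbf{1}_{\{n\le |v|\le 2n\}}\,F(v)\,|u|^{2}
\end{equation*}
using the operator-norm comparison $\|B(v)\|^{2}\sim\langle v\rangle^{\gamma+2}\lesssim F(v)$, and then claim this is absorbed into $(F\chi_n^{2}u,u)_{L^2}$ up to a factor $n^{-2}$. That absorption fails: $\nabla_v\chi_n$ is supported on the shell $n\le|v|\le 2n$, precisely the region where $\chi_n$ decreases from $1$ to $0$, so $\mathbf{1}_{\{n\le|v|\le2n\}}\not\lesssim\chi_n^{2}$ and the remainder is not controlled by the left-hand side. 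If instead you try to let it vanish directly, the bound reads $\lesssim n^{\gamma}\int_{\{|v|\sim n\}}|u|^{2}$, which need not tend to $0$ for $\gamma\in(0,1]$ since $u\in L^2$ gives no rate on the tail. A small Cauchy--Schwarz parameter only worsens the constant in front of this term, so the ``genuine and uniform in $n$'' absorption you invoke cannot be arranged.

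The paper closes this exact gap by taking the velocity cutoff \emph{radial}. Then $\nabla_v\zeta_k$ is parallel to $v$, and since $\mathbf{A}(v)=B^{T}(v)B(v)$ has eigenvalue $\ell_1(v)\sim\langle v\rangle^{\gamma}$ along $v$ and the large eigenvalue $\ell_2(v)\sim\langle v\rangle^{\gamma+2}$ only transversally (Lemma~\ref{N6}), the commutator sees only $\ell_1$: $\left|B(v)\nabla_v\zeta_k\right|^{2}=\ell_1(v)\left|\nabla_v\zeta_k\right|^{2}\lesssim n^{-2}\langle v\rangle^{\gamma}$ on the shell, hence $\left\|B(\nabla_v\zeta_k)u\right\|_{L^2}^{2}\lesssim n^{\gamma-2}\|u\|_{L^2}^{2}\to 0$ for every $\gamma<2$, covering the whole range $\gamma\in[-3,1]$. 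Without this anisotropic observation the energy argument does not close for $\gamma>0$.
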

  \begin{proof}
  We adapt here the proof given in  \cite[page 44]{article}. We apply the abstract criterion by taking
  $ \mathcal{H}=L^{2}(\mathbb{R}^{6}_{x,v})$ and the domain of  $\mathcal{A}$ defined by $D(  \mathcal{A})=\mathcal{S}(\mathbb{R}^{6}_{x,v})$.
First, we show the accretivity of the operator  $  \mathcal{A}$.
We want to prove that  $\Re{e}\left(\mathcal{A}u,u\right)_{\mathcal{H}}\geq 0$ for $ u \in D(  \mathcal{A})$. Indeed, from (\fcolorbox{red}{ white}{\ref{N9}})
\begin{align*}
\Re{e}\left(\mathcal{A}u,u\right)_{\mathcal{H}}&=\underbrace{\Re{e}\left(v\cdot\nabla_{x}u,u\right)_{\mathcal{H}}}_{=0\hspace{0.1cm} \text{since}\hspace{0.05cm}v\cdot\nabla_{x}\hspace{0.1cm}\text{is skew-adjoint }}
- \Re{e}\left(\nabla_{v}\cdot (\mathbf{A}(v)\nabla_{v}u),u\right)_{\mathcal{H}}+\Re{e}\left(F(v)u,u\right)_{\mathcal{H}}\\
&={\Vert B(v)\nabla_{v}u \Vert}_{\mathcal{H}}^{2}+{\Vert \sqrt{F(v)}u \Vert}_{\mathcal{H}}^{2}\\
&\geq 0.
\end{align*}
 Since $  \mathcal{A}$ is an accretive operator then its closure   $\overline{\mathcal{A}}$ exists and it is accretive (see \cite[Proposition 5.3]{article}).
Let us now show that there exists ${\lambda}_{1}>0$ such that the operator
$$T=  \mathcal{A}+\lambda_{1}\text{Id}$$
has dense image in $\mathcal{H}$. We take $\lambda_{1}=1$. Let $f\in \mathcal{H}$ satisfy 
\begin{equation}\label{M44}
    {\left(f,Tu\right)}_{\mathcal{H}}=0, \hspace{0.2cm} \forall u\in D(  \mathcal{A}).
\end{equation}
we want's to prove that $f=0$,\\
Since $T$ is a differential operator
then his formal adjoint $T^{\sharp}$ exists (in the sense of distributions). According to  (\fcolorbox{red}{ white}{\ref{M44}}) we obtain
\begin{align}\label{N11}
     T^{\sharp}f=\big(-\nabla_{v}\cdot \mathbf{A}(v)\nabla_{v}+F(v)+1-X_{0}\big)f=0, \hspace{0.2cm}\text{in}\hspace{0.2cm}  \mathcal{D}'(\mathbb{R}^{6}).
     \end{align}
Using Lemma \fcolorbox{red}{ white}{\ref{N10}}, we have  $-\nabla_{v}\cdot \mathbf{A}(v)\nabla_{v}+F(v)+1-X_{0}$ is a hypoelliptic operator, so $f \in C^{\infty}(\mathbb{R}^{6})$ (see  \cite[Chapter 2]{article}).\\
Now we introduce the family of truncation functions $\zeta_{k}$  defined
 by
$$\zeta_{k}(x,v)=\zeta\big(\frac{x}{k_{1}}\big)\zeta\big(\frac{v}{k_{2}}\big),\hspace{0.2cm}\forall k=(k_{1},k_{2})\in (\mathbb{N^{*}})^{2},$$ where $\zeta$ is a $C^{\infty}$ function satisfying  the following conditions:
$$
 \left\{
    \begin{array}{ll}
     0\leq\zeta\leq 1, \\
        \zeta= 1\hspace{0.1cm} \text{on}\hspace{0.2cm}B(0,1),\\
        \text{supp}\hspace{0.1cm}\zeta\subset B(0,2),\\
     \zeta\hspace{0.2cm} \text{is a radial function} .
    \end{array}
\right.
$$
The expression of $ T^{\sharp}(\zeta_{k}f)$ is
\begin{align*}
   T^{\sharp}(\zeta_{k}f)&=-\nabla_{v}\cdot \big(\mathbf{A}(v)\nabla_{v}(\zeta_{k}f)\big)+\big(F(v)+1\big)\zeta_{k}f-X_{0}(\zeta_{k}f)\\
   &=-\nabla_{v}\cdot \big(\mathbf{A}(v)([\nabla_{v}\zeta_{k}]f)\big)-\nabla_{v}\zeta_{k}\cdot \mathbf{A}(v)\nabla_{v}f-X_{0}(\zeta_{k})f-\zeta_{k} T^{\sharp}f,
\end{align*}
by using  (\fcolorbox{red}{ white}{\ref{N11}}), we obtain
\begin{align}
     T^{\sharp}(\zeta_{k}f)=-\nabla_{v}\cdot \big(\mathbf{A}(v)([\nabla_{v}\zeta_{k}]f)\big)-\nabla_{v}\zeta_{k}\cdot \mathbf{A}(v)\nabla_{v}f-X_{0}(\zeta_{k})f.
\end{align}
We note that $T^{\sharp}(\zeta_{k}f)\in\mathcal{H}$, taking the scalar product with $\zeta_{k}f$ we obtain
\begin{align*}
 {\left(T^{\sharp}(\zeta_{k}f),\zeta_{k}f\right)}_{\mathcal{H}}=&-\iint\nabla_{v}\cdot \big(\mathbf{A}(v)([\nabla_{v}\zeta_{k}]f)\big)\zeta_{k}f\hspace{0.1cm}dxdv-\iint X_{0}(\zeta_{k})\zeta_{k}\vert f \vert^{2}\hspace{0.1cm}dxdv\\&-\iint\big(\nabla_{v}\zeta_{k}\cdot \mathbf{A}(v)\nabla_{v}f\big)\zeta_{k}f\hspace{0.1cm}dxdv,
 \end{align*}
 By doing an integration by parts, we obtain 
\begin{align*}
 {\left(T^{\sharp}(\zeta_{k}f),\zeta_{k}f\right)}_{\mathcal{H}}=\iint{\vert B\nabla_{v}(\zeta_{k})f\vert}^{2}\hspace{0.1cm}dxdv-\iint X_{0}(\zeta_{k})\zeta_{k}\vert f \vert^{2}\hspace{0.1cm}dxdv.
 \end{align*}
 On the other hand, using (\fcolorbox{red}{ white}{\ref{N11}}), we obtain
 \begin{align*}
 {\left(T^{\sharp}(\zeta_{k}f),\zeta_{k}f\right)}_{\mathcal{H}}=&\iint{\vert B\nabla_{v}(\zeta_{k}f)\vert}^{2}\hspace{0.1cm}dxdv+\iint\big(F(v)+1\big)\vert \zeta_{k}f\vert^{2}\hspace{0.1cm}dxdv\\
 &-\underbrace{\iint X_{0}(\zeta_{k}f)\zeta_{k} f\hspace{0.1cm}dxdv}_{=0\hspace{0.1cm} \text{since}\hspace{0.1cm}X_{0}\hspace{0.1cm}\text{is skew-adjoint}}\\
 &=\iint{\vert B\nabla_{v}(\zeta_{k}f)\vert}^{2}\hspace{0.1cm}dxdv+\iint\big(F(v)+1\big)\vert \zeta_{k}f\vert^{2}\hspace{0.1cm}dxdv.
 \end{align*}
Using the fact that $$\iint{\vert B\nabla_{v}(\zeta_{k}f)\vert}^{2}\hspace{0.1cm}dxdv\geq0,$$ we obtain 
 \begin{align}\label{Z5}
     \iint\big(F(v)+1\big)\vert \zeta_{k}f\vert^{2}\hspace{0.1cm}dxdv\leq \underbrace{\iint{\vert B\nabla_{v}(\zeta_{k})f\vert}^{2}\hspace{0.1cm}dxdv}_{(i)}-\underbrace{\iint X_{0}(\zeta_{k})\zeta_{k}\vert f \vert^{2}\hspace{0.1cm}dxdv}_{(ii)}.
 \end{align}
 \underline{Estimate of (i):}
Using Lemma \fcolorbox{red}{ white}{\ref{N6}}-(b)-(d) and taking into account that the function $\zeta$ is a radial function we obtain
\begin{align*}
    \iint{\vert B\nabla_{v}(\zeta_{k})f\vert}^{2}\hspace{0.1cm}dxdv&=\iint \ell_{1}(v)\vert P_{v}\nabla_{v}\zeta_{k}  \vert^{2}\vert f\vert^{2}\hspace{0.1cm}dxdv+\underbrace{\iint \ell_{2}(v)\vert  (I-P_{v})\nabla_{v}\zeta_{k} \vert^{2}\vert f\vert^{2}\hspace{0.1cm}dxdv}_{=0\hspace{0.1cm} \text{since}\hspace{0.05cm}\hspace{0.1cm}v\hspace{0.1cm}\text{is parallel to}\hspace{0.1cm}\nabla_{v}\zeta_{k}}\\
    &\leq \frac{C_{0}}{{k}_{2}^{2}}\iint \langle v \rangle^{\gamma}\Phi_{k}\vert f\vert^{2} \hspace{0.1cm}dxdv,
    \end{align*}
where $C_{0}>0$ and $\Phi_{k}=\zeta(\frac{x}{k_{1}})\zeta^{'}(\frac{v}{k_{2}})$.
Using the fact that $\gamma \in [{-3,1}]$ and the fact that $\Phi_{k}$ is a bounded function we have the existence of a constant $C_{1}>0$ such that,
$$ \iint{\vert B\nabla_{v}(\zeta_{k})f\vert}^{2}\hspace{0.1cm}dxdv\leq C_{1}\big( \frac{1}{{k}_{2}^{2}}+ \frac{1}{{k}_{2}}\big)\Vert f \Vert^{2}.$$
 \underline{Esimate of (ii):} we have
 \begin{align*}
 \left|\iint X_{0}(\zeta_{k})\zeta_{k}\vert f \vert^{2}\hspace{0.1cm}dxdv\right|\leq \frac{1}{{k}_{1}}\iint \vert v \vert\Tilde{\Phi}_{k}\zeta_{k}\vert f\vert^{2} \hspace{0.1cm}dxdv,
  \end{align*}
  where $\Tilde{\Phi}_{k}=\zeta^{'}(\frac{x}{k_{1}})\zeta(\frac{v}{k_{2}}).$\\
 Now, taking into account that the functions   $\Tilde{\Phi}_{k}$ and  $\zeta_{k}$ are bounded, we have the existence of a constant $C_{2}>0$ such that,
 \begin{align*}
\left|\iint X_{0}(\zeta_{k})\zeta_{k}\vert f \vert^{2}\hspace{0.1cm}dxdv\right|\leq C_{2} \frac{{k}_{2}}{{k}_{1}}\Vert f \Vert^{2}.
\end{align*}
Finally, coming back to (\fcolorbox{red}{ white}{\ref{Z5}}) we obtained the existence of a constant $C>0$ such that,
\begin{align}\label{N12}
      \iint\vert \zeta_{k}f\vert^{2}\hspace{0.1cm}dxdv\leq  C\big( \frac{1}{{k}_{2}^{2}}+ \frac{1}{{k}_{2} }+\frac{{k}_{2}}{{k}_{1}}\big)\Vert f \Vert^{2},\hspace{0.2cm} \forall k.
      \end{align}
 Taking  $k_{1}\rightarrow +\infty$ in (\fcolorbox{red}{ white}{\ref{N12}}) we obtain
\begin{align}\label{N13}
      \iint\vert \zeta(\frac{v}{k_{2}})f\vert^{2}\hspace{0.1cm}dxdv\leq  C\big( \frac{1}{{k}_{2}^{2}}+ \frac{1}{{k}_{2}}\big)\Vert f \Vert^{2},
\end{align}
and taking $k_{2}\rightarrow +\infty$ in (\fcolorbox{red}{ white}{\ref{N13}}) we obtain
$$  \iint\vert f\vert^{2}\hspace{0.1cm}dxdv=0,$$
then $f=0.$
\end{proof}
 From now on, we write  $\mathcal{A}$ for the closure of the operator $\mathcal{A}$. 
\begin{cor}\label{N3000}
 Let  $\gamma \in [{-3,1}]$ and  $\mathcal{P}$ be the operator defined in (\fcolorbox{red}{ white}{\ref{N4}}). Then, $\mathcal{-P}$ is a generator of a semi-group $\big(S(t)\big)_{t\geq0}$  strongly continuous on $\mathcal{H}=L^{2}(\mathbb{R}^{6}_{x,v})$ verifying
 \begin{align}
     \Vert S(t) \Vert \leq e^{ \Vert\mathcal{K} \Vert t}\hspace{0.2cm}\text{for}\hspace{0.2cm} t\geq 0,
 \end{align}
 where $\mathcal{K}$ the operator defined in (\fcolorbox{red}{ white}{\ref{N14}}).
\end{cor}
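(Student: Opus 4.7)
The plan is to assemble this corollary from two ingredients already at our disposal: the maximal accretivity of $\overline{\mathcal{A}}$ proved in Theorem \ref{N15}, and a bounded perturbation argument that accounts for the compact part $\mathcal{K}$. The decomposition $\mathcal{P} = \mathcal{A} + \mathcal{K}$ from (\ref{N14}) is exactly suited to this approach: $\mathcal{A}$ carries the unbounded structure (transport and diffusion), while $\mathcal{K} = -\mathcal{L}_2 - M\chi_R$ is designed to be bounded on $\mathcal{H}$.

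\textbf{Step 1 (semigroup generated by $-\mathcal{A}$).} By Theorem \ref{N15}, $\overline{\mathcal{A}}$ is maximal accretive on $\mathcal{H}=L^2(\mathbb{R}^6_{x,v})$. The Hille--Yosida theorem (equivalently, the Lumer--Phillips theorem) then yields that $-\overline{\mathcal{A}}$ is the infinitesimal generator of a strongly continuous contraction semigroup $(T(t))_{t\geq 0}$ on $\mathcal{H}$, with
\[
\Vert T(t)\Vert_{\mathcal{B}(\mathcal{H})} \leq 1, \qquad \forall t\geq 0.
\]

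\textbf{Step 2 (boundedness of $\mathcal{K}$).} Next I would verify that $\mathcal{K}$ is a bounded operator on $\mathcal{H}$. The term $M\chi_R$ is a bounded multiplication operator because $\chi_R\in C_c^\infty(\mathbb{R}^3_v)$ satisfies $0\leq \chi_R\leq 1$, so $\Vert M\chi_R\Vert_{\mathcal{B}(\mathcal{H})}\leq M$. For the piece $\mathcal{L}_2$, I would appeal to the explicit expression
\[
\mathcal{L}_2 f = -\mu^{-1/2}\partial_i\left\{\mu\bigl[a_{ij}*_v\bigl\{\mu^{1/2}[\partial_j f + \tfrac{v_j}{2}f]\bigr\}\bigr]\right\}
\]
given in the introduction, combined with the Gaussian weights $\mu$, $\mu^{1/2}$, which absorb all polynomial losses coming from the singular kernel $a_{ij}$. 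Using the classical estimates (as established e.g.\ in \cite{carrapatoso:hal-01143343}, \cite{Guo2002}, \cite{Wu2013ExponentialTD}) one obtains that $\mathcal{L}_2\in\mathcal{B}(\mathcal{H})$ for $\gamma\in[-3,1]$. Hence $\mathcal{K}\in\mathcal{B}(\mathcal{H})$.

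\textbf{Step 3 (bounded perturbation).} With $-\overline{\mathcal{A}}$ generating a $C_0$ contraction semigroup and $-\mathcal{K}\in\mathcal{B}(\mathcal{H})$, the Bounded Perturbation Theorem \cite[Theorem 1.3]{book} applies to $-\mathcal{P} = -\overline{\mathcal{A}} + (-\mathcal{K})$. It gives that $-\mathcal{P}$ generates a strongly continuous semigroup $(S(t))_{t\geq 0}$ on $\mathcal{H}$, with the explicit growth bound
\[
\Vert S(t)\Vert_{\mathcal{B}(\mathcal{H})} \leq 1\cdot e^{\Vert -\mathcal{K}\Vert\, t} = e^{\Vert \mathcal{K}\Vert\, t}, \qquad \forall t\geq 0,
\]
which is precisely the claim. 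The main technical point in this proof is the boundedness of $\mathcal{L}_2$ on $L^2$; once that is in hand the remainder is a direct application of two standard abstract results from semigroup theory.
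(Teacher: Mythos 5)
Your proposal is correct and follows essentially the same route as the paper: apply Theorem \ref{N15} together with Hille--Yosida to obtain the contraction semigroup generated by $-\overline{\mathcal{A}}$, note that $\mathcal{K}=-\mathcal{L}_2-M\chi_R$ is bounded (the paper simply asserts this, citing that $\mathcal{L}_2$ is compact and $M\chi_R$ is bounded, while you sketch why via the Gaussian weights), and conclude with the Bounded Perturbation Theorem. The only cosmetic difference is that you spell out the boundedness of $\mathcal{K}$ in a bit more detail than the paper does.
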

\begin{proof}
Using (\fcolorbox{red}{ white}{\ref{N14}}), the operator $\mathcal{-P}$ is written as follows
  $$-\mathcal{P}=-\mathcal{A}-\mathcal{K}.$$
  According to Theorem  \fcolorbox{red}{ white}{\ref{N15}}, $\mathcal{A}$ is a maximally accretive operator. According to the Hille-Yosida Theorem, $-\mathcal{A}$ is a generator of a strongly continuous semi-group of contraction. On the other hand,  $-\mathcal{K}$  is a bounded operator in $\mathcal{H}$ (we have $\mathcal{L}_{2}$ a compact operator and  $M\chi_{R}$ is bounded). Using 
  the Bounded Perturbation Theorem  in \cite[Theorem 1.3]{book}, we have that  $\mathcal{-P}$  is a generator of a semi-group  $\big(S(t)\big)_{t\geq0}$  strongly continuous on  $\mathcal{H}$, moreover we have
 \begin{align}
     \Vert S(t) \Vert \leq e^{ \Vert\mathcal{K} \Vert t}\hspace{0.2cm}\text{for}\hspace{0.2cm} t\geq 0.
     \end{align}
\end{proof}

\section{Hypoelliptic estimates for the operator with parameters}\label{z2}

In the following discussion, we work with $\gamma \in [{0,1}]$. 
In this section, we will study the operator acting on the velocity variable $v$:
\begin{align}
   \mathcal{A}_{\xi}=iv\cdot \xi+(B(v)\nabla_{v})^{*}\cdot B(v)\nabla_{v}+F(v),
        \end{align}
    where $\xi$ is the parameter \text{in}  $\mathbb{R}^{3}.$  The operator $\mathcal{A}_{\xi}$ is obtained by the partial Fourier transformation in $ x $. 
    The goal of studying the operator $\mathcal{A}_{\xi}$ and considering $\xi$ as a parameter, is to obtain estimates of the velocity variables $ v $ uniformly with respect to $ \xi $. Then by using the inverse Fourier transform with respect to $ x $, we can obtain global estimates in all variables.
    We note that the operator $\mathcal{A}_{\xi}$ verifies for all  $u\in \mathcal{S}(\mathbb{R}^{3}_{v})$,  \begin{equation}\label{N502}
{\Vert B(v)\nabla_{v}u \Vert}_{L^{2}}^{2}+{\Vert \sqrt{F(v)}u \Vert}_{L^{2}}^{2}\leq \Re{e}\left(\mathcal{A}_{\xi}u,u\right)_{L^{2}}.   
    \end{equation} 
\textbf{Notations.} Throughout this section, we will use  $\Vert \cdot \Vert_{L^{2}}$ to denote the norm in the space  $L^{2}(\mathbb{R}^{3}_{v})$ and $\xi$ is a parameter.  We use $p^{\text {Wick}}$ to denote  the Wick quantization of  $p$ in the variables $(v,\eta)$ (for more details on Wick quantization see \cite{Lerner2008} and Appendix \fcolorbox{red}{ white}{\ref{P2}}).\\
The main result in this section is  Proposition \fcolorbox{red}{ white}{\ref{N100}} and  Proposition \fcolorbox{red}{ white}{\ref{N105}}.

\subsection{Pseudo-differential parts}\label{W1}
In this part, we will show several lemmas concerning pseudo-differential symbols. We need to build these symbols, who verify assumptions of Theorem \fcolorbox{red}{ white}{\ref{M36}},  such that the pseudo-differential operator associated to these symbols has good properties. These operators play an important role in hypoellipic estimates. The standard concepts on pseudo-differential calculus are explained in Appendix \fcolorbox{red}{ white}{\ref{1001}}. We define for $(v,\eta)\in \mathbb{R}^{6}$  the following symbols, they depend on the parameter $\xi$ but we do not mention it in our notations since $\xi$ is seen as a parameter.
\begin{align}\label{N20}
\lambda(v, \eta)&=\left[\langle v \rangle^\gamma\left(1+\vert v\vert^{2}+\vert \eta\vert^{2}+\vert \xi\vert^{2}+\vert v\wedge\eta\vert^{2}+\vert v\wedge\xi\vert^{2}\right)\right]^{1/2},\\
\mathbf{a}(v,\eta)&=1+\vert v\vert^{2}+\vert \eta\vert^{2}+\vert \xi\vert^{2}+\vert v\wedge\eta\vert^{2}+\vert v\wedge\xi\vert^{2},\\g_{1}(v, \eta)&=1+\langle v \rangle+\langle \eta \rangle,\\
        g_{2}(v, \eta)&=1+{\langle \xi \rangle}^{1/3}+{\langle \eta \rangle},\\
   g_{3}(v, \eta)&=1+{\langle \xi \rangle}^{2/3}+{\langle \eta \rangle}^{2}.
\end{align}
\begin{lem}
The above symbols are  admissible weights in the sense of Definition \fcolorbox{red}{ white}{\ref{S1}}  uniformly with respect to the parameter $\xi$ \hspace{0.1cm}\text{in}\hspace{0.1cm}  $\mathbb{R}^{3}.$
\end{lem}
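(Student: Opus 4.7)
The plan is to unwind the definition of admissible weight used in the Weyl--H\"ormander calculus (as recalled in Appendix \ref{1001}) and verify, symbol by symbol, the slow variation and temperance conditions with constants independent of $\xi\in\mathbb{R}^3$. Treating $\xi$ as a frozen parameter, each of the five functions is smooth and strictly positive on $\mathbb{R}^6_{v,\eta}$, so the work lies entirely in controlling the ratios $m(X)/m(Y)$ for $X=(v,\eta)$, $Y=(v',\eta')$.

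I would dispatch the easy cases first. The symbol $g_1$ has no $\xi$-dependence at all, so its admissibility with respect to the flat metric $|dv|^2+|d\eta|^2$ is the classical computation for $1+\langle v\rangle+\langle\eta\rangle$ and the constants are trivially uniform in $\xi$. For $g_2$ and $g_3$, $\xi$ enters only as an additive, non-negative constant $\langle\xi\rangle^{1/3}$ (respectively $\langle\xi\rangle^{2/3}$); the key elementary observation is that if a positive function $m_0$ satisfies $m_0(X)/m_0(Y)\leq C$ then $(m_0(X)+c)/(m_0(Y)+c)\leq C$ for every $c\geq 0$, and similarly $(m_0(X)+c)/(m_0(Y)+c)\leq 1+(m_0(X)/m_0(Y))$, so both slow variation and temperance descend from the admissibility of $1+\langle\eta\rangle$ and $1+\langle\eta\rangle^2$ with the same constants.

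The substantive cases are $\lambda$ and $a$, which share the same rough structure $\lambda^2=\langle v\rangle^\gamma\, a$ with $a=1+|v|^2+|\eta|^2+|\xi|^2+|v\wedge\eta|^2+|v\wedge\xi|^2$. For slow variation I would differentiate $\log a$: each derivative of the terms $|v\wedge\eta|^2$ and $|v\wedge\xi|^2$ produces factors that are bounded after division by $a$, because the numerators $|\xi|\,|v\wedge\xi|$, $|v|\,|v\wedge\eta|$, etc., are dominated by $a$ itself via the elementary $2xy\leq x^2+y^2$ applied to the summands already present in $a$. The $\langle v\rangle^\gamma$ prefactor for $\lambda$ is handled by the usual slow variation of $\langle v\rangle$ (since $\gamma\in[0,1]$ and $\langle v\rangle^2\leq a$), so the relevant step sizes from the underlying metric absorb all derivatives with constants that do not see $\xi$. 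For temperance I would bound $a(v',\eta')/a(v,\eta)$ pointwise: starting from
\[
|v'\wedge\xi|^2\leq 2|v\wedge\xi|^2+2|v-v'|^2|\xi|^2
\]
and the analogous inequalities for $|v'\wedge\eta'|^2$, every new term that appears on the right is of the form (quantity controlled by $a(v,\eta)$)$\cdot$(polynomial in $|v-v'|+|\eta-\eta'|$). Absorbing $|\xi|^2$ into $a$ is crucial and is precisely what gives uniformity in $\xi$. The factor $\langle v\rangle^\gamma$ contributes an extra $(1+|v-v'|)^\gamma$ factor, which is harmless.

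The main obstacle I anticipate is the temperance bound for $\lambda$ and $a$: one needs to check that rotating $v$ (which can drastically change the alignment between $v$ and $\xi$, and hence $|v\wedge\xi|$) does not produce ratios that grow with $\xi$. The computation above closes because the bound $|(v-v')\wedge\xi|\leq|v-v'|\,|\xi|$ trades a $|\xi|^2$ factor for a polynomial in $|v-v'|$, and this $|\xi|^2$ is always available inside $a$. Once this uniformity is verified in both $v$ and $\eta$ directions, admissibility follows from the definition, and all constants depend only on $\gamma$ and the fixed metric, not on $\xi$.
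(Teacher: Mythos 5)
Your proposal is essentially the paper's argument for the hard symbols $\lambda$ and $a$: you expand $|v'\wedge\xi|$ and $|v'\wedge\eta'|$ bilinearly, trade the cross terms against the $|\xi|^2$ already present inside $a$, and use Peetre's inequality for the $\langle v\rangle^\gamma$ prefactor, which is exactly what the paper does to reach $\lambda(Y)/\lambda(Y')\leq C_\gamma(1+\Gamma(Y-Y'))^{(4+\gamma)/2}$. Your dispatch of $g_2,g_3$ via the elementary monotonicity $(m_0+c)/(m_0'+c)\leq m_0/m_0'$ (for $m_0\geq m_0'$, $c\geq 0$) is a slightly cleaner way to deduce uniformity in $\xi$ than the paper's ``the proof will be the same,'' and the side discussion of slow variation is harmless but unnecessary here, since the paper's Definition (\ref{M61}) imposes only the temperance inequality.
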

\begin{proof}
 We have to check $\lambda$ is an admissible weight. It is sufficient to verify that there exists two constants $N$ and $C$, both depending only on $\gamma$, such that for all\\ $Y=(v,\eta),\hspace{0.1cm} Y'=(v', \eta')$, we have
 $$\lambda(Y)\leq C \lambda(Y')\left(1+\Gamma(Y-Y')\right)^{N},$$
 where $\Gamma$ is the metric defined by $\Gamma=dv^{2}+d\eta^{2}$.
We have
\begin{equation}
    \frac{\lambda^{2}(v,\eta)}{\lambda^{2}(v', \eta')}=\frac{\langle v \rangle^\gamma}{\langle v' \rangle^{\gamma}} \left(\frac{1+\vert v\vert^{2}+\vert \eta\vert^{2}+\vert \xi\vert^{2}+\vert v\wedge\eta\vert^{2}+\vert v\wedge\xi\vert^{2}}{1+\vert v'\vert^{2}+\vert \eta'\vert^{2}+\vert \xi\vert^{2}+\vert v'\wedge\eta'\vert^{2}+\vert v'\wedge\xi\vert^{2}}\right).
\end{equation}
We now use Peetre’s inequality
\begin{align}\label{2000}
   {\langle y \rangle}^\tau\leq 2^{\frac{\vert\tau\vert}{2}}{\langle y' \rangle}^\tau{\langle y-y' \rangle}^{\vert\tau\vert},\hspace{0.2cm} \tau\in \mathbb{R},
\end{align}
to get 
$$\frac{\langle v \rangle^\gamma}{\langle v' \rangle^{\gamma}}\leq 2^{\gamma}{\langle v-v' \rangle}^{\gamma}.$$ 
Using (\fcolorbox{red}{ white}{\ref{2000}}), we obtain
$$\frac{1+\vert v\vert^{2}+\vert \eta\vert^{2}}{1+\vert v'\vert^{2}+\vert \eta'\vert^{2}+\vert \xi\vert^{2}+\vert v'\wedge\eta'\vert^{2}+\vert v'\wedge\xi\vert^{2}}\leq \frac{\langle v \rangle^2}{\langle v' \rangle^{2}}+\frac{\langle \eta \rangle^{2}}{\langle \eta' \rangle^{2}}\leq 4\left(\langle v-v' \rangle+\langle \eta-\eta' \rangle \right)^{2}$$ 
and
$$\frac{\vert \xi\vert^{2}}{1+\vert v'\vert^{2}+\vert \eta'\vert^{2}+\vert \xi\vert^{2}+\vert v'\wedge\eta'\vert^{2}+\vert v'\wedge\xi\vert^{2}}\leq 1\leq \left(\langle v-v' \rangle+\langle \eta-\eta' \rangle \right)^{2}.$$ 
Using the relation
$$ v\wedge\xi=(v-v')\wedge\xi+v'\wedge\xi,$$
we obtain
$$\frac{\vert v\wedge\xi\vert^{2}}{1+\vert v'\vert^{2}+\vert \eta'\vert^{2}+\vert \xi\vert^{2}+\vert v'\wedge\eta'\vert^{2}+\vert v'\wedge\xi\vert^{2}}\leq 1+\frac{\langle v-v' \rangle^2\langle \xi \rangle^{2}}{\langle \xi \rangle^{2}}\leq 2\left(\langle v-v' \rangle+\langle \eta-\eta' \rangle \right)^{2}.$$ 
Moreover using the relation
$$ v\wedge\eta=(v-v')\wedge(\eta-\eta') + (v-v')\wedge\eta'+v'\wedge(\eta-\eta')+v'\wedge\eta',$$
we compute
\begin{align*}
    &\frac{\vert v\wedge\eta\vert^{2}}{1+\vert v'\vert^{2}+\vert \eta'\vert^{2}+\vert \xi\vert^{2}+\vert v'\wedge\eta'\vert^{2}+\vert v'\wedge\xi\vert^{2}}\\
    &\leq\frac{4\vert v-v'\vert^{2}\vert \eta-\eta'\vert^{2}+4\vert v-v'\vert^{2}\vert \eta'\vert^{2}+4\vert v'\vert^{2}\vert \eta-\eta'\vert^{2}+4\vert v'\wedge\eta'\vert^{2}}{1+\vert v'\vert^{2}+\vert \eta'\vert^{2}+\vert \xi\vert^{2}+\vert v'\wedge\eta'\vert^{2}+\vert v'\wedge\xi\vert^{2}}\\
    &\leq 4\vert v-v'\vert^{2}\vert \eta-\eta'\vert^{2}+4\vert v-v'\vert^{2}+4\vert \eta-\eta'\vert^{2}+4\\
    &\leq 10\left(\langle v-v' \rangle+\langle \eta-\eta' \rangle \right)^{4}.
\end{align*}
Combining the above inequalities, we get 
\begin{equation*}
    \frac{\lambda(Y)}{\lambda(Y')}\leq C_{\gamma}\left(1+\Gamma(Y-Y')\right)^{\frac{4+\gamma}2},
\end{equation*}
so $\lambda$ is an admissible weight. The proof will be the same for $a, g_{1}, g_{2}$ and $ g_{3}.$
\end{proof}
\begin{lem}\label{N26}
For $m\in \mathbb{R}$, 
 $$\lambda^{m}\in S(\lambda^{m},\Gamma),$$
uniformly with respect to the parameter $\xi$ \hspace{0.1cm}\text{in}\hspace{0.1cm}  $\mathbb{R}^{3}$, where $S(\lambda^{m},\Gamma)$ is defined in Definition \fcolorbox{red}{ white}{\ref{S2}} in Appendix \fcolorbox{red}{ white}{\ref{S3}}.
\end{lem}
\begin{proof}
We can rewrite  $\lambda$ as follows 
 $$\lambda(v,\eta)=\langle v \rangle^{\frac{\gamma}{2}} {\mathbf{a}^{\frac{1}{2}}(v,\eta)}.$$
To prove the wanted result, we use induction on $\vert\alpha+\beta\vert$ to prove that for any $k\in\mathbb{R}$ and any $\vert\alpha+\beta\vert\geq 0$,
 \begin{align}\label{N16}
 \vert\partial_{v}^{\alpha}\partial_{\eta}^{\beta}{\mathbf{a}^{k}(v, \eta)} \vert\lesssim {\mathbf{a}^{k}(v, \eta)},
  \end{align}
which obviously holds for  $\vert\alpha+\beta\vert=0$. Now suppose $\vert\alpha+\beta\vert\geq 1$, then we have either $\vert\alpha\vert\geq 1$ or $\vert\beta\vert\geq 1$, and suppose $\vert\beta\vert\geq 1$ without loss of generality. So we can write $\partial_{\eta}^{\beta}=\partial_{\eta}^{\Tilde{\beta}}\partial_{\eta_{j}}$ with $\vert\Tilde{\beta}\vert=\vert\beta\vert-1$ and thus
 \begin{align*}
     \partial_{v}^{\alpha}\partial_{\eta}^{\beta}\mathbf{a}^{k}(v, \eta)= \partial_{v}^{\alpha}\partial_{\eta}^{\Tilde{\beta}}\left[k \mathbf{a}^{k-1}\big(2\eta_{j}+2(v\wedge\eta)\partial_{\eta_{j}}(v\wedge\eta)\big)\right],
 \end{align*}
 which along with Leibniz’ formula and the induction assumption yields
  \begin{align*}
  \vert\partial_{v}^{\alpha}\partial_{\eta}^{\beta}\mathbf{a}^{k}(v, \eta) \vert&\lesssim \mathbf{a}^{k-1}\big(1+\vert \eta\vert+\vert v\vert\vert \eta\vert+\vert v\vert\vert v\wedge\eta\vert+\vert v\vert^{2}\big)\\
  &\lesssim {\mathbf{a}^{k}(v, \eta)},
  \end{align*}
  on the other hand, we have for all $k\in\mathbb{R}$,
  \begin{align}\label{N17}
  \langle v \rangle^k \in S({\langle v \rangle}^{k},\Gamma).
    \end{align}
 Finally, using (\fcolorbox{red}{ white}{\ref{N16}}),  (\fcolorbox{red}{ white}{\ref{N17}}) and Leibniz’ formula we conclude  for all $\vert\alpha+\beta\vert\geq 0$,
 \begin{align}
 \vert\partial_{v}^{\alpha}\partial_{\eta}^{\beta}\lambda^{m}(v, \eta) \vert\leq C_{\alpha,\beta}\hspace{0.05cm}\lambda^{m}(v, \eta).
  \end{align}
\end{proof}
\begin{lem}\label{N27}
For $m\in \mathbb{R}$, 
 $$\vert\xi\cdot\partial_{\eta}\lambda^{m}\vert \lesssim \lambda^{m},$$
uniformly with respect to the parameter $\xi$ \text{in}  $\mathbb{R}^{3}.$
\end{lem}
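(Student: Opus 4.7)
The plan is to carry out an explicit derivative computation on $\lambda^m$, exploiting the fact that $\lambda$ factors as $\langle v\rangle^{\gamma/2}a^{1/2}$ so that $\eta$-derivatives only see the factor $a$. Since $a(v,\eta) = 1 + |v|^2 + |\eta|^2 + |\xi|^2 + |v\wedge\eta|^2 + |v\wedge\xi|^2$, only the terms $|\eta|^2$ and $|v\wedge\eta|^2$ contribute to $\partial_\eta$. A direct computation using $(v\wedge\eta)_i = \epsilon_{ijk}v_j\eta_k$ gives the clean identity
\begin{equation*}
\xi\cdot\partial_\eta(v\wedge\eta) = v\wedge\xi,
\end{equation*}
from which
\begin{equation*}
\xi\cdot\partial_\eta a = 2(\xi\cdot\eta) + 2(v\wedge\eta)\cdot(v\wedge\xi).
\end{equation*}

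Next I would apply the chain rule to write
\begin{equation*}
\xi\cdot\partial_\eta\lambda^m = m\,\langle v\rangle^{m\gamma/2}\,a^{m/2-1}\bigl[(\xi\cdot\eta)+(v\wedge\eta)\cdot(v\wedge\xi)\bigr],
\end{equation*}
and then bound the bracket by $a$ itself using two elementary Cauchy--Schwarz estimates:
\begin{equation*}
|\xi\cdot\eta|\le \tfrac{1}{2}(|\xi|^2+|\eta|^2),\qquad |(v\wedge\eta)\cdot(v\wedge\xi)|\le \tfrac{1}{2}(|v\wedge\eta|^2+|v\wedge\xi|^2),
\end{equation*}
both of which are dominated by $a(v,\eta)$ by the very definition of $a$. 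Combining these bounds gives $|\xi\cdot\partial_\eta a|\le 2a$, hence
\begin{equation*}
|\xi\cdot\partial_\eta\lambda^m|\le |m|\,\langle v\rangle^{m\gamma/2}\,a^{m/2} = |m|\,\lambda^m,
\end{equation*}
which is the required estimate, and the constant $|m|$ is clearly independent of $\xi\in\mathbb{R}^3$.

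There is no real obstacle here: the result is essentially a one-line consequence of the algebraic identity $\xi\cdot\partial_\eta(v\wedge\eta)=v\wedge\xi$ combined with the observation that each quadratic quantity appearing in $\xi\cdot\partial_\eta a$ has a twin already built into the weight $a$ (the pairing $(\xi\cdot\eta)$ sits inside $|\xi|^2+|\eta|^2$, and the pairing $(v\wedge\eta)\cdot(v\wedge\xi)$ sits inside $|v\wedge\eta|^2+|v\wedge\xi|^2$). The only point requiring mild care is to keep the $\langle v\rangle^{m\gamma/2}$ factor out of the way, which is why I isolate the decomposition $\lambda^m = \langle v\rangle^{m\gamma/2}a^{m/2}$ first; this also makes clear that Lemma~\ref{N26} is not strictly needed for this particular estimate, since the desired bound does not involve any $v$- or $\eta$-derivative losses.
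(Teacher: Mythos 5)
Your proof is correct and follows essentially the same route as the paper: both exploit the factorization $\lambda^m=\langle v\rangle^{m\gamma/2}a^{m/2}$, compute $\xi\cdot\partial_\eta a=2\xi\cdot\eta+2(v\wedge\eta)\cdot(v\wedge\xi)$, and bound this by $a$ via Cauchy--Schwarz since each pairing has its twin quadratic already present in $a$. The only cosmetic difference is that the paper records the computation for $m=2$ and then asserts the general case, whereas you apply the chain rule to $a^{m/2}$ directly.
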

\begin{proof}
Lemma \fcolorbox{red}{ white}{\ref{N27}} follows directly from (\fcolorbox{red}{ white}{\ref{N20}}) and the fact that
$$\vert\xi\cdot\partial_{\eta}\lambda^{2}\vert=\lambda{\langle v \rangle}^{\frac{\gamma}{2}} \mathbf{a}^{-1/2}\vert\xi\cdot\eta+v\wedge\xi\cdot v\wedge\eta\vert\lesssim \lambda^{2},$$
we can conclude that for all $m\in \mathbb{R}$, 
 $$\vert\xi\cdot\partial_{\eta}\lambda^{m}\vert \lesssim \lambda^{m},$$
 uniformly with respect to the parameter $\xi$ \text{in} $\mathbb{R}^{3}.$
 \end{proof}
\begin{lem}
We have
$$g_{i}\in S(g_{i},\Gamma),\hspace{0.1cm} \text{for}\hspace{0.2cm}i=1,\ldots,3, $$
uniformly with respect to the parameter $\xi$ \text{in}  $\mathbb{R}^{3}.$
\end{lem}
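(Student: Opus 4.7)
The goal is to show, for each $i\in\{1,2,3\}$ and all multi-indices $(\alpha,\beta)\in\mathbb{N}^3\times\mathbb{N}^3$, the existence of a constant $C_{\alpha,\beta}$ independent of $\xi\in\mathbb{R}^3$ such that
$$\bigl|\partial_v^\alpha \partial_\eta^\beta g_i(v,\eta)\bigr| \leq C_{\alpha,\beta}\, g_i(v,\eta)\qquad\text{for all } (v,\eta)\in\mathbb{R}^6.$$
Two preliminary facts make everything easy: first, $g_i\geq 1$ for each $i$, so every uniformly bounded quantity is automatically $\lesssim g_i$; second, $\langle v\rangle\leq g_1$, $\langle\eta\rangle\leq g_i$ for $i=1,2$, and $\langle\eta\rangle^2\leq g_3$. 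No other input is required beyond the classical estimate $|\partial^\sigma \langle\cdot\rangle|\lesssim \langle\cdot\rangle^{1-|\sigma|}$.

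For $g_1=1+\langle v\rangle+\langle\eta\rangle$, pure derivatives in $v$ or $\eta$ of order $\geq 1$ are controlled by $\langle v\rangle^{1-|\alpha|}$ or $\langle\eta\rangle^{1-|\beta|}$, both of which are bounded by a constant and hence by $g_1$; mixed derivatives with $|\alpha|,|\beta|\geq 1$ vanish since the symbol is separated. The argument for $g_2=1+\langle\xi\rangle^{1/3}+\langle\eta\rangle$ is identical: as $\xi$ is a parameter, the term $\langle\xi\rangle^{1/3}$ is constant with respect to $(v,\eta)$, so only $\partial_\eta^\beta\langle\eta\rangle$ contributes, giving uniformly bounded derivatives for $|\beta|\geq 1$ that are $\lesssim g_2$.

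For $g_3=1+\langle\xi\rangle^{2/3}+\langle\eta\rangle^2$ one proceeds by a direct computation: $\partial_{\eta_j} g_3=2\eta_j$, $\partial_{\eta_j}\partial_{\eta_k} g_3=2\delta_{jk}$, and every $v$-derivative as well as every derivative of order $\geq 3$ vanishes. The first-order bound $|\partial_{\eta_j}g_3|=2|\eta_j|\leq 2\langle\eta\rangle$ is absorbed by $g_3$ via the chain $\langle\eta\rangle\leq\langle\eta\rangle^2\leq g_3$ (which holds because $\langle\eta\rangle\geq 1$); the second-order terms are trivially bounded by a constant, hence by $g_3$. Uniformity in $\xi$ is automatic since $\xi$ enters each $g_i$ only as an additive constant and therefore never appears in any derivative. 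There is no substantive obstacle here; the lemma is a mechanical verification, with the only small point being the observation $\langle\eta\rangle\leq\langle\eta\rangle^2\leq g_3$ that handles the first-order derivative of $g_3$.
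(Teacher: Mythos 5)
Your proof is correct and takes essentially the same route as the paper: both reduce the claim to the standard symbol estimates $\langle v\rangle^m\in S(\langle v\rangle^m,\Gamma)$ and $\langle\eta\rangle^m\in S(\langle\eta\rangle^m,\Gamma)$, together with the trivial observation that $\xi$ enters each $g_i$ only as an additive constant. You simply spell out the term-by-term verification (including the small point $\langle\eta\rangle\leq\langle\eta\rangle^2\leq g_3$) that the paper's one-line proof leaves implicit.
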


\begin{proof}
Using the fact that for all $m\in \mathbb{R}$, ${\langle v \rangle}^{m}\in S({\langle v \rangle}^{m},\Gamma)$, ${\langle \eta \rangle}^{m}\in S({\langle \eta \rangle}^{m}, \Gamma)$ we obtain
$$\forall \alpha,\beta \in \mathbb{N}^{3},\hspace{0.2cm}\vert \partial_{v}^{\alpha}\partial_{\eta}^{\beta}g_{i}(v, \eta)\vert \leq C_{\alpha,\beta}\hspace{0.05cm}g_{i},$$
uniformly with respect to the parameter $\xi$ \text{in}  $\mathbb{R}^{3}.$
\end{proof}
\begin{lem}\label{M80}
For all $\varepsilon > 0$,
\begin{tabenum}
[{i)}]\tabenumitem $\partial_{\eta}\lambda\in S(\varepsilon\lambda +\varepsilon^{-1}{\langle v \rangle}^{\frac{\gamma}{2}+1},\Gamma),$ \tabenumitem $\partial_{\eta}g_{1}\in S(\varepsilon g_{1} +\varepsilon^{-1},\Gamma),$\\\tabenumitem $\partial_{\eta}g_{2}\in S(\varepsilon g_{2} +\varepsilon^{-1}{\langle v \rangle}^{\frac{\gamma}{6}+1},\Gamma),$\tabenumitem $\partial_{\eta}g_{3}\in S(\varepsilon g_{3} +\varepsilon^{-1}{\langle v \rangle}^{\frac{\gamma}{3}+1},\Gamma);$\end{tabenum} 
uniformly with respect to the parameter $\xi$ \hspace{0.1cm}\text{in}\hspace{0.1cm}  $\mathbb{R}^{3}.$
\end{lem}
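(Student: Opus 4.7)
The plan is to prove all four estimates by the same mechanism: compute the relevant $\eta$-derivative explicitly, then split the resulting expression via Young's inequality $ab \leq \tfrac{\varepsilon}{2}a^{2}+\tfrac{1}{2\varepsilon}b^{2}$ so that one piece is absorbed into the ``large'' weight of the target symbol class and the other into the remainder. For (ii)--(iv) the weights $g_{i}$ are essentially one-dimensional in $\eta$, so the estimates are almost immediate; for (i) the proof relies on the algebraic relations built into $\lambda$, most crucially on the lower bound $\lambda^{2} \geq \langle v\rangle^{\gamma}(1+|v|^{2}) = \langle v\rangle^{\gamma+2}$, which was already exploited implicitly in Lemma \ref{N26}.

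For (i), I would first compute
\begin{equation*}
\partial_{\eta_{j}}\lambda^{2} = 2\langle v\rangle^{\gamma}\bigl(\eta_{j} + (v\wedge\eta)\cdot(v\wedge e_{j})\bigr),
\end{equation*}
so that $|\partial_{\eta_{j}}\lambda^{2}| \lesssim \langle v\rangle^{\gamma}|\eta| + \langle v\rangle^{\gamma}|v|\,|v\wedge\eta|$. Applying Young's inequality to each term gives
\begin{equation*}
\langle v\rangle^{\gamma}|\eta| \leq \tfrac{\varepsilon}{2}\langle v\rangle^{\gamma}|\eta|^{2} + \tfrac{1}{2\varepsilon}\langle v\rangle^{\gamma} \leq \tfrac{\varepsilon}{2}\lambda^{2} + \tfrac{1}{2\varepsilon}\langle v\rangle^{\gamma+2},
\end{equation*}
and analogously $\langle v\rangle^{\gamma}|v|\,|v\wedge\eta|\leq \tfrac{\varepsilon}{2}\lambda^{2}+\tfrac{1}{2\varepsilon}\langle v\rangle^{\gamma+2}$ after estimating $|v|^{2}\leq \langle v\rangle^{2}$. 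Dividing the identity $\partial_{\eta_{j}}\lambda = \partial_{\eta_{j}}\lambda^{2}/(2\lambda)$ by $\lambda$ and using $\langle v\rangle^{\gamma+2}/\lambda \leq \langle v\rangle^{\gamma/2+1}$ then yields the pointwise bound $|\partial_{\eta_{j}}\lambda| \lesssim \varepsilon\lambda + \varepsilon^{-1}\langle v\rangle^{\gamma/2+1}$. To upgrade this to a symbol-class statement, I would expand $\partial_{v}^{\alpha}\partial_{\eta}^{\beta}(\partial_{\eta_{j}}\lambda) = \partial_{v}^{\alpha}\partial_{\eta}^{\beta}\bigl(\tfrac{1}{2\lambda}\,\partial_{\eta_{j}}\lambda^{2}\bigr)$ by Leibniz: the factors coming from $\lambda^{-1}$ are controlled by Lemma \ref{N26} (applied with $m=-1$), while the derivatives of $\partial_{\eta_{j}}\lambda^{2}$ are explicit polynomials in $(v,\eta,\xi)$ with coefficients in $\langle v\rangle^{\gamma-2k}$, each amenable to the same Young-type splitting.

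Parts (ii)--(iv) are substantially lighter because $g_{1},g_{2},g_{3}$ depend on $\eta$ only through $\langle\eta\rangle$ or $\langle\eta\rangle^{2}$. For (ii), $|\partial_{\eta}g_{1}| = |\eta|/\langle\eta\rangle \leq 1$, and since $g_{1}\geq 1$ one has $\varepsilon g_{1}+\varepsilon^{-1}\geq 2$, which absorbs the bound; higher $\eta$-derivatives of $\langle\eta\rangle$ are uniformly bounded and $g_{i}$ have no $v$-dependence in the $\eta$-direction, so the symbol-class statement is immediate. Part (iii) is identical, the factor $\langle v\rangle^{\gamma/6+1}\geq 1$ playing no role beyond bookkeeping. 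For (iv), $|\partial_{\eta}g_{3}|\leq 2\langle\eta\rangle$, and Young's inequality gives $\langle\eta\rangle \leq \tfrac{\varepsilon}{2}\langle\eta\rangle^{2}+\tfrac{1}{2\varepsilon}\leq \tfrac{\varepsilon}{2}g_{3}+\tfrac{1}{2\varepsilon}\langle v\rangle^{\gamma/3+1}$, and higher derivatives of $\langle\eta\rangle^{2}$ are again constants. The only step I expect to require genuine care is the Leibniz expansion in (i) for multi-indices with $|\alpha|\geq 1$: one must rewrite the various cross-terms coming from $\partial_{v}\langle v\rangle^{\gamma}$, $\partial_{v}(v\wedge\eta)$, and $\partial_{v}(\lambda^{-1})$ so that the quadratic pieces are absorbed into $\varepsilon\lambda$ and the remainders into $\varepsilon^{-1}\langle v\rangle^{\gamma/2+1}$. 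This is routine given Lemma \ref{N26}, but it is the only point where the argument is genuinely non-trivial.
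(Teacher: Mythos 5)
Your proposal is correct and follows essentially the same route as the paper: in both cases the key step is to bound $|\partial_\eta\lambda|$ pointwise by $\langle v\rangle^{\gamma/4+1/2}\lambda^{1/2}$ (you reach the same bound via $\partial_\eta\lambda^2/(2\lambda)$ together with $\lambda\geq\langle v\rangle^{\gamma/2+1}$), apply Young's inequality to split into $\varepsilon\lambda+\varepsilon^{-1}\langle v\rangle^{\gamma/2+1}$, and handle higher derivatives by combining Lemma~\ref{N26} with the Leibniz/induction argument. Your simplification for (ii)--(iii), namely observing $|\partial_\eta g_i|\leq 1$ and using $\varepsilon g_i+\varepsilon^{-1}\cdot(\text{remainder})\geq 2$, is a minor shortcut but not a genuinely different argument, and it reaches the same conclusion.
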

\begin{proof}
We have
\begin{align*}
    \vert \partial_{\eta}g_{1}(v, \eta) \vert&=\vert \partial_{\eta}\big(1+\langle v \rangle+\langle \eta \rangle\big) \vert
  \lesssim \vert\eta \vert \langle \eta \rangle^{-1}
     \lesssim g_{1}^{1/2},\\
    \vert \partial_{\eta}g_{2}(v, \eta) \vert&=\vert \partial_{\eta}\big(1+{\langle \xi \rangle}^{1/3}+\langle \eta \rangle \big) \vert
   \lesssim \vert\eta \vert \langle \eta \rangle^{-1}
    \lesssim g_{2}^{1/2}{\langle v \rangle}^{\frac{\gamma}{12}+\frac{1}{2}},\\
   \vert \partial_{\eta}g_{3}(v, \eta) \vert&=\vert \partial_{\eta}\big(1+{\langle \xi \rangle}^{2/3}+{\langle \eta \rangle}^{2} \big) \vert \lesssim
     \langle \eta \rangle
    \lesssim g_{3}^{1/2}{\langle v \rangle}^{\frac{\gamma}{6}+\frac{1}{2}}.
  \end{align*}
  Using Young's inequality we get for all $\varepsilon > 0$, 
   \begin{align*}
    \vert \partial_{\eta}g_{1}(v, \eta) \vert\lesssim  \varepsilon g_{1} +\varepsilon^{-1},\hspace{0.1cm} 
    \vert \partial_{\eta}g_{2}(v, \eta) \vert\lesssim  \varepsilon g_{2} +\varepsilon^{-1}{\langle v \rangle}^{\frac{\gamma}{6}+1},\hspace{0.1cm} 
    \vert \partial_{\eta}g_{3}(v, \eta) \vert\lesssim  \varepsilon g_{3} +\varepsilon^{-1}{\langle v \rangle}^{\frac{\gamma}{3}+1},
    \end{align*}
    then arguing as above we can use induction on $ \vert \alpha \vert +\vert \beta \vert$ to obtain, for $ \vert \alpha \vert +\vert \beta \vert \geq 0$,
    $$\vert \partial_{v}^{\alpha}\partial_{\eta}^{\beta}\partial_{\eta}g_{1}(v, \eta)\vert \lesssim \varepsilon  g_{1} +\varepsilon^{-1},\hspace{0.4cm}\vert \partial_{v}^{\alpha}\partial_{\eta}^{\beta}\partial_{\eta}g_{2}(v, \eta)\vert \lesssim \varepsilon  g_{2} +\varepsilon^{-1}{\langle v \rangle}^{\frac{\gamma}{6}+1},$$
    $$\text{and}\hspace{0.2cm}\vert \partial_{v}^{\alpha}\partial_{\eta}^{\beta}\partial_{\eta}g_{3}(v, \eta)\vert \lesssim \varepsilon  g_{3} +\varepsilon^{-1}{\langle v \rangle}^{\frac{\gamma}{3}+1}.$$
  Regarding the symbol $\lambda$,  we have
    \begin{align*}\label{M75}
    \vert \partial_{\eta}\lambda(v, \eta) \vert={\langle v \rangle}^{\frac{\gamma}{2}}\mathbf{a}^{-1/2} \big\vert\eta+(v\wedge\eta)\partial_{\eta}(v\wedge\eta)\big\vert
    \lesssim {\langle v \rangle}^{\frac{\gamma}{2}}\mathbf{a}^{-1/2} \big(\vert \eta\vert+\vert v\vert\vert v\wedge\eta\vert\big)
  \lesssim  {\langle v \rangle}^{\frac{\gamma}{4}+\frac{1}{2}}\lambda^{1/2}.
  \end{align*}
     Using Young’s inequality we get for all $\varepsilon > 0$,
   \begin{align*}
    \vert \partial_{\eta}\lambda(v, \eta) \vert\lesssim  \varepsilon \lambda +\varepsilon^{-1} \langle v \rangle^{\frac{\gamma}{2}+1},
     \end{align*}
     then arguing as above we can use induction on $ \vert \alpha \vert +\vert \beta \vert$ to obtain, for $ \vert \alpha \vert +\vert \beta \vert \geq 0$,
    $$\vert \partial_{v}^{\alpha}\partial_{\eta}^{\beta}\partial_{\eta}\lambda(v, \eta) \vert\lesssim  \varepsilon \lambda +\varepsilon^{-1} \langle v \rangle^{\frac{\gamma}{2}+1}.$$
    \end{proof}
    From the above, we have shown that the symbols $ g_{1},\hspace{0.1cm}  g_{2},\hspace{0.1cm}   g_{3}$ and $\lambda$ verify the hypotheses of Theorem \fcolorbox{red}{ white}{\ref{M36}}, so we can apply the results of Theorem \fcolorbox{red}{ white}{\ref{M36}}  to the following operators
    \begin{align}
      g_{1,K}^{w}&=\big (g_{1}+K\big )^{w},\\ 
         \label{N70}  
      g_{2,K}^{w}&=\big (g_{2}+K{\langle v \rangle}^{\frac{\gamma}{6}+1}\big )^{w},\\
      \label{N91}
      g_{3,K}^{w}&=\big (g_{3}+K{\langle v \rangle}^{\frac{\gamma}{3}+1}\big )^{w},\\
      \label{N52}   
       \lambda_{K}^{w}&=\big (\lambda+K\langle v \rangle^{\frac{\gamma}{2}+1}\big )^{w},
    \end{align}
    where $K$ the fixed constant given by Theorem  \fcolorbox{red}{ white}{\ref{M36}}. In  Section \ref{2001}, we will apply the results of Theorem \fcolorbox {red} {white} {\ref{M36}} on the operators above.\\
    
 Let $\psi$ be a $  C_{0}^{\infty}(\mathbb{R},[{0,1}])$ function such that
    \begin{align}\label{N21}
          \psi= 1\hspace{0.1cm} \text{on}\hspace{0.2cm}[{-1,1}],
        \hspace{0.2cm}\text{supp}\hspace{0.1cm} \psi\subset [{-2,2}].
         \end{align}
         \begin{defi}
     Define the real-valued symbol
        \begin{align}\label{N36}
        g=-\frac{ B(v)\xi \cdot B(v)\eta }{ \lambda^{4/3}} \psi\bigg(\frac{{\vert B(v)\eta \vert}^{2}+F(v)}{\lambda^{2/3}}\bigg),
        \end{align}
       where $\lambda$ is the symbol defined in (\fcolorbox{red}{ white}{\ref{N20}}).
          \end{defi}
    
\begin{lem}[\textbf{Lemma 3.3 in \cite{herau_anisotropic_2011}}]\label{N25}
We have
$$ \psi\bigg(\frac{{\vert B(v)\eta \vert}^{2}+F(v)}{\lambda^{2/3}}\bigg) \in S(1,\Gamma),$$
uniformly with respect to the parameter   $\xi$ \text{in} $\mathbb{R}^{3}$.
\end{lem}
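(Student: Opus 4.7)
The plan is to apply the Faà di Bruno chain rule to the composition $\psi(h)$, where
\begin{equation*}
h(v,\eta) = \frac{|B(v)\eta|^{2} + F(v)}{\lambda^{2/3}},
\end{equation*}
and thereby reduce the problem to symbol estimates on $h$. For any multi-indices $\alpha,\beta$, the derivative $\partial_{v}^{\alpha}\partial_{\eta}^{\beta}\psi(h)$ is a finite sum of terms of the form $\psi^{(k)}(h)\,\prod_{j}\partial_{v}^{\alpha_{j}}\partial_{\eta}^{\beta_{j}}h$. Each factor $\psi^{(k)}(h)$ is uniformly bounded and, by the support condition on $\psi$, the whole expression vanishes outside the set $\Omega = \{(v,\eta) : h(v,\eta)\leq 2\}$. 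It therefore suffices to prove that every derivative $\partial_{v}^{\alpha'}\partial_{\eta}^{\beta'}h$ with $|\alpha'|+|\beta'|\geq 1$ is bounded on $\Omega$, uniformly in $\xi\in\mathbb{R}^{3}$.

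\textbf{Step 1 (a priori bounds on $\Omega$).} On $\Omega$ one has $F(v)\leq 2\lambda^{2/3}$ and $|B(v)\eta|^{2}\leq 2\lambda^{2/3}$. Combining these with the lower bound $F(v)\gtrsim\langle v\rangle^{\gamma+2}$ proved earlier, and with Lemma~\ref{N200}(iii), one obtains
\begin{equation*}
\langle v\rangle^{\gamma+2}\;\lesssim\;\lambda^{2/3}, \qquad \langle v\rangle^{\gamma}\bigl(|\eta|^{2}+|v\wedge\eta|^{2}\bigr)\;\lesssim\;\lambda^{2/3} \qquad \text{on } \Omega.
\end{equation*}

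\textbf{Step 2 (derivative estimates on $h$).} Write $h=g\,\lambda^{-2/3}$ with $g=|B(v)\eta|^{2}+F(v)$. Lemma~\ref{N26} gives $\lambda^{-2/3}\in S(\lambda^{-2/3},\Gamma)$; Lemmas~\ref{N200}(ii) and \ref{N80} provide the pointwise bounds $|\partial_{v}^{\alpha}b_{ij}|\lesssim\langle v\rangle^{\gamma/2+1-|\alpha|}$ and $|\partial_{v}^{\alpha}F|\lesssim\langle v\rangle^{\gamma+2-|\alpha|}$. Since $|B(v)\eta|^{2}$ is quadratic in $\eta$, only $\partial_{\eta}^{\beta'}$ with $|\beta'|\leq 2$ contribute non-trivially to $g$. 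Applying Leibniz's formula, $\partial_{v}^{\alpha'}\partial_{\eta}^{\beta'}h$ is then a finite sum of monomials $\langle v\rangle^{a}|\eta|^{b}\,\lambda^{-2/3-c}$ with $a,b,c\geq 0$, and the a priori bounds of Step~1 guarantee that each such monomial is uniformly bounded on $\Omega$.

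\textbf{Step 3 (conclusion).} Combining Steps~1--2 with the Faà di Bruno expansion gives $|\partial_{v}^{\alpha}\partial_{\eta}^{\beta}\psi(h)|\leq C_{\alpha\beta}$ on $\mathbb{R}^{6}$, with constants independent of $\xi$. This is exactly the statement $\psi(h)\in S(1,\Gamma)$, uniformly in $\xi$. The \emph{main obstacle} is the bookkeeping in Step~2: one must carefully track the exponents of $\langle v\rangle$ and $|\eta|$ produced by the iterated Leibniz expansion of $h$, and check that every resulting monomial is absorbed by the two a priori inequalities of Step~1. The combinatorics becomes somewhat delicate at high orders of mixed $v,\eta$ differentiation, but no new analytic idea is required beyond the bounds already available.
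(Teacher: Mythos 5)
The paper does not actually reprove this result but cites Lemma~3.3 of \cite{herau_anisotropic_2011}; the same mechanism is however spelled out just below in the proof of Lemma~\ref{P5}. Your overall plan (Faà di Bruno plus the support restriction, reducing to boundedness of the derivatives of $h=\big(|B(v)\eta|^2+F(v)\big)\lambda^{-2/3}$ on $\Omega=\{h\leq 2\}$) is sound, but Step~2 contains a genuine gap. The claim that the Leibniz expansion yields only monomials $\langle v\rangle^a|\eta|^b\lambda^{-2/3-c}$, each controlled by the two a~priori bounds of Step~1, is false. Concretely, the term $\partial_{v_k}\big(|B(v)\eta|^2\big)\lambda^{-2/3}$, bounded naively via $|b_{ij}|\lesssim\langle v\rangle^{\gamma/2+1}$ and $|\partial_{v_k}b_{ij}|\lesssim\langle v\rangle^{\gamma/2}$, produces a monomial of size $\langle v\rangle^{\gamma+1}|\eta|^2\lambda^{-2/3}$; on $\Omega$ the constraints $\langle v\rangle^{\gamma+2}\lesssim\lambda^{2/3}$ and $\langle v\rangle^{\gamma}|\eta|^2\lesssim\lambda^{2/3}$ only give $\langle v\rangle^{\gamma+1}|\eta|^2\lambda^{-2/3}\lesssim\langle v\rangle$, which is not bounded, and no interpolation between the two constraints supplies the missing factor $\langle v\rangle^{-1}$. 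The ingredient you are missing is the lower bound from Lemma~\ref{N200}(iii), namely $\langle v\rangle^{\gamma/2}|\eta|\lesssim|B(v)\eta|$. With it one estimates every derivative $\partial^{\alpha}_v\big(B(v)\eta\big)$ with $|\alpha|\geq 1$ by $\langle v\rangle^{\gamma/2}|\eta|\lesssim|B(v)\eta|\lesssim\lambda^{1/3}$ on the support of $\psi$ (exactly the chain recorded in~(\ref{N24})), which gives $\big|\partial^{\alpha}_v|B(v)\eta|^2\big|\lesssim|B(v)\eta|^2\lesssim\lambda^{2/3}$ there, and similarly for the $\eta$ and mixed derivatives. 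In short, one must keep $|B(v)\eta|$ as a single quantity and compare its derivatives back to itself, rather than factor it into powers of $\langle v\rangle$ and $|\eta|$.
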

\begin{lem}\label{P5}
The symbol $g$  belongs to the class
  $S(1,\Gamma)$ uniformly with respect to the parameter   $\xi$ \text{in} $\mathbb{R}^{3}$.
\end{lem}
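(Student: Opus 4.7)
I would write $g=h\,\tilde\psi$, where
\[h(v,\eta)=-\frac{B(v)\xi\cdot B(v)\eta}{\lambda^{4/3}},\qquad \tilde\psi(v,\eta)=\psi\!\left(\frac{|B(v)\eta|^{2}+F(v)}{\lambda^{2/3}}\right),\]
and invoke Lemma \ref{N25} to get $\tilde\psi\in S(1,\Gamma)$ uniformly in $\xi$. The factor $h$ is \emph{not} itself in $S(1,\Gamma)$, but the cutoff enforces support estimates that compensate for it. The plan is to first extract these support estimates, then prove $|g|\lesssim 1$, and finally handle derivatives via Leibniz, reducing every term to the already-known symbol-class memberships of $\lambda^{m}$ (Lemma \ref{N26}) and $\tilde\psi$ (Lemma \ref{N25}), together with the pointwise bounds on $B$ of Lemma \ref{N200} and the lower bound $F(v)\gtrsim\langle v\rangle^{\gamma+2}$ established before Lemma \ref{N80}.

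On $\operatorname{supp}\tilde\psi$ one has $|B(v)\eta|^{2}+F(v)\leq 2\lambda^{2/3}$. Combined with $|B(v)\eta|^{2}\gtrsim\langle v\rangle^{\gamma}|\eta|^{2}$ from Lemma \ref{N200}-(iii) and $F(v)\gtrsim\langle v\rangle^{\gamma+2}$, this yields
\[\langle v\rangle^{\gamma/2}|\eta|\lesssim\lambda^{1/3},\qquad \langle v\rangle^{\gamma/2+1}\lesssim\lambda^{1/3}\qquad\text{on }\operatorname{supp}\tilde\psi.\]
Everywhere one has $|B(v)\xi|^{2}\lesssim\langle v\rangle^{\gamma}(|\xi|^{2}+|v\wedge\xi|^{2})\lesssim\lambda^{2}$, so $|B(v)\xi|\lesssim\lambda$. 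Hence $|B(v)\xi\cdot B(v)\eta|\lesssim\lambda\cdot\lambda^{1/3}=\lambda^{4/3}$ on $\operatorname{supp}\tilde\psi$, giving the pointwise bound $|g|\lesssim 1$.

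For $|\partial_{v}^{\alpha}\partial_{\eta}^{\beta}g|\lesssim 1$ I would apply Leibniz to get a finite sum of $(\partial^{\alpha',\beta'}h)(\partial^{\alpha-\alpha',\beta-\beta'}\tilde\psi)$. The $\tilde\psi$-factors are bounded by Lemma \ref{N25} and supported inside $\{|B(v)\eta|^{2}+F(v)\leq 2\lambda^{2/3}\}$, so the support bounds still apply. A further Leibniz expansion of $\partial^{\alpha',\beta'}h$ separates derivatives of $\lambda^{-4/3}$, each $\lesssim\lambda^{-4/3}$ by Lemma \ref{N26}, from derivatives of $B(v)\xi\cdot B(v)\eta$. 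Since $B(v)\eta$ is linear in $\eta$, at most one $\eta$-derivative acts nontrivially, giving $\partial_{\eta_{k}}(B(v)\xi\cdot B(v)\eta)=B(v)\xi\cdot B(v)e_{k}$. Each remaining $v$-derivative either hits $B(v)\xi$, producing
\[|\partial_{v}^{\mu}(B(v)\xi)|\lesssim\langle v\rangle^{\gamma/2+1-|\mu|}|\xi|\lesssim\lambda\]
(the refined bound $|B(v)\xi|\lesssim\lambda$ being used when $|\mu|=0$, and $\langle v\rangle^{\gamma/2}|\xi|\lesssim\lambda$ when $|\mu|\geq 1$), or hits $B(v)\eta$ (respectively $B(v)e_{k}$), producing
\[|\partial_{v}^{\nu}(B(v)\eta)|\lesssim\langle v\rangle^{\gamma/2+1-|\nu|}|\eta|\lesssim\lambda^{1/3}\qquad\text{on }\operatorname{supp}\tilde\psi.\]
Each Leibniz term is therefore bounded by $\lambda\cdot\lambda^{1/3}\cdot\lambda^{-4/3}=O(1)$.

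The main obstacle is the combinatorial bookkeeping: one must check that in every term of the Leibniz expansion exactly one factor of $\lambda$ (from $B(v)\xi$) and one factor of $\lambda^{1/3}$ (from $B(v)\eta$, from $B(v)e_{k}$, or from the velocity weight $\langle v\rangle^{\gamma/2+1}$ controlled by the support localization) appear, so that the denominator $\lambda^{4/3}$ is cancelled uniformly in $\alpha$, $\beta$, and in the parameter $\xi$.
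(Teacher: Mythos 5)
Your proposal is correct and follows essentially the same route as the paper: bound $|B(v)\xi|\lesssim\lambda$ everywhere, use the support of $\psi$ to get $|B(v)\eta|\lesssim\lambda^{1/3}$, apply Cauchy--Schwarz to bound $|g|\lesssim 1$, and then control derivatives by Leibniz using the same bounds for derivatives of $B(v)\xi$, $B(v)\eta$ together with Lemmas \ref{N25} and \ref{N26}. The extra support inequality $\langle v\rangle^{\gamma/2+1}\lesssim\lambda^{1/3}$ you record is not actually needed (the bound $|\partial_v^\alpha B(v)\eta|\lesssim\langle v\rangle^{\gamma/2}|\eta|\lesssim|B(v)\eta|\lesssim\lambda^{1/3}$ already suffices), and the ``combinatorial bookkeeping'' you flag as the main obstacle is exactly what the paper dispenses with in one line via Leibniz.
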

\begin{proof}
Notice from (\fcolorbox{red}{ white}{\ref{N21}}) that
\begin{align}
  {\vert B(v)\eta \vert}^{2}+F(v)\leq 2{\lambda^{2/3}},
\end{align}
on the support of the function
\begin{align}
    \psi\bigg(\frac{{\vert B(v)\eta \vert}^{2}+F(v)}{\lambda^{2/3}}\bigg).
\end{align}
By recalling (\fcolorbox{red}{ white}{\ref{N20}}) and using (\fcolorbox{red}{ white}{\ref{N501}}), we obtain
\begin{align}
\vert B(v)\xi \vert \lesssim{\langle v \rangle}^\frac{\gamma}{2}\vert \xi\vert +{\langle v \rangle}^\frac{\gamma}{2}\vert v\wedge\xi\vert\lesssim\lambda.
\end{align}
We deduce from the Cauchy-Schwarz inequality that
one can estimate
\begin{align}\label{N37}
\vert B(v)\xi \cdot B(v)\eta \vert \lesssim \vert B(v)\xi\vert  \vert B(v)\eta\vert  \lesssim \lambda^{4/3},
\end{align}
on the support of $\psi$. The symbol  $g$  is therefore a bounded function uniformly with respect to the parameter   $\xi$ \text{in}  $\mathbb{R}^{3}$. Using (\fcolorbox{red}{ white}{\ref{N22}}) and (\fcolorbox{red}{ white}{\ref{N501}}), we can estimate 
\begin{align}\label{N23}
    \vert \partial_{v}^{ \alpha }{ B(v)\xi }\vert \lesssim {\langle v \rangle}^\frac{\gamma}{2}\vert \xi\vert \lesssim\vert B(v)\xi\vert \lesssim \lambda 
    \end{align}
    where $ \alpha \in \mathbb{N}^{3}$ with $\vert \alpha \vert \geq 1$. Using again (\fcolorbox{red}{ white}{\ref{N22}}) and (\fcolorbox{red}{ white}{\ref{N501}}), we can estimate 
    \begin{align}\label{N24}
     \vert \partial_{v}^{ \alpha }{ B(v)\eta }\vert \lesssim {\langle v \rangle}^\frac{\gamma}{2}\vert \eta\vert \lesssim\vert B(v)\eta\vert \lesssim \lambda^{1/3}
\end{align}
 on the support of $\psi$. Morever, one can estimate from above the modulus of all the derivatives of the term $B(v)\eta$ by a constant times $\lambda^{1/3}$ on the support of the function
$$\psi\bigg(\frac{{\vert B(v)\eta \vert}^{2}+F(v)}{\lambda^{2/3}}\bigg).$$
Using Leibniz's formula, Cauchy-Schwarz inequality (\fcolorbox{red}{ white}{\ref{N23}}), (\fcolorbox{red}{ white}{\ref{N24}})  one can estimate from above the
modulus of all the derivatives of the term $B(v)\xi \cdot B(v)\eta$ by a constant times  $\lambda^{4/3}$ on the support of the fonction $\psi$. According to Lemma
\fcolorbox{red}{ white}{\ref{N26}} and Lemma 
\fcolorbox{red}{ white}{\ref{N25}}, this proves that the symbol $g$ belongs to the class  $S(1,\Gamma)$  uniformly with respect to the parameter   $\xi$ \text{in} $\mathbb{R}^{3}$.
\end{proof}
\begin{lem}\label{N28}
We have
$$\left| \xi\cdot\partial_{\eta}\left[\psi\bigg(\frac{{\vert B(v)\eta \vert}^{2}+F(v)}{\lambda^{2/3}}\bigg) \right]\right|\lesssim 1+{\vert B(v)\eta \vert}^{2}+F(v),$$
 uniformly with respect to the parameter   $\xi$ \text{in}  $\mathbb{R}^{3}$.
\end{lem}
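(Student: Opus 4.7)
Differentiating through the composition, one obtains
\begin{equation*}
\xi\cdot\partial_{\eta}\!\left[\psi\!\left(\tfrac{|B(v)\eta|^{2}+F(v)}{\lambda^{2/3}}\right)\right]
= \psi'\!\left(\tfrac{|B(v)\eta|^{2}+F(v)}{\lambda^{2/3}}\right)\,\xi\cdot\partial_{\eta}\!\left(\tfrac{|B(v)\eta|^{2}+F(v)}{\lambda^{2/3}}\right).
\end{equation*}
The plan is to bound the last $\eta$-derivative on the support of $\psi'$, which is where the right-hand side of the claim is equivalent to $\lambda^{2/3}$, and then invoke the $L^{\infty}$-bound on $\psi'$ elsewhere, since the left-hand side vanishes there.

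\textbf{Key observation on the support.} Since $\psi=1$ on $[-1,1]$ and $\mathrm{supp}\,\psi\subset[-2,2]$, the derivative $\psi'$ vanishes on $[-1,1]\cup\{|\cdot|>2\}$. Because $F(v)\geq 0$ and $|B(v)\eta|^{2}\geq 0$, the argument of $\psi'$ is nonnegative, so on $\mathrm{supp}\,\psi'\!\left(\tfrac{|B(v)\eta|^{2}+F(v)}{\lambda^{2/3}}\right)$ we have
\begin{equation*}
\lambda^{2/3} \;\leq\; |B(v)\eta|^{2}+F(v) \;\leq\; 2\lambda^{2/3},
\end{equation*}
i.e.\ $\lambda^{2/3}\sim|B(v)\eta|^{2}+F(v)$. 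This is the whole point: every factor of $\lambda^{1/3}$ appearing in the estimates can be traded for a factor of $\bigl(|B(v)\eta|^{2}+F(v)\bigr)^{1/2}$.

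\textbf{Estimating the two terms.} The Leibniz rule gives two contributions. For the first,
$$
\frac{\xi\cdot\partial_{\eta}|B(v)\eta|^{2}}{\lambda^{2/3}} \;=\; \frac{2\,B(v)\xi\cdot B(v)\eta}{\lambda^{2/3}};
$$
by Cauchy--Schwarz together with the already established bound $|B(v)\xi|\lesssim\lambda$ (from $(\ref{N501})$ and $(\ref{N22})$, as used in Lemma~\ref{P5}), this is controlled by $\lambda^{1/3}\,|B(v)\eta|$. On $\mathrm{supp}\,\psi'$ we replace $\lambda^{1/3}$ by $\bigl(|B(v)\eta|^{2}+F(v)\bigr)^{1/2}$ and use $|B(v)\eta|\leq\bigl(|B(v)\eta|^{2}+F(v)\bigr)^{1/2}$ to conclude this term is $\lesssim |B(v)\eta|^{2}+F(v)$. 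For the second contribution,
$$
\frac{\bigl(|B(v)\eta|^{2}+F(v)\bigr)\,\xi\cdot\partial_{\eta}\lambda^{2/3}}{\lambda^{4/3}},
$$
Lemma~\ref{N27} applied to $\lambda^{2/3}$ yields $|\xi\cdot\partial_{\eta}\lambda^{2/3}|\lesssim\lambda^{2/3}$, so this term is $\lesssim\bigl(|B(v)\eta|^{2}+F(v)\bigr)/\lambda^{2/3}\lesssim 1$ on $\mathrm{supp}\,\psi'$.

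\textbf{Conclusion and where the subtlety lies.} Adding the two bounds and using $\|\psi'\|_{L^{\infty}}<\infty$ yields the claim. The only delicate point is the equivalence $\lambda^{2/3}\sim|B(v)\eta|^{2}+F(v)$ on $\mathrm{supp}\,\psi'$: a naive use of $|B(v)\eta|\lesssim\lambda^{1/3}$ on $\mathrm{supp}\,\psi$ would only deliver a bound of size $\lambda^{2/3}$, i.e.\ the anisotropic weight itself, which is exactly what we must \emph{avoid} in order to get the gain in terms of $|B(v)\eta|^{2}+F(v)$ needed later to compensate the commutator with the transport part.
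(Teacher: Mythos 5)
Your proof is correct and follows essentially the same route as the paper: differentiate through the composition, split by Leibniz into the term $2B(v)\xi\cdot B(v)\eta/\lambda^{2/3}$ and the term $(|B(v)\eta|^2+F(v))\,\xi\cdot\partial_\eta(\lambda^{-2/3})$, bound the first by Cauchy--Schwarz together with $|B(v)\xi|\lesssim\lambda$ and the support constraint $\lambda^{2/3}\lesssim|B(v)\eta|^2+F(v)\lesssim\lambda^{2/3}$ on $\operatorname{supp}\psi'$, and the second by Lemma~\ref{N27} together with the same support constraint. The only cosmetic difference is that you keep $|B(v)\eta|$ explicit in the first bound and apply Lemma~\ref{N27} to $\lambda^{2/3}$ rather than $\lambda^{-2/3}$, which are equivalent bookkeeping choices.
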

\begin{proof}
Let $\omega=\frac{{\vert B(v)\eta \vert}^{2}+F(v)}{\lambda^{2/3}}$. We may write
\begin{align*}
    \xi\cdot\partial_{\eta}\left[\psi(\omega) \right]=\psi'(\omega)&\left[\frac{2 B(v)\xi \cdot B(v)\eta }{ \lambda^{2/3}}
    +\big({\vert B(v)\eta \vert}^{2}+F(v)\big)(\xi\cdot\partial_{\eta})(\lambda^{-2/3})\right].
    \end{align*}
    Notice from (\fcolorbox{red}{ white}{\ref{N20}}) and (\fcolorbox{red}{ white}{\ref{N21}}) 
    $$\left|\frac{2 B(v)\xi \cdot B(v)\eta }{ \lambda^{2/3}}\right| \lesssim \frac{ \vert B(v)\xi \vert\vert B(v)\eta\vert}{ \lambda^{2/3}}\lesssim\frac{\lambda\lambda^{1/3}}{\lambda^{2/3}}\lesssim \lambda^{2/3}\lesssim {\vert B(v)\eta \vert}^{2}+F(v),$$
    on the support of the function $\psi'(\omega)$. One can then deduce Lemma \fcolorbox{red}{ white}{\ref{N28}}  from \\ Lemma \fcolorbox{red}{ white}{\ref{N27}}. 
\end{proof}
\subsection{Hypoelliptic estimates}\label{2001}
 We shall consider the multiplier $G= g^{\text {Wick}}$  defined by the Wick quantization of the symbol $g$.
We refer the reader to Appendix \fcolorbox{red}{ white}{\ref{P2}} on Wick calculus. We begin by noticing from  (\fcolorbox{red}{ white}{\ref{P3}}) that there exists a real-valued symbol  $\Tilde{g}$ belonging
to the class $S(1,\Gamma)$ uniformly with respect to the parameter $\xi$ \text{in} $\mathbb{R}^{3}$ such that
\begin{align}\label{N32}
    G= g^{\text {Wick}}=\Tilde{g}^{w};
    \end{align}
where $\Tilde{g}^{w}$ denotes the operator obtained by the Weyl quantization of the symbol $\Tilde{g}$ given by
\begin{align}\label{N90}
(\Tilde{g}^{w}u)(v)=\dfrac{1}{(2\pi)^{3}}\int_{\mathbb{R}^{6}}e^{i(v-v')\cdot\eta}\Tilde{g}\bigg(\dfrac{v+v'}{2},\eta\bigg)u(v')\hspace{0.1cm}dv'd\eta.
\end{align}
We shall sometimes closely follow \cite{herau_anisotropic_2011} and refer to appendix (Section \fcolorbox{red}{ white}{\ref{1001}}) for the main features of the Wick and the Weyl quantizations.\\
We begin by a series of Lemmas whose proof is exactly the same as the one in \cite{herau_anisotropic_2011}.
\begin{lem}\label{N34}
There exists  $c_{1}>0$ such that for all $u\in \mathcal{S}(\mathbb{R}^{3}_{v})$,
\begin{align*}
\vert\left(F(v)u,Gu\right)_{L^{2}}\vert+\vert\left(\nabla_{v}\cdot (\mathbf{A}(v)\nabla_{v}u),Gu\right)_{L^{2}}\vert\leq c_{1}\hspace{0.1cm} \Re{e}\left(\mathcal{A}_{\xi}u,u\right)_{L^{2}},
\end{align*}
uniformly with respect to the parameter $\xi$ \text{in} $\mathbb{R}^{3}$.
\end{lem}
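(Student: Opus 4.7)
The strategy is to combine the coercivity estimate (\ref{N502}) with the fact that $G = \tilde{g}^{w}$ is bounded on $L^2$ (uniformly in $\xi$) via Calderón--Vaillancourt, since $\tilde g\in S(1,\Gamma)$ by (\ref{N32}) and Lemma~\ref{P5}. The target is to bound each of the two terms by a constant times $\|B(v)\nabla_v u\|_{L^2}^2+\|\sqrt{F(v)}\,u\|_{L^2}^2$, which is controlled by $\Re(\mathcal{A}_\xi u,u)_{L^2}$ thanks to (\ref{N502}).

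For the multiplication term, I would exploit the positivity of $F(v)$ (which moreover satisfies $F(v)\gtrsim\langle v\rangle^{\gamma+2}\geq 1$) and write
$$\bigl(F(v)u,Gu\bigr)_{L^2}=\bigl(\sqrt{F(v)}\,u,\,\sqrt{F(v)}\,Gu\bigr)_{L^2}.$$
The idea is then to commute $\sqrt{F(v)}$ through $G$ in the Weyl--Hörmander calculus. Using Lemma~\ref{N80}, the symbol $\sqrt{F}$ lies in $S(\langle v\rangle^{(\gamma+2)/2},\Gamma)$, and the Poisson bracket $\{\sqrt F,\tilde g\}=\nabla_v\sqrt F\cdot\partial_\eta\tilde g$ is controlled by combining the decay of $\nabla_v\sqrt F$ with the estimates on $\partial_\eta\tilde g$ inherited from $\partial_\eta g$ (the cutoff $\psi$ in the definition (\ref{N36}) confines matters to the region where $|B(v)\eta|^2+F(v)\lesssim\lambda^{2/3}$). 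This should give that $[\sqrt{F(v)},G]$ is bounded on $L^2$ uniformly in $\xi$, and hence
$$\|\sqrt{F(v)}\,Gu\|_{L^2}\leq \|G\|\,\|\sqrt{F(v)}\,u\|_{L^2}+\|[\sqrt{F(v)},G]\|\,\|u\|_{L^2}\lesssim \|\sqrt{F(v)}\,u\|_{L^2}.$$
Cauchy--Schwarz then yields $|(F(v)u,Gu)_{L^2}|\lesssim\|\sqrt{F(v)}\,u\|_{L^2}^2\lesssim\Re(\mathcal{A}_\xi u,u)_{L^2}$.

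For the diffusion term, I would integrate by parts and use $\mathbf{A}=B^{T}B$ to get
$$\bigl(\nabla_v\!\cdot(\mathbf{A}(v)\nabla_v u),\,Gu\bigr)_{L^2}=-\bigl(B(v)\nabla_v u,\,B(v)\nabla_v\, Gu\bigr)_{L^2}.$$
Writing $B(v)\nabla_v\, G=G\,B(v)\nabla_v+[B(v)\nabla_v,G]$, the bounded part contributes at most $\|G\|\,\|B(v)\nabla_v u\|_{L^2}^2$ after Cauchy--Schwarz. The commutator $[B(v)\nabla_v,G]$ has principal symbol driven by $\{B(v)i\eta,\tilde g\}$; the cutoff $\psi$ in $g$ forces $|B(v)\eta|\lesssim\lambda^{1/3}$ (cf.\ (\ref{N24})) and $F(v)\lesssim\lambda^{2/3}$ on its support, so any factors produced by differentiating $B(v)\eta$ or $\lambda^{-4/3}$ are absorbed. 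The goal is to show $\|[B(v)\nabla_v,G]u\|_{L^2}\lesssim \|B(v)\nabla_v u\|_{L^2}+\|\sqrt{F(v)}\,u\|_{L^2}$, after which Cauchy--Schwarz and Young's inequality give the desired bound by a constant times $\Re(\mathcal{A}_\xi u,u)_{L^2}$.

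\textbf{Main obstacle.} The nontrivial work is entirely in the commutator estimates: one must carry the weight system $(\lambda,\Gamma)$ through the symbolic calculus and verify that each piece appearing after Weyl composition either lands in $S(1,\Gamma)$ (bounded on $L^2$) or in a class whose quantization can be absorbed by $B(v)\nabla_v$ and $\sqrt{F(v)}$, uniformly in $\xi$. This is exactly the calculation performed in \cite{herau_anisotropic_2011} for the Landau-type multiplier; here the same machinery applies because our $g$ was designed so that Lemmas~\ref{N25}, \ref{P5}, \ref{N28} hold, and the weight $\lambda$ encodes the full anisotropy of $B(v)$. I would therefore follow the Hérau--Pravda-Starov argument line by line, with the weights $\lambda$, $g_1,g_2,g_3$ of Section~\ref{W1} in place of the weights used there.
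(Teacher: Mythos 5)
Your proposal points to the right reference and takes essentially the same route the paper does: the paper's own proof of this lemma is simply a citation of Lemma~3.7 in \cite{herau_anisotropic_2011}, and you propose to reproduce that argument. The reduction to controlling $\Vert\sqrt{F(v)}\,Gu\Vert_{L^{2}}$ and the diffusion contribution by the two energy pieces of (\ref{N502}) is the correct plan.

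One intermediate claim in your sketch is, however, incorrect as stated, even though it does not invalidate the strategy. You assert that $[\sqrt{F(v)},G]$ is bounded on $L^{2}$ uniformly in $\xi$. For $\gamma>0$ this fails: one has $\partial_{v}\sqrt{F}\in S(\langle v\rangle^{\gamma/2},\Gamma)$ while $\partial_{\eta}\tilde{g}$ is only in $S(1,\Gamma)$ — note that the convolution with the Gaussian in (\ref{P3}) smears the $\psi$-cutoff of $g$, so the support localization that you want to invoke does not transfer directly from $g$ to $\tilde{g}$ — and hence the Weyl symbol of $[\sqrt{F},G]$ lies only in $S(\langle v\rangle^{\gamma/2},\Gamma)$, which is not a bounded weight. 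What actually saves your chain of inequalities is the absorption you need but did not state: since $\langle v\rangle^{\gamma/2}\lesssim\langle v\rangle^{(\gamma+2)/2}\sim\sqrt{F(v)}$, one gets $\Vert[\sqrt{F},G]u\Vert_{L^{2}}\lesssim\Vert\langle v\rangle^{\gamma/2}u\Vert_{L^{2}}\lesssim\Vert\sqrt{F}\,u\Vert_{L^{2}}$, and the target bound $\Vert\sqrt{F}\,Gu\Vert_{L^{2}}\lesssim\Vert\sqrt{F}\,u\Vert_{L^{2}}$ still follows. The same caution applies to the diffusion commutator: $[B(v)\nabla_{v},G]$ is not $L^{2}$-bounded either — its leading symbol contains a term of order $\langle v\rangle^{\gamma/2+1}\sim\sqrt{F}$ from $B(v)\cdot\partial_{v}\tilde{g}$ and one of order $\langle v\rangle^{\gamma/2}\langle\eta\rangle$, comparable to $|B(v)\eta|$, from $(\partial_{v}B)\eta\cdot\partial_{\eta}\tilde{g}$ — but the weighted estimate you state as the ``goal'' is exactly what the calculus yields and what you need. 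In short, the approach is right and matches the paper's, but the absolute $L^{2}$-boundedness of the commutators must be replaced throughout by boundedness with weights absorbed into $\sqrt{F}$ and $B(v)\nabla_{v}$, and the $\psi$-cutoff localization should be exploited at the level of $g$ (via the Wick calculus of (\ref{P6}), say) rather than after convolving to produce $\tilde{g}$.
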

\begin{proof}
See Lemma 3.7 in \cite{herau_anisotropic_2011}.
\end{proof}
\begin{lem}\label{N41}
There exists  $c_{2}>0$  such that for all $u\in \mathcal{S}(\mathbb{R}^{3}_{v})$,
$$
\left| \Vert B(v)\nabla_{v}u \Vert_{L^{2}}^{2}+\Vert \sqrt{F(v)}u \Vert_{L^{2}}^{2}-
\left({[4\pi^{2}{\vert B(v)\eta \vert}^{2}+F(v)]}^{\text{Wick}}u,u\right)_{L^{2}}\right|\leq c_{2}\hspace{0.1cm} \Re{e}\left(\mathcal{A}_{\xi}u,u\right)_{L^{2}},$$
uniformly with respect to the parameter $\xi$ \text{in} $\mathbb{R}^{3}$.
\end{lem}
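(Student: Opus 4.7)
The plan is to compare the differential realization of $\mathcal{A}_{\xi}-iv\cdot\xi=(B(v)D_{v})^{*}(B(v)D_{v})+F(v)$ with the Wick quantization of its natural symbol and show the discrepancy is absorbed into the dissipation $\Re{e}\left(\mathcal{A}_{\xi}u,u\right)_{L^{2}}$. By identity (\ref{N502}),
$$\Re{e}\left(\mathcal{A}_{\xi}u,u\right)_{L^{2}}=\|B(v)\nabla_{v}u\|_{L^{2}}^{2}+\|\sqrt{F(v)}u\|_{L^{2}}^{2},$$
so the quantity inside the absolute value in the statement equals
$$\Re{e}\left(\mathcal{A}_{\xi}u,u\right)_{L^{2}}-\left([4\pi^{2}|B(v)\eta|^{2}+F(v)]^{\text{Wick}}u,u\right)_{L^{2}},$$
and the task reduces to bounding it in modulus by $c_{2}\,\Re{e}\left(\mathcal{A}_{\xi}u,u\right)_{L^{2}}$.

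First I would rewrite the operator $(B(v)D_{v})^{*}(B(v)D_{v})+F(v)$ in Weyl form using the composition rule. The principal Weyl symbol is $4\pi^{2}|B(v)\eta|^{2}+F(v)$, together with subprincipal terms of the schematic form $c_{\alpha\beta}(v)\eta^{\beta}$ arising from $v$-derivatives of the coefficients $b_{ij}(v)$. By Lemma \ref{N200}(ii) one has $|\partial_{v}^{\alpha}b_{ij}|\lesssim\langle v\rangle^{\gamma/2+1-|\alpha|}$, so these Weyl correction symbols enjoy the expected lower-order decay in $|\alpha|$ and the correct $\gamma$-growth in $v$.

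Next I would pass from Weyl to Wick quantization: there exists a real-valued Weyl symbol $\widetilde{p}$ such that $p^{\text{Wick}}=\widetilde{p}^{w}$, obtained from $p$ by a fixed Gaussian convolution on $\mathbb{R}^{6}$ (compare (\ref{N32})--(\ref{N90})). Applied to $p=4\pi^{2}|B(v)\eta|^{2}+F(v)$ this yields $\widetilde{p}=p+r$, with $r$ built out of second and higher $v,\eta$-derivatives of $p$. Using Lemmas \ref{N6}, \ref{N80} together with estimate (\ref{N501}), every such remainder satisfies a pointwise bound of the form $|r(v,\eta)|\lesssim |B(v)\eta|^{2}+F(v)+1$ uniformly in $\xi\in\mathbb{R}^{3}$.

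It then remains to quantize the various error symbols and reduce their quadratic forms to the reference quantity $\|B(v)\nabla_{v}u\|_{L^{2}}^{2}+\|\sqrt{F(v)}u\|_{L^{2}}^{2}$. For the terms linear in $\eta$, one uses Cauchy--Schwarz and Young's inequality, writing schematically
$$|(h(v)\cdot\eta\, u,u)_{L^{2}}|\lesssim \varepsilon\|B(v)\nabla_{v}u\|_{L^{2}}^{2}+\varepsilon^{-1}\|\chi(v)u\|_{L^{2}}^{2},$$
with $\chi(v)^{2}\lesssim F(v)$; for the terms purely in $v$, one exploits $F(v)\gtrsim\langle v\rangle^{\gamma+2}\geq 1$ to absorb everything into $\|\sqrt{F(v)}u\|_{L^{2}}^{2}$. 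The main obstacle will be to make sure that the final constant is truly independent of $\xi$: this forces a careful tracking of $\gamma$-exponents and a systematic use of the parameter-uniform admissibility proven at the start of this section. Combining the above estimates and applying (\ref{N502}) once more yields the announced bound, closely following the argument of Lemma 3.9 in \cite{herau_anisotropic_2011}.
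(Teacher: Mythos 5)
The paper offers no internal proof for this lemma: it is a one-line citation to Lemma~3.11 of \cite{herau_anisotropic_2011}. Your reconstruction of that argument is therefore a welcome supplement, and its skeleton is correct: treat $(BD_v)^*(BD_v)+F$ as a Weyl-quantized operator whose principal symbol coincides with the symbol being Wick-quantized, then control (a) the lower-order Weyl corrections coming from derivatives of the $b_{ij}$ (via Lemma~\ref{N200}) and (b) the Weyl-to-Wick Gaussian-convolution remainder from (\ref{P3})--(\ref{P4}), absorbing both into $\|B(v)\nabla_v u\|_{L^2}^2+\|\sqrt{F(v)}u\|_{L^2}^2=\Re e(\mathcal{A}_\xi u,u)_{L^2}$. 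You are also right that (\ref{N502}) is in fact an equality, since $iv\cdot\xi$ is skew-adjoint.

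One step you should not leave implicit is the factor $4\pi^2$, because as written it undermines a key assertion of the proposal. Under the Weyl quantization (\ref{M27}) used in this paper, $D_{v_j}$ has symbol $\eta_j$, so $(BD_v)^*(BD_v)$ has Weyl principal symbol $|B(v)\eta|^2$, \emph{not} $4\pi^2|B(v)\eta|^2$. The factor $4\pi^2$ in the statement of Lemma~\ref{N41} is imported from \cite{herau_anisotropic_2011}, which uses the $e^{-2\pi i x\cdot\xi}$ Fourier normalization so that $\partial_{x_j}$ has symbol $2\pi i\,\xi_j$. This is a constant-multiple discrepancy, not a lower-order one, so it cannot be hidden in the error term $c_2\,\Re e(\mathcal{A}_\xi u,u)_{L^2}$. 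Your proposal asserts that ``the principal Weyl symbol is $4\pi^2|B(v)\eta|^2+F(v)$,'' which is false under (\ref{M27}). To close this gap you must either (i) redo the symbol computation in the paper's own conventions, in which case the $4\pi^2$ should disappear from the comparison altogether, or (ii) check that the coherent state $\varphi_Y$ and the Wick formula of Appendix~\ref{P2} are normalized compatibly with (\ref{M27}) and re-derive the constant accordingly --- noting that, as printed, the appendix formulas do not even give $1^{\text{Wick}}=\mathrm{Id}$, so some re-normalization is genuinely needed. A second, smaller point: the remainder in (\ref{P4}) involves only the second derivatives of the symbol at Gaussian-shifted points, not ``second and higher'' derivatives; what matters is that those second derivatives obey the required bounds uniformly in the $O(1)$ Gaussian shift, which follows from the admissibility estimates already established in this section.
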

\begin{proof}
See Lemma 3.11 in \cite{herau_anisotropic_2011}.
\end{proof}
Let $\delta$ be a positive parameter such that $0<\delta\leq1$. We use a multiplier method and write that
\begin{align}\label{N30}
  \begin{split}
    \Re{e}\left(\mathcal{A}_{\xi}u,(1-\delta G)u\right)_{L^{2}}=&\Vert B(v)\nabla_{v}u \Vert_{L^{2}}^{2}+\Vert \sqrt{F(v)}u \Vert_{L^{2}}^{2}-\delta\Re{e}\left(iv\cdot{\xi}u, Gu\right)_{L^{2}}\\&+\delta\Re{e}\left(\nabla_{v}\cdot (\mathbf{A}(v)\nabla_{v}u),Gu\right)_{L^{2}}-\delta\Re{e}\left(F(v)u,Gu\right)_{L^{2}}.
      \end{split}
\end{align}
\begin{lem}\label{P8}
 We have for any $s\in \mathbb{R}$
$$\Vert{\langle B(v)\xi\rangle}^{s}(1-\delta G) u\Vert_{L^{2}}
\lesssim \Vert{\langle B(v)\xi\rangle}^{s} u\Vert_{L^{2}},$$
uniformly with respect to the parameter $\xi$ \text{in} $\mathbb{R}^{3}$.
\end{lem}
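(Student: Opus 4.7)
Set $m(v) := \langle B(v)\xi\rangle^s$. Since $m$ is a multiplication operator (hence commutes with itself), the triangle inequality gives
$$
\|m(1-\delta G)u\|_{L^2} \leq \|mu\|_{L^2} + \delta\,\|mGm^{-1}\|_{L^2 \to L^2}\,\|mu\|_{L^2},
$$
so it is enough to show that $mGm^{-1}$ is bounded on $L^2(\mathbb{R}^3_v)$ uniformly with respect to $\xi \in \mathbb{R}^3$ and $\delta \in (0,1]$. Writing $w := mu$ then finishes the argument.

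\textbf{Symbol computations.} By (\ref{N32}), $G = \tilde g^w$ with $\tilde g \in S(1,\Gamma)$ uniformly in $\xi$. Since $m$ depends only on $v$, its Weyl quantization coincides with multiplication by $m$, and $mGm^{-1}$ is the Weyl quantization of the Moyal product $m\# \tilde g \# m^{-1}$. I would verify that this composed symbol lies in $S(1,\Gamma)$ uniformly in $\xi$ by first establishing the log-derivative bounds
$$
\forall\, \alpha \in \mathbb{N}^3, \qquad |\partial_v^\alpha m(v)| \lesssim m(v), \qquad |\partial_v^\alpha m(v)^{-1}| \lesssim m(v)^{-1},
$$
with constants independent of $\xi$. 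The case $|\alpha|=1$ follows from the chain rule applied to $\langle B(v)\xi\rangle^2 = 1 + |B(v)\xi|^2$: by Lemma \ref{N200}(ii) one has $|\partial_v(B(v)\xi)| \lesssim \langle v\rangle^{\gamma/2}|\xi|$, while Lemma \ref{N200}(iii) yields the crucial uniform lower bound $\langle v\rangle^{\gamma/2}|\xi| \lesssim \langle B(v)\xi\rangle$. Combining these gives $|\partial_v \langle B(v)\xi\rangle| \lesssim \langle B(v)\xi\rangle$, and an induction on $|\alpha|$ propagates the bound to all orders, for both $m$ and $m^{-1}$. With these estimates in hand, every term of the asymptotic expansion of $m\#\tilde g\#m^{-1}$ is of the schematic form $(\partial_v^\alpha m)(\partial_v^\beta m^{-1})(\partial_\eta^\gamma \tilde g)$ with $|\gamma| \geq |\alpha| + |\beta|$; the first two factors are uniformly bounded by the previous step, and $\partial_\eta^\gamma \tilde g \in S(1,\Gamma)$ uniformly in $\xi$ since $\tilde g$ does.

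\textbf{Conclusion and main obstacle.} Once the symbol $m\#\tilde g\#m^{-1}$ is shown to lie in $S(1,\Gamma)$ with seminorms uniform in $\xi$, the Calder\'on--Vaillancourt theorem (recalled in the appendix, Section~\ref{1001}) furnishes $\|mGm^{-1}\|_{L^2\to L^2} \lesssim 1$, which combined with the reduction above closes the proof. The main obstacle is the bookkeeping in the Moyal expansion: one must show not only that the principal symbol $\tilde g$ recovers a bounded operator (which is essentially automatic), but also that the remainder at any order lies in $S(1,\Gamma)$ with seminorms controlled uniformly in $\xi$. This is exactly where the structure of $m$, depending only on $v$, is decisive, since every correction term in $m\#\tilde g\#m^{-1} - \tilde g$ carries at least one $\eta$-derivative of $\tilde g$, keeping the composed symbol within the bounded symbol class regardless of how large $\xi$ is.
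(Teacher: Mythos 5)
Your argument is correct in its essentials, and it is the natural pseudo-differential-calculus proof. The paper itself contains no proof of this lemma — it simply cites Lemma~3.8 of Hérau–Pravda-Starov — so a side-by-side comparison of routes is not possible from the text; your proof is almost certainly the argument the citation silently relies on. The reduction to the boundedness of $m\,G\,m^{-1}$ with $m(v)=\langle B(v)\xi\rangle^{s}$, and the log-derivative bound $|\partial_{v}^{\alpha}m|\lesssim m$ obtained by combining the upper bound $|\partial_{v}^{\alpha}b_{i,j}|\lesssim\langle v\rangle^{\gamma/2+1-|\alpha|}$ of Lemma~\ref{N200}(ii) with the crucial two-sided estimate $|B(v)\xi|^{2}\sim\langle v\rangle^{\gamma}(|\xi|^{2}+|v\wedge\xi|^{2})$ of Lemma~\ref{N200}(iii), are exactly right and the seminorms come out uniform in $\xi$.

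One step you leave in \emph{would verify} mode should be made explicit, because it is what actually closes the argument: to control the full Moyal remainder (not just each term of the asymptotic expansion) the cleanest route is to check that $m$ is an \emph{admissible weight} for the flat metric $\Gamma=dv^{2}+d\eta^{2}$, i.e.\ $m(v)\lesssim\langle v-v'\rangle^{N}m(v')$ uniformly in $\xi$; this follows from the two-sided bound of Lemma~\ref{N200}(iii) together with the same Peetre and cross-product manipulations the paper already uses to show $\lambda$ is admissible. Once $m\in S(m,\Gamma)$ and $m^{-1}\in S(m^{-1},\Gamma)$ with uniform seminorms, the composition formula~(\ref{M3}) of the appendix gives directly $m\,\sharp\,\tilde g\,\sharp\,m^{-1}\in S(m\cdot 1\cdot m^{-1},\Gamma)=S(1,\Gamma)$ with seminorms uniform in $\xi$, and Calderón–Vaillancourt finishes. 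Your hand-computed expansion of the composed symbol is a useful sanity check, but the remainder control you flag as the main obstacle is handled automatically by the abstract composition theorem once admissibility of $m$ is recorded.
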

\begin{proof}
See Lemma 3.8 in \cite{herau_anisotropic_2011}.
\end{proof}
\begin{prop}\label{N51}
There exists  $C>0$ such that for all $u\in \mathcal{S}(\mathbb{R}^{3}_{v})$,
$$
((\lambda^{2/3})^{\text{Wick}}u,u)_{L^{2}}\leq C\big( \Re{e}(\mathcal{A}_{\xi}u,u)_{L^{2}}+ \Re{e}(\mathcal{A}_{\xi}u,(1-\delta G)u)_{L^{2}}\big),
$$
uniformly with respect to the parameter $\xi$ \text{in} $\mathbb{R}^{3}$.
\end{prop}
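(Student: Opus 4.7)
The plan is to exploit the multiplier identity (\ref{N30}), absorb the terms not involving the transport commutator using Lemmas \ref{N34} and \ref{N41}, and then extract from $-\delta\,\Re e(iv\cdot\xi u,Gu)_{L^2}$ a positive elliptic gain that dominates $((\lambda^{2/3})^{\text{Wick}}u,u)_{L^2}$ modulo absorbable errors. First I would apply Lemma \ref{N41} to replace $\|B(v)\nabla_v u\|_{L^2}^2+\|\sqrt{F(v)}u\|_{L^2}^2$ by $\bigl((4\pi^2|B(v)\eta|^2+F(v))^{\text{Wick}}u,u\bigr)_{L^2}$ at the cost of a term $\leq c_2\,\Re e(\mathcal{A}_\xi u,u)_{L^2}$, and use Lemma \ref{N34} to control both $\delta|\Re e(\nabla_v\cdot(\mathbf{A}(v)\nabla_v u),Gu)_{L^2}|$ and $\delta|\Re e(F(v)u,Gu)_{L^2}|$ by $\delta c_1\,\Re e(\mathcal{A}_\xi u,u)_{L^2}$. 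Then (\ref{N30}) gives, uniformly in $\xi$,
\begin{equation*}
\bigl((4\pi^2|B(v)\eta|^2+F(v))^{\text{Wick}}u,u\bigr)_{L^2}-\delta\,\Re e(iv\cdot\xi u,Gu)_{L^2}\lesssim \Re e(\mathcal{A}_\xi u,u)_{L^2}+\Re e(\mathcal{A}_\xi u,(1-\delta G)u)_{L^2}.
\end{equation*}

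Next, since $G$ is self-adjoint and $iv\cdot\xi$ is skew-adjoint, $\Re e(iv\cdot\xi u,Gu)_{L^2}=\tfrac12([G,iv\cdot\xi]u,u)_{L^2}$. Writing $G=\tilde g^w$ via (\ref{N32}) and applying the Weyl composition rule $[a^w,b^w]=\tfrac{1}{i}\{a,b\}^w+\text{remainder}$, the principal symbol of $-[G,iv\cdot\xi]$ equals, modulo lower order, $-\xi\cdot\partial_\eta g$. Differentiating (\ref{N36}) yields
\begin{equation*}
-\xi\cdot\partial_\eta g=\frac{|B(v)\xi|^2}{\lambda^{4/3}}\psi(\omega)+r_1+r_2,\qquad \omega=\frac{|B(v)\eta|^2+F(v)}{\lambda^{2/3}},
\end{equation*}
where $r_1$ arises from $\xi\cdot\partial_\eta(\lambda^{-4/3})$ and $r_2$ from $\xi\cdot\partial_\eta\psi(\omega)$; Lemma \ref{N27} together with the Cauchy–Schwarz estimate (\ref{N37}) bounds $r_1$ by a constant on the support of $\psi(\omega)$, while Lemma \ref{N28} combined with (\ref{N37}) bounds $r_2$ by a constant multiple of $\lambda^{2/3}$ on the support of $\psi'(\omega)$.

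I would then establish the pointwise elliptic inequality
\begin{equation*}
\frac{|B(v)\xi|^2}{\lambda^{4/3}}\psi(\omega)+4\pi^2|B(v)\eta|^2+F(v)\gtrsim \lambda^{2/3},
\end{equation*}
by splitting on the size of $\omega$: if $\omega\geq 1$ then $|B(v)\eta|^2+F(v)\geq\lambda^{2/3}$ directly, while if $\omega\leq 1$ then $\psi(\omega)=1$ and Lemma \ref{N200}(iii) together with the lower bound $F(v)\gtrsim\langle v\rangle^{\gamma+2}$ force $|B(v)\xi|^2+|B(v)\eta|^2+F(v)\gtrsim\lambda^2$, hence $|B(v)\xi|^2\gtrsim\lambda^2$ for $\lambda$ large and the first term dominates $\lambda^{2/3}$ (the remaining bounded region is handled by a compactness argument since all symbols are continuous). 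Wick positivity then promotes this pointwise bound to an operator inequality, producing $c\,((\lambda^{2/3})^{\text{Wick}}u,u)_{L^2}$.

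The main difficulty is the bookkeeping of remainders: one must control the Weyl remainder in $[G,iv\cdot\xi]$, compare $\tilde g^w$ with $g^{\text{Wick}}$ via (\ref{P3}), and transit between Weyl and Wick quantizations of $\psi(\omega)|B(v)\xi|^2/\lambda^{4/3}$ and of $|B(v)\eta|^2+F(v)$, verifying that every error term is of order at most $\lambda^{2/3}$ with a prefactor proportional to $\delta$ (so absorbable into the gain) or absorbable into $\Re e(\mathcal{A}_\xi u,u)_{L^2}$ via (\ref{N502}). Fixing $\delta$ small enough so that the positive commutator contribution strictly dominates all such remainders then yields the proposition.
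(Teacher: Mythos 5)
Your proposal is correct and follows the paper's proof strategy essentially step for step: start from the multiplier identity (\ref{N30}), absorb the error terms via Lemmas \ref{N34} and \ref{N41}, extract the Poisson bracket $\{\xi\cdot v,g\}$ from the transport commutator, establish the pointwise elliptic lower bound, and conclude by Wick positivity (\ref{P7}). The only cosmetic deviation is that you route the commutator $[G,iv\cdot\xi]$ through Weyl calculus via $G=\tilde g^w$, whereas the paper applies the Wick composition formula (\ref{P6}) directly; since $v\cdot\xi$ is linear the Weyl bracket is exact and $\bigl(\{\tilde g,\,v\cdot\xi\}\bigr)^w=\bigl(\{\xi\cdot v,\,g\}\bigr)^{\text{Wick}}$, so the two formulations coincide and the remainder-bookkeeping you flag as the main difficulty in fact collapses.
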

\begin{proof}
Using (\fcolorbox{red}{ white}{\ref{N30}}), we have for $0<\delta\leq1$ and $u\in \mathcal{S}(\mathbb{R}^{3}_{v})$,
\begin{multline}\label{N31}
 \Vert B(v)\nabla_{v}u \Vert_{L^{2}}^{2}+\Vert \sqrt{F(v)}u \Vert_{L^{2}}^{2}-\delta\Re{e}\left(iv\cdot{\xi}u, Gu\right)_{L^{2}}\\=   \Re{e}\left(\mathcal{A}_{\xi}u,(1-\delta G)u\right)_{L^{2}}-\delta\Re{e}\left(\nabla_{v}\cdot (\mathbf{A}(v)\nabla_{v}u),Gu\right)_{L^{2}}+\delta\Re{e}\left(F(v)u,Gu\right)_{L^{2}},
 \end{multline}
uniformly with respect to the parameter $\xi$ \text{in} $\mathbb{R}^{3}$. Recalling (\fcolorbox{red}{ white}{\ref{N32}}) and noticing from (\fcolorbox{red}{ white}{\ref{P3}}) and (\fcolorbox{red}{ white}{\ref{P4}}) that $v^{\text{Wick}}=v$, we may rewrite  (\fcolorbox{red}{ white}{\ref{N31}}) as
\begin{multline}\label{N33}
 \Vert B(v)\nabla_{v}u \Vert_{L^{2}}^{2}+\Vert \sqrt{F(v)}u \Vert_{L^{2}}^{2}-\delta\Re{e}\left(i{\xi}\cdot v^{\text{Wick}} u, Gu\right)_{L^{2}}\\=   \Re{e}\left(\mathcal{A}_{\xi}u,(1-\delta G)u\right)_{L^{2}}-\delta\Re{e}\left(\nabla_{v}\cdot (\mathbf{A}(v)\nabla_{v}u),Gu\right)_{L^{2}}+\delta\Re{e}\left(F(v)u,Gu\right)_{L^{2}}.
 \end{multline}
Using Lemma \fcolorbox{red}{ white}{\ref{N34}}, we deduce that there is a constant $c_{3}>0$ such that for all $0<\delta\leq1$ and $u\in \mathcal{S}(\mathbb{R}^{3}_{v})$, 
\begin{multline}\label{N35}
 \Vert B(v)\nabla_{v}u \Vert_{L^{2}}^{2}+\Vert \sqrt{F(v)}u \Vert_{L^{2}}^{2}-\delta\Re{e}\left(i{\xi}\cdot v^{\text{Wick}} u, Gu\right)_{L^{2}}\\\leq c_{3}\big( \Re{e}\left(\mathcal{A}_{\xi}u,u\right)_{L^{2}}+  \Re{e}\left(\mathcal{A}_{\xi}u,(1-\delta G)u\right)_{L^{2}}\big),
      \end{multline}

uniformly with respect to the parameter $\xi$ \text{in} $\mathbb{R}^{3}$. We deduce from Lemma
 \fcolorbox{red}{ white}{\ref{P5}} and (\fcolorbox{red}{ white}{\ref{P6}}) that
\begin{align}\label{N39}
  \begin{split}
 -\delta\Re{e}\left(i{\xi}\cdot v^{\text{Wick}} u, Gu\right)_{L^{2}}&= -\delta\Re{e}\left( g^{\text{Wick}}{(i{\xi}\cdot v)}^{\text{Wick}} u,u\right)_{L^{2}}\\
 &=\delta\frac{1}{4\pi}\left({\left\{{\xi}\cdot v, g\right\}}^{\text{Wick}}u,u\right)_{L^{2}}.
  \end{split}
\end{align}
  Using (\fcolorbox{red}{ white}{\ref{N36}}) with a direct computation of the Poisson bracket gives that
  \begin{multline}\label{N38}
  \left\{{\xi}\cdot v, g\right\}=(B(v)\xi \cdot B(v)\eta )(\xi\cdot\partial_{\eta}(\lambda^{-4/3}))\psi\bigg(\frac{{\vert B(v)\eta \vert}^{2}+F(v)}{\lambda^{2/3}}\bigg)\\
       +\frac{{\vert B(v)\xi  \vert}^{2}}{ \lambda^{4/3}} \psi\bigg(\frac{{\vert B(v)\eta \vert}^{2}+F(v)}{\lambda^{2/3}}\bigg)+\frac{ B(v)\xi \cdot B(v)\eta }{ \lambda^{4/3}} \xi\cdot\partial_{\eta}\left[\psi\bigg(\frac{{\vert B(v)\eta \vert}^{2}+F(v)}{\lambda^{2/3}}\bigg) \right].
       \end{multline}
  
  We notice from Lemma \fcolorbox{red}{ white}{\ref{N27}}, Lemma \fcolorbox{red}{ white}{\ref{N28}},  (\fcolorbox{red}{ white}{\ref{N36}}) and (\fcolorbox{red}{ white}{\ref{N37}}) that
  \begin{align}
      \left| \left\{{\xi}\cdot v, g\right\}-\frac{{\vert B(v)\xi  \vert}^{2}}{ \lambda^{4/3}} \psi\bigg(\frac{{\vert B(v)\eta \vert}^{2}+F(v)}{\lambda^{2/3}}\bigg)\right|\lesssim 1+{\vert B(v)\eta \vert}^{2}+F(v),
      \end{align}
uniformly with respect to the parameter $\xi$ \text{in} $\mathbb{R}^{3}$.
 It follows from (\fcolorbox{red}{ white}{\ref{N39}}),   (\fcolorbox{red}{ white}{\ref{N35}}) and the fact that
the Wick quantization is a positive quantization (\fcolorbox{red}{ white}{\ref{P7}}) that there exists constante $c_{4}>0$ such that for all $0<\delta\leq1$ and $u\in \mathcal{S}(\mathbb{R}^{3}_{v})$, 
\begin{multline}\label{N40}
 \frac{\delta}{4\pi}\left(\left[\frac{{\vert B(v)\xi  \vert}^{2}}{ \lambda^{4/3}} \psi\bigg(\frac{{\vert B(v)\eta \vert}^{2}+F(v)}{\lambda^{2/3}}\bigg)\right]^{\text{Wick}}u,u\right)_{L^{2}}+\Vert B(v)\nabla_{v}u \Vert_{L^{2}}^{2}+\Vert \sqrt{F(v)}u \Vert_{L^{2}}^{2}\\\leq c_{3}\big( \Re{e}\left(\mathcal{A}_{\xi}u,u\right)_{L^{2}}+  \Re{e}\left(\mathcal{A}_{\xi}u,(1-\delta G)u\right)_{L^{2}}\big)\hspace{7cm}\\+\delta c_{4}\left(\left[1+{\vert B(v)\eta \vert}^{2}+F(v)\right]^{\text{Wick}}u,u\right)_{L^{2}},
 \end{multline}

uniformly with respect to the parameter $\xi$ \text{in} $\mathbb{R}^{3}$. It follows from Lemma \fcolorbox{red}{ white}{\ref{N41}}  that there exists  $c_{5}>0$ such that for all $0<\delta\leq1$ and $u\in \mathcal{S}(\mathbb{R}^{3}_{v})$
\begin{multline}\label{N42}
 \delta\left(\left[\frac{{\vert B(v)\xi  \vert}^{2}}{ \lambda^{4/3}} \psi\bigg(\frac{{\vert B(v)\eta \vert}^{2}+F(v)}{\lambda^{2/3}}\bigg)\right]^{\text{Wick}}u,u\right)_{L^{2}}+ (4\pi^{2}\left[{\vert B(v)\eta \vert}^{2}+F(v)\right]^{\text{Wick}}u,u)_{L^{2}}\\+\Vert u \Vert_{L^{2}}^{2}
 \leq c_{5}\big( \Re{e}\left(\mathcal{A}_{\xi}u,u\right)_{L^{2}}+  \Re{e}\left(\mathcal{A}_{\xi}u,(1-\delta G)u\right)_{L^{2}}\big),
 \end{multline}
uniformly with respect to the parameter $\xi$ \text{in} $\mathbb{R}^{3}$.\\
Notice from (\fcolorbox{red}{ white}{\ref{N20}}), (\fcolorbox{red}{ white}{\ref{N21}}) and (\fcolorbox{red}{ white}{\ref{N501}}) that
\begin{align*}
    &\delta\frac{{\vert B(v)\xi  \vert}^{2}}{ \lambda^{4/3}} \psi\bigg(\frac{{\vert B(v)\eta \vert}^{2}+F(v)}{\lambda^{2/3}}\bigg)+4\pi^2{\vert B(v)\eta \vert}^{2}+F(v)+1\geq\\
    &\delta\frac{{\vert B(v)\xi  \vert}^{2}+{\vert B(v)\eta \vert}^{2}+F(v)+1}{ \lambda^{4/3}} \psi\bigg(\frac{{\vert B(v)\eta \vert}^{2}+F(v)}{\lambda^{2/3}}\bigg)\\
    &+({\vert B(v)\eta \vert}^{2}+F(v)+1)\left[1- \psi\bigg(\frac{{\vert B(v)\eta \vert}^{2}+F(v)}{\lambda^{2/3}}\bigg)\right]\gtrsim\\
    &\delta\frac{ \lambda^{2}}{ \lambda^{4/3}} \psi\bigg(\frac{{\vert B(v)\eta \vert}^{2}+F(v)}{\lambda^{2/3}}\bigg)+\delta\lambda^{2/3}\left[1- \psi\bigg(\frac{{\vert B(v)\eta \vert}^{2}+F(v)}{\lambda^{2/3}}\bigg)\right]\gtrsim\delta \lambda^{2/3},
\end{align*}
when  $0<\frac{\delta}{\lambda^{4/3}}\leq 1$; since 
$${\vert B(v)\eta \vert}^{2}+F(v)\geq  \lambda^{2/3},$$
on the support of the function 
$$1- \psi\bigg(\frac{{\vert B(v)\eta \vert}^{2}+F(v)}{\lambda^{2/3}}\bigg).$$
By using again that the Wick quantization is a positive quantization (\fcolorbox{red}{ white}{\ref{P7}}), we deduce that there
exists $C>0$ such that for all $0<\delta\leq1$ and $u\in \mathcal{S}(\mathbb{R}^{3}_{v})$, 
$$
((\lambda^{2/3})^{\text{Wick}}u,u)_{L^{2}}\leq C\big( \Re{e}(\mathcal{A}_{\xi}u,u)_{L^{2}}+ \Re{e}(\mathcal{A}_{\xi}u,(1-\delta G)u)_{L^{2}}\big),
$$
uniformly with respect to the parameter $\xi$ \text{in}  $\mathbb{R}^{3}$.
\end{proof}
\begin{lem}\label{P1}
There exists $C>0$ such that for all $v\in\mathbb{R}^{3}$ and $\xi\in\mathbb{R}^{3}$,
$${\langle B(v)\xi\rangle}^{1/3}\leq C m(v,\xi),$$
with $$m(v,\xi)={\big(\int_{\mathbb{R}^{3}}{\langle B(v+\tilde{v})\xi\rangle}^{2/3}\pi^{-3}e^{-\vert \tilde{v}\vert^{2}}\hspace{0.1cm}d\tilde{v}\big)}^{1/2}.$$
\end{lem}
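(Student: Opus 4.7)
By Lemma \ref{N200}-(iii), we have $\langle B(v)\xi\rangle^{2}\sim A(v,\xi):=1+\langle v\rangle^{\gamma}(|\xi|^{2}+|v\wedge\xi|^{2})$ uniformly in $(v,\xi)$, so it is enough to prove that $m(v,\xi)^{2}\gtrsim A(v,\xi)^{1/3}$ with an implicit constant independent of $(v,\xi)$. The plan is to lower-bound the Gaussian integral defining $m(v,\xi)^{2}$ by restricting the integration region to the unit ball $\{|\tilde v|\leq 1\}$, on which the weight $\pi^{-3}e^{-|\tilde v|^{2}}$ is bounded below by a positive constant, and then to establish the pointwise estimate $A(v+\tilde v,\xi)\gtrsim A(v,\xi)$ uniformly for $|\tilde v|\leq 1$. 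The desired bound follows at once by combining these two observations and taking cube roots.

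To prove the pointwise estimate, I first observe that on $|\tilde v|\leq 1$ one has $\langle v+\tilde v\rangle\sim\langle v\rangle$, and hence $\langle v+\tilde v\rangle^{\gamma}\sim\langle v\rangle^{\gamma}$ since $\gamma\in[0,1]$. I then distinguish two cases according to the relative size of $|v\wedge\xi|$ and $|\xi|$. If $|v\wedge\xi|\leq 2|\xi|$ (the near-parallel case), one has $A(v,\xi)\lesssim 1+\langle v\rangle^{\gamma}|\xi|^{2}\lesssim 1+\langle v+\tilde v\rangle^{\gamma}|\xi|^{2}\leq A(v+\tilde v,\xi)$, which is the claim. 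If $|v\wedge\xi|>2|\xi|$, then for $|\tilde v|\leq 1$ the triangle inequality gives $|(v+\tilde v)\wedge\xi|\geq|v\wedge\xi|-|\tilde v||\xi|\geq\tfrac{1}{2}|v\wedge\xi|$, so
\[
A(v+\tilde v,\xi)\geq 1+\langle v+\tilde v\rangle^{\gamma}|(v+\tilde v)\wedge\xi|^{2}\gtrsim 1+\langle v\rangle^{\gamma}|v\wedge\xi|^{2}\gtrsim A(v,\xi),
\]
since in this regime the $|v\wedge\xi|^{2}$ term dominates $|\xi|^{2}$ inside $A(v,\xi)$.

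The only delicate point in the argument is the near-parallel case, where the anisotropic term $|v\wedge\xi|^{2}$ may vanish and one is forced to rely on the isotropic contribution $\langle v\rangle^{\gamma}|\xi|^{2}$ alone. This works precisely because the weight $\langle v\rangle^{\gamma}$ is stable (up to multiplicative constants independent of $v$) under translations of $v$ by vectors in the unit ball, which in turn uses the assumption $\gamma\in[0,1]$; if $\gamma$ were negative and very large in magnitude this robustness would fail and a finer argument would be needed.
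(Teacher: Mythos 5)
Your proof is correct and follows essentially the same approach as the paper's. Both use Lemma~\ref{N200}-(iii) to reduce to the anisotropic quantity $\langle v\rangle^{\gamma}(|\xi|^{2}+|v\wedge\xi|^{2})$, both exploit the same dichotomy $|v\wedge\xi|\leq 2|\xi|$ versus $|v\wedge\xi|>2|\xi|$, and both use the same triangle-inequality estimate $|(v+\tilde v)\wedge\xi|\geq\tfrac{1}{2}|v\wedge\xi|$ on the unit ball in the latter case. The only cosmetic difference is that the paper keeps the cube-root powers throughout and invokes Peetre's inequality to handle the $\langle v+\tilde v\rangle^{\gamma/3}$ factor over all of $\mathbb{R}^{3}$ (restricting to the unit ball only for the $|(v+\tilde v)\wedge\xi|$ term), whereas you restrict the whole integral to the unit ball at the outset and work with the square $A(v,\xi)$, taking cube roots at the end; the two bookkeeping choices are interchangeable. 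One small remark on your closing comment: the equivalence $\langle v+\tilde v\rangle^{\gamma}\sim\langle v\rangle^{\gamma}$ on $|\tilde v|\leq 1$ in fact holds for any fixed real $\gamma$ (with constants depending on $\gamma$), not only for $\gamma\in[0,1]$, since the two-sided bound $\langle v+\tilde v\rangle\sim\langle v\rangle$ is uniform on the unit ball; so that caveat is not needed.
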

\begin{proof}
We have that
\begin{align*}
    m(v,\xi)^{2}\gtrsim\int_{\mathbb{R}^{3}}(1+{\vert B(v+\tilde{v})\xi\vert}^{2/3})\pi^{-3}e^{-\vert \tilde{v}\vert^{2}}\hspace{0.1cm}d\tilde{v},
    \end{align*}
 therefore by using (\fcolorbox{red}{ white}{\ref{N501}}), we obtain that
    \begin{align*}
      m(v,\xi)^{2}&\gtrsim\int_{\mathbb{R}^{3}}(1+{\langle v+\tilde{v} \rangle}^{\gamma/3}\vert \xi\vert^{2/3}+ {\langle v+\tilde{v} \rangle}^{\gamma/3}\vert  (v+\tilde{v})\wedge\xi\vert^{2/3})\pi^{-3}e^{-\vert \tilde{v}\vert^{2}}\hspace{0.1cm}d\tilde{v},
\end{align*}
and using Peetre's inequality (\fcolorbox{red}{ white}{\ref{2000}}), we have 
$$\frac{\langle v \rangle^{\gamma/3}}{\langle \tilde{v}  \rangle^{\gamma/3}}\lesssim {\langle v+\tilde{v} \rangle}^{{\gamma/3}},$$ 
so we get 
\begin{align*}
     m(v,\xi)^{2}& \gtrsim 1+{\langle v \rangle}^{\gamma/3}\vert \xi\vert^{2/3}+\int_{\mathbb{R}^{3}} \frac{\langle v \rangle^{\gamma/3}}{\langle \tilde{v} \rangle^{\gamma/3}}\vert  (v+\tilde{v})\wedge\xi\vert^{2/3}\pi^{-3}e^{-\vert \tilde{v}\vert^{2}}\hspace{0.1cm}d\tilde{v}\\
     &  \gtrsim 1+{\langle v \rangle}^{\gamma/3}\vert \xi\vert^{2/3}+\int_{\overline{B(0,1)}} \frac{\langle v \rangle^{\gamma/3}}{\langle \tilde{v} \rangle^{\gamma/3}}\vert  (v+\tilde{v})\wedge\xi\vert^{2/3}\pi^{-3}e^{-\vert \tilde{v}\vert^{2}}\hspace{0.1cm}d\tilde{v}
\end{align*}
where $\overline{B(0,1)}$ stands for the closed unit ball in $\mathbb{R}^{3}$. By noticing that we have
$$\vert  (v+\tilde{v})\wedge\xi\vert\geq \vert  v\wedge\xi\vert-\vert \tilde{v}\wedge\xi\vert\geq \vert  v\wedge\xi\vert-\vert\xi\vert\geq \frac{1}{2}\vert  v\wedge\xi\vert,$$
when $\vert \tilde{v} \vert\leq 1$ and $2\vert  \xi\vert\leq \vert  v\wedge\xi\vert$, it follows that
$$\int_{\mathbb{R}^{3}} \frac{\langle v \rangle^{\gamma/3}}{\langle \tilde{v} \rangle^{\gamma/3}}\vert  (v+\tilde{v})\wedge\xi\vert^{2/3}\pi^{-3}e^{-\vert \tilde{v}\vert^{2}}\hspace{0.1cm}d\tilde{v} \gtrsim {\langle v \rangle^{\gamma/3}}\vert  v\wedge\xi\vert^{2/3},$$
when $2\vert  \xi\vert\leq \vert  v\wedge\xi\vert$. Since 
$${\langle v \rangle^{\gamma/3}}\vert  \xi\vert^{2/3} \gtrsim {\langle v \rangle^{\gamma/3}}\vert  v\wedge\xi\vert^{2/3}$$ when $ 2\vert  \xi\vert\geq \vert  v\wedge\xi\vert$. Based on the above and using  (\fcolorbox{red}{ white}{\ref{N501}}), we obtain 
$$m(v,\xi)^{2} \gtrsim 1+{\langle v \rangle}^{\gamma/3}\vert \xi\vert^{2/3}+{\langle v \rangle^{\gamma/3}}\vert  v\wedge\xi\vert^{2/3}\gtrsim 1+{\vert B(v)\xi\vert}^{2/3}\gtrsim {\langle B(v)\xi\rangle}^{2/3}.$$
\end{proof}
\begin{lem}\label{N50}
For all  $u\in \mathcal{S}(\mathbb{R}^{3}_{v})$,
\begin{align}
  \Vert {\langle v \rangle}^{\frac{5\gamma}{6}+2}u \Vert_{L^{2}}^{2}+  \Vert{\langle v \rangle}^{\frac{\gamma}{3}+1} B(v)D_{v}u \Vert_{L^{2}}^{2}\lesssim \Vert{\mathcal{A}_{\xi}}u\Vert_{L^{2}}^{2} +\Vert u \Vert_{L^{2}}^{2}.
\end{align}
\end{lem}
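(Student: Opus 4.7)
The plan is to prove the estimate by a weighted energy method: pair $\mathcal{A}_\xi u$ against the real weight $\langle v \rangle^{2\gamma/3+2} u$ in $L^2(\mathbb{R}^3_v)$ and take the real part. The exponent $s=2\gamma/3+2$ is tuned so that $s/2=\gamma/3+1$ recovers the weight on $B(v)D_v u$, while the lower bound $F(v)\gtrsim \langle v\rangle^{\gamma+2}$ combined with $\langle v\rangle^{s/2}$ produces exactly $\|\langle v\rangle^{5\gamma/6+2}u\|_{L^2}^2$. Since $iv\cdot\xi$ is skew-adjoint and commutes with any real multiplier, the transport part contributes zero to the real part, so all the useful information comes from the diffusion and $F(v)$ parts.

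\textbf{Main computation.} With $s=2\gamma/3+2$, integration by parts (using $(B(v)D_v)^*B(v)D_v=-\nabla_v\cdot(\mathbf{A}(v)\nabla_v)$) together with the Leibniz identity
$$B(v)\nabla_v(\langle v\rangle^s u)=\langle v\rangle^s\, B(v)\nabla_v u+s\langle v\rangle^{s-2}B(v)v\, u$$
gives
\begin{align*}
\mathrm{Re}\bigl(\mathcal{A}_\xi u,\langle v\rangle^s u\bigr)_{L^2}=&\ \|\langle v\rangle^{\gamma/3+1}B(v)\nabla_v u\|_{L^2}^2+\|\sqrt{F(v)}\,\langle v\rangle^{\gamma/3+1}u\|_{L^2}^2\\
&\ +s\,\mathrm{Re}\bigl(B(v)\nabla_v u,\langle v\rangle^{s-2}B(v)v\, u\bigr)_{L^2}.
\end{align*}
The $F$-term dominates $\|\langle v\rangle^{5\gamma/6+2}u\|_{L^2}^2$ by the lower bound on $F$. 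Since $B(v)v=\sqrt{\ell_1(v)}\,v$ satisfies $|B(v)v|\lesssim\langle v\rangle^{\gamma/2+1}$ by Lemma \ref{N200}, Young's inequality controls the cross term by $\varepsilon\|\langle v\rangle^{\gamma/3+1}B(v)\nabla_v u\|_{L^2}^2+C_\varepsilon\|\langle v\rangle^{5\gamma/6+1}u\|_{L^2}^2$.

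\textbf{Conclusion.} By Cauchy--Schwarz together with $\langle v\rangle^{2\gamma/3+2}\le \langle v\rangle^{5\gamma/6+2}$ (valid since $\gamma\ge 0$),
$$\bigl|\mathrm{Re}\bigl(\mathcal{A}_\xi u,\langle v\rangle^{s}u\bigr)_{L^2}\bigr|\le \varepsilon\,\|\langle v\rangle^{5\gamma/6+2}u\|_{L^2}^2+C_\varepsilon\|\mathcal{A}_\xi u\|_{L^2}^2.$$
Finally, the elementary interpolation $\|\langle v\rangle^{5\gamma/6+1}u\|_{L^2}^2\le\varepsilon\|\langle v\rangle^{5\gamma/6+2}u\|_{L^2}^2+C_\varepsilon\|u\|_{L^2}^2$, obtained by splitting into $\{\langle v\rangle\le R\}$ and $\{\langle v\rangle>R\}$ with $R$ large, allows one to absorb the $\|\langle v\rangle^{5\gamma/6+2}u\|^2$ and $\|\langle v\rangle^{\gamma/3+1}B(v)\nabla_v u\|^2$ terms into the left-hand side after choosing $\varepsilon$ small enough; since $|B(v)D_v u|=|B(v)\nabla_v u|$, this yields the claim.

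\textbf{Main obstacle.} There is no genuinely hard step here: the argument is the weighted analogue of the identity $\mathrm{Re}(\mathcal{A}_\xi u,u)_{L^2}=\|B(v)\nabla_v u\|^2+\|\sqrt{F}\,u\|^2$ recorded in (\ref{N502}). The only delicate point is the exponent bookkeeping that forces the precise choice $s=2\gamma/3+2$, together with the crucial use of the hard-potential assumption $\gamma\in[0,1]$ to ensure both the growth $F(v)\gtrsim\langle v\rangle^{\gamma+2}$ and the ordering $\langle v\rangle^{2\gamma/3+2}\le\langle v\rangle^{5\gamma/6+2}$; for $\gamma<0$ the argument would break on both counts.
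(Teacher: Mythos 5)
The paper does not actually prove Lemma \ref{N50}; it is dispatched with a one-line citation to Lemma 3.3 in \cite{herau:hal-00637834}. Your argument therefore cannot be checked against a proof in this paper, but it can be checked on its own merits, and it is essentially correct: the weighted energy pairing against $\langle v\rangle^{s}u$ with $s=2\gamma/3+2$ is well chosen, the transport term $iv\cdot\xi$ drops out exactly because $\langle v\rangle^{s}$ is real and independent of $\xi$ (which is also what makes the estimate uniform in $\xi$), the Leibniz/integration-by-parts identity for the diffusion term is correct, $\|\sqrt{F}\langle v\rangle^{\gamma/3+1}u\|^{2}\gtrsim\|\langle v\rangle^{5\gamma/6+2}u\|^{2}$ follows from $F\gtrsim\langle v\rangle^{\gamma+2}$, and $|B(v)v|\lesssim\langle v\rangle^{\gamma/2+1}$ (since $v\wedge v=0$ in \eqref{N501}) is the right bound for the cross term.

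One small bookkeeping slip: the Young estimate on the commutator term gives
\begin{align*}
\Bigl|\bigl(B\nabla_{v}u,\langle v\rangle^{2\gamma/3}B(v)v\,u\bigr)_{L^{2}}\Bigr|
&\lesssim\int \langle v\rangle^{7\gamma/6+1}\,|B\nabla_{v}u|\,|u|\,dv\\
&\le\varepsilon\,\|\langle v\rangle^{\gamma/3+1}B\nabla_{v}u\|_{L^{2}}^{2}+C_{\varepsilon}\,\|\langle v\rangle^{5\gamma/6}u\|_{L^{2}}^{2},
\end{align*}
i.e.\ the residual weight is $\langle v\rangle^{5\gamma/6}$ and not $\langle v\rangle^{5\gamma/6+1}$ as you wrote; this is harmless (it is even slightly better) and your subsequent interpolation step absorbs it just the same. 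The rest of the argument — the Cauchy--Schwarz step using $\langle v\rangle^{2\gamma/3+2}\le\langle v\rangle^{5\gamma/6+2}$ for $\gamma\ge 0$, followed by absorbing the small-$\varepsilon$ contributions into the left-hand side — is sound, and the identification $|B(v)D_{v}u|=|B(v)\nabla_{v}u|$ closes the loop. In short, you supply a self-contained elementary proof where the paper only cites the Hérau--Li reference; your emphasis on where $\gamma\ge 0$ is used (both for $F\gtrsim\langle v\rangle^{\gamma+2}$ and for the exponent comparison) is a useful observation that the paper leaves implicit.
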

\begin{proof}
See Lemma 3.3 in \cite{herau:hal-00637834}.
\end{proof}
For the rest, we need to improve this estimate. We have the following Lemma. 
\begin{lem}\label{N62}
For all  $u\in \mathcal{S}(\mathbb{R}^{3}_{v})$,
\begin{align}
  \Vert {\langle v \rangle}^{\gamma+2}u \Vert_{L^{2}}^{2}+  \Vert{\langle v \rangle}^{\frac{\gamma}{2}+1} B(v)D_{v}u \Vert_{L^{2}}^{2}\lesssim \Vert{\mathcal{A}_{\xi}}u\Vert_{L^{2}}^{2} +\Vert u \Vert_{L^{2}}^{2}.
\end{align}
\end{lem}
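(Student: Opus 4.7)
The plan is to run a weighted energy estimate based on testing $\mathcal{A}_{\xi}u$ against the multiplier $\langle v\rangle^{\gamma+2}u$ in $L^{2}(\mathbb{R}^{3}_{v})$ and carefully tracking each contribution. Taking the real part, the transport piece $\mathrm{Re}(iv\cdot\xi\, u,\langle v\rangle^{\gamma+2}u)_{L^{2}}$ vanishes because the integrand is purely imaginary, while the coercive lower bound $F(v)\gtrsim \langle v\rangle^{\gamma+2}$ established earlier gives directly $\mathrm{Re}(F(v)u,\langle v\rangle^{\gamma+2}u)_{L^{2}}\gtrsim \|\langle v\rangle^{\gamma+2}u\|_{L^{2}}^{2}$. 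An integration by parts on the diffusion term $-\nabla_{v}\!\cdot\!(\mathbf{A}(v)\nabla_{v}u)$ generates $\int\langle v\rangle^{\gamma+2}|B(v)\nabla_{v}u|^{2}\,dv=\|\langle v\rangle^{\gamma/2+1}B(v)D_{v}u\|_{L^{2}}^{2}$, which is precisely the second desired term, together with a commutator remainder of the form $\int B(v)\nabla_{v}u\cdot B(v)\nabla_{v}\langle v\rangle^{\gamma+2}\,\bar{u}\,dv$.

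To control this commutator I would exploit the crucial parallelism $\nabla_{v}\langle v\rangle^{\gamma+2}=(\gamma+2)\langle v\rangle^{\gamma}v$, which is collinear with $v$ and hence makes the anisotropic term $|v\wedge\cdot|$ in the sharp bound (\ref{N501}) vanish, yielding $|B(v)\nabla_{v}\langle v\rangle^{\gamma+2}|\lesssim \langle v\rangle^{3\gamma/2+1}$. Young's inequality then splits the commutator into $\tfrac{\epsilon}{2}\int\langle v\rangle^{\gamma+2}|B(v)\nabla_{v}u|^{2}\,dv+\tfrac{C}{\epsilon}\int\langle v\rangle^{2\gamma}|u|^{2}\,dv$, the first half being absorbed into the diffusion term on the left. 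In parallel, Cauchy-Schwarz and Young bound the total pairing by $\tfrac{1}{2\delta}\|\mathcal{A}_{\xi}u\|_{L^{2}}^{2}+\tfrac{\delta}{2}\|\langle v\rangle^{\gamma+2}u\|_{L^{2}}^{2}$. Since $\gamma\in[0,1]$ the residual integral $\int\langle v\rangle^{2\gamma}|u|^{2}\,dv$ is strictly lower order than the target $\|\langle v\rangle^{\gamma+2}u\|_{L^{2}}^{2}$ and can be absorbed via the elementary pointwise inequality $\langle v\rangle^{2\gamma}\leq \tau\langle v\rangle^{2\gamma+4}+C_{\tau}$. Selecting $\epsilon,\delta,\tau$ small enough relative to the constants supplied by $F(v)\gtrsim \langle v\rangle^{\gamma+2}$ delivers the claimed estimate.

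The main technical point, and essentially the only non-routine step, is the commutator estimate. Without using both the sharp anisotropic structure of $\mathbf{A}(v)$ encoded in (\ref{N501}) and the crucial observation that $\nabla_{v}\langle v\rangle^{\gamma+2}$ has no component transverse to $v$, one would pick up an extra factor $|v|^{2}$ from the $|v\wedge\cdot|^{2}$ piece and end up with a residual term that is no longer absorbable; this is precisely what costs the extra $\gamma/2$ of weight in the weaker Lemma \ref{N50}. Here the identity $\overline{a}_{i,j}(v)v_{i}v_{j}=\ell_{1}(v)|v|^{2}$ together with $\ell_{1}(v)\sim \langle v\rangle^{\gamma}$ from Lemma \ref{N6} is used in its sharp form rather than the coarser bound $\mathbf{A}(v)\eta\cdot\eta\lesssim \langle v\rangle^{\gamma+2}|\eta|^{2}$.
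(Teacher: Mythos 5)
Your proof is correct and takes a genuinely different route from the paper's. The paper proves this lemma by conjugation: it applies the basic coercivity estimate (\ref{N502}) to the function $\langle v\rangle^{\gamma/2+1}u$, then controls the commutator $[\mathcal{A}_{\xi},\langle v\rangle^{\gamma/2+1}]$ through a sequence of bounded auxiliary operators ($H_1$, $H_2$) whose construction uses the symbol classes, and closes the argument by invoking the weaker estimate of Lemma \ref{N50} (which is imported from \cite{herau:hal-00637834}) via the inequality $\gamma+1\leq\frac{5\gamma}{6}+2$. By contrast, you run a single direct energy estimate with multiplier $\langle v\rangle^{\gamma+2}$: the transport term drops out, $F(v)\gtrsim\langle v\rangle^{\gamma+2}$ gives the zeroth-order coercivity, integration by parts on the diffusion produces the weighted gradient norm plus one explicit commutator integral $\int (B\nabla_v u)\cdot(B\nabla_v\langle v\rangle^{\gamma+2})\,\bar u\,dv$, and that commutator is controlled by Young's inequality and the pointwise interpolation $\langle v\rangle^{2\gamma}\leq\tau\langle v\rangle^{2\gamma+4}+C_\tau$. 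Your approach is self-contained -- it does not lean on Lemma \ref{N50} at all -- which is arguably cleaner; it buys transparency at the cost of an explicit integration by parts, while the paper's conjugation reuses the already-established abstract coercivity and the bootstrap machinery.

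One small remark: you present the radial parallelism of $\nabla_v\langle v\rangle^{\gamma+2}$ with $v$ (which kills the $|v\wedge\cdot|^2$ contribution in (\ref{N501}) and hence engages only the small eigenvalue $\ell_1\sim\langle v\rangle^{\gamma}$) as \emph{essential}. It is a genuine improvement -- it reduces the residual from $\int\langle v\rangle^{2\gamma+2}|u|^2$ to $\int\langle v\rangle^{2\gamma}|u|^2$ -- but it is not strictly necessary for the argument to close. Even the crude bound $|B(v)\eta|\lesssim\langle v\rangle^{\gamma/2+1}|\eta|$ (through the large eigenvalue $\ell_2$) gives $|B(v)\nabla_v\langle v\rangle^{\gamma+2}|\lesssim\langle v\rangle^{3\gamma/2+2}$, and the resulting residual $\int\langle v\rangle^{2\gamma+2}|u|^2$ is still strictly lower order than the target $\int\langle v\rangle^{2\gamma+4}|u|^2$, hence absorbable by the same interpolation. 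So the sharp eigenvalue structure is a welcome refinement rather than the linchpin of the proof; the statement in your last paragraph that without it the residual is ``no longer absorbable'' is not accurate. That said, the overall estimate and the steps you carry out are sound.
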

\begin{proof}
We will start by estimating the term
$$\left(\left[ {\mathcal{A}_{\xi}},{\langle v \rangle}^{\frac{\gamma}{2}+1}\right]u,{\langle v \rangle}^{\frac{\gamma}{2}+1}u\right)_{L^{2}}.$$
Let $u\in \mathcal{S}(\mathbb{R}^{3}_{v})$, we have
\begin{align*}
    \left|\left(\left[ {\mathcal{A}_{\xi}},{\langle v \rangle}^{\frac{\gamma}{2}+1}\right]u,{\langle v \rangle}^{\frac{\gamma}{2}+1}u\right)_{L^{2}}\right|\leq   &\left|\left( B(v)\left[D_{v} ,{\langle v \rangle}^{\frac{\gamma}{2}+1}\right]u,B(v)D_{v}{\langle v \rangle}^{\frac{\gamma}{2}+1}u\right)_{L^{2}}\right|\\
    &+\left| \left( B(v)D_{v}u, B(v)\left[D_{v} ,{\langle v \rangle}^{\frac{\gamma}{2}+1}\right]{\langle v \rangle}^{\frac{\gamma}{2}+1}u\right)_{L^{2}}\right|\\\leq   &\left|\left( B(v)\left[D_{v} ,{\langle v \rangle}^{\frac{\gamma}{2}+1}\right]u, B(v)\left[D_{v} ,{\langle v \rangle}^{\frac{\gamma}{2}+1}\right]u\right)_{L^{2}}\right|\\&+ \left|\left( B(v)\left[D_{v} ,{\langle v \rangle}^{\frac{\gamma}{2}+1}\right]u, {\langle v \rangle}^{\frac{\gamma}{2}+1}B(v)D_{v} u\right)_{L^{2}}\right|\\ &+\left| \left( B(v)D_{v}u, B(v)\left[D_{v} ,{\langle v \rangle}^{\frac{\gamma}{2}+1}\right]{\langle v \rangle}^{\frac{\gamma}{2}+1}u\right)_{L^{2}}\right|,
\end{align*}
using the fact that the symbol of the operator $\left[D_{v} ,{\langle v \rangle}^{\frac{\gamma}{2}+1}\right]$ belongs to  the class
$ S({{\langle v \rangle}^{\frac{\gamma}{2}}},\Gamma)$
uniformly with respect to the parameter $\xi$ and using the following notations
$$H_{1}=\underbrace{B(v)\left[D_{v} ,{\langle v \rangle}^{\frac{\gamma}{2}+1}\right]{\langle v \rangle}^{-\gamma-1}}_{\in\mathcal{B}(L^{2})},$$
 $$H_{2}=\underbrace{{\langle v \rangle}^{-\frac{\gamma}{2}-1}B(v)\left[D_{v} ,{\langle v \rangle}^{\frac{\gamma}{2}+1}\right]{\langle v \rangle}^{-\frac{\gamma}{2}}}_{\in\mathcal{B}(L^{2})},$$
we obtain 
\begin{align*}
    \left|\left(\left[ {\mathcal{A}_{\xi}},{\langle v \rangle}^{\frac{\gamma}{2}+1}\right]u,{\langle v \rangle}^{\frac{\gamma}{2}+1}u\right)_{L^{2}}\right|\leq   &\left|\left( H_{1}{\langle v \rangle}^{\gamma+1}u,H_{1}{\langle v \rangle}^{\gamma+1}u\right)_{L^{2}}\right|\\
   &+ \left|\left(H_{1}{\langle v \rangle}^{\gamma+1}u, {\langle v \rangle}^{\frac{\gamma}{2}+1}B(v)D_{v} u\right)_{L^{2}}\right|\\ &+\left| \left( {\langle v \rangle}^{\frac{\gamma}{2}+1} B(v)D_{v}u, H_{2}{\langle v \rangle}^{\frac{\gamma}{2}+1} u\right)_{L^{2}}\right|,
\end{align*}
using the fact that $\gamma+1\leq 2+\frac{5\gamma}{6}$ and Lemma \fcolorbox{red}{ white}{\ref{N50}}, we obtain for all $\varepsilon>0$
\begin{align*}
    \left|\left(\left[ {\mathcal{A}_{\xi}},{\langle v \rangle}^{\frac{\gamma}{2}+1}\right]u,{\langle v \rangle}^{\frac{\gamma}{2}+1}u\right)_{L^{2}}\right|\lesssim \varepsilon( \Vert {\langle v \rangle}^{\gamma+2}u \Vert_{L^{2}}^{2}+  &\Vert{\langle v \rangle}^{\frac{\gamma}{2}+1} B(v)D_{v}u \Vert_{L^{2}}^{2})\\&+C_{\varepsilon}(\Vert{\mathcal{A}_{\xi}}u\Vert_{L^{2}}^{2} +\Vert u \Vert_{L^{2}}^{2}).
    \end{align*}
   On the other hand, we have
    \begin{align*}
     \left|\left({\mathcal{A}_{\xi}}{\langle v \rangle}^{\frac{\gamma}{2}+1}u,{\langle v \rangle}^{\frac{\gamma}{2}+1}u\right)_{L^{2}}\right|\leq  \left|\left({\mathcal{A}_{\xi}}u,{\langle v \rangle}^{\gamma+2}u\right)_{L^{2}}\right|+    \left|\left(\left[ {\mathcal{A}_{\xi}},{\langle v \rangle}^{\frac{\gamma}{2}+1}\right]u,{\langle v \rangle}^{\frac{\gamma}{2}+1}u\right)_{L^{2}}\right|
        \end{align*}
        and
          \begin{align*}
     \left|\left({\mathcal{A}_{\xi}}u,{\langle v \rangle}^{\gamma+2}u\right)_{L^{2}}\right|\leq \varepsilon \Vert {\langle v \rangle}^{\gamma+2}u \Vert_{L^{2}}^{2}+C_{\varepsilon}\Vert{\mathcal{A}_{\xi}}u\Vert_{L^{2}}^{2}.
        \end{align*}
        Now using (\fcolorbox{red}{ white}{\ref{N502}}), we can write
        \begin{align*}
            &\Vert {\langle v \rangle}^{\gamma+2}u \Vert_{L^{2}}^{2}+  \Vert{\langle v \rangle}^{\frac{\gamma}{2}+1} B(v)D_{v}u \Vert_{L^{2}}^{2}\\
            &\leq \Vert {\langle v \rangle}^{\frac{\gamma}{2}+1}{\langle v \rangle}^{\frac{\gamma}{2}+1}u \Vert_{L^{2}}^{2}+  \Vert B(v)D_{v}{\langle v \rangle}^{\frac{\gamma}{2}+1}u \Vert_{L^{2}}^{2}+\Vert B(v)\left[D_{v},{\langle v \rangle}^{\frac{\gamma}{2}+1}\right]u \Vert_{L^{2}}^{2}\\
            &\leq   \left|\left({\mathcal{A}_{\xi}}{\langle v \rangle}^{\frac{\gamma}{2}+1}u,{\langle v \rangle}^{\frac{\gamma}{2}+1}u\right)_{L^{2}}\right|
            +\Vert B(v)\left[D_{v},{\langle v \rangle}^{\frac{\gamma}{2}+1}\right]u \Vert_{L^{2}}^{2}.
        \end{align*}
      For the last term, we have
        \begin{align*}
        \Vert B(v)\left[D_{v},{\langle v \rangle}^{\frac{\gamma}{2}+1}\right]u \Vert_{L^{2}}^{2}\leq   \Vert H_{1}{\langle v \rangle}^{\gamma+1}u \Vert_{L^{2}}^{2}\lesssim \varepsilon \Vert {\langle v \rangle}^{\gamma+2}u \Vert_{L^{2}}^{2}+C_{\varepsilon}(\Vert{\mathcal{A}_{\xi}}u\Vert_{L^{2}}^{2}+\Vert u\Vert_{L^{2}}^{2}).
          \end{align*}
          Finally, taking $\varepsilon $ small enough, we obtain
          for all $u\in \mathcal{S}(\mathbb{R}^{3}_{v})$,
\begin{align}
  \Vert {\langle v \rangle}^{\gamma+2}u \Vert_{L^{2}}^{2}+  \Vert{\langle v \rangle}^{\frac{\gamma}{2}+1} B(v)D_{v}u \Vert_{L^{2}}^{2}\lesssim \Vert{\mathcal{A}_{\xi}}u\Vert_{L^{2}}^{2} +\Vert u \Vert_{L^{2}}^{2}.
\end{align}
\end{proof}
\begin{prop}\label{N71}
There exists $C>0$ such that for all $u\in \mathcal{S}(\mathbb{R}^{3}_{v})$,
$$
\Vert{\langle v \rangle}^{\frac{\gamma}{3}}{\vert \xi \vert}^{2/3}u\Vert_{L^{2}}^{2}\leq C\big( \Vert{\mathcal{A}_{\xi}}u\Vert_{L^{2}}^{2} +\Vert u \Vert_{L^{2}}^{2}\big),
$$
uniformly with respect to the parameter   $\xi$ \text{in} $\mathbb{R}^{3}$.
\end{prop}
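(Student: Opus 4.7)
The strategy is to derive the estimate from Proposition \ref{N51} by combining it with pointwise symbol bounds and the sharp Wick-to-multiplication comparison provided by Lemma \ref{P1}. The key observation is that the symbol $\lambda^{2/3}$ dominates pointwise the symbol $\langle B(v)\xi\rangle^{2/3}$, and this latter quantity in turn dominates the weight $\langle v\rangle^{\gamma/3}|\xi|^{2/3}$ appearing in the conclusion.

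First, I would bound the right-hand side of Proposition \ref{N51} by $C(\|\mathcal{A}_\xi u\|_{L^2}^2 + \|u\|_{L^2}^2)$ by applying the Cauchy--Schwarz and Young inequalities, together with Lemma \ref{P8} (with $s=0$) to control the norm of $(1-\delta G)u$. This yields
\[
((\lambda^{2/3})^{\text{Wick}} u, u)_{L^2} \leq C\bigl(\|\mathcal{A}_\xi u\|_{L^2}^2 + \|u\|_{L^2}^2\bigr).
\]

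Next, from the definition (\ref{N20}) of $\lambda$ and the identity (\ref{N501}), we have $\lambda^2 \gtrsim 1 + \langle v\rangle^\gamma(|\xi|^2 + |v\wedge\xi|^2) \gtrsim 1 + |B(v)\xi|^2$, hence $\lambda^{2/3} \gtrsim \langle B(v)\xi\rangle^{2/3}$ pointwise as non-negative symbols. Since Wick quantization is positivity-preserving (see (\ref{P7})), this transfers to the operator inequality, and we obtain
\[
\bigl((\langle B(v)\xi\rangle^{2/3})^{\text{Wick}} u, u\bigr)_{L^2} \leq C\bigl(\|\mathcal{A}_\xi u\|^2 + \|u\|^2\bigr).
\]

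The crucial step is to pass from this inner-product bound to the $L^2$-norm squared. Because the symbol $\langle B(v)\xi\rangle^{2/3}$ depends only on $v$ (with $\xi$ a parameter), its Wick quantization acts as multiplication by its Gaussian-smoothed version, which (up to universal constants) coincides with $m(v,\xi)^2$ from Lemma \ref{P1}. Using the pointwise comparison $\langle B(v)\xi\rangle^{2/3} \leq C^2\, m(v,\xi)^2$ provided by Lemma \ref{P1}, we then get
\[
\|\langle B(v)\xi\rangle^{1/3} u\|_{L^2}^2 = \int_{\mathbb{R}^3} \langle B(v)\xi\rangle^{2/3} |u(v)|^2\, dv \leq C\bigl(\|\mathcal{A}_\xi u\|^2 + \|u\|^2\bigr).
\]
Finally, the pointwise lower bound $\langle B(v)\xi\rangle^{2/3} \gtrsim \langle v\rangle^{\gamma/3}|\xi|^{2/3}$ from (\ref{N501}) applied under the integral yields the claimed estimate.

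The main technical obstacle lies in step 3, namely the identification of the Wick-quantized $\langle B(v)\xi\rangle^{2/3}$ with the functional $m(v,\xi)^2$ of Lemma \ref{P1}, which requires a direct computation with the Gaussian kernel underlying the Wick quantization and careful handling of the lower-order remainders arising in the Wick-to-Weyl conversion, closely paralleling the technique used in \cite{herau_anisotropic_2011}.
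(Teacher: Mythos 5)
The proposal has a genuine gap in the final two steps, and the route it takes cannot reach the stated estimate without an additional bootstrapping argument.

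Your steps 1–3 (bounding the right side of Proposition~\ref{N51} by $C(\Vert\mathcal{A}_\xi u\Vert^2 + \Vert u\Vert^2)$, using positivity of Wick quantization and the pointwise bound $\lambda^{2/3}\gtrsim\langle B(v)\xi\rangle^{2/3}$, then identifying the Wick quantization of the $v$-only symbol with multiplication by $m(v,\xi)^2$ and invoking Lemma~\ref{P1}) do, at best, produce
$$\Vert \langle B(v)\xi\rangle^{1/3} u\Vert_{L^2}^2 = \int_{\mathbb{R}^3}\langle B(v)\xi\rangle^{2/3}\,|u(v)|^2\,dv \;\lesssim\; \Vert\mathcal{A}_\xi u\Vert_{L^2}^2 + \Vert u\Vert_{L^2}^2.$$
But the proposition claims control of $\Vert\langle v\rangle^{\gamma/3}|\xi|^{2/3} u\Vert_{L^2}^2 = \int \langle v\rangle^{2\gamma/3}|\xi|^{4/3}|u|^2\,dv$. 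Since $\langle B(v)\xi\rangle^{2/3}\gtrsim \langle v\rangle^{\gamma/3}|\xi|^{2/3}$ (this is the correct reading of (\ref{N501})), the inequality you have only gives $\int \langle v\rangle^{\gamma/3}|\xi|^{2/3}|u|^2\,dv\lesssim\ldots$, i.e.\ $\Vert\langle v\rangle^{\gamma/6}|\xi|^{1/3}u\Vert^2\lesssim\ldots$, which is weaker than the claim by a factor of two in all exponents. To get $\langle v\rangle^{2\gamma/3}|\xi|^{4/3}$ under the integral you would need the symbol $\langle B(v)\xi\rangle^{4/3}$, not $\langle B(v)\xi\rangle^{2/3}$, and that power is not available from Proposition~\ref{N51} directly.

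The missing ingredient is the power-doubling bootstrap used in the paper. There one keeps the weight distributed across the pairing (using Lemma~\ref{P8} with $s=1/3$, not $s=0$) so as to obtain the sharper intermediate estimate $\Vert\langle B(v)\xi\rangle^{1/3}u\Vert_{L^2}\lesssim \Vert\langle B(v)\xi\rangle^{-1/3}\mathcal{A}_\xi u\Vert_{L^2}$, and then substitutes $\langle B(v)\xi\rangle^{1/3}u$ for $u$; this raises the left side to $\Vert\langle B(v)\xi\rangle^{2/3}u\Vert_{L^2}$. The price is the commutator $\langle B(v)\xi\rangle^{-1/3}[\mathcal{A}_\xi,\langle B(v)\xi\rangle^{1/3}]$, which involves the second-order part $D_v\cdot\mathbf{A}(v)D_v$ and whose pieces are controlled by the improved weighted estimate in Lemma~\ref{N62} (which you do not use at all, but which is essential precisely for this purpose). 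Without this substitution-plus-commutator step — or some equivalent mechanism — the argument stalls at the one-third power of the weight and cannot reach the stated two-thirds power.
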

\begin{proof}
We deduce from Proposition \fcolorbox{red}{ white}{\ref{N51}}, Lemma  \fcolorbox{red}{ white}{\ref{P1}} and Lemma  \fcolorbox{red}{ white}{\ref{P8}} that
$$\Vert{\langle B(v)\xi\rangle}^{1/3} u\Vert_{L^{2}}
\lesssim{\Vert{\langle B(v)\xi\rangle}^{-1/3}\mathcal{A}_{\xi}}u\Vert_{L^{2}}^{2}\Vert{\langle B(v)\xi\rangle}^{1/3} u \Vert_{L^{2}} ,
$$
uniformly with respect to the parameter   $\xi$ \text{in} $\mathbb{R}^{3}$. By
substituting ${\langle B(v)\xi\rangle}^{1/3}u$ to $u$ in this estimate, we obtain that
\begin{align}\label{N60}
\Vert{\langle B(v)\xi\rangle}^{2/3} u\Vert_{L^{2}}^{2}
\lesssim{\Vert{\langle B(v)\xi\rangle}^{-1/3}\mathcal{A}_{\xi}}{\langle B(v)\xi\rangle}^{1/3}u\Vert_{L^{2}}\Vert{\langle B(v)\xi\rangle}^{2/3} u \Vert_{L^{2}},
\end{align}
uniformly with respect to the parameter   $\xi$ \text{in} $\mathbb{R}^{3}$.
First, we have
\begin{align*}
&{\langle B(v)\xi\rangle}^{-1/3}\left[D_{v}\cdot \mathbf{A}(v)D_{v},{\langle B(v)\xi\rangle}^{1/3}\right]\\
&={\langle B(v)\xi\rangle}^{-1/3} \sum_{j,k=1}^{3}D_{v_{j}}\cdot\overline{a}_{jk}(v) \left[D_{v_{k}},{\langle B(v)\xi\rangle}^{1/3} \right]\\&\hspace{6.5cm}+{\langle B(v)\xi\rangle}^{-1/3} \sum_{j,k=1}^{3}\left[D_{v_{j}},{\langle B(v)\xi\rangle}^{1/3} \right]\cdot\overline{a}_{jk}(v) D_{v_{k}}
\\&={\langle B(v)\xi\rangle}^{-1/3} \sum_{j,k=1}^{3}D_{v_{j}}\overline{a}_{jk}(v) \left[D_{v_{k}},{\langle B(v)\xi\rangle}^{1/3} \right]\\&\hspace{6.5cm}+{\langle B(v)\xi\rangle}^{-1/3} \sum_{j,k=1}^{3}\overline{a}_{jk}(v)\left[D_{v_{k}},{\langle B(v)\xi\rangle}^{1/3} \right]D_{v_{j}}\\
&+{\langle B(v)\xi\rangle}^{-1/3} \sum_{j,k=1}^{3}\overline{a}_{jk}(v)D_{v_{j}}\big(\left[D_{v_{k}},{\langle B(v)\xi\rangle}^{1/3} \right]\big)\\
&\hspace{6.5cm}+{\langle B(v)\xi\rangle}^{-1/3} \sum_{j,k=1}^{3}\left[D_{v_{j}},{\langle B(v)\xi\rangle}^{1/3} \right]\cdot\overline{a}_{jk}(v) D_{v_{k}}.
\end{align*}
Symbolic calculus shows that $$\left[D_{v} ,{\langle B(v)\xi\rangle}^{1/3}\right]=\frac{1}{i}\nabla_{v}({\langle B(v)\xi\rangle}^{1/3}),$$
and using the following notations
$$H_{3}=\underbrace{{\langle B(v)\xi\rangle}^{-1/3} \partial_{v_{j}}\overline{a}_{jk}(v) \partial_{v_{k}}{\langle B(v)\xi\rangle}^{1/3} {\langle v \rangle}^{-\gamma-1}}_{\in\mathcal{B}(L^{2})},$$ 
$$H_{4}=\underbrace{{\langle B(v)\xi\rangle}^{-1/3} \overline{a}_{jk}(v) \partial_{v_{j}}\partial_{v_{k}}{\langle B(v)\xi\rangle}^{1/3} {\langle v \rangle}^{-\gamma-2}}_{\in\mathcal{B}(L^{2})},$$
$$H_{5}=\underbrace{{\langle B(v)\xi\rangle}^{-1/3} {b}_{ik}(v) \partial_{v_{k}}{\langle B(v)\xi\rangle}^{1/3} {\langle v \rangle}^{-\frac{\gamma}{2}-1}}_{\in\mathcal{B}(L^{2})},$$
$$H_{6}=\underbrace{{\langle B(v)\xi\rangle}^{-1/3} {b}_{ij}(v) \partial_{v_{j}}{\langle B(v)\xi\rangle}^{1/3} {\langle v \rangle}^{-\frac{\gamma}{2}-1}}_{\in\mathcal{B}(L^{2})},$$
where we used the fact that
$\mathbf{A}(v)= {B}^{\text{T}}(v){B}(v)$.
Then, going back to (\fcolorbox{red}{ white}{\ref{N60}}), we have
\begin{align*}
&\Vert{\langle B(v)\xi\rangle}^{2/3} u\Vert_{L^{2}}^{2}\\
\lesssim\hspace{0.1cm} &{\Vert{\langle B(v)\xi\rangle}^{-1/3}\mathcal{A}_{\xi}}{\langle B(v)\xi\rangle}^{1/3}u\Vert_{L^{2}}\Vert{\langle B(v)\xi\rangle}^{2/3} u \Vert_{L^{2}},\\
\lesssim \hspace{0.1cm} &\Vert{\langle B(v)\xi\rangle}^{-1/3}\left[\mathcal{A}_{\xi},{\langle B(v)\xi \rangle}^{1/3}\right]u\Vert_{L^{2}}^{2}+\Vert{\mathcal{A}_{\xi}}u \Vert_{L^{2}}^{2},\\
\lesssim \hspace{0.1cm} & \Vert{\mathcal{A}_{\xi}}u \Vert_{L^{2}}^{2}+\Vert H_{3}{\langle v \rangle}^{\gamma+1}u \Vert_{L^{2}}^{2}+\Vert H_{4}{\langle v \rangle}^{\gamma+2}u \Vert_{L^{2}}^{2}\\+&\sum_{i=1}^{3}\Vert H_{5}\big(\sum_{j=1}^{3}{\langle v \rangle}^{\frac{\gamma}{2}+1}{b}_{ij}(v)D_{v_{j}}u\big) \Vert_{L^{2}}^{2}+\sum_{i=1}^{3}\Vert H_{6}\big(\sum_{k=1}^{3}{\langle v \rangle}^{\frac{\gamma}{2}+1}{b}_{ik}(v)D_{v_{k}}u\big) \Vert_{L^{2}}^{2},\\
\lesssim \hspace{0.1cm} &\Vert{\mathcal{A}_{\xi}}u \Vert_{L^{2}}^{2}+\Vert{\langle v \rangle}^{\gamma+1}u \Vert_{L^{2}}^{2}+\Vert {\langle v \rangle}^{\gamma+2}u \Vert_{L^{2}}^{2}+\Vert {\langle v \rangle}^{\frac{\gamma}{2}+1}B(v)D_{v}u \Vert_{L^{2}}^{2}
\end{align*}
We finally conclude from Lemma \fcolorbox{red}{ white}{\ref{N62}} that for  all $u\in \mathcal{S}(\mathbb{R}^{3}_{v})$,
\begin{align*}
\Vert{\langle B(v)\xi\rangle}^{2/3} u\Vert_{L^{2}}^{2}
\lesssim \Vert{\mathcal{A}_{\xi}}u\Vert_{L^{2}}^{2} +\Vert u \Vert_{L^{2}}^{2}.
\end{align*}
This ends the proof of Proposition  \fcolorbox{red}{ white}{\ref{N71}}.
\end{proof}
 \begin{lem}
 Let  $\lambda_{K}$ be the symbol defined in (\fcolorbox{red}{ white}{\ref{N52}}). Then for any  $\Tilde{\varepsilon}>0$ there exists a constant $C_{\Tilde{\varepsilon}}$,  such that for all $u\in \mathcal{S}(\mathbb{R}^{3}_{v})$,
 \begin{align}\label{N83}
    \begin{split}
    &\Re{e}\left(\mathcal{A}_{\xi}\big(\lambda_{K}^{1/3}\big)^{w}u,\big(\lambda_{K}^{1/3}\big)^{w}u\right)_{L^{2}}+\Re{e}\left(\mathcal{A}_{\xi}\big(\lambda_{K}^{1/3}\big)^{w}u,(1-\delta G)\big(\lambda_{K}^{1/3}\big)^{w}u\right)_{L^{2}}\\&\lesssim \Tilde{\varepsilon}\Vert\big(\lambda_{K}^{2/3}\big)^{w}u \Vert_{L^{2}}^{2}
     +C_{\Tilde{\varepsilon}}\big(\Vert\mathcal{A}_{\xi}u\Vert_{L^{2}}^{2} +\Vert u \Vert_{L^{2}}^{2}\big).
     \end{split}
 \end{align}
 \end{lem}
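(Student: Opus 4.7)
The plan is to use the weight $(\lambda_{K}^{1/3})^{w}$ as a conjugating multiplier, moving it onto $\mathcal{A}_{\xi} u$ by self-adjointness and then controlling the commutator $[\mathcal{A}_{\xi}, (\lambda_{K}^{1/3})^{w}]$ through the Weyl--Hörmander calculus developed in Section~\ref{W1} together with the robust properties ensured by Theorem~\ref{M36}.

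Since the symbol $\lambda_{K}^{1/3}$ is real, the operator $(\lambda_{K}^{1/3})^{w}$ is formally self-adjoint, hence
\[
\Re{e}\bigl(\mathcal{A}_{\xi} (\lambda_{K}^{1/3})^{w} u,\, (\lambda_{K}^{1/3})^{w} u\bigr)_{L^{2}} = \Re{e}\bigl(\mathcal{A}_{\xi} u,\, (\lambda_{K}^{1/3})^{w} (\lambda_{K}^{1/3})^{w} u\bigr)_{L^{2}} + \Re{e}\bigl([\mathcal{A}_{\xi}, (\lambda_{K}^{1/3})^{w}] u,\, (\lambda_{K}^{1/3})^{w} u\bigr)_{L^{2}}.
\]
Theorem~\ref{M36} applied to the admissible weight $\lambda_{K}$ yields the composition $(\lambda_{K}^{1/3})^{w} (\lambda_{K}^{1/3})^{w} = (\lambda_{K}^{2/3})^{w} + r_{0}^{w}$ with $r_{0}^{w}$ a remainder of strictly lower order. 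Cauchy--Schwarz and Young's inequality then produce
\[
\bigl|\Re{e}\bigl(\mathcal{A}_{\xi} u,\, (\lambda_{K}^{2/3})^{w} u + r_{0}^{w} u\bigr)_{L^{2}}\bigr| \leq \tfrac{\tilde{\varepsilon}}{2}\Vert(\lambda_{K}^{2/3})^{w} u\Vert_{L^{2}}^{2} + C_{\tilde{\varepsilon}}\bigl(\Vert\mathcal{A}_{\xi} u\Vert_{L^{2}}^{2} + \Vert u\Vert_{L^{2}}^{2}\bigr).
\]

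Next, I decompose $\mathcal{A}_{\xi} = iv\cdot\xi + (B(v)\nabla_{v})^{*}\cdot B(v)\nabla_{v} + F(v)$ and treat $[\mathcal{A}_{\xi}, (\lambda_{K}^{1/3})^{w}]$ term by term. The commutator with $iv\cdot\xi$ is the Weyl quantization of $\xi\cdot\partial_{\eta}\lambda_{K}^{1/3}$, which by Lemma~\ref{N27} lies in $S(\lambda_{K}^{1/3},\Gamma)$ and is absorbed by $\tilde{\varepsilon}\Vert(\lambda_{K}^{2/3})^{w} u\Vert_{L^{2}}^{2}$ after another Cauchy--Schwarz. The commutator with $F(v)$ produces, via symbolic calculus and Lemmas~\ref{N80} and~\ref{M80}, a symbol of order strictly lower than $2/3$ in $(v,\eta)$, absorbed through the a priori bounds. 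The hardest piece is $[(B(v)\nabla_{v})^{*}\cdot B(v)\nabla_{v}, (\lambda_{K}^{1/3})^{w}]$, which I split as
\[
(B(v)\nabla_{v})^{*}\bigl[B(v)\nabla_{v}, (\lambda_{K}^{1/3})^{w}\bigr] + \bigl[(B(v)\nabla_{v})^{*}, (\lambda_{K}^{1/3})^{w}\bigr] B(v)\nabla_{v};
\]
each factor is estimated by extracting $v$-weights of the form $\langle v\rangle^{\gamma/2+1}$ and $\langle v\rangle^{\gamma+2}$, and absorbing them through Lemma~\ref{N62} and Proposition~\ref{N71}, which bound $\Vert\langle v\rangle^{\gamma+2} u\Vert_{L^{2}}^{2} + \Vert\langle v\rangle^{\gamma/2+1} B(v) D_{v} u\Vert_{L^{2}}^{2}$ by $\Vert\mathcal{A}_{\xi} u\Vert_{L^{2}}^{2} + \Vert u\Vert_{L^{2}}^{2}$.

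For the second inner product involving $(1-\delta G)$, the same scheme applies: I first pass $(\lambda_{K}^{1/3})^{w}$ through by self-adjointness, then combine the $L^{2}$-boundedness of $G = g^{\text{Wick}}$ (Lemma~\ref{P5}) with Lemma~\ref{P8} (taken with $s=0$) to dominate $\Vert(1-\delta G)(\lambda_{K}^{1/3})^{w} u\Vert_{L^{2}}$ by $\Vert(\lambda_{K}^{1/3})^{w} u\Vert_{L^{2}}$ uniformly in $\delta\in(0,1]$, after which the estimates of the previous paragraph close the argument. The main obstacle is precisely the commutator with the diffusion part, which a priori produces symbols of order $1+1/3=4/3$, exactly matching the target order and leaving no slack; the key saving is to pair each $v$-derivative of $\lambda_{K}^{1/3}$ (which drops one power of $\langle v\rangle$ by Lemma~\ref{N26}) with a factor of $B(v)\nabla_{v}$ or a $v$-weight acting on $u$, so that the resulting quantity is one of those controlled by Lemma~\ref{N62}.
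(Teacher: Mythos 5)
Your overall scheme (expand $\Re{e}(\mathcal{A}_{\xi}(\lambda_K^{1/3})^w u,\cdot)$ through the commutator $[\mathcal{A}_{\xi},(\lambda_K^{1/3})^w]$, treat the three pieces of $\mathcal{A}_{\xi}$ separately, and use the pseudo-differential calculus of Theorem~\ref{M36}) is indeed the route the paper takes. However, there is a genuine gap in the treatment of the diffusion-part commutator, and it is precisely the point that makes this lemma nontrivial. When you write $(B(v)\nabla_{v})^{*}[B(v)\nabla_{v},(\lambda_K^{1/3})^w]$ and pair it with $(\lambda_K^{1/3})^w u$ (or $(1-\delta G)(\lambda_K^{1/3})^w u$), an integration by parts puts $B(v)\nabla_{v}(\lambda_K^{1/3})^w u$ on the other side of the inner product. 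The quantity $\Vert B(v)\nabla_{v}(\lambda_K^{1/3})^w u\Vert_{L^2}^2$ is \emph{not} bounded by $\Vert\mathcal{A}_{\xi} u\Vert^2+\Vert u\Vert^2$ nor by $\Vert(\lambda_K^{2/3})^w u\Vert^2$: the only available control is $\Vert B(v)\nabla_{v}(\lambda_K^{1/3})^w u\Vert_{L^2}^2\le\Re{e}\bigl(\mathcal{A}_{\xi}(\lambda_K^{1/3})^w u,(\lambda_K^{1/3})^w u\bigr)_{L^2}$ from (\fcolorbox{red}{white}{\ref{N502}}), i.e.\ exactly the quantity being estimated. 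Likewise, another sub-piece of the commutator produces terms such as $\Vert\langle v\rangle^{\gamma/6}D_v(\lambda_K^{1/3})^w u\Vert_{L^2}^2+\Vert\langle v\rangle^{\gamma/6}|\xi|^{1/3}(\lambda_K^{1/3})^w u\Vert_{L^2}^2$, which cannot be reduced to norms of $u$ by Lemma~\ref{N62} or Proposition~\ref{N71}; they require Proposition~\ref{N51} applied with $(\lambda_K^{1/3})^w u$ in place of $u$, which brings in \emph{both} summands of the left-hand side, including the one with $(1-\delta G)$.

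This is why the paper does not (and cannot) estimate the two inner products separately as you propose; it proves the auxiliary bound (\fcolorbox{red}{white}{\ref{N84}}), of the form
\[
\Re{e}\bigl(\bigl[\mathcal{A}_{\xi},(\lambda_K^{1/3})^w\bigr]u,\;a^w(\lambda_K^{1/3})^w u\bigr)_{L^2}
\lesssim \Tilde{\varepsilon}\Vert(\lambda_K^{2/3})^w u\Vert_{L^2}^2 + C_{\varepsilon,\Tilde{\varepsilon}}\bigl(\Vert\mathcal{A}_{\xi}u\Vert_{L^2}^2+\Vert u\Vert_{L^2}^2\bigr)
+\varepsilon\,\bigl\{\text{LHS of (\ref{N83})}\bigr\},
\]
applied with $a=1$ and $a=1-\delta\Tilde{g}$, and then closes by taking $\varepsilon$ small enough to absorb the $\varepsilon\{\text{LHS}\}$ terms on the left. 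Your proposal as written has no such absorption step, and consequently the claimed bound ``$\lesssim\Tilde{\varepsilon}\Vert(\lambda_K^{2/3})^w u\Vert^2 + C_{\Tilde{\varepsilon}}(\Vert\mathcal{A}_{\xi}u\Vert^2+\Vert u\Vert^2)$'' for the diffusion commutator does not follow from Lemma~\ref{N62} and Proposition~\ref{N71} alone. Additionally, dominating $\Vert(1-\delta G)(\lambda_K^{1/3})^w u\Vert_{L^2}$ by $\Vert(\lambda_K^{1/3})^w u\Vert_{L^2}$ does not close the second inner product either, since $\mathcal{A}_{\xi}(\lambda_K^{1/3})^w u$ still sits in the other slot and the needed control again goes through Proposition~\ref{N51} for $(\lambda_K^{1/3})^w u$, hence through the same bootstrap.
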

 \begin{proof}
 As a preliminary step we firstly show that for any $\varepsilon,\Tilde{\varepsilon}>0$ there exists a constant $C_{\varepsilon,\Tilde{\varepsilon}}$, such that
   \begin{align}\label{N84}
   \begin{split}
    &\Re{e}\left(\left[ \mathcal{A}_{\xi},\big(\lambda_{K}^{1/3}\big)^{w}\right]u,a^{w}\big(\lambda_{K}^{1/3}\big)^{w}u\right)_{L^{2}}\\&\lesssim \Tilde{\varepsilon}\Vert\big(\lambda_{K}^{2/3}\big)^{w}u \Vert_{L^{2}}^{2}+C_{\varepsilon,\Tilde{\varepsilon}}\big(\Vert\mathcal{A}_{\xi}u\Vert_{L^{2}}^{2} +\Vert u \Vert_{L^{2}}^{2}\big)\\ &+\varepsilon\left\{\Re{e}\left(\mathcal{A}_{\xi}\big(\lambda_{K}^{1/3}\big)^{w}u,\big(\lambda_{K}^{1/3}\big)^{w}u\right)_{L^{2}}+\Re{e}\left(\mathcal{A}_{\xi}\big(\lambda_{K}^{1/3}\big)^{w}u,(1-\delta G)\big(\lambda_{K}^{1/3}\big)^{w}u\right)_{L^{2}}\right\} ,
     \end{split}\end{align}
  where $a$ is an arbitrary symbol belonging to $S(1,\Gamma)$ uniformly with respect to the parameter $\xi$.
   Using the notation
  $$\mathcal{Z}_{1}=\left(\left[ D_{v}\cdot\mathbf{A}(v)D_{v},\big(\lambda_{K}^{1/3}\big)^{w}\right]u, a^{w}\big(\lambda_{K}^{1/3}\big)^{w}u\right)_{L^{2}}.$$
We have
  \begin{align*}
     \mathcal{Z}_{1}&= \sum_{i,j,k=1}^{3}\left(\left[ {b}_{ik}(v)D_{v_{k}},\big(\lambda_{K}^{1/3}\big)^{w}\right]u, {b}_{ij}(v)D_{v_{j}}a^{w}\big(\lambda_{K}^{1/3}\big)^{w}u\right)_{L^{2}}\\
     &+ \sum_{i,j,k=1}^{3}\left({b}_{ik}D_{v_{k}}u, \left[ \big(\lambda_{K}^{1/3}\big)^{w},{b}_{ij}(v)D_{v_{j}}\right]a^{w}\big(\lambda_{K}^{1/3}\big)^{w}u\right)_{L^{2}}.
  \end{align*}
 Using the fact that
  $$ \left[ B(v)D_{v},\big(\lambda_{K}^{1/3}\big)^{w}\right]u=B(v)\left[ D_{v},\big(\lambda_{K}^{1/3}\big)^{w}\right]u-\left[ \big(\lambda_{K}^{1/3}\big)^{w},B(v)\right]D_{v}u,$$
 we obtain 
  \begin{align*}
  \vert \mathcal{Z}_{1} \vert &\leq 
  \left|\sum_{i,j,k=1}^{3} \left( {b}_{ik}(v)\left[D_{v_{k}},\big(\lambda_{K}^{1/3}\big)^{w}\right]u, {b}_{ij}(v)D_{v_{j}}a^{w}\big(\lambda_{K}^{1/3}\big)^{w}u\right)_{L^{2}}\right|\\
  &+ \left|\sum_{i,j,k=1}^{3} \left( \left[\big(\lambda_{K}^{1/3}\big)^{w},  {b}_{ik}(v)\right]D_{v_{k}}u, {b}_{ij}(v)D_{v_{j}}a^{w}\big(\lambda_{K}^{1/3}\big)^{w}u\right)_{L^{2}}\right|\\
  &+\left|\sum_{i,j,k=1}^{3} \left( {b}_{ik}(v)D_{v_{k}} u, {b}_{ij}(v)\left[D_{v_{j}},\big(\lambda_{K}^{1/3}\big)^{w}\right]a^{w}\big(\lambda_{K}^{1/3}\big)^{w}u\right)_{L^{2}}\right|\\
  &+\left| \sum_{i,j,k=1}^{3}\left({b}_{ik}(v)D_{v_{k}} u, \left[\big(\lambda_{K}^{1/3}\big)^{w}, {b}_{ij}(v) \right]D_{v_{j}}a^{w}\big(\lambda_{K}^{1/3}\big)^{w}u\right)_{L^{2}}\right|\\
  &\leq \vert \mathcal{Z}_{1,1} \vert + \vert \mathcal{Z}_{1,2} \vert+ \vert \mathcal{Z}_{1,3} \vert+ \vert \mathcal{Z}_{1,4} \vert.
   \end{align*}
   \underline{Estimate of  $\mathcal{Z}_{1,1}$:}
   Observing  $a\in S(1,\Gamma)$,\hspace{0.1cm} $\partial_{v}\lambda_{K}^{1/3}\in S({\langle v \rangle}^{\gamma/6}g_{2,K},\Gamma)$ and using (\fcolorbox{red}{ white}{\ref{N22}}) with symbolic calculus  shows that  
$$\left[D_{v_{j}} ,a^{w}\right]\in \Psi(1,\Gamma),\hspace{0.2cm}\left [{b}_{ij}(v),a^{w}\right] 
\in \Psi({\langle v \rangle}^{\frac{\gamma}{2}},\Gamma)\hspace{0.2cm}\text{and}\hspace{0.2cm} \left[D_{v_{k}}, \big(\lambda_{K}^{1/3}\big)^{w}\right]
\in \Psi({{\langle v \rangle}^{\frac{\gamma}{6}}}g_{2,K},\Gamma)$$
uniformly with respect to the parameter $\xi$, where $g_{2,K}^{w}$  the operator defined in (\fcolorbox{red}{ white}{\ref{N70}}).\\  Now  using the following notation
\begin{align*}
H_{7}=\underbrace{{b}_{ik}(v)\left[D_{v_{k}} ,\big(\lambda_{K}^{1/3}\big)^{w}\right]\left(g_{2,K}^{w}\right)^{-1}{\langle v \rangle}^{-\gamma/6}{\langle v \rangle}^{-\frac{\gamma}{2}-1}}_{\in\mathcal{B}(L^{2})},
\end{align*}
 we obtain
\begin{align*}
    \vert \mathcal{Z}_{1,1} \vert &\leq 
  \left| \sum_{i,j,k=1}^{3}\left(H_{7}{\langle v \rangle}^{\frac{2\gamma}{3}+1} g_{2,K}^{w}u, {b}_{ij}(v)\left[D_{v_{j}} ,a^{w}\right]{\langle v \rangle}^{-\frac{\gamma}{2}-1}{\langle v \rangle}^{\frac{\gamma}{2}+1}\big(\lambda_{K}^{1/3}\big)^{w}u\right)_{L^{2}}\right|\\
  &+ \left| \sum_{i,j,k=1}^{3}\left(H_{7}{\langle v \rangle}^{\frac{2\gamma}{3}+1}g_{2,K}^{w} u, \left[{b}_{ij}(v),a^{w}\right]{\langle v \rangle}^{-\frac{\gamma}{2}}{\langle v \rangle}^{\frac{\gamma}{2}}D_{v_{j}}\big(\lambda_{K}^{1/3}\big)^{w}u\right)_{L^{2}}\right|\\
  &+ \left|\sum_{i,j,k=1}^{3} \left(H_{7}{\langle v \rangle}^{\frac{2\gamma}{3}+1}g_{2,K}^{w} u, a^{w}{b}_{ij}(v)D_{v_{j}}\big(\lambda_{K}^{1/3}\big)^{w}u\right)_{L^{2}}\right|\\
  &\lesssim C_{\varepsilon} \left(\Vert{\langle v \rangle}^{{\gamma +2}}u \Vert_{L^{2}}^{2}+\Vert {\langle v \rangle}^{{\gamma+1}}D_{v}u \Vert_{L^{2}}^{2}+\Vert{\langle v \rangle}^{\frac{\gamma}{3}}{\vert \xi \vert}^{2/3}u\Vert_{L^{2}}^{2}\right)\\
  &+\varepsilon\Re{e}\left(\mathcal{A}_{\xi}\big(\lambda_{K}^{1/3}\big)^{w}u,\big(\lambda_{K}^{1/3}\big)^{w}u\right)_{L^{2}}.
\end{align*}
Moreover using Lemma \fcolorbox{red}{ white}{\ref{N62}} and Proposition \fcolorbox{red}{ white}{\ref{N71}}, we obtain 
\begin{align*}
        \vert \mathcal{Z}_{1,1} \vert &\lesssim \varepsilon\Re{e}\left(\mathcal{A}_{\xi}\big(\lambda_{K}^{1/3}\big)^{w}u,\big(\lambda_{K}^{1/3}\big)^{w}u\right)_{L^{2}}+C_{\varepsilon}\big(\Vert{\mathcal{A}_{\xi}}u\Vert_{L^{2}}^{2} +\Vert u \Vert_{L^{2}}^{2}\big).
\end{align*}
  \underline{Estimate of $\mathcal{Z}_{1,2}$:}
  Observing $\partial_{\eta}\lambda_{K}^{1/3}\in S({\langle v \rangle}^{\frac{\gamma}{6}+\frac{1}{3}},\Gamma)$ and using  (\fcolorbox{red}{ white}{\ref{N22}}) with symbolic calculus shows that the symbol of the commutator
  $\left[\big(\lambda_{K}^{1/3}\big)^{w},{b}_{ik}(v)\right]$\hspace{0.2cm}
 belongs to
 $S({\langle v \rangle}^{\frac{2\gamma}{3}+\frac{1}{3}},\Gamma)$ uniformly with respect to the parameter $\xi$.
 Now using the following notation
\begin{align*}
H_{8}=\underbrace{\left[\big(\lambda_{K}^{1/3}\big)^{w},{b}_{ik}(v)\right]{\langle v \rangle}^{-\frac{2\gamma}{3}-\frac{1}{3}}}_{\in\mathcal{B}(L^{2})},
\end{align*}
we obtain
\begin{align*}
    \vert \mathcal{Z}_{1,2} \vert &\leq 
 \left| \sum_{i,j,k=1}^{3}\left(H_{8}{\langle v \rangle}^{\frac{2\gamma}{3}+\frac{1}{3}}D_{v_{k}} u, {b}_{ij}(v)\left[D_{v_{j}},a^{w}\right]{\langle v \rangle}^{-\frac{\gamma}{2}-1}{\langle v \rangle}^{\frac{\gamma}{2}+1}\big(\lambda_{K}^{1/3}\big)^{w}u\right)_{L^{2}}\right|\\
  &+ \left| \sum_{i,j,k=1}^{3}\left(H_{8}{\langle v \rangle}^{\frac{2\gamma}{3}+\frac{1}{3}}D_{v_{k}} u, \left[{b}_{ij}(v),a^{w}\right]{\langle v \rangle}^{-\frac{\gamma}{2}}{\langle v \rangle}^{\frac{\gamma}{2}}D_{v_{j}}\big(\lambda_{K}^{1/3}\big)^{w}u\right)_{L^{2}}\right|\\
  &+ \left|\sum_{i,j,k=1}^{3} \left(H_{8}{\langle v \rangle}^{\frac{2\gamma}{3}+\frac{1}{3}} D_{v_{k}}u, a^{w}{b}_{ij}(v)D_{v_{j}}\big(\lambda_{K}^{1/3}\big)^{w}u\right)_{L^{2}}\right|\\
  &\lesssim C_{\varepsilon} \Vert {\langle v \rangle}^{{\gamma+1}}D_{v}u \Vert_{L^{2}}^{2}+\varepsilon\Re{e}\left(\mathcal{A}_{\xi}\big(\lambda_{K}^{1/3}\big)^{w}u,\big(\lambda_{K}^{1/3}\big)^{w}u\right)_{L^{2}},
\end{align*}
moreover using  Proposition \fcolorbox{red}{ white}{\ref{N71}}, we obtain
\begin{align*}
        \vert \mathcal{Z}_{1,2} \vert &\lesssim \varepsilon\Re{e}\left(\mathcal{A}_{\xi}\big(\lambda_{K}^{1/3}\big)^{w}u,\big(\lambda_{K}^{1/3}\big)^{w}u\right)_{L^{2}}+C_{\varepsilon}\big(\Vert\mathcal{A}_{\xi}u\Vert_{L^{2}}^{2} +\Vert u \Vert_{L^{2}}^{2}\big).
\end{align*}
 \underline{Estimate of  $\mathcal{Z}_{1,3}$:}
 Using the following notation
\begin{align*}
H_{9}=\underbrace{{\langle v \rangle}^{-\frac{\gamma}{2}-1}{b}_{ij}(v)\left[D_{v_{j}} ,\big(\lambda_{K}^{1/3}\big)^{w}\right]a^{w}\left(g_{2,K}^{w}\right)^{-1}{\langle v \rangle}^{-\gamma/6}}_{\in\mathcal{B}(L^{2})},
\end{align*}
we obtain 
\begin{align*}
    \vert \mathcal{Z}_{1,3} \vert &\leq
    \left|\sum_{i,j,k=1}^{3} \left({\langle v \rangle}^{\frac{\gamma}{2}+1} {b}_{ik}D_{v_{k}}u, H_{9}{\langle v \rangle}^{\frac{\gamma}{6}}g_{2,K}^{w}\big(\lambda_{K}^{1/3}\big)^{w}u\right)_{L^{2}}\right|\\
   &\lesssim\left|\sum_{i,j,k=1}^{3} \left({\langle v \rangle}^{\frac{\gamma}{2}+1} {b}_{ik}D_{v_{k}}u, {\langle v \rangle}^{\frac{\gamma}{6}}g_{2,K}^{w}\big(\lambda_{K}^{1/3}\big)^{w}u\right)_{L^{2}}\right|\\
  &\lesssim \varepsilon \left(\Vert{\langle v \rangle}^{{\frac{\gamma}{6} }}\big(\lambda_{K}^{1/3}\big)^{w}u \Vert_{L^{2}}^{2}+\Vert {\langle v \rangle}^{{\frac{\gamma}{6} }}D_{v}\big(\lambda_{K}^{1/3}\big)^{w}u \Vert_{L^{2}}^{2}+\Vert{\langle v \rangle}^{\frac{\gamma}{6}}{\vert \xi \vert}^{1/3}\big(\lambda_{K}^{1/3}\big)^{w}u\Vert_{L^{2}}^{2}\right)\\
  &+C_{\varepsilon} \Vert{\langle v \rangle}^{\frac{\gamma}{2}+1} B(v)D_{v}u \Vert_{L^{2}}^{2},
\end{align*}
moreover using Lemma \fcolorbox{red}{ white}{\ref{N62}} and Proposition \fcolorbox{red}{ white}{\ref{N51}}, we obtain 
\begin{align*}
        \vert \mathcal{Z}_{1,3} \vert &\lesssim \varepsilon\Re{e}\left(\mathcal{A}_{\xi}\big(\lambda_{K}^{1/3}\big)^{w}u,\big(\lambda_{K}^{1/3}\big)^{w}u\right)_{L^{2}}+\varepsilon\Re{e}\left(\mathcal{A}_{\xi}(\lambda_{K}^{1/3})^{w}u,(1-\delta G)(\lambda_{K}^{1/3})^{w}u\right)_{L^{2}}\\
        &+C_{\varepsilon}\big(\Vert\mathcal{A}_{\xi}u\Vert_{L^{2}}^{2} +\Vert u \Vert_{L^{2}}^{2}\big).
\end{align*}
 \underline{Estimate of $\mathcal{Z}_{1,4}$:}
Using the following notations
 \begin{align*}
H_{10}=\underbrace{{\langle v \rangle}^{-\frac{\gamma}{3}-1}\left[\big(\lambda_{K}^{1/3}\big)^{w},{b}_{ij}(v)\right]\left[D_{v_{j}} ,a^{w}\right]{\langle v \rangle}^{-\frac{\gamma}{3}}}_{\in\mathcal{B}(L^{2})},
\end{align*}
 \begin{align*}
H_{11}=\underbrace{{\langle v \rangle}^{-\frac{\gamma}{3}-1}\left[\big(\lambda_{K}^{1/3}\big)^{w},{b}_{ij}(v)\right]a^{w}{\langle v \rangle}^{-\frac{\gamma}{3}}}_{\in\mathcal{B}(L^{2})},
\end{align*}
we obtain
\begin{align*}
    \vert \mathcal{Z}_{1,4} \vert &\leq
    \left|\sum_{i,j,k=1}^{3} \left({\langle v \rangle}^{\frac{\gamma}{3}+1} {b}_{ik}D_{v_{k}}u, H_{10}{\langle v \rangle}^{\frac{\gamma}{3}}\big(\lambda_{K}^{1/3}\big)^{w}u\right)_{L^{2}}\right|\\
    &+   \left|\sum_{i,j,k=1}^{3} \left({\langle v \rangle}^{\frac{\gamma}{3}+1} {b}_{ik}D_{v_{k}}u, H_{11}{\langle v \rangle}^{\frac{\gamma}{3}}D_{v_{j}}\big(\lambda_{K}^{1/3}\big)^{w}u\right)_{L^{2}}\right|\\
  &\lesssim \varepsilon \left(\Vert{\langle v \rangle}^{{\frac{\gamma}{3} }}\big(\lambda_{K}^{1/3}\big)^{w}u \Vert_{L^{2}}^{2}+\Vert {\langle v \rangle}^{{\frac{\gamma}{3} }}D_{v}\big(\lambda_{K}^{1/3}\big)^{w}u \Vert_{L^{2}}\right)\\
  &+C_{\varepsilon} \Vert{\langle v \rangle}^{\frac{\gamma}{3}+1} B(v)D_{v}u \Vert_{L^{2}}^{2},
\end{align*}
Moreover using Lemma \fcolorbox{red}{ white}{\ref{N50}}, we obtain
\begin{align*}
        \vert \mathcal{Z}_{1,4} \vert &\lesssim \varepsilon\Re{e}\left(\mathcal{A}_{\xi}\big(\lambda_{K}^{1/3}\big)^{w}u,\big(\lambda_{K}^{1/3}\big)^{w}u\right)_{L^{2}}+C_{\varepsilon}\big(\Vert\mathcal{A}_{\xi}u\Vert_{L^{2}}^{2} +\Vert u \Vert_{L^{2}}^{2}\big),
\end{align*}
so using the estimates of $\mathcal{Z}_{1,n}$ for $n=1,\ldots,4, $ we obtain
\begin{align*}
        \vert \mathcal{Z}_{1} \vert &\lesssim \varepsilon\Re{e}\left(\mathcal{A}_{\xi}\big(\lambda_{K}^{1/3}\big)^{w}u,\big(\lambda_{K}^{1/3}\big)^{w}u\right)_{L^{2}}+C_{\varepsilon}\big(\Vert\mathcal{A}_{\xi}u\Vert_{L^{2}}^{2} +\Vert u \Vert_{L^{2}}^{2}\big).
\end{align*}
Let's look now
  $$\mathcal{Z}_{2}=\left(\left[ F(v),\big(\lambda_{K}^{1/3}\big)^{w}\right]u, a^{w}\big(\lambda_{K}^{1/3}\big)^{w}u\right)_{L^{2}},$$
   observing the symbol $\partial_{\eta}\lambda_{K}^{1/3}\in S({\langle v \rangle}^{\frac{\gamma}{6}+\frac{1}{3}},\Gamma)$, and using  Lemma \fcolorbox{red}{ white}{\ref{N80}} with symbolic calculus shows that the symbol of the commutator 
$\left[ F(v),\big(\lambda_{K}^{1/3}\big)^{w}\right]$\hspace{0.2cm}
 belongs to $S({\langle v \rangle}^{\frac{3\gamma}{2}+2},\Gamma)$ uniformly with respect to the parameter $\xi$. Now using the following notation
  $$H_{12}=\underbrace{{\langle v \rangle}^{-\frac{\gamma}{2}-1}{\left[F(v) ,\big(\lambda_{K}^{1/3}\big)^{w}\right]} {\langle v \rangle}^{-{\gamma}-1}}_{\in\mathcal{B}(L^{2})},$$
  we obtain
      \begin{align*}
    \vert\mathcal{Z}_{2}\vert
    &\leq\left|\left({\langle v \rangle}^{\frac{\gamma}{2}+1}H_{12}{\langle v \rangle}^{-\gamma-1}{\langle v \rangle}^{\gamma+1} u, a^{w}\big(\lambda_{K}^{1/3}\big)^{w}u\right)_{L^{2}}\right|\\
    &\leq\left|\left(H_{12}{\langle v \rangle}^{\gamma+1} u, a^{w}{\langle v \rangle}^{\frac{\gamma}{2}+1}\big(\lambda_{K}^{1/3}\big)^{w}u\right)_{L^{2}}\right|\\&+ \left|\left(H_{12}{\langle v \rangle}^{\gamma+1} u, \left[{\langle v \rangle}^{\frac{\gamma}{2}+1} ,a^{w}\right]{\langle v \rangle}^{-\frac{\gamma}{2}-1}{\langle v \rangle}^{\frac{\gamma}{2}+1}\big(\lambda_{K}^{1/3}\big)^{w}u\right)_{L^{2}}\right|,
    \end{align*}
    in addition, we have that the symbol of the commutator  $ \left[{\langle v \rangle}^{\frac{\gamma}{2}+1} ,a^{w}\right]$ belongs to $S({{\langle v \rangle}^{\frac{\gamma}{2}+1}},\Gamma)$
    and using  Lemma \fcolorbox{red}{ white}{\ref{N62}}, we obtain 
      \begin{align}
    \vert\mathcal{Z}_{2}\vert
    \lesssim \varepsilon\Re{e}\left(\mathcal{A}_{\xi}\big(\lambda_{K}^{1/3}\big)^{w}u,\big(\lambda_{K}^{1/3}\big)^{w}u\right)_{L^{2}}+C_{\varepsilon}(\Vert \mathcal{A}_{\xi}u\Vert_{L^{2}}^{2} +\Vert u \Vert_{L^{2}}^{2}).
    \end{align}
      Let’s look now
    $$\mathcal{Z}_{3}=\left(\left[ iv\cdot \xi,\big(\lambda_{K}^{1/3}\big)^{w}\right]u, a^{w}\big(\lambda_{K}^{1/3}\big)^{w}u\right)_{L^{2}},$$
    using Lemma \fcolorbox{red}{ white}{\ref{N27}} with symbolic calculus shows that the symbol of the commutator
   $\left[ iv\cdot \xi,\big(\lambda_{K}^{1/3}\big)^{w}\right]$ belongs to $S(\lambda_{K}^{1/3},\Gamma)$  uniformly with respect to the parameter $\xi$.
    Now using the following notation
 $$H_{13}=\underbrace{\big(\lambda_{K}^{1/3}\big)^{w}(a^{w})^{*}{\left[iv\cdot \xi ,\big(\lambda_{K}^{1/3}\big)^{w}\right]} \left(\big(\lambda_{K}^{2/3}\big)^{w}\right)^{-1}}_{\in\mathcal{B}(L^{2})},$$
 we obtain 
  \begin{align}
  \vert\mathcal{Z}_{3}\vert &\leq\left|\left(H_{13}\big(\lambda_{K}^{2/3}\big)^{w}u,u\right)_{L^{2}}\right|\lesssim \Tilde{\varepsilon}\Vert\big(\lambda_{K}^{2/3}\big)^{w}u \Vert_{L^{2}}^{2}
     +C_{\Tilde{\varepsilon}}\big(\Vert\mathcal{A}_{\xi}u\Vert_{L^{2}}^{2} +\Vert u \Vert_{L^{2}}^{2}\big).
      \end{align}
        From the above,  using the estimates of $\mathcal{Z}_{l}$ for $l=1,\ldots,3$, we obtain (\fcolorbox{red}{ white}{\ref{N84}}). 
Next we prove  (\fcolorbox{red}{ white}{\ref{N83}}), we have the following relation
\begin{align*}
  &\Re{e}\left(\mathcal{A}_{\xi}\big(\lambda_{K}^{1/3}\big)^{w}u,\big(\lambda_{K}^{1/3}\big)^{w}u\right)_{L^{2}}+\Re{e}\left(\mathcal{A}_{\xi}\big(\lambda_{K}^{1/3}\big)^{w}u,(1-\delta G)\big(\lambda_{K}^{1/3}\big)^{w}u\right)_{L^{2}}\\
=\hspace{0.1cm}&\Re{e}\left(\mathcal{A}_{\xi}u,\big(\lambda_{K}^{1/3}\big)^{w}\big(\lambda_{K}^{1/3}\big)^{w}u\right)_{L^{2}}+  \Re{e}\left(\left[ \mathcal{A}_{\xi},\big(\lambda_{K}^{1/3}\big)^{w}\right]u,\big(\lambda_{K}^{1/3}\big)^{w}u\right)_{L^{2}}\\
+\hspace{0.1cm}&\Re{e}\left(\mathcal{A}_{\xi}u,\big(\lambda_{K}^{1/3}\big)^{w}(1-\delta G)\big(\lambda_{K}^{1/3}\big)^{w}u\right)_{L^{2}}+ \Re{e}\left(\left[ \mathcal{A}_{\xi},\big(\lambda_{K}^{1/3}\big)^{w}\right]u,(1-\delta G)\big(\lambda_{K}^{1/3}\big)^{w}u\right)_{L^{2}}\\
=\hspace{0.1cm}&\Re{e}\left(\mathcal{A}_{\xi}u,\big(\lambda_{K}^{1/3}\big)^{w}\big(\lambda_{K}^{1/3}\big)^{w}\left(\big(\lambda_{K}^{2/3}\big)^{w}\right)^{-1}\big(\lambda_{K}^{2/3}\big)^{w}u\right)_{L^{2}}+\Re{e}\left(\left[ \mathcal{A}_{\xi},\big(\lambda_{K}^{1/3}\big)^{w}\right]u,\big(\lambda_{K}^{1/3}\big)^{w}u\right)_{L^{2}}\\
+\hspace{0.1cm}&\Re{e}\left(\mathcal{A}_{\xi}u,\big(\lambda_{K}^{1/3}\big)^{w}(1-\delta G)\big(\lambda_{K}^{1/3}\big)^{w}\left(\big(\lambda_{K}^{2/3}\big)^{w}\right)^{-1}\big(\lambda_{K}^{2/3}\big)^{w}u\right)_{L^{2}}\\
+\hspace{0.1cm}&\Re{e}\left(\left[ \mathcal{A}_{\xi},\big(\lambda_{K}^{1/3}\big)^{w}\right]u,(1-\delta G)\big(\lambda_{K}^{1/3}\big)^{w}u\right)_{L^{2}}
 \end{align*}
 gives, with  $\Tilde{\varepsilon}>0$ arbitrary,
 \begin{align*}
 &\Re{e}\left(\mathcal{A}_{\xi}\big(\lambda_{K}^{1/3}\big)^{w}u,\big(\lambda_{K}^{1/3}\big)^{w}u\right)_{L^{2}}+\Re{e}\left(\mathcal{A}_{\xi}\big(\lambda_{K}^{1/3}\big)^{w}u,(1-\delta G)\big(\lambda_{K}^{1/3}\big)^{w}u\right)_{L^{2}}\\
&\lesssim \Tilde{\varepsilon}\Vert\big(\lambda_{K}^{2/3}\big)^{w}u \Vert_{L^{2}}^{2}
     +C_{\Tilde{\varepsilon}}\big(\Vert\mathcal{A}_{\xi}u\Vert_{L^{2}}^{2} +\Vert u \Vert_{L^{2}}^{2}\big)+  \Re{e}\left(\left[ \mathcal{A}_{\xi},\big(\lambda_{K}^{1/3}\big)^{w}\right]u,\big(\lambda_{K}^{1/3}\big)^{w}u\right)_{L^{2}}\\&+\Re{e}\left(\left[ \mathcal{A}_{\xi},\big(\lambda_{K}^{1/3}\big)^{w}\right]u,(1-\delta G)\big(\lambda_{K}^{1/3}\big)^{w}u\right)_{L^{2}}.
  \end{align*}
 We could apply (\fcolorbox{red}{ white}{\ref{N84}}) with $a=1$ and $a=1-\delta\Tilde{g}$  to control the last term in the above inequality; this gives,
with   $\varepsilon,\Tilde{\varepsilon}>0$ arbitrarily small,
   \begin{align*}
   &\Re{e}\left(\mathcal{A}_{\xi}\big(\lambda_{K}^{1/3}\big)^{w}u,\big(\lambda_{K}^{1/3}\big)^{w}u\right)_{L^{2}}+\Re{e}\left(\mathcal{A}_{\xi}\big(\lambda_{K}^{1/3}\big)^{w}u,(1-\delta G)\big(\lambda_{K}^{1/3}\big)^{w}u\right)_{L^{2}}\\
   &\lesssim \Tilde{\varepsilon}\Vert\big(\lambda_{K}^{2/3}\big)^{w}u \Vert_{L^{2}}^{2}+ \varepsilon\Re{e}\left(\mathcal{A}_{\xi}\big(\lambda_{K}^{1/3}\big)^{w}u,\big(\lambda_{K}^{1/3}\big)^{w}u\right)_{L^{2}}+C_{\varepsilon,\Tilde{\varepsilon}}(\Vert\mathcal{A}_{\xi}u\Vert_{L^{2}}^{2} +\Vert u \Vert_{L^{2}}^{2})\\&+\varepsilon\Re{e}\left(\mathcal{A}_{\xi}\big(\lambda_{K}^{1/3}\big)^{w}u,(1-\delta G)\big(\lambda_{K}^{1/3}\big)^{w}u\right)_{L^{2}}.
     \end{align*}
     Letting $\varepsilon$ small enough yields the desired estimate  (\fcolorbox{red}{ white}{\ref{N83}}).
\end{proof}
\begin{prop}\label{N100}
 Let  $\lambda_{K}$ be the symbol defined in   (\fcolorbox{red}{ white}{\ref{N52}}). Then there exists $C_{0}>0$ such that for all $u\in \mathcal{S}(\mathbb{R}^{3}_{v})$,
 \begin{align}
    \Vert\big(\lambda_{K}^{2/3}\big)^{w}u \Vert_{L^{2}}^{2}\leq 
     C_{0}\big(\Vert\mathcal{A}_{\xi}u\Vert_{L^{2}}^{2} +\Vert u \Vert_{L^{2}}^{2}\big).
 \end{align}
\end{prop}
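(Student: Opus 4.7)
The plan is to apply Proposition~\ref{N51} not to $u$ itself but to $v := \bigl(\lambda_K^{1/3}\bigr)^w u$, and then to invoke the technical estimate~(\ref{N83}) in order to bound the resulting right-hand side. Performing this substitution in Proposition~\ref{N51} yields
\[
\left(\bigl(\lambda^{2/3}\bigr)^{\text{Wick}} v, v\right)_{L^2} \leq C\Bigl[\Re{e}\bigl(\mathcal{A}_\xi v, v\bigr)_{L^2} + \Re{e}\bigl(\mathcal{A}_\xi v, (1-\delta G) v\bigr)_{L^2}\Bigr],
\]
and estimate (\ref{N83}) controls the right-hand side by $\tilde\varepsilon\,\bigl\|\bigl(\lambda_K^{2/3}\bigr)^w u\bigr\|_{L^2}^2 + C_{\tilde\varepsilon}\bigl(\|\mathcal{A}_\xi u\|_{L^2}^2 + \|u\|_{L^2}^2\bigr)$ for every $\tilde\varepsilon > 0$.

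It remains to show that the left-hand side dominates $\bigl\|\bigl(\lambda_K^{2/3}\bigr)^w u\bigr\|_{L^2}^2$ modulo an $\|u\|_{L^2}^2$ remainder. The key observation is the pointwise equivalence $\lambda \sim \lambda_K$: from (\ref{N20}) one reads off $\lambda \geq \langle v \rangle^{\gamma/2+1}$ by keeping only the $1+|v|^2$ summand under the square root, so that $\lambda \leq \lambda_K = \lambda + K\langle v \rangle^{\gamma/2+1} \leq (1+K)\lambda$. Combining this equivalence with the Wick-to-Weyl conversion formula (\ref{P3}), the positivity (\ref{P7}) of the Wick calculus, and standard Weyl composition, the operator $\bigl(\lambda_K^{1/3}\bigr)^w\bigl(\lambda^{2/3}\bigr)^{\text{Wick}}\bigl(\lambda_K^{1/3}\bigr)^w$ acquires Weyl principal symbol $\lambda_K^{4/3}$. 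The admissibility and positivity properties of the shifted weight $\lambda_K$ supplied by Theorem~\ref{M36} (which is precisely why the constant $K$ must be taken large) then yield a Gårding-type lower bound
\[
\left(\bigl(\lambda^{2/3}\bigr)^{\text{Wick}} v, v\right)_{L^2} \gtrsim \bigl\|\bigl(\lambda_K^{2/3}\bigr)^w u\bigr\|_{L^2}^2 - C\|u\|_{L^2}^2,
\]
the error terms generated along the way being absorbed by the already-established weighted bounds of Lemma~\ref{N62} and Proposition~\ref{N71}.

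Combining the upper and lower bounds and choosing $\tilde\varepsilon$ small enough so as to absorb the $\tilde\varepsilon\,\bigl\|\bigl(\lambda_K^{2/3}\bigr)^w u\bigr\|_{L^2}^2$ term into the left-hand side delivers the sought-for inequality $\bigl\|\bigl(\lambda_K^{2/3}\bigr)^w u\bigr\|_{L^2}^2 \leq C_0\bigl(\|\mathcal{A}_\xi u\|_{L^2}^2 + \|u\|_{L^2}^2\bigr)$. The main technical obstacle is the lower-bound step: tracking the remainders produced by the Wick-to-Weyl identification and by the composition of Weyl operators, and verifying that each of them either belongs to a bounded symbol class or lies in a class whose operator norm is controlled by the preceding weighted estimates for $\mathcal{A}_\xi$, requires careful symbolic bookkeeping against the $\Gamma$-admissibility properties of $\lambda_K^{s}$ guaranteed by Theorem~\ref{M36}.
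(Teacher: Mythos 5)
Your plan matches the structure of the paper's proof: substitute $\bigl(\lambda_K^{1/3}\bigr)^w u$ into Proposition~\ref{N51}, invoke (\ref{N83}) to control the right-hand side, convert the Wick quantization to Weyl via (\ref{P3}), and use Theorem~\ref{M36} to identify the surviving term with $\bigl\|\bigl(\lambda_K^{2/3}\bigr)^w u\bigr\|_{L^2}^2$, finally absorbing $\tilde\varepsilon\bigl\|\bigl(\lambda_K^{2/3}\bigr)^w u\bigr\|_{L^2}^2$. The observation $\lambda \sim \lambda_K$ is correct and is indeed what lets one pass from the $\lambda$ of Proposition~\ref{N51} to $\lambda_K$.

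The one place where you wave your hands is exactly where the paper does the hard work: you frame the lower bound as a ``Gårding-type inequality'' for the composed operator with principal symbol $\lambda_K^{4/3}$, but the paper instead makes the Wick-to-Weyl remainder $r$ explicit and \emph{splits} it as $r = r_1 + r_2 + r_3$, corresponding to $\nabla_\eta^2$, $\nabla_v\nabla_\eta$, and $\nabla_v^2$ of $\lambda_K^{2/3}$. These three pieces lie in genuinely different weighted classes — $r_1, r_2 \in S(\langle v\rangle^{\gamma+2},\Gamma)$ but $r_3 \in S(\langle v\rangle^{\gamma/3} g_{3,K},\Gamma)$ — and each is bounded by an $H_i$-sandwich reduction to Lemma~\ref{N62} or Proposition~\ref{N71}; without this splitting, the remainder is not uniformly absorbable and the ``careful symbolic bookkeeping'' you mention is precisely what is at stake. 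After that, Theorem~\ref{M36}(IV) is used twice (once with $\kappa = 2/3, \tau = 1/2$ and once with $\kappa = 1/3, \tau = 2$) to obtain $\bigl(\bigl(\lambda_K^{2/3}\bigr)^w v, v\bigr)_{L^2} \sim \bigl\|\bigl[\bigl(\lambda_K^{1/3}\bigr)^w\bigr]^2 u\bigr\|_{L^2}^2 \sim \bigl\|\bigl(\lambda_K^{2/3}\bigr)^w u\bigr\|_{L^2}^2$ — so the lower bound is really an exact norm equivalence from the Basic Theorem rather than a Gårding inequality with a $C\|u\|^2$ error. Your proposal is correct in outline but compresses this into a sentence; the paper's version supplies the decomposition your sketch leaves implicit.
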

\begin{proof}
 
   Using  Proposition \fcolorbox{red}{ white}{\ref{N51}}, we have for all $u\in \mathcal{S}(\mathbb{R}^{3}_{v})$,
    \begin{align}
((\lambda_{K}^{2/3})^{\text{Wick}}u,u)_{L^{2}}\lesssim  \Re{e}(\mathcal{A}_{\xi}u,u)_{L^{2}}+ \Re{e}(\mathcal{A}_{\xi}u,(1-\delta G)u)_{L^{2}},
\end{align}
uniformly with respect to the parameter   $\xi$ \text{in} $\mathbb{R}^{3}$.
   By substituting $\left(\lambda_{K}^{1/3}\right)^{w}u$ to $u$ in the  above estimate, we obtain that for all $\Tilde{\varepsilon}>0,$
     \begin{align}\label{M102}
        \left(\big(\lambda_{K}^{2/3}\big)^{\text {Wick}}\big(\lambda_{K}^{1/3}\big)^{w}u,\big(\lambda_{K}^{1/3}\big)^{w}u\right)_{L^{2}}\lesssim \Tilde{\varepsilon}\Vert \big(\lambda_{K}^{2/3}\big)^{w}u \Vert_{L^{2}}^{2}
     +C_{\Tilde{\varepsilon}}\big(\Vert\mathcal{A}_{\xi}u\Vert_{L^{2}}^{2} +\Vert u \Vert_{L^{2}}^{2}\big) .
    \end{align}
   Notice from (\fcolorbox{red}{ white}{\ref{P3}}) that we may write
   \begin{align}
(\lambda_{K}^{2/3})^{\text {Wick}}=(\lambda_{K}^{2/3})^{w}+r^{w},
\end{align}
   with
   $$r(v,\eta)=\int_{0}^{1}\int_{\mathbb{R}^{6}} (1-\theta)(\lambda_{K}^{2/3})^{"}(Y+\theta Y_{1}) Y_{1}\cdot Y_{1} e^{-\vert Y_{1}\vert^{2}}\mathrm{d}Y_{1} \, \mathrm{d}\theta,$$
   where $Y,Y_{1}\in \mathbb{R}^{6}$ and $(\lambda_{K}^{2/3})^{"}(Y)$ is the Hessian of $\lambda_{K}^{2/3}$ at the point $Y$.
  Define
  $$r_{1}=\pi^{-3}\int_{0}^{1}\int_{\mathbb{R}^{6}} (1-\theta)\nabla^{2}_{\eta}(\lambda_{K}^{2/3})(Y+\theta Y_{1})\eta_{1}\cdot\eta_{1} e^{-\vert Y_{1}\vert^{2}}\mathrm{d}Y_{1} \, \mathrm{d}\theta,$$
   $$r_{2}=\pi^{-3}\sum_{j,k=1}^{3}\int_{0}^{1}\int_{\mathbb{R}^{6}} (1-\theta)\partial_{v_{j}}\partial_{\eta_{k}}(\lambda_{K}^{2/3})(Y+\theta Y_{1})({v_{1}}_{j}{\eta_{1}}_{k}+{v_{1}}_{k}{\eta_{1}}_{j}) e^{-\vert Y_{1}\vert^{2}}\mathrm{d}Y_{1} \, \mathrm{d}\theta,$$
 and
    $$r_{3}=\pi^{-3}\int_{0}^{1}\int_{\mathbb{R}^{6}} (1-\theta)\nabla^{2}_{v}(\lambda_{K}^{2/3})(Y+\theta Y_{1})v_{1}\cdot v_{1} e^{-\vert Y_{1}\vert^{2}}\mathrm{d}Y_{1} \, \mathrm{d}\theta.$$
    Using Lemma \fcolorbox{red}{ white}{\ref{M80}} with symbolic calculus shows that the symbol
$$\nabla^{2}_{\eta}(\lambda_{K}^{2/3}), \nabla_{v}\nabla_{\eta}(\lambda_{K}^{2/3})
\in S({{\langle v \rangle}^{\gamma+2}},\Gamma),$$
uniformly with respect to the parameter $\xi$, then
$r_{1}, r_{2}$
belong to   $S({{\langle v \rangle}^{\gamma+2}},\Gamma)$.
Using the following notations
 $$H_{14}=\underbrace{\big(\big(\lambda_{K}^{2/3}\big)^{w}\big)^{-1}\big(\lambda_{K}^{1/3}\big)^{w}r^{w}_{1}\big(\lambda_{K}^{1/3}\big)^{w}{\langle v \rangle}^{-{\gamma}-2}}_{\in\mathcal{B}(L^{2})},$$ $$H_{15}=\underbrace{\big(\big(\lambda_{K}^{2/3}\big)^{w}\big)^{-1}\big(\lambda_{K}^{1/3}\big)^{w}r^{w}_{1}\big(\lambda_{K}^{1/3}\big)^{w}{\langle v \rangle}^{-{\gamma}-2}}_{\in\mathcal{B}(L^{2})},$$
 we obtain for all  $\Tilde{\Tilde{\varepsilon}}>0$,
 \begin{align*}
     &\left|\left(r_{1}^{w}\big(\lambda_{K}^{1/3}\big)^{w}u,\big(\lambda_{K}^{1/3}\big)^{w}u\right)_{L^{2}}\right|+ \left|\left(r_{2}^{w}\big(\lambda_{K}^{1/3}\big)^{w}u,\big(\lambda_{K}^{1/3}\big)^{w}u\right)_{L^{2}}\right|\\
    &\leq\left|\left(H_{14}{\langle v \rangle}^{{\gamma}+2}u,\big(\lambda_{K}^{2/3}\big)^{w}u\right)_{L^{2}}\right|+\left|\left(H_{15}{\langle v \rangle}^{{\gamma}+2}u,\big(\lambda_{K}^{2/3}\big)^{w}u\right)_{L^{2}}\right|\\
    &\lesssim \Tilde{\Tilde{\varepsilon}}\Vert\big(\lambda_{K}^{2/3}\big)^{w}u \Vert_{L^{2}}^{2}
     +C_{\Tilde{\Tilde{\varepsilon}}}\big(\Vert\mathcal{A}_{\xi}u\Vert_{L^{2}}^{2} +\Vert u \Vert_{L^{2}}^{2}\big).
\end{align*}
 Taking into account that the symbol
  $\nabla^{2}_{v}(\lambda_{K}^{2/3})$,
  belongs to $S({{\langle v \rangle}^{\frac{\gamma}{3}}}g_{3,K},\Gamma)$
  uniformly with respect to the parameter $\xi$, where $g_{3,K}$ the operator defined in (\fcolorbox{red}{ white}{\ref{N91}}), then the symbol  $r_{3}$ belongs to   $S({{\langle v \rangle}^{\frac{\gamma}{3}}}g_{3,K},\Gamma).$ Using the following notation
 $$H_{16}=\underbrace{\big(\big(\lambda_{K}^{2/3}\big)^{w}\big)^{-1}\big(\lambda_{K}^{1/3}\big)^{w}r^{w}_{3}\big(\lambda_{K}^{1/3}\big)^{w}\big(g_{3,K}^{w}\big)^{-1}{\langle v \rangle}^{-\frac{\gamma}{3}}}_{\in\mathcal{B}(L^{2})},$$
 we obtain
 \begin{align*}
     \left|\left(r_{3}^{w}\big(\lambda_{K}^{1/3}\big)^{w}u,\big(\lambda_{K}^{1/3}\big)^{w}u\right)_{L^{2}}\right|
    &\leq\left|\left(H_{16}{\langle v \rangle}^{\frac{\gamma}{3}}g_{3,K}^{w}u,\big(\lambda_{K}^{2/3}\big)^{w}u\right)_{L^{2}}\right|\\
    &\lesssim \Tilde{\Tilde{\varepsilon}}\Vert\big(\lambda_{K}^{2/3}\big)^{w}u \Vert_{L^{2}}^{2}
     +C_{\Tilde{\Tilde{\varepsilon}}}\big(\Vert\mathcal{A}_{\xi}u\Vert_{L^{2}}^{2} +\Vert u \Vert_{L^{2}}^{2}\big).
\end{align*}
From the above, using the estimates of $r_{l}$ for $l=1,\ldots,3$, we obtain  
\begin{align}\label{N95}
     \left|\left(r^{w}\big(\lambda_{K}^{1/3}\big)^{w}u,\big(\lambda_{K}^{1/3}\big)^{w}u\right)_{L^{2}}\right|
    \lesssim  \Tilde{\Tilde{\varepsilon}}\Vert\big(\lambda_{K}^{2/3}\big)^{w}u \Vert_{L^{2}}^{2}
     +C_{\Tilde{\Tilde{\varepsilon}}}\big(\Vert\mathcal{A}_{\xi}u\Vert_{L^{2}}^{2} +\Vert u \Vert_{L^{2}}^{2}\big).
\end{align}
By applying Theorem  \fcolorbox{red}{ white}{\ref{M36}} with $p=\lambda$, we obtain  
\begin{align}
\left(\big(\lambda_{K}^{2/3}\big)^{ w}\big(\lambda_{K}^{1/3}\big)^{w}u,\big(\lambda_{K}^{1/3}\big)^{w}u\right)_{L^{2}}\sim \Vert{\big[ \big(\lambda_{K}^{1/3}\big)^{w}\big]}^{2}u \Vert_{L^{2}}^{2}\sim\Vert\big(\lambda_{K}^{2/3}\big)^{w}u \Vert_{L^{2}}^{2},
\end{align}
uniformly with respect to the parameter $\xi$.
 Using (\fcolorbox{red}{ white}{\ref{N95}}), then taking $\Tilde{\Tilde{\varepsilon}}$ small enough, we get for all $\Tilde{\varepsilon}>0$,  
\begin{align*}
    \Vert\big(\lambda_{K}^{2/3}\big)^{w}u \Vert_{L^{2}}^{2}\lesssim \Tilde{\varepsilon}\Vert\big(\lambda_{K}^{2/3}\big)^{w}u \Vert_{L^{2}}^{2}
     +\Tilde{C}_{\Tilde{\varepsilon}}\big(\Vert\mathcal{A}_{\xi}u\Vert_{L^{2}}^{2} +\Vert u \Vert_{L^{2}}^{2}\big),
\end{align*}
now taking $\Tilde{\varepsilon}$ small enough, we obtain  that there is a constant $C_{0}>0$
  such that for all $u\in \mathcal{S}(\mathbb{R}^{3}_{v})$,
 \begin{align}
    \Vert\big(\lambda_{K}^{2/3}\big)^{w}u \Vert_{L^{2}}^{2}\leq 
     C_{0}\big(\Vert\mathcal{A}_{\xi}u\Vert_{L^{2}}^{2} +\Vert u \Vert_{L^{2}}^{2}\big).
 \end{align}
 \end{proof}
 \begin{prop}\label{N105}
   Let  $\lambda_{K}$ be the symbol defined in   (\fcolorbox{red}{ white}{\ref{N52}}). Then there exists $\mathbf{C}_{0} >0$ such that for all $u\in \mathcal{S}(\mathbb{R}^{3}_{v})$,
    \begin{align}
\Vert \mathcal{A}_{\xi}u \Vert_{L^{2}}^{2} \leq \mathbf{C}_{0} \Vert \big({\lambda^{2}_{K}}\big)^{w}u \Vert_{L^{2}}^{2}.
 \end{align}
 \end{prop}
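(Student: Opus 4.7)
The plan is to identify $\mathcal{A}_\xi$ as a Weyl pseudo-differential operator whose full Weyl symbol lies in the class $S(\lambda_K^2, \Gamma)$ uniformly in the parameter $\xi$, and then to invoke the composition formula in the Weyl-Hörmander calculus together with Theorem \ref{M36} to reduce the desired bound to the $L^2$-boundedness of a pseudo-differential operator with symbol in $S(1,\Gamma)$.

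To compute the symbol, I would start from $\mathcal{A}_\xi = iv \cdot \xi + D_v \cdot \mathbf{A}(v) D_v + F(v)$ and expand the second-order part $D_{v_i}\overline{a}_{ij}(v) D_{v_j}$ via the Moyal product; this yields a Weyl symbol of the form $\overline{a}_{ij}(v)\eta_i\eta_j + c_1(v)\cdot\eta + c_0(v)$, where $|c_1(v)| \lesssim \langle v \rangle^{\gamma+1}$ and $|c_0(v)| \lesssim \langle v \rangle^\gamma$ by Lemma \ref{N6}(b). I would then verify, piece by piece, that the full symbol $\sigma_\xi := iv\cdot\xi + \overline{a}_{ij}\eta_i\eta_j + c_1\cdot\eta + c_0 + F$ belongs to $S(\lambda_K^2, \Gamma)$ uniformly in $\xi$. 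The bound $|v\cdot\xi| \lesssim \lambda^2$ follows from $\gamma \geq 0$ and $\lambda^2 \gtrsim |v|^2 + |\xi|^2$, while the only nonzero derivative in $v$ is $\xi$ with $|\xi| \lesssim \lambda$. The bound $|\mathbf{A}(v)\eta\cdot\eta| = |B(v)\eta|^2 \leq C\lambda^2$ is immediate from Lemma \ref{N200}(iii); the derivative estimates follow from Lemma \ref{N200}(ii), the AM-GM inequality $|\eta||v\wedge\eta| \leq \frac{1}{2}(|\eta|^2 + |v\wedge\eta|^2)$, and the key observation $\langle v \rangle^{\gamma+2} \lesssim \lambda^2$ (equivalently $\langle v \rangle^{\gamma/2+1} \lesssim \lambda$), which allows every extra power of $\langle v \rangle$ to be absorbed by a power of $\lambda$. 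The subprincipal corrections $c_1\cdot\eta$ and $c_0$ are handled by the same trick, and $F(v) \in S(\langle v \rangle^{\gamma+2}, \Gamma) \subset S(\lambda_K^2, \Gamma)$ by Lemma \ref{N80} and the definition $\lambda_K^2 \geq K^2 \langle v \rangle^{\gamma+2}$.

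For the final step, Theorem \ref{M36} applied to $\lambda$ provides the invertibility of $\lambda_K^w$ with inverse in $\Psi(\lambda_K^{-1}, \Gamma)$; squaring gives $(\lambda_K^w)^{-2} \in \Psi(\lambda_K^{-2}, \Gamma)$ together with an equivalence $\|(\lambda_K^w)^2 u\|_{L^2} \sim \|(\lambda_K^2)^w u\|_{L^2}$, since the Weyl symbols of these two operators differ only by a lower-order term. Composition in the Weyl-Hörmander calculus then shows $\mathcal{A}_\xi \circ (\lambda_K^w)^{-2} \in \Psi(1,\Gamma)$, which is bounded on $L^2$ uniformly in $\xi$ by Calderón-Vaillancourt. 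Writing $\mathcal{A}_\xi u = [\mathcal{A}_\xi \circ (\lambda_K^w)^{-2}]\cdot (\lambda_K^w)^2 u$ and using the above equivalence yields $\|\mathcal{A}_\xi u\|_{L^2}^2 \leq \mathbf{C}_0 \|(\lambda_K^2)^w u\|_{L^2}^2$. The main technical obstacle is the careful symbolic verification that $\sigma_\xi \in S(\lambda_K^2,\Gamma)$ holds with \emph{all} derivatives, uniformly in $\xi$; in particular, the first-order corrections of size $\langle v \rangle^{\gamma+1}|\eta|$ in the Weyl symbol of $D_v \cdot \mathbf{A}(v) D_v$ must be controlled precisely by the anisotropic weight $\lambda$, and this is where the inequality $\langle v \rangle^{\gamma+2} \lesssim \lambda^2$ plays its crucial role.
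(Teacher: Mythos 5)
Your proposal follows essentially the same route as the paper: verify that the full Weyl symbol of $\mathcal{A}_\xi$ lies in $S(\lambda^2,\Gamma)$ uniformly in $\xi$, then compose with the inverse of $(\lambda_K^2)^w$ (you pass through $(\lambda_K^w)^{-2}$ and invoke the equivalence $\Vert(\lambda_K^w)^2u\Vert\sim\Vert(\lambda_K^2)^wu\Vert$ from Theorem~\ref{M36}, whereas the paper uses $((\lambda_K^2)^w)^{-1}$ directly, but these are interchangeable) and apply Calder\'on--Vaillancourt. The only cosmetic difference is that you compute the sub-principal terms $c_1\cdot\eta+c_0$ by expanding the Moyal product directly, while the paper cites Lemma~3.11 of \cite{herau_anisotropic_2011} for the decomposition of the symbol into $iv\cdot\xi+\vert B(v)\eta\vert^2+F(v)+R_1+R_2$.
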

 \begin{proof}
We denote by $\Tilde{\sigma}$  the symbol of the operator $\mathcal{A}_{\xi}$.  We will show that $\Tilde{\sigma}$ belongs to $ S(\lambda^{2},\Gamma)$ uniformly with respect to the parameter $\xi$. \\Using Lemma 3.11 in \cite{herau_anisotropic_2011}, we can write $\Tilde{\sigma}$ as follows
  $$\Tilde{\sigma}=iv\cdot\xi + {\vert B(v)\eta \vert}^{2}+F(v)+R_{1}+R_{2},$$
where $R_{1}$ (resp $R_{2}$) is a symbol belongs to $ S( {\langle v \rangle}^{\gamma+1} \langle\eta \rangle,\Gamma)$ (resp  $S({\langle v \rangle}^{\gamma},\Gamma)$) uniformly with respect to the parameter $\xi$.\\ 
   \underline{\text{For}\hspace{0.2cm}$iv\cdot\xi$:}
  Taking into account the fact that  $\gamma \geq 0$,  we have
 \begin{align*}
 \text{For}\hspace{0.2cm} \vert \alpha \vert=  0,\hspace{0.2cm} &\vert iv\cdot\xi \vert \lesssim  \vert v\vert^{2}+\vert \xi \vert^{2}
    \lesssim  {\langle v \rangle}^{\gamma+2}+ {\langle v \rangle}^{\gamma}\vert \xi \vert^{2}\lesssim  {\lambda}^{2},\\
\text{for}\hspace{0.2cm} \vert \alpha \vert=  1,\hspace{0.2cm} &\vert \partial_{v}^{ \alpha  }(iv\cdot\xi) \vert=\vert \xi \vert\lesssim  {\langle v \rangle}^{\gamma/2}\vert \xi \vert \lesssim {\lambda}^{2},\\
 \text{for}\hspace{0.2cm}  \vert \alpha \vert  \geq 2,\hspace{0.2cm}  &\vert \partial_{v}^{\alpha }(iv\cdot\xi) \vert=0.
  \end{align*}
   \underline{\text{For}\hspace{0.2cm}${\vert B(v)\eta \vert}^{2}$:}
  Using   (\fcolorbox{red}{ white}{\ref{N22}}), we have
    \begin{align}
          \text{for}\hspace{0.2cm} \vert \alpha \vert\geq 1,\hspace{0.2cm}  {\vert \partial_{v}^{ \alpha } B(v)\eta \vert}^{2}
  \lesssim {\langle v \rangle}^{\gamma}\vert \eta \vert^{2}\lesssim {\vert B(v)\eta \vert}^{2},
   \end{align}
then, using Cauchy-Schwarz we get 
   \begin{align}
        &\forall \alpha\in \mathbb{N}^{3},\hspace{0.2cm}  \vert \partial_{v}^{ \alpha }({\vert B(v)\eta \vert}^{2})\vert \lesssim {\vert B(v)\eta \vert}^{2}
  \lesssim {\lambda}^{2}.
   \end{align}
 On the other hand, also using  (\fcolorbox{red}{ white}{\ref{N22}}), we have
   \begin{align*}
  \text{pour}\hspace{0.2cm} \vert \alpha \vert=  0,\hspace{0.2cm}   &{\vert B(v)\eta \vert}^{2}  \lesssim   {\lambda}^{2},\\
\text{pour}\hspace{0.2cm} \vert \alpha \vert=  1,\hspace{0.2cm} &\vert \partial_{\eta}^{ \alpha }({\vert B(v)\eta \vert}^{2}) \vert \lesssim \vert B(v) \vert \vert B(v)\eta \vert\lesssim {\lambda}^{2},\\
\text{pour}\hspace{0.2cm} \vert \alpha \vert=  2,\hspace{0.2cm} &\vert \partial_{\eta}^{ \alpha }({\vert B(v)\eta \vert}^{2})\vert \lesssim \vert B^{T}(v)B(v)\vert\lesssim {\lambda}^{2},\\
 \text{pour}\hspace{0.2cm}  \vert \alpha \vert  \geq 3,\hspace{0.2cm}  &\vert \partial_{\eta}^{ \alpha } ({\vert B(v)\eta \vert}^{2}) \vert=0,
  \end{align*}
  so we get
  \begin{align}\label{M131}
  \forall \alpha \in \mathbb{N}^{3} ,\hspace{0.2cm}  \vert \partial_{\eta}^{ \alpha } ({\vert B(v)\eta \vert}^{2}) \vert
  \lesssim {\lambda}^{2}.
  \end{align}
Moreover,  one can estimate from above the modulus of all the derivatives of the term ${\vert B(v)\eta \vert}^{2}$  by a constant times $\lambda^{2}$.

\underline{\text{For}\hspace{0.2cm}$F(v)$:}
Using Lemma \fcolorbox{red}{ white}{\ref{N80}}, we have
  \begin{align}
  \forall \alpha \in \mathbb{N}^{3} ,\hspace{0.2cm}
  \vert
 \partial_{v}^{ \alpha  }F(v)\vert \lesssim{\langle v \rangle}^{\gamma+2-\vert \alpha \vert } \lesssim {\lambda}^{2},
 \end{align}
 which gives that $F(v)\in S(\lambda^{2},\Gamma)$ uniformly with respect to the parameter $\xi$.\\
In addition, we have that  $ R_{1}, R_{2}\in S(\lambda^{2},\Gamma)$. From the above, we can deduce that  $\Tilde{\sigma}$ belongs to  $ S(\lambda^{2},\Gamma)$ uniformly with respect to the parameter  $\xi$.
  Using the following notation
 $$H_{17}=\underbrace{ \mathcal{A}_{\xi}\big(\big(\lambda_{K}^{2}\big)^{w}\big)^{-1}}_{\in\mathcal{B}(L^{2})},$$
Then there exists $\mathbf{C}_{0}>0$ such that
 \begin{align}
      \Vert H_{17}\varphi \Vert_{L^{2}}^{2} \leq \mathbf{C}_{0} \Vert \varphi \Vert_{L^{2}}^{2}\hspace{0.2cm} \forall \varphi \in L^{2}(\mathbb{R}^{3}_{v}),
 \end{align}
 uniformly with respect to the parameter $\xi$,  which implies for all  $u\in \mathcal{S}(\mathbb{R}^{3}_{v})$,
 \begin{align}
\Vert  \mathcal{A}_{\xi}u \Vert_{L^{2}}^{2} \leq \mathbf{C}_{0} \Vert \big({\lambda^{2}_{K}}\big)^{w}u \Vert_{L^{2}}^{2}.
 \end{align}
 \end{proof}
 \section{Hypoelliptic estimates for the whole linearized Landau operator}\label{z3}
In this section, we show hypoelliptic estimates with respect to the velocity and position variables for the Landau operator ${\mathcal{P}}$. These estimates allow us to locate the spectrum and estimate the resolvent of the Landau operator. We denote by  $\Lambda_{K}$ the operator associated to the symbol $\lambda_{K}$ by considering the  inverse Fourier transform with respect to the variable  $x$.

 \begin{thm}\label{N102}
There exists $C>0$ such that for all $u\in \mathcal{S}(\mathbb{R}^{6}_{x,v})$,
 \begin{align}\label{W4}
 \Vert \Lambda^{2/3}_{K}u \Vert_{{L^{2}_{x,v}}}^{2}\leq C\left(\Vert {\mathcal{P}}u \Vert_{L^{2}_{x,v}}^{2} +\Vert u \Vert_{L^{2}_{x,v}}^{2}\right).
  \end{align}
 \end{thm}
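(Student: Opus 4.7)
The plan is to reduce the estimate to the parametric situation treated in Section \ref{z2} by taking the partial Fourier transform $\mathcal{F}_x$ in the position variable, applying Proposition \ref{N100} fibrewise in the dual variable $\xi$, and then removing the compact perturbation $\mathcal{K}$ at the end. The definition of $\Lambda_K$ given just above the statement is precisely what makes this reduction clean: if we set $\widehat{u}(\xi,v)=(\mathcal{F}_x u)(\xi,v)$, then on the Fourier side $\Lambda_K^{2/3}$ is exactly the operator $(\lambda_K^{2/3})^w$ acting on the $v$ variable with parameter $\xi$, and the full operator $\mathcal{A}$ becomes $\mathcal{A}_\xi$ on $\widehat{u}(\xi,\cdot)$.

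First I would verify (or simply record) the two conjugation identities
\begin{equation*}
\mathcal{F}_x(\mathcal{A}u)(\xi,\cdot)=\mathcal{A}_\xi\widehat{u}(\xi,\cdot),\qquad \mathcal{F}_x(\Lambda_K^{2/3}u)(\xi,\cdot)=(\lambda_K^{2/3})^w\widehat{u}(\xi,\cdot),
\end{equation*}
which follow directly from the formula (\ref{N9}) for $\mathcal{A}$ (the $v\cdot\nabla_x$ transport becomes $iv\cdot\xi$ and the $v$-operators commute with $\mathcal{F}_x$) and from the definition of $\Lambda_K$. I would then apply Proposition \ref{N100} pointwise in $\xi$ to $\widehat{u}(\xi,\cdot)\in\mathcal{S}(\mathbb{R}^3_v)$, giving for each $\xi$
\begin{equation*}
\|(\lambda_K^{2/3})^w\widehat{u}(\xi,\cdot)\|_{L^2_v}^{2}\leq C_0\bigl(\|\mathcal{A}_\xi\widehat{u}(\xi,\cdot)\|_{L^2_v}^{2}+\|\widehat{u}(\xi,\cdot)\|_{L^2_v}^{2}\bigr),
\end{equation*}
with the constant $C_0$ independent of $\xi$. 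Integrating in $\xi$ and invoking Plancherel yields
\begin{equation*}
\|\Lambda_K^{2/3}u\|_{L^2_{x,v}}^{2}\leq C_0\bigl(\|\mathcal{A}u\|_{L^2_{x,v}}^{2}+\|u\|_{L^2_{x,v}}^{2}\bigr).
\end{equation*}

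To convert this into the desired estimate involving $\mathcal{P}$, I would use the splitting $\mathcal{P}=\mathcal{A}+\mathcal{K}$ from (\ref{N14}) together with the fact established in the proof of Corollary \ref{N3000} that $\mathcal{K}=-\mathcal{L}_2-M\chi_R$ is bounded on $L^2_{x,v}$. A triangle inequality then gives
\begin{equation*}
\|\mathcal{A}u\|_{L^2_{x,v}}^{2}\leq 2\|\mathcal{P}u\|_{L^2_{x,v}}^{2}+2\|\mathcal{K}\|_{\mathcal{B}(L^2_{x,v})}^{2}\|u\|_{L^2_{x,v}}^{2},
\end{equation*}
and substituting this into the previous line delivers (\ref{W4}) with $C=2C_0(1+\|\mathcal{K}\|^{2})+C_0$.

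The only subtle point, and the one I would have to be careful with, is the interpretation of $\Lambda_K^{2/3}$ as the inverse Fourier transform in $x$ of the $\xi$-parametrized Weyl operator $(\lambda_K^{2/3})^w$: the symbol $\lambda_K$ depends on $(v,\eta,\xi)$ but not on $x$, so it genuinely defines a Fourier multiplier in $x$ composed with a $v$-pseudo-differential operator, which is exactly the setting in which the fibrewise application of Proposition \ref{N100} together with Plancherel is justified. Beyond this bookkeeping the argument is essentially a soft synthesis of the parametric work of Section \ref{z2} and the splitting of $\mathcal{P}$ from Section \ref{z1}, with no further hard analysis required.
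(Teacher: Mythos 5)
Your proposal is essentially the paper's own proof: partial Fourier transform in $x$, apply Proposition~\ref{N100} fibrewise in $\xi$ (with the constant $C_0$ uniform in $\xi$), integrate via Plancherel, then absorb $\mathcal{K}$ using the splitting $\mathcal{P}=\mathcal{A}+\mathcal{K}$ from (\ref{N14}) and the boundedness of $\mathcal{K}$. One small notational point worth flagging: the paper defines $\Lambda_K$ as the inverse $x$-Fourier transform of $\lambda_K^w$, so $\Lambda_K^{2/3}$ is most naturally the operator power whose Fourier conjugate is $(\lambda_K^w)^{2/3}$, whereas you identify it directly with $(\lambda_K^{2/3})^w$; the paper bridges this via the equivalence $\Vert(\lambda_K^{2/3})^w u\Vert_{L^2_v}\sim\Vert(\lambda_K^w)^{2/3}u\Vert_{L^2_v}$ from Theorem~\ref{M36}(V), and since Proposition~\ref{N100} is already stated in terms of $(\lambda_K^{2/3})^w$, your reading produces the same final estimate up to constants, so this is harmless.
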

 \begin{proof}
Using Theorem  \fcolorbox{red}{ white}{\ref{M36}},  we have for all  $u\in \mathcal{S}(\mathbb{R}^{3}_{v})$,
 $$   \Vert\big(\lambda_{K}^{2/3}\big)^{w}u \Vert_{L^{2}_{v}}^{2}\sim  \Vert\big(\lambda_{K}^{w}\big)^{2/3}u \Vert_{L^{2}_{v}}^{2},$$
so using the Proposition \fcolorbox{red}{ white}{\ref{N100}}, there exists a constant $C_{1}>0$ 
such that for all  $u\in \mathcal{S}(\mathbb{R}^{3}_{v})$,  
 \begin{align}
\Vert \big(\lambda_{K}^{w}\big)^{2/3}u \Vert_{L^{2}_{v}}^{2}\leq C_{1}\left(\Vert\mathcal{A}_{\xi}u \Vert_{L^{2}_{v}}^{2} +\Vert u \Vert_{L^{2}_{v}}^{2}\right),
 \end{align}
  uniformly with respect to the parameter $\xi$.
  By integrating the previous inequality with respect to the parameter $\xi$ \text{in} $\mathbb{R}^{3}$ and  considering the inverse Fourier transform with respect to the variable  $x$, we obtain for all  $u\in \mathcal{S}(\mathbb{R}^{6}_{x,v})$,  
 \begin{align}\label{N101}
\Vert \Lambda^{2/3}_{K}u \Vert_{L^{2}_{x,v}}^{2}\leq C_{1} \left(\Vert {\mathcal{A}}u \Vert_{L^{2}_{x,v}}^{2} +\Vert u \Vert_{L^{2}_{x,v}}^{2}\right).
  \end{align}
  Using (\fcolorbox{red}{ white}{\ref{N14}}), the operator $\mathcal{P}$ is written as follows
  $$\mathcal{P}=\mathcal{A}+\mathcal{K}.$$ 
 Consequently, using  (\fcolorbox{red}{ white}{\ref{N101}}), we have 
  \begin{align*}
\Vert \Lambda^{2/3}_{K}u \Vert_{L^{2}_{x,v}}^{2}&\leq C_{1} \left(\Vert {(\mathcal{A+K-K}})u \Vert_{L^{2}_{x,v}}^{2} +\Vert u \Vert_{L^{2}_{x,v}}^{2}\right)\\
&\leq C_{1} \left(\Vert {\mathcal{P}}u \Vert_{L^{2}_{x,v}}^{2} +\Vert {\mathcal{K}}u \Vert_{L^{2}_{x,v}}^{2}+\Vert u \Vert_{L^{2}_{x,v}}^{2}\right)
  \end{align*}
  and using the fact that $\mathcal{K}$ is a bounded operator,  we obtain that there exists a constant  $C>0$ such that for all  $u\in \mathcal{S}(\mathbb{R}^{6}_{x,v})$,  
  \begin{align} 
 \Vert \Lambda^{2/3}_{K}u \Vert_{{L^{2}_{x,v}}}^{2}\leq C\left(\Vert {\mathcal{P}}u \Vert_{L^{2}_{x,v}}^{2} +\Vert u \Vert_{L^{2}_{x,v}}^{2}\right).
  \end{align}
   \end{proof}
 By adding a term $i\kappa$ to $\mathcal{P}$ with $\kappa\in\mathbb{R}$, the proof doesn't change due to the never changing of the real part of $\mathcal{P}$ as mentionned crucially in Remark 2.2 in \cite{herau_isotropic_2004}. So Theorem \fcolorbox{red}{ white}{\ref{N102}} admits the following extension.
 \begin{thm}
There exists $C>0$ such that for all $u\in \mathcal{S}(\mathbb{R}^{6}_{x,v})$,
 \begin{align}\label{M142}
 \forall \kappa \in \mathbb{R},\hspace{0.2cm}\Vert \Lambda^{2/3}_{K}u \Vert_{L^{2}_{x,v}}^{2}\leq C\left(\Vert (\mathcal{P}-i\kappa) u \Vert_{L^{2}_{x,v}}^{2} +\Vert u \Vert_{L^{2}_{x,v}}^{2}\right).
 \end{align}
\end{thm}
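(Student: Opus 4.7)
The plan is to revisit the proof of Theorem \ref{N102} with $\mathcal{P}$ replaced by $\mathcal{P}-i\kappa$ (and correspondingly $\mathcal{A}$ replaced by $\mathcal{A}-i\kappa$) and check that every ingredient carries over with a $\kappa$-independent constant. The central observation I will exploit is that multiplication by $-i\kappa$ is a bounded skew-adjoint operator on $L^{2}$ that commutes with every other operator appearing in the argument: it therefore contributes nothing to any real part and nothing to any commutator. This is exactly the content of the remark preceding the statement.

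At the Fourier level, the parametric operator becomes
\begin{equation*}
\mathcal{A}_{\xi}-i\kappa = i(v\cdot\xi - \kappa) + (B(v)\nabla_{v})^{*}\cdot B(v)\nabla_{v} + F(v),
\end{equation*}
and the basic accretivity identity (\ref{N502}) holds immediately with $\mathcal{A}_{\xi}-i\kappa$ in place of $\mathcal{A}_{\xi}$, since $\Re((\mathcal{A}_{\xi}-i\kappa)u,u)_{L^{2}} = \Re(\mathcal{A}_{\xi}u,u)_{L^{2}}$. Next I will inspect the multiplier expansion (\ref{N30}): the only extra term it produces is $-\delta\,\Re(-i\kappa\, u, Gu)_{L^{2}}$, which vanishes since $G=g^{\text{Wick}}$ is self-adjoint (because the symbol $g$ of (\ref{N36}) is real-valued) while $i\kappa$ is purely imaginary. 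Consequently Proposition \ref{N51} transfers verbatim with $\mathcal{A}_{\xi}-i\kappa$ substituted for $\mathcal{A}_{\xi}$. The commutator bound (\ref{N84}) is similarly preserved, because $[\big(\lambda_{K}^{1/3}\big)^{w},-i\kappa]=0$ and the symbol of $\mathcal{A}_{\xi}-i\kappa$ differs from that of $\mathcal{A}_{\xi}$ only by a scalar constant. Propagating these through the pseudo-differential computations, Proposition \ref{N100} yields
\begin{equation*}
\Vert(\lambda_{K}^{2/3})^{w}u\Vert_{L^{2}_{v}}^{2} \leq C_{0}\bigl(\Vert(\mathcal{A}_{\xi}-i\kappa)u\Vert_{L^{2}_{v}}^{2} + \Vert u\Vert_{L^{2}_{v}}^{2}\bigr),
\end{equation*}
with the same constant $C_{0}$ as in the $\kappa=0$ case, uniformly in both $\xi\in\mathbb{R}^{3}$ and $\kappa\in\mathbb{R}$.

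Finally, following the scheme of Theorem \ref{N102}, I will integrate this estimate with respect to $\xi$, invert the partial Fourier transform in $x$, and use the splitting $\mathcal{P}-i\kappa=(\mathcal{A}-i\kappa)+\mathcal{K}$ together with the boundedness of $\mathcal{K}$ on $L^{2}_{x,v}$ (Corollary \ref{N3000}) to absorb the contribution of $\mathcal{K}$ into $\Vert u\Vert_{L^{2}_{x,v}}^{2}$ via the triangle inequality. This delivers (\ref{M142}).

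I do not foresee any substantive obstacle: the only real task is the bookkeeping required to verify that every real-part manipulation of Section \ref{2001} and every commutator appearing in the passage from Proposition \ref{N51} to Proposition \ref{N100} is insensitive to the addition of the constant multiplier $-i\kappa$. Both facts reduce to the skew-adjointness of $-i\kappa$ and to its commuting with all operators in play, so the proof of Theorem \ref{N102} extends line by line, with a constant $C$ that is independent of $\kappa$.
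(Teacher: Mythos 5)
Your argument is correct and is essentially a fully detailed version of the paper's own (one-sentence) justification: the paper simply remarks that "adding a term $i\kappa$ to $\mathcal{P}$ doesn't change the proof since it never changes the real part of $\mathcal{P}$," and your careful verification — that $-i\kappa$ is bounded, skew-adjoint, commutes with every operator in play, and therefore contributes nothing to any real-part identity (such as (\ref{N502}) and (\ref{N30})) nor to any commutator (such as those entering Lemma \ref{N62}, (\ref{N84}), and the Poisson bracket $\{\xi\cdot v,g\}$) — is exactly the bookkeeping that remark implicitly relies on.
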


 \begin{thm}
  There exists  $\mathbf{C}>0$  such that for all $u\in \mathcal{S}(\mathbb{R}^{6}_{x,v})$,
 \begin{align}\label{N111}
\Vert {\mathcal{P}}u \Vert_{L^{2}_{x,v}}^{2} \leq \mathbf{C} \Vert \Lambda^{2}_{K}u \Vert_{L^{2}_{x,v}}^{2}.
 \end{align}
 \end{thm}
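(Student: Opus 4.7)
The plan is to mirror the reduction used to pass from Proposition~\ref{N100} to Theorem~\ref{N102}, but applied this time to the upper bound furnished by Proposition~\ref{N105}. Everything analytically substantial has already been done there; what remains is a routine assembly.

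First I would apply Proposition~\ref{N105} to $\hat u(\xi,\cdot)$, the partial Fourier transform in $x$ of an arbitrary $u\in\mathcal{S}(\mathbb{R}^6_{x,v})$. That proposition gives
$$\|\mathcal{A}_{\xi}\hat u(\xi,\cdot)\|_{L^{2}_{v}}^{2}\le \mathbf{C}_{0}\,\|(\lambda_{K}^{2})^{w}\hat u(\xi,\cdot)\|_{L^{2}_{v}}^{2},$$
uniformly with respect to the parameter $\xi\in\mathbb{R}^{3}$. Integrating this inequality over $\xi\in\mathbb{R}^3$, using Plancherel in $x$, and recalling that $\Lambda_K^2$ is by definition obtained from $(\lambda_K^2)^w$ by inverse partial Fourier transform in $x$, I would obtain
$$\|\mathcal{A}u\|_{L^{2}_{x,v}}^{2}\le \mathbf{C}_{0}\,\|\Lambda_{K}^{2}u\|_{L^{2}_{x,v}}^{2},\qquad \forall u\in \mathcal{S}(\mathbb{R}^{6}_{x,v}).$$

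Next I would use the splitting $\mathcal{P}=\mathcal{A}+\mathcal{K}$ from \eqref{N14} together with the triangle inequality to write
$$\|\mathcal{P}u\|_{L^{2}_{x,v}}^{2}\le 2\|\mathcal{A}u\|_{L^{2}_{x,v}}^{2}+2\|\mathcal{K}u\|_{L^{2}_{x,v}}^{2}.$$
Since $\mathcal{K}=-\mathcal{L}_{2}-M\chi_{R}$ is bounded on $L^{2}_{x,v}$ (as already exploited in the proof of Corollary~\ref{N3000}), the second term is majorised by $2\|\mathcal{K}\|^{2}\|u\|_{L^{2}_{x,v}}^{2}$. Finally, since $\gamma\ge 0$ we have $\lambda\ge 1$, hence $\lambda_{K}^{2}\ge 1$ pointwise; this implies that $((\lambda_{K}^{2})^{w})^{-1}\in\mathcal{B}(L^{2}_{v})$ uniformly in $\xi$ (a fact already invoked implicitly at the end of Proposition~\ref{N105} through the bounded operator $H_{17}$), and therefore $\|u\|_{L^{2}_{x,v}}\lesssim \|\Lambda_{K}^{2}u\|_{L^{2}_{x,v}}$ after integration in $\xi$. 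Combining the three estimates with a constant $\mathbf{C}>0$ then delivers~\eqref{N111}.

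The main (and in fact only) obstacle I see is a bookkeeping one: to verify that every constant produced by applying Proposition~\ref{N105} to $\hat u(\xi,\cdot)$ is genuinely independent of $\xi$, so that integration in $\xi$ is legitimate. This is asserted in the statement of Proposition~\ref{N105} and was indeed tracked all along through the class $S(\lambda^{2},\Gamma)$ uniformly in $\xi$; once this uniformity is acknowledged, the proof is purely a packaging of earlier results and requires no new symbolic or multiplier estimate.
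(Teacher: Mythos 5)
Your proposal follows essentially the same route as the paper: apply Proposition~\ref{N105} fibrewise in $\xi$, integrate and invert the partial Fourier transform to get $\|\mathcal{A}u\|^2\lesssim\|\Lambda_K^2u\|^2$, then use the decomposition $\mathcal{P}=\mathcal{A}+\mathcal{K}$, boundedness of $\mathcal{K}$, and the pointwise lower bound $\Lambda_K^2\ge\mathrm{Id}$ to absorb $\|u\|^2$. One small imprecision: the paper defines $\Lambda_K$ (not $\Lambda_K^2$) as the inverse partial Fourier transform of $\lambda_K^w$, so $\Lambda_K^2$ means the operator square $(\lambda_K^w)^2$, not the quantization $(\lambda_K^2)^w$; the paper therefore first invokes Theorem~\ref{M36} to pass from $\|(\lambda_K^2)^w\hat u\|^2$ (which is what Proposition~\ref{N105} gives) to the equivalent $\|(\lambda_K^w)^2\hat u\|^2$ before integrating in $\xi$. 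Your statement that "$\Lambda_K^2$ is by definition obtained from $(\lambda_K^2)^w$'' glosses over this conversion, but since Theorem~\ref{M36}~(V) makes the two norms comparable uniformly in $\xi$, the argument is not affected.
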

 \begin{proof}
Using Theorem  \fcolorbox{red}{ white}{\ref{M36}} , we have for all $u\in \mathcal{S}(\mathbb{R}^{3}_{v})$, 
$$   \Vert\big(\lambda_{K}^{2}\big)^{w}u \Vert_{L^{2}_{v}}^{2}\sim  \Vert\big(\lambda_{K}^{w}\big)^{2}u \Vert_{L^{2}_{v}}^{2},$$
so using the Proposition  \fcolorbox{red}{ white}{\ref{N105}}, there exists a constant $\mathbf{C}_{1}>0$ 
such that for all $u\in \mathcal{S}(\mathbb{R}^{3}_{v})$, 
 \begin{align}
\Vert\mathcal{A}_{\xi}u \Vert_{L^{2}_{v}}^{2}\leq  \mathbf{C}_{1} \Vert\big(\lambda_{K}^{w}\big)^{2}u \Vert_{L^{2}_{v}}^{2},
 \end{align}
  uniformly with respect to the parameter $\xi$.
    By integrating the previous inequality withrespect to the parameter $\xi$ \text{in} $\mathbb{R}^{3}$ and considering the inverse Fourier transform with respect to the variable $x$,  we obtain for all $u\in \mathcal{S}(\mathbb{R}^{6}_{x,v})$,
     \begin{align}\label{N110}
\Vert {\mathcal{A}}u \Vert_{L^{2}_{x,v}}^{2} \leq \mathbf{C}_{1} \Vert \Lambda^{2}_{K}u \Vert_{L^{2}_{x,v}}^{2}.
  \end{align}
Using (\fcolorbox{red}{ white}{\ref{N14}}), the operator $\mathcal{P}$ is written as follows
  $$\mathcal{P}=\mathcal{A}+\mathcal{K}.$$ 
  Consequently, using  (\fcolorbox{red}{ white}{\ref{N110}}), we have
  \begin{align*}
  \Vert {\mathcal{P}}u \Vert_{L^{2}_{x,v}}^{2} &\leq\Vert {\mathcal{A}}u \Vert_{L^{2}_{x,v}}^{2} +\Vert {\mathcal{K}}u \Vert_{L^{2}_{x,v}}^{2}
  \\&\leq  \mathbf{C}_{1} \Vert \Lambda^{2}_{K}u \Vert_{L^{2}_{x,v}}^{2}+\mathbf{C}_{2}\Vert u \Vert_{L^{2}_{x,v}}^{2},
  \end{align*}
with  $\mathbf{C}_{2}>0$. Using the fact that  the operator $\Lambda^{2}_{K}\geq \text{Id}$ (see for instance Theorem \fcolorbox{red}{ white}{\ref{M36}} in the Appendix \fcolorbox{red}{ white}{\ref{N130}}), we obtain that there exists a constant
   $\mathbf{C}>0$  such that for all $u\in \mathcal{S}(\mathbb{R}^{6}_{x,v})$,
 \begin{align}
\Vert {\mathcal{P}}u \Vert_{L^{2}_{x,v}}^{2} \leq \mathbf{C} \Vert \Lambda^{2}_{K}u \Vert_{L^{2}_{x,v}}^{2}.
 \end{align}
 \end{proof}
 We srongly think the  estimate in (\fcolorbox{red}{ white}{\ref{W4}}) is  optimal in term of the index $2/3$ appearing in the left hand side, although we don't prove that. This index  $2/3$ is classical for kinetic model. We refer for example to Proposition 5.22 in \cite{article}, partial results in this direction in the Fokker-Planck case.
 \section{Localisation of the spectrum for the Landau operator}\label{z4}
 In all that follows, we denote by  $\sigma(\mathcal{P})$ the spectrum of the operator $\mathcal{P}$, $\rho(\mathcal{P})$ The resolvent set of $\mathcal{P}$ and  $\Vert \cdot \Vert_{L^{2}}$ to denote the norm in the space $L^{2}(\mathbb{R}^{6}_{x,v})$.
The following lemma holds for any maximally accretive operator.
\begin{lem}\label{M164}
Let $(\mathcal{A},D(\mathcal{A}))$ be a maximally accretive operator in the Hilbert space $\mathcal{H}$. For
any  $\eta\in\mathopen{]}0\,,1\mathclose{[}$ , the estimate 
$$\vert z+1 \vert^{2\eta}\Vert u \Vert^{2}\leq 4 \big( ((\mathcal{A}+1)^{*}(\mathcal{A}+1))^{\eta}u,u \big)_{\mathcal{H}} +4\Vert (\mathcal{A}-z)u \Vert^{2}$$
holds for all $u\in D(\mathcal{A})$ and $z\in \mathbb{C}$ with $\Re{e}\hspace{0.05cm} z\geq -1$ .
\end{lem}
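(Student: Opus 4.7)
The plan is to set $B = \mathcal{A}+I$ and $w = z+1$, so that $B$ is maximally accretive with $\Re(Bu,u)\geq \|u\|^2$, whence $B^*B\geq I$ (from $\|Bu\|^2\geq (\Re(Bu,u))^2/\|u\|^2\geq\|u\|^2$), and $\Re w\geq 0$. The statement reduces to showing
\[
|w|^{2\eta}\|u\|^2 \leq 4\bigl((B^*B)^\eta u, u\bigr) + 4\|(B-w)u\|^2.
\]

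I would first verify the two endpoints. For $\eta=0$, the inequality is immediate because $B^*B\geq I$ gives $((B^*B)^\eta u, u)\geq \|u\|^2$. For $\eta=1$, the identity $wu = Bu - (B-w)u$ combined with $(a+b)^2\leq 2a^2+2b^2$ yields the even stronger bound $|w|^2\|u\|^2\leq 2(B^*Bu,u) + 2\|(B-w)u\|^2$.

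For intermediate $\eta\in(0,1)$, the idea is to transfer to the operator setting the scalar inequality $|w|^{2\eta}\leq 4\lambda^\eta + 4|\sqrt\lambda-w|^2$ valid for $\lambda\geq 1$ and $\Re w\geq 0$. I would prove this by a dichotomy: if $\sqrt\lambda \geq |w|/2$, then $4\lambda^\eta\geq 4(|w|/2)^{2\eta}\geq|w|^{2\eta}$; if $\sqrt\lambda<|w|/2$, then $|\sqrt\lambda-w|\geq |w|/2$, so $4|\sqrt\lambda-w|^2\geq |w|^2\geq |w|^{2\eta}$ when $|w|\geq 1$, while the remaining subcase $|w|\leq 1$ is absorbed by $4\lambda^\eta \geq 4 \geq |w|^{2\eta}$. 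Applying the spectral theorem to $|B|=(B^*B)^{1/2}\geq I$, I would then decompose $u=u_L+u_H$ at the threshold $|B|=|w|/2$: the high component is handled directly by functional calculus, giving $|w|^{2\eta}\|u_H\|^2\leq 4^\eta ((B^*B)^\eta u,u)\leq 4((B^*B)^\eta u,u)$; on the low one, $\|Bu_L\|=\||B|u_L\|\leq(|w|/2)\|u_L\|$ together with the reverse triangle inequality give $\|(B-w)u_L\|\geq(|w|/2)\|u_L\|$, whence $|w|^{2\eta}\|u_L\|^2\leq |w|^2\|u_L\|^2\leq 4\|(B-w)u_L\|^2$ for $|w|\geq 1$.

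The main obstacle is that $B-w$ does not commute with the spectral projection of $B^*B$, so $\|(B-w)u_L\|$ must be related to the true residual $\|(B-w)u\|$. I would handle this via $\|(B-w)u_L\|^2\leq 2\|(B-w)u\|^2+2\|(B-w)u_H\|^2$ and reabsorb the cross-term $\|(B-w)u_H\|^2$ into $((B^*B)^\eta u,u)$ by adjusting the splitting threshold to $|B|=|w|/2^{1/\eta}$ (so that $\lambda^\eta \geq |w|^{2\eta}/4$ still holds on the high part with a sharper spectral gap at the low–high boundary), and by using the factorisation $\lambda=\lambda^\eta\lambda^{1-\eta}$ on the high spectral support together with Young's inequality $a^\eta b^{1-\eta}\leq\eta a+(1-\eta)b$ to convert powers. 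Residual $\|u\|^2$ contributions are absorbed using $((B^*B)^\eta u,u)\geq\|u\|^2$, and collecting the constants yields the factor $4$.
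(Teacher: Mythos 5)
Your reduction to $B=\mathcal{A}+I$ with $B^*B\geq I$, the endpoint checks, the scalar dichotomy, and the separate treatment of $u_H$ (via functional calculus) and $u_L$ (via the reverse triangle inequality using $\|Bu_L\|=\||B|u_L\|\leq(|w|/2)\|u_L\|$) are all correct, and you correctly single out the real difficulty: $B$ does not commute with the spectral projections of $|B|=(B^*B)^{1/2}$, so the lower bound $\|(B-w)u_L\|\geq(|w|/2)\|u_L\|$ is a statement about the \emph{wrong} residual. The paper itself does not give a proof (it cites Proposition B.1 of H\'erau--Nier), so I can only assess your argument on its own terms.

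The step that fails is the ``reabsorption'' of $\|(B-w)u_H\|^2$ into $((B^*B)^{\eta}u,u)$. Unpacking, $\|(B-w)u_H\|^2\leq 2\|Bu_H\|^2+2|w|^2\|u_H\|^2$, with $\|Bu_H\|^2=\||B|u_H\|^2=\int_{s>T}s^2\,d\mu_u(s)$, while $((B^*B)^\eta u,u)=\int s^{2\eta}\,d\mu_u(s)$. There is no constant $C$ (independent of $u$ and of $w$) with $\int_{s>T}s^2\,d\mu_u\leq C\int s^{2\eta}\,d\mu_u$: take $\mu_u$ concentrated near $s=\Lambda$ with $\Lambda\gg T$, and the ratio is $\Lambda^{2-2\eta}\to\infty$. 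Likewise $|w|^2\|u_H\|^2\leq|w|^2 T^{-2\eta}((B^*B)^\eta u,u)$, and with $T\sim|w|$ this prefactor is $|w|^{2-2\eta}$, again unbounded as $|w|\to\infty$. Young's inequality $a^\eta b^{1-\eta}\leq\eta a+(1-\eta)b$ with the factorisation $\lambda=\lambda^\eta\lambda^{1-\eta}$ cannot help here: it lets you dominate a \emph{product} by a convex combination, but what you need is to dominate $\lambda$ (equivalently $\lambda^{1-\eta}$) by a constant times $\lambda^{\eta}$ on $\{\lambda>T\}$, which is simply false since $\lambda^{1-\eta}$ is unbounded there. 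Changing the threshold to $|w|/2^{1/\eta}$ does not change this; the obstruction is the unbounded high-energy tail, not the location of the cutoff. Concretely, if one carries your estimate through honestly one only obtains $|w|^2\|u_L\|^2\lesssim |w|^2\|u\|^2+\|(B-w)u\|^2$, which is vacuous. So the noncommutativity obstacle you flag is genuine, and the proposed fix does not close it; a different mechanism is needed to pass from the correct scalar inequality for self-adjoint $|B|$ to the non-normal operator $B$.
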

\begin{proof}
See Proposition B.1 in \cite{herau_isotropic_2004}.
\end{proof}
\textbf{Proof of Theorem \ref{N5}.}
\begin{proof}
The proof will be divided into two steps. \\
\underline{\textbf{First step:}}\\
 Using (\fcolorbox{red}{ white}{\ref{M142}}) and the triangle inequality we therefore get for all  $z=\nu + i\kappa \in \mathbb{C} ,$ with \hspace{0.1cm} $\nu=\Re{e}\hspace{0.05cm} z\geq -1/2$ and $u\in \mathcal{S}(\mathbb{R}^{6}_{x,v})$, 
\begin{align*}
    \Vert \Lambda^{2/3}_{K}u \Vert_{L^{2}}^{2}&\leq C(\Vert (\mathcal{P}-i\kappa) u \Vert_{L^{2}}^{2} +\Vert u \Vert_{L^{2}}^{2}).\\
     &\leq C(\Vert (\mathcal{P}-i\kappa-\nu+\nu)u \Vert_{L^{2}}^{2} +\Vert u \Vert_{L^{2}}^{2})\\
     &\leq C(2\Vert (\mathcal{P}-z)u \Vert_{L^{2}}^{2} +(2\nu^{2}+1)\Vert u \Vert_{L^{2}}^{2}).
\end{align*}
But we have $\nu\geq -1/2$  implies $2\nu+2\geq 1$, so we get
\begin{align*}
 \Vert \Lambda^{2/3}_{K}u \Vert_{L^{2}}^{2}
   &\leq C(6\Vert (\mathcal{P}-z)u \Vert_{L^{2}}^{2} +6(\nu+1)^{2}\Vert u \Vert_{L^{2}}^{2})\\
   &\leq 6C(\Vert (\mathcal{P}-z)u \Vert_{L^{2}}^{2} +(\nu+1)^{2}\Vert u \Vert_{L^{2}}^{2}),
\end{align*}
By taking $\Tilde{C}=6C$, we finally obtain the following estimate
\begin{equation}\label{M150}
     \Vert \Lambda^{2/3}_{K}u \Vert_{L^{2}}^{2}\leq \Tilde{C} \big(\Vert (\mathcal{P}-z)u \Vert_{L^{2}}^{2} +(\Re{e}\hspace{0.05cm} z+1)^{2}\Vert u \Vert_{L^{2}}^{2}\big),
\end{equation}
for all $u \in \mathcal{S}(\mathbb{R}^{6}_{x,v})$ and $ z\in \mathbb{C}$ with $\Re{e}\hspace{0.05cm} z\geq -\frac{1}{2}$.\\
\underline{\textbf{Second step:}}\\
First, we will show that there exists a constant  $\Tilde{C}_{1}$ such that for all  $u\in \mathcal{S}(\mathbb{R}^{6}_{x,v})$, we have
\begin{align}\label{M161}
    0\leq  \big((\mathcal{A}+1)^{*}(\mathcal{A}+1)u,u \big)_{L^{2}}\leq \Tilde{C}_{1}\big(\Lambda^{4}_{K}u,u\big)_{L^{2}},
\end{align}
where  $\mathcal{A}$ the operator defined in (\fcolorbox{red}{ white}{\ref{N14}}).
Indeed, let  $u\in \mathcal{S}(\mathbb{R}^{6}_{x,v})$, we have
$$
 \big((1+\mathcal{A})^{*}(1+\mathcal{A})u ,u \big)_{L^{2}}
 = \big((1+\mathcal{A})u ,(1+\mathcal{A})u\big)_{L^{2}}
 =\Vert (1+\mathcal{A})u \Vert_{L^{2}}^{2}\geq 0.$$ 
On the other hand, we have
$$\big((1+\mathcal{A})^{*}(1+\mathcal{A})u ,u \big)_{L^{2}}=\Vert (1+\mathcal{A})u \Vert_{L^{2}}^{2}
     \leq\Vert \mathcal{A}u \Vert_{L^{2}}^{2} +\Vert u \Vert_{L^{2}}^{2}.$$
    Using (\fcolorbox{red}{ white}{\ref{N14}}), the operator $\mathcal{P}$ is written as follows
  $$\mathcal{P}=\mathcal{A}+\mathcal{K},$$ 
  we obtain 
\begin{align*}
\big((1+\mathcal{A})^{*}(1+\mathcal{A})u ,u \big)_{L^{2}}&\leq\Vert {\mathcal{P}}u \Vert_{L^{2}}^{2} +\Vert {\mathcal{K}}u \Vert_{L^{2}}^{2}+\Vert u \Vert_{L^{2}}^{2},
\end{align*}
  finally, using the fact that  $\mathcal{K}$ is a bounded operator, $\Lambda_{K}\geq \text{Id}$  and the estimate  (\fcolorbox{red}{ white}{\ref{N111}}), we obtain 
     \begin{align*}
\big((1+\mathcal{A})^{*}(1+\mathcal{A})u ,u \big)_{L^{2}}&\leq (1+\nu_{0})\Vert \Lambda_{K}^{2}u \Vert_{L^{2}}^{2} + \mathbf{C}  \Vert \Lambda_{K}^{2}u \Vert_{L^{2}}^{2}\\
  & \leq (1+\nu_{0}+\mathbf{C} )  \Vert \Lambda_{K}^{2}u \Vert_{L^{2}}^{2},
        \end{align*}
where  $\nu_{0}>0$. By taking  $\Tilde{C}_{1}=(1+\nu_{0}+\mathbf{C} )$, we obtain the estimate (\fcolorbox{red}{ white}{\ref{M161}}).\\
According to the monotonicity of the operator functional  $\mathcal{A}\longrightarrow \mathcal{A}^{\alpha}$ for $\alpha \in\mathopen{[}0\,,1\mathclose{]},$  in particular with  $\alpha=\dfrac{1}{3}$, we obtain 
$$0\leq\big(((1+\mathcal{A})^{*}(1+\mathcal{A}))^{1/3}u,u\big)_{L^{2}}\leq {\Tilde{C}_{1}}^{2/3} \big(\Lambda_{K}^{4/3}u,u\big)_{L^{2}}.$$
According to Theorem \fcolorbox{red}{ white}{\ref{N15}}, $\mathcal{A}$ is maximally accretive, then by applying  Lemma  \fcolorbox{red}{ white}{\ref{M164}} with $\eta=\dfrac{1}{3}$, for $\Re{e}\hspace{0.05cm} z\geq -1/2 \hspace{0.2cm}\text{and}\hspace{0.2cm} u\in \mathcal{S}(\mathbb{R}^{6}_{x,v})$, we obtain
\begin{align*}
    \vert z+1 \vert^{2/3}\Vert u \Vert_{L^{2}}^{2}&\leq 4 \big(((1+\mathcal{A})^{*}(1+\mathcal{A}))^{1/3}u,u\big)_{L^{2}}+4\Vert (\mathcal{A}-z)u \Vert_{L^{2}}^{2}\\
    &\leq4{\Tilde{C}_{1}}^{2/3}\big(\Lambda_{K}^{4/3}u,u\big)_{L^{2}} +4\Vert (\mathcal{A}-z)u \Vert_{L^{2}}^{2}\\
    &\leq 4{\Tilde{C}_{1}}^{2/3}\Vert \Lambda_{K}^{2/3}u \Vert_{L^{2}}^{2} +4\Vert (\mathcal{A}-z)u \Vert_{L^{2}}^{2}\\
    &\leq 4{\Tilde{C}_{1}}^{2/3}\Vert \Lambda_{K}^{2/3}u \Vert_{L^{2}}^{2} +4\Vert (\mathcal{P}-z)u \Vert_{L^{2}}^{2}+4\nu_{1}\Vert u \Vert_{L^{2}}^{2}.
\end{align*}
With the inequality (\fcolorbox{red}{ white}{\ref{M150}}), we obtain
$$\vert z+1 \vert^{2/3}\Vert u \Vert_{L^{2}}^{2}\leq(4{\Tilde{C}_{1}}^{2/3}\Tilde{C}+4)\Vert (\mathcal{P}-z)u \Vert_{L^{2}}^{2}+
(4{\Tilde{C}_{1}}^{2/3}\Tilde{C}+16\nu_{1})(\Re{e}\hspace{0.05cm} z +1)^{2}\Vert u \Vert_{L^{2}}^{2}.$$
By taking $ C_{\mathcal{P}}=\sqrt{8(4\nu_{1}+{\Tilde{C}_{1}}^{2/3}\Tilde{C})},\hspace{0.1cm} Q_{\mathcal{P}}=\sqrt{8(1+{\Tilde{C}_{1}}^{2/3}\Tilde{C})}$ 
we finally get \\
$\forall z\in \mathbb{C},\hspace{0.1cm}\Re{e}\hspace{0.05cm} z\geq -1/2,\hspace{0.1cm}\forall u \in \mathcal{S}(\mathbb{R}^{6}_{x,v})$,
\begin{equation}\label{M165}
\vert z+1 \vert^{2/3}\Vert u \Vert_{L^{2}}^{2}\leq \frac{{Q^{2}_{\mathcal{P}}}}{2}\Vert (\mathcal{P}-z)u \Vert_{L^{2}}^{2}+
\frac{{C^{2}_{\mathcal{P}}}}{2}(\Re{e}\hspace{0.05cm} z +1)^{2}\Vert u \Vert_{L^{2}}^{2}.
\end{equation}

Now let  $z \notin S_{\mathcal{P}}$ such that  $\Re{e}\hspace{0.05cm}z\geq -1/2$, so according to the definition of $S_{\mathcal{P}}$ given in  (\fcolorbox{red}{ white}{\ref{N112}}) we have
$$(\Re{e}\hspace{0.05cm} z +1)^{2}\leq  \dfrac{1}{C_{\mathcal{P}}^{2}}\vert z+1 \vert^{2/3},$$
then for all  $u\in \mathcal{S}(\mathbb{R}^{6}_{x,v})$, inequality  (\fcolorbox{red}{ white}{\ref{M165}}) implies 
\begin{align}\label{M170}
    \vert z+1 \vert^{2/3}\Vert u \Vert_{L^{2}}^{2}&\leq  {Q}_{\mathcal{P}}^{2}\Vert (\mathcal{P}-z)u \Vert_{L^{2}}^{2},
\end{align}
we deduce that ${\mathcal{P}}-z$ is injective, moreover we can replace ${\mathcal{P}}-z $ by $({\mathcal{P}}-z)^{*}$ in (\fcolorbox{red}{white}{\ref{M170}}), which gives $({\mathcal{P}}-z)^{*}$ is injective and consequently ${\mathcal{P}}-z$ is bijective with dense image in $L^{2}$, therefore $z\in\rho(\mathcal{P})$.
By taking $v=(\mathcal{P}-z)u$ in the estimate (\fcolorbox{red}{ white}{\ref{M170}}) we get
\begin{align*}
\Vert {(\mathcal{P}-z)}^{-1}v \Vert_{L^{2}}^{2}&\leq  Q^{2}_{\mathcal{P}}\vert z+1 \vert^{-2/3}\Vert v \Vert_{L^{2}}^{2}\hspace{0.2cm}\forall v \in L^{2},
\end{align*}
then we obtain  the resolvent estimate (\fcolorbox{red}{ white}{\ref{N113}}).\\ 
On the other hand, we have that if $z\in \sigma(\mathcal{P})$ then $z \in S_{\mathcal{P}}$ and taking into account  that the numerical range of the operator  $\mathcal{P}$  is the half plan $\left\lbrace \Re{e}\hspace{0.05cm} z \geq 0\right\rbrace$, we deduce that the spectrum $\sigma(\mathcal{P})$ satisfies
$$\sigma(\mathcal{P})\subset S_{\mathcal{P}}\cap \lbrace{\Re{e}\hspace{0.05cm} z \geq 0\rbrace}.$$
Concerning  estimate (\fcolorbox{red}{ white}{\ref{W3}}), we have 
\begin{align*}
    \Re{e}\left((\mathcal{P}-z)u,u\right)_{L^{2}}\geq -\Re{e}\hspace{0.05cm}z\Vert u \Vert_{L^{2}}^{2}\hspace{0.3cm}\text{with}\hspace{0.3cm} \Re{e}\hspace{0.05cm}z\leq-\frac{1}{2}<0,
\end{align*}
which implies that
    \begin{align*}
    {\Vert (z-\mathcal{P})^{-1} \Vert}_{\mathcal{B}(L^{2}_{x,v})} \leq \vert \Re{e}\hspace{0.05cm}z \vert^{-1}.
    \end{align*}
The proof is then complete.
\end{proof}
\begin{appendix}
\section{Appendix}\label{1001}
\subsection{Weyl-Hörmander calculus}\label{S3}
We recall here some notations and basic facts of symbolic calculus, and refer to  \cite{lerner_metrics_2011} and  \cite{hormander_analysis_2007} for detailed discussions on the pseudo-differential calculus.\\ 
We introduce on  $\mathbb{R}^{2n}$ the following metric
$$\Gamma=dv^{2}+d\eta^{2}.$$
\begin{defi}\label{S1}
Let $m\geq 1$ be a $C^{\infty}$ function on $\mathbb{R}^{2n}$. We say that  $m$ is an admissible weight for  $\Gamma$  if there exist two constants $C>0$ and $N>0$ such that
\begin{equation}\label{M61}
\forall X,Y\in \mathbb{R}^{2n},\quad m(X)\leq C \left\langle X-Y\right\rangle^{N}m(Y).
\end{equation}
\end{defi}
\begin{defi}\label{S2}
Let $m$  be an admissible function. We denote by $S(m,\Gamma)$ the symbol class of all smooth functions  $p(v,\eta)$ (possibly depending on parameter  $\xi$) satisfying 
 $$ \forall \alpha,\beta \in \mathbb{N}^{n},\exists\hspace{0.05cm} C_{\alpha,\beta}>0 ;\hspace{0.05cm}\forall (v,\eta)\in \mathbb{R}^{2n},\hspace{0.05cm}\vert \partial_{v}^{\alpha}\partial_{\eta}^{\beta}p(v,\eta)\vert \leq C_{\alpha,\beta}m(v,\eta).$$
\end{defi}
The space of symbols  $S(m,\Gamma)$ endowed with the semi-norms
 \begin{equation}\label{M1}
 \Vert p \Vert_{k;S(m,\Gamma)}=\underset{\vert \alpha+\beta \vert \leq k} {\text{sup}}\hspace{0.1cm}\underset{(v,\eta)\in \mathbb{R}^{2n}}{\text{sup}}\vert m(v,\eta)^{-1} \partial_{v}^{\alpha}\partial_{\eta}^{\beta}p(v,\eta)\vert\hspace{0.05cm};\hspace{0.1cm} k\in \mathbb{N}
  \end{equation}
 becomes a Fréchet space.\\
 For such a symbol $p$ in  $S(m,\Gamma)$ we may define its Weyl
quantization  $p^{w}$ by  
\begin{equation}\label{M27}
 \forall u\in \mathcal{S}(\mathbb{R}^{n}),\hspace{0.3cm} (p^{w}u)(v)=\dfrac{1}{(2\pi)^{n}}\int_{\mathbb{R}^{2n}}e^{i(v-v')\cdot\eta}p\bigg(\dfrac{v+v'}{2},\eta\bigg)u(v')\hspace{0.1cm}dv'd\eta.
\end{equation}
The Weyl quantization of $S(m,\Gamma)$ is denoted by $\Psi(m,\Gamma)$.
 \begin{thm}\textbf{(Calderon-Vaillancourt)}\label{M2}\\
 Let $p^{w}$ be an operator in   $\Psi(1,\Gamma)$. We have $p^{w}$ a continuous operator on $L^{2}(\mathbb{R}^{n})$ and
 \begin{equation}\label{M4}
  \forall u \in L^{2}(\mathbb{R}^{n}),\quad  \Vert p^{w}u \Vert_{L^{2}} \leq C\Vert p \Vert_{N;S(1,\Gamma)}\Vert u \Vert_{L^{2}},
\end{equation}
where $C>0$ and a positive integer $N$ depending only on the dimension.
\end{thm}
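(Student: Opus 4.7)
The plan is to prove the Calderón–Vaillancourt estimate for the flat metric $\Gamma=dv^{2}+d\eta^{2}$ via the Cotlar–Stein almost-orthogonality lemma applied to a phase-space partition of unity. First, fix $\chi\in C_{c}^{\infty}(\mathbb{R}^{2n})$ supported in the unit ball with $\sum_{j\in\mathbb{Z}^{2n}}\chi(\,\cdot\,-j)\equiv 1$, and decompose $p=\sum_{j\in\mathbb{Z}^{2n}}p_{j}$ with $p_{j}(X)=p(X)\chi(X-j)$. Each $p_{j}$ is a compactly supported symbol, uniformly bounded in $S(1,\Gamma)$ with seminorms controlled by those of $p$ (times constants depending only on $\chi$), and its support in $\mathbb{R}^{2n}$ is centred near $j$.

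Next, I would estimate each block $p_{j}^{w}$ individually. Writing the Schwartz kernel
\begin{equation*}
K_{j}(v,v')=\frac{1}{(2\pi)^{n}}\int_{\mathbb{R}^{n}} e^{i(v-v')\cdot\eta}\,p_{j}\!\Bigl(\frac{v+v'}{2},\eta\Bigr)\,d\eta,
\end{equation*}
one integrates by parts in $\eta$ using $(1-\Delta_{\eta})^{M}e^{i(v-v')\cdot\eta}=\langle v-v'\rangle^{2M}e^{i(v-v')\cdot\eta}$. Since $p_{j}$ is compactly supported in $\eta$ with derivative bounds from the $S(1,\Gamma)$ seminorms, this yields $|K_{j}(v,v')|\lesssim\|p\|_{2M;S(1,\Gamma)}\langle v-v'\rangle^{-2M}$, uniformly in $j$. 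Schur's test then gives $\|p_{j}^{w}\|_{\mathcal{B}(L^{2})}\lesssim\|p\|_{N_{0};S(1,\Gamma)}$ uniformly in $j$, with $N_{0}$ depending only on $n$.

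The main obstacle is the almost-orthogonality step: showing that for every $N$ there exists $C_{N}>0$ and an integer $N'=N'(n,N)$ such that
\begin{equation*}
\|(p_{j}^{w})^{*}p_{k}^{w}\|_{\mathcal{B}(L^{2})}+\|p_{j}^{w}(p_{k}^{w})^{*}\|_{\mathcal{B}(L^{2})}\leq C_{N}\,\|p\|^{2}_{N';S(1,\Gamma)}\,\langle j-k\rangle^{-N}.
\end{equation*}
To prove this I would use the Weyl composition formula $\overline{p_{j}}\#\,p_{k}$ given by the oscillatory integral on $\mathbb{R}^{4n}$ and perform repeated integrations by parts in the symplectic phase, exploiting that $\mathrm{supp}\,p_{j}$ and $\mathrm{supp}\,p_{k}$ are separated by a distance $\sim\langle j-k\rangle$ in phase space. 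Each integration by parts trades one factor of $\langle j-k\rangle^{-1}$ for one derivative of the symbols, and since symbols in $S(1,\Gamma)$ admit derivatives of all orders uniformly, one can iterate as many times as needed; bounding the resulting composed symbol by the Calderón–Vaillancourt estimate applied to bounded compactly-supported symbols (already proven in the previous step) closes the loop.

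Finally, having established the almost-orthogonality bounds with, say, $N=2n+1$, Cotlar–Stein yields
\begin{equation*}
\|p^{w}\|_{\mathcal{B}(L^{2})}\leq\sup_{j}\sum_{k\in\mathbb{Z}^{2n}}\Bigl(\|(p_{j}^{w})^{*}p_{k}^{w}\|^{1/2}+\|p_{j}^{w}(p_{k}^{w})^{*}\|^{1/2}\Bigr)\lesssim\|p\|_{N;S(1,\Gamma)},
\end{equation*}
where $N$ is the maximum of $N_{0}$ and the $N'$ needed in the almost-orthogonality bound. The constant $C$ and the integer $N$ depend only on $n$, as required. The technical heart is the integration-by-parts computation on the symplectic oscillatory integral; everything else is a clean bookkeeping of seminorms.
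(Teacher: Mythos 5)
The paper does not prove this theorem: it simply cites Section 18 of H\"ormander's treatise, where the result is established for general H\"ormander metrics satisfying slow variation, temperance, and the uncertainty principle. Your proposal, by contrast, gives a genuine proof sketch specialized to the flat metric $\Gamma = dv^{2}+d\eta^{2}$, and the argument you outline is sound and standard: decompose $p$ over a unit lattice in phase space, bound each block $p_{j}^{w}$ by Schur's test after integrating by parts in the frequency variable, establish rapid decay of $\|(p_{j}^{w})^{*}p_{k}^{w}\|$ and $\|p_{j}^{w}(p_{k}^{w})^{*}\|$ in $\langle j-k\rangle$ by non-stationary phase on the Weyl composition kernel (exploiting the disjointness of supports), and close with Cotlar--Stein. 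This is essentially the approach one finds in Folland's \emph{Harmonic Analysis in Phase Space} or in Lerner's treatment of the flat case. H\"ormander's route in the cited reference is structurally similar in spirit (a metric-adapted partition of unity plus almost-orthogonality), but more abstract, since it must work for a confinement partition subordinate to an arbitrary admissible metric; your version trades that generality for concreteness. One small remark on bookkeeping: in your almost-orthogonality step the seminorm order $N'$ grows with the decay exponent $N$ you demand, so when you finally invoke Cotlar--Stein with $N=2n+1$ you should take care that the resulting $N'$ is fixed once and for all in terms of $n$ only, which is the case but is worth saying explicitly to justify the claim that the final $N$ in the statement depends only on the dimension.
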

\begin{proof}
See Section 18 in \cite{hormander_analysis_2007}.
\end{proof}
\begin{thm}\label{M10}
Let $p^{w}$ be an invertible operator in $\Psi(m,\Gamma)$, then its inverse  $[p^{w}]^{-1}$ belongs to  $\Psi(m^{-1},\Gamma)$.  
\end{thm}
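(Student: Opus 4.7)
The plan is to deduce Theorem~\ref{M10} from Beals' characterization of the class $\Psi(m,\Gamma)$ (for the flat metric $\Gamma = dv^{2}+d\eta^{2}$) combined with the algebraic identity
\[
[L, A^{-1}] = -A^{-1}\,[L,A]\,A^{-1},
\]
iterated as many times as needed. Recall that for this flat metric, Beals' theorem asserts that an operator $T$ belongs to $\Psi(m^{-1},\Gamma)$ if and only if, for every $N \in \mathbb{N}$ and every finite sequence $L_{1},\dots,L_{N}$ of linear forms in the operators $v_{1},\dots,v_{n},D_{v_{1}},\dots,D_{v_{n}}$, the iterated commutator
\[
\mathrm{ad}_{L_{1}}\cdots \mathrm{ad}_{L_{N}}(T)
\]
maps $L^{2}(\mathbb{R}^{n})$ continuously into the Sobolev space $H(m,\Gamma)$ associated with the weight $m$ (or equivalently, $m^{w}\,\mathrm{ad}_{L_{1}}\cdots \mathrm{ad}_{L_{N}}(T)$ is bounded on $L^{2}$, up to identification modulo lower-order remainders). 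Setting $A=p^{w}$ and $B=A^{-1}$, the goal is therefore to check this commutator criterion for $B$.

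The first step is the base case $N=0$: I would show that $B:L^{2}\to H(m,\Gamma)$ is continuous. Since $p\in S(m,\Gamma)$, the operator $A=p^{w}$ maps $H(m,\Gamma)\to L^{2}$ continuously (this is a standard action-of-symbols result, obtained from Calderón--Vaillancourt by realising $H(m,\Gamma) = (m^{w})^{-1}L^{2}$ up to an elliptic symbol in $\Psi(1,\Gamma)$). The assumed invertibility of $A$ then yields, via the open mapping theorem, that $B$ is continuous in the reverse direction, i.e.\ $L^{2}\to H(m,\Gamma)$. For the inductive step I would use the identity above recursively: unfolding $\mathrm{ad}_{L_{1}}\cdots \mathrm{ad}_{L_{N}}(B)$ produces a finite sum of terms of the form
\[
B \cdot C_{1} \cdot B \cdot C_{2} \cdots B \cdot C_{k} \cdot B,
\]
where each $C_{j}$ is an iterated commutator of $A$ with a subset of the $L_{i}$'s. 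For the flat metric $\Gamma$, $\mathrm{ad}_{L}(A) = (\{\ell,p\})^{w}$ for a linear form $\ell$, and the Poisson bracket of a linear form with $p\in S(m,\Gamma)$ stays in $S(m,\Gamma)$; hence each $C_{j}\in\Psi(m,\Gamma)$ by direct symbol calculus and Theorem~\ref{M2}.

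The bookkeeping of mapping properties then closes the argument: each factor $B$ maps $L^{2}\to H(m,\Gamma)$, each $C_{j}\in\Psi(m,\Gamma)$ maps $H(m,\Gamma)\to L^{2}$, and a sandwich of $k$ blocks $C_{j}$ between $k+1$ copies of $B$ therefore maps $L^{2}\to H(m,\Gamma)$ continuously. Summing over the finitely many terms produced by the combinatorial expansion of the iterated identity gives the required $L^{2}\to H(m,\Gamma)$ bound for $\mathrm{ad}_{L_{1}}\cdots \mathrm{ad}_{L_{N}}(B)$, which is exactly Beals' criterion for $B\in\Psi(m^{-1},\Gamma)$. The principal obstacle is the careful justification that the recursion is compatible with the functional framework, namely that at every intermediate level one really composes bounded operators between the spaces $L^{2}$ and $H(m,\Gamma)$, and that the symbols produced by the symbolic calculus remain in $S(m,\Gamma)$ with quantitative semi-norm control uniform in the parameters hidden in the symbol $p$; this is precisely what the admissibility of the weight~$m$ (in the sense of~\eqref{M61}) and the slow variation of $\Gamma$ ensure, so the argument is essentially bookkeeping once the base case is in hand.
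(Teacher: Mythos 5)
Your route via Beals' characterization plus iterated applications of $\mathrm{ad}_{L}(A^{-1})=-A^{-1}\,\mathrm{ad}_{L}(A)\,A^{-1}$ is the standard one, and is in all likelihood what the reference the paper delegates to (Lemma A.2 of H\'erau--Nier) actually does. The inductive bookkeeping is sound: for the flat metric $\Gamma$ each $C_{j}$ is the Weyl quantization of an iterated Poisson bracket of $p$ with linear forms, hence stays in $S(m,\Gamma)$, and alternating the maps $B:L^{2}\to H(m,\Gamma)$ and $C_{j}:H(m,\Gamma)\to L^{2}$ closes the estimate required by Beals' criterion.

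The gap is in the base case, and it is not cosmetic. Invoking the open mapping theorem to get $B:L^{2}\to H(m,\Gamma)$ continuous silently uses that $A=p^{w}$ is a \emph{bijection} from $H(m,\Gamma)$ onto $L^{2}$. But knowing only that $A:H(m,\Gamma)\to L^{2}$ is continuous and that $0$ lies in the $L^{2}$-resolvent set of $A$ does not yield surjectivity of $A$ restricted to $H(m,\Gamma)$ --- and under that weaker reading of ``invertible'' the theorem itself is false. Take $n=1$, $m(v,\eta)=\langle v\rangle$ and $p(v)=\langle v\rangle\chi(v)+1$ with $\chi\in C_{c}^{\infty}(\mathbb{R};[0,1])$ nontrivial: then $p$ is bounded with bounded derivatives, so $p\in S(1,\Gamma)\subset S(m,\Gamma)$; $p^{w}$ is bounded multiplication by $p(v)\geq 1$ and hence invertible on $L^{2}$; yet $[p^{w}]^{-1}$ is multiplication by $1/p$, which equals $1$ off a compact set and therefore cannot lie in $\Psi(\langle v\rangle^{-1},\Gamma)$. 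What the theorem actually needs --- and what the paper's applications in Theorem~\ref{M36} always provide, since there the inverse is built as a composition of a $\Psi(p_{K}^{-\tau},\Gamma)$ operator with a bounded one --- is that $p^{w}$ is an isomorphism from $H(m,\Gamma)$ onto $L^{2}$, equivalently that the $L^{2}$-inverse already maps $L^{2}$ into $H(m,\Gamma)$, which ellipticity of $p$ relative to $m$ guarantees. You should make that hypothesis explicit; the continuity of $B:L^{2}\to H(m,\Gamma)$ then follows by the closed graph theorem, and the rest of your induction goes through as written.
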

\begin{proof}
See Lemma A.2 in  \cite{herau_isotropic_2004}.
\end{proof}
\begin{defi}
Let $p^{w} \in \Psi(m,\Gamma)$. We say that $p^{w}$ is an  elliptic operator if there exists $C>0$ such that
$$\vert p \vert \geq C m .$$
\end{defi}
Let us also recall here the composition formula of Weyl quantization.
Let $a\in S(m_{1},\Gamma)$ and $b\in S(m_{2},\Gamma)$,
the compositions of the pseudo-differential operators $a^{w}$ and $b^{w}$ are pseudo-differential operators whose symbol, denoted $a \sharp b$,  belongs to  $S(m_{1}m_{2},\Gamma)$ and has the following development:
\begin{equation}\label{M3}
a \sharp b=ab+\int_{0}^{1} \iint e^{-i\sigma(Y-Y_{1},Y-Y_{2})/(2\theta)}\frac{i}{2}\sigma(\partial_{Y_{1}},\partial_{ Y_{2}})a(Y_{1})b(Y_{2})\mathrm{d}Y_{1}\mathrm{d}Y_{2}\mathrm{d}\theta/(\theta)^{2n},
\end{equation}
 where $\sigma$ is the symplectic form in $T^{*}\mathbb{R}^{n}=\mathbb{R}^{2n}$ given by 
 \begin{eqnarray}\label{M6}
  \sigma(Z,Z')=\sum_{j=0}^{n}\zeta_{j}z'_{j}-z_{j}\zeta'_{j}.
   \end{eqnarray}
   As we work with pseudo-differential operators which belong to classes associated to the metric  $\Gamma$, all operators will be defined as continuous operators of $ \mathcal{S}(\mathbb{R}^{n})$ to $ \mathcal{S}(\mathbb{R}^{n})$ or from $ \mathcal{S}^{'}(\mathbb{R}^{n})$ to $ \mathcal{S}^{'}(\mathbb{R}^{n})$.
 \subsection{Basic theorem}\label{N130}
This theorem aims at giving a uniform statement for Weyl Hörmander tools with a large parameter $K$. This theorem gives very important results which can use these results to establish estimates which requires pseudo-differential operators. Part (I) in the theorem below has been shown in \cite{alexandre_global_2012} with $\tau=1$, but here we have improved this result for all $\tau\in \mathbb{R}$. In general, these results give a general and robust framework to techniques already used as well for work that requires these kinds of properties.
\begin{thm}\label{M36}
Let $p,q\geq 1$ be two symbols which verify the following hypotheses:
\begin{itemize}
    \item[i)] $p$ is an admissible weight.
    \item[ii)] $p\sim q$.
      \item[iii)] $p,\hspace{0.1cm}q \in S(p,\Gamma)$.
      \item[iv)] There exists  $M\in \mathbb{R}$ such that for all       $\varepsilon > 0$ we have  $\partial_{\eta}p,\hspace{0.1cm}\partial_{\eta}q \in S(\varepsilon p +\varepsilon^{-1}{\langle v \rangle}^{M} ,\Gamma)$.
\end{itemize}
 We define the symbols $p_{K}$, $q_{K}$ as  $p_{K}=p+K{\langle v \rangle}^{M}$    and  
$ q_{K}=q+K{\langle v \rangle}^{M}$.
 Then there exists $K_{0}$  such that for all $K\geq K_{0}:$ 
\begin{itemize}
    \item[I)] For all $\tau \in \mathbb{R}$,  $(p_{K}^{\tau})^{w}$ and $ (q_{K}^{\tau})^{w}$ are invertible.
    \item[II)] For all $\tau \in \mathbb{R}$,
   ${[ (p_{K}^{\tau})^{w}]}^{-1}$ and ${[ (q_{K}^{\tau})^{w}]}^{-1}$ are pseudo-differential operators that belong to $\Psi(p_{K}^{-\tau},\Gamma)$ uniformly in $K$.
      \item[III)] For all $\tau \in \mathbb{R}$ and for all  $\kappa\geq 0$,  $ {[({p_{K}^{\kappa}})^{w}]}^{\tau}$ and ${[ ({q_{K}^{\kappa}})^{w}]}^{\tau}$ are pseudo-differential operators belong to $\Psi(p_{K}^{\kappa\tau},\Gamma)$  uniformly in $K$.
       \item[IV)] For all $\tau \in \mathbb{R}$ and for all  $\kappa\geq 0$,  $\forall u \in  \mathcal{S}(\mathbb{R}^{n})$, we have $$\Vert  {[({p_{K}^{\kappa}})^{w}]}^{\tau}u \Vert_{L^{2}}^{2}\sim \Vert (p_{K}^{\kappa\tau})^{w}u \Vert_{L^{2}}^{2}, $$
   uniformly in $K$.
      \item[V)]  For all $\tau \in \mathbb{R}$, $\forall u \in  \mathcal{S}(\mathbb{R}^{n})$, we have $$\Vert  (p_{K}^{w})^{\tau}u \Vert_{L^{2}}^{2}\sim \Vert (p_{K}^{\tau})^{w}u \Vert_{L^{2}}^{2}\sim \Vert (q_{K}^{\tau})^{w}u \Vert_{L^{2}}^{2} \sim \Vert  (q_{K}^{w})^{\tau}u \Vert_{L^{2}}^{2} ,$$
       uniformly in $K$.
      \item[VI)]$\forall u \in  \mathcal{S}(\mathbb{R}^{n})$, we have $$(p_{K}^{w}u,u)_{L^{2}}\sim (q_{K}^{w}u,u)_{L^{2}},$$
        uniformly in $K$.
       \item[VII)] If $M\geq 0$, we have $p_{K}^{w}$, $q_{K}^{w} \geq$ \text{Id}. 
      \end{itemize}
\end{thm}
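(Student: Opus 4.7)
The key quantitative observation driving the whole proof is that hypothesis (iv), applied with $\varepsilon = K^{-1/2}$, gives
$$|\partial_\eta p| \leq K^{-1/2}\, p + K^{1/2}\langle v\rangle^M \leq 2K^{-1/2}\, p_K,$$
so $\eta$-derivatives of $p$ (and symmetrically of $q$) are relatively small with respect to $p_K$ when $K$ is large. Combined with (iii), a Faà-di-Bruno induction then yields $p_K^\tau \in S(p_K^\tau,\Gamma)$ uniformly in $K$ for every $\tau \in \mathbb{R}$, and, crucially, each $\eta$-derivative of order $\geq 1$ produces an extra factor $K^{-1/2}$. The same holds for $q_K^\tau$ thanks to $p \sim q$. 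This quantitative gain is the engine that drives every uniformity-in-$K$ estimate below.

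For parts (I) and (II), the plan is to invert $(p_K^\tau)^w$ by a Neumann-series argument at the symbol level. The Moyal formula (\fcolorbox{red}{white}{\ref{M3}}) gives $p_K^\tau \sharp p_K^{-\tau} = 1 + r_K^\tau$, and because the Poisson bracket of a function with itself vanishes, every term of $r_K^\tau$ carries at least one $\eta$-derivative, hence by the previous paragraph at least one factor $K^{-1/2}$. Thus $r_K^\tau \in S(1,\Gamma)$ has seminorms tending to zero as $K\to\infty$, and by Calderón--Vaillancourt (Theorem \fcolorbox{red}{white}{\ref{M2}}) the operator $(r_K^\tau)^w$ is a small perturbation of the identity on $L^2$. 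The Neumann series then produces an inverse of $1 + (r_K^\tau)^w$, and Theorem \fcolorbox{red}{white}{\ref{M10}} places $[(p_K^\tau)^w]^{-1}$ in $\Psi(p_K^{-\tau},\Gamma)$ uniformly in $K$. The argument for $q_K$ is identical. For (III), I would define $[(p_K^\kappa)^w]^\tau$ via the spectral theorem -- legitimate since $(p_K^\kappa)^w$ is formally self-adjoint and, by Gårding applied to $p_K^\kappa - 1 \geq 0$, satisfies $(p_K^\kappa)^w \geq c\,\mathrm{Id}$ -- and then represent it by a Cauchy integral
$$[(p_K^\kappa)^w]^\tau = \frac{1}{2\pi i}\int_{\Gamma_0} z^\tau \bigl(z - (p_K^\kappa)^w\bigr)^{-1}\,dz$$
around a contour $\Gamma_0$ in $\mathbb{C}\setminus(0,+\infty)^c$ avoiding the spectrum; the resolvent bound from part (II) keeps the integrand in $\Psi(p_K^{-\kappa},\Gamma)$ uniformly in $z\in \Gamma_0$ and $K$, yielding $[(p_K^\kappa)^w]^\tau \in \Psi(p_K^{\kappa\tau},\Gamma)$. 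The norm equivalences (IV) and (V) then follow because $[(p_K^\kappa)^w]^\tau$ and $(p_K^{\kappa\tau})^w$ share the same principal symbol, so their difference lies in a strictly lower-order class and can be absorbed by Gårding applied to $(p_K^{2\kappa\tau})^w$; the equivalence $\|(p_K^\tau)^w u\|_{L^2}^2 \sim \|(q_K^\tau)^w u\|_{L^2}^2$ uses $p\sim q \Rightarrow p_K^\tau \sim q_K^\tau$, and (VI) is (V) at $\tau = 1/2$ polarized.

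Finally, (VII) is immediate: if $M\geq 0$ then $p_K \geq 1 + K$, so Gårding applied to $p_K - 1 \geq K$ gives $(p_K^w u,u)_{L^2} \geq \|u\|_{L^2}^2$ for $K \geq K_0$, and similarly for $q_K$. The main obstacle is not any single step but threading \emph{uniformity in $K$} through the entire chain: the Moyal remainder, the length and position of the Cauchy contour, the constants in Calderón--Vaillancourt and Gårding, and the constants in the Faà-di-Bruno induction must all be controlled independently of $K$. It is exactly the $K^{-1/2}$-gain from hypothesis (iv) that makes every such constant closable for $K$ sufficiently large, and the delicate point is to verify that this gain survives through functional calculus, which is why (III) and (IV) require the extra care of the Cauchy-integral representation rather than a direct symbolic computation.
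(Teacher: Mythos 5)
Your treatment of parts (I), (II), (IV)--(VII) follows essentially the same route as the paper: identify the $K^{-1/2}$-gain from hypothesis (iv) with $\varepsilon = K^{-1/2}$, use the Moyal expansion so every remainder term carries at least one $\eta$-derivative, invert by Neumann series and close with Calderón--Vaillancourt, then derive the norm equivalences by sandwiching with inverses. All of this is correct and matches the paper's argument.

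Part (III) has a genuine gap. You define $[(p_K^\kappa)^w]^\tau$ by the spectral theorem (fine) and then propose to represent it by a Dunford--Riesz contour integral $\frac{1}{2\pi i}\int_{\Gamma_0} z^\tau \left(z-(p_K^\kappa)^w\right)^{-1}\,dz$ ``for all $\tau \in \mathbb{R}$.'' This integral does not converge for $\tau \geq 0$: the operator $(p_K^\kappa)^w$ is unbounded with spectrum extending to $+\infty$, so any admissible contour $\Gamma_0$ must also extend to infinity, and there $z^\tau$ with $\tau\geq 0$ is not integrable against the $O(|z|^{-1})$ decay of the resolvent. You also cite ``the resolvent bound from part (II),'' but (II) controls $[(p_K^\tau)^w]^{-1}$, not the full resolvent $(z-(p_K^\kappa)^w)^{-1}$, whose symbol class necessarily degrades as $z$ moves along the contour; this dependence on $z$ is exactly what must be tracked and you do not track it. The paper avoids both problems: it first establishes the result for integer exponents by composition, then handles the \emph{gap} exponents $\tau\in(-1,0)$ by the Balakrishnan--Yosida representation
\begin{equation*}
[(p_K^\kappa)^w]^\tau = -\frac{\sin(\pi\tau)}{\pi}\int_0^\infty s^\tau \left(s+(p_K^\kappa)^w\right)^{-1}\,ds,
\end{equation*}
whose integral \emph{does} converge for $\tau\in(-1,0)$, computes the resulting symbol by splitting $s\in(0,1)$ from $s\geq 1$ with explicit $s$-uniform symbol bounds, and finally glues the two cases by composing with integer powers. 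To repair your argument you would need to restrict your contour formula to $\tau\in(-1,0)$ (or use Balakrishnan directly), track the $s$- or $z$-dependence of the resolvent symbol, and reduce general $\tau$ to this interval via integer compositions as the paper does.
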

\begin{proof}
Let $K>0$ and let $p$ and $q$ two symbols and $M\in \mathbb{R}$ such that the hypotheses from  (i) to (iv) are verified.
For simplification, we will prove 
 (I, II, III) just for the operator  $(p_{K}^{\tau})^{w}$, and we reason in the same way for $(q_{K}^{\tau})^{w}$.  
 \item[I.] We will show that $ (p_{K}^{\tau})^{w}$  is invertible for all $\tau \in \mathbb{R}$. We note that $p_{K}$   is an admissible weight  ($p$ is an admissible weight). We have  $p_{K}$    $ \in S(p_{K} ,\Gamma)$
 uniformly in  $K$. Indeed, using (iii) we have for all  $ \alpha,\beta \in \mathbb{N}^{n}$,
  \begin{align*}
      \vert \partial_{v}^{\alpha}\partial_{\eta}^{\beta}p_{K}\vert&\leq \vert \partial_{v}^{\alpha}\partial_{\eta}^{\beta}p\vert+\vert \partial_{v}^{\alpha}\partial_{\eta}^{\beta}K{\langle v \rangle}^{M}\vert\\
      &\leq C_{\alpha,\beta}\hspace{0.05cm}p+K C_{\alpha,\beta,M}{\langle v \rangle}^{M}\\
      &\leq {\widetilde C_{\alpha,\beta,M}}\hspace{0.05cm}p_{K}.
  \end{align*}
  More generally we can show,  by induction on $\vert\alpha\vert$ and Leibnitz's formula that for $\tau\in \mathbb{R}$,
  $$\forall \alpha \in \mathbb{N}^{2n},\hspace{0.2cm} \vert \partial_{v,\eta}^{\alpha}p_{K}^{\tau}\vert \leq  C_{\alpha,\beta,\tau}\hspace{0.05cm}p_{K}^{\tau },$$
  which gives $p_{K}^{\tau} \in S(p_{K}^{\tau} ,\Gamma)$  uniformly in $K$.\\
 Using formula (\fcolorbox{red}{ white}{\ref{M3}}), we may write
  \begin{align}\label{M5}
 (p_{K}^{\tau})^{w}(p_{K}^{-\tau})^{w}=\text{Id}- R_{K}^{w},
\end{align}
  where 
  $$ R_{K}=-\int_{0}^{1} (\partial_{\eta}p_{K}^{\tau})\sharp_{\theta}\big( \partial_{v}(p_{K}^{-\tau})\big)\, \mathrm{d}\theta + \int_{0}^{1} (\partial_{v}p_{K}^{\tau})\sharp_{\theta}\big( \partial_{\eta}(p_{K}^{-\tau})\big)\, \mathrm{d}\theta$$
 with $g\sharp_{\theta} h$ defined by
  \begin{equation}\label{M12}
  g\sharp_{\theta} h(Y)=\iint e^{-2i\sigma(Y-Y_{1},Y-Y_{2})/\theta}\frac{1}{2i}g(Y_{1})h(Y_{2})\mathrm{d}Y_{1}\mathrm{d}Y_{2}/(\pi\theta)^{2n},
  \end{equation}
  with $Y,Y_{1},Y_{2}\in \mathbb{R}^{2n}$ and $\sigma$ a symplectic form defined in  \fcolorbox{red}{ white}{\ref{M6}}.\\
  Let now $N$  be the integer which is given in (\fcolorbox{red}{ white}{\ref{M4}}). By \cite[Proposition 1.1]{SEDP_1998-1999____A3_0}, we can find a constant $C_{N}$ and a positive integer $l_{N}$, both depending only on $N$ but independent of $K$ and $\theta$, such that 
  $$\Vert (\partial_{\eta}p_{K}^{\tau})\sharp_{\theta}\big( \partial_{v}(p_{K}^{-\tau})\big)\Vert_{N;S(1,\Gamma)}\leq C_{N}\Vert \partial_{\eta}p_{K}^{\tau}\Vert_{l_{N};S(p_{K}^{\tau},\Gamma)}\Vert \partial_{v}(p_{K}^{-\tau})\Vert_{l_{N};S(p_{K}^{-\tau},\Gamma)},$$
  where the semi-norm $\Vert . \Vert_{k;S(M,\Gamma)}$ is defined by (\fcolorbox{red}{ white}{\ref{M1}}).\\ Moreover, using (iv), we have $\partial_{\eta}p \in S(\varepsilon p +\varepsilon^{-1}{\langle v \rangle}^{M} ,\Gamma)$, by taking $\varepsilon=K^{-1/2}$, we obtain $\partial_{\eta}p \in S(K^{-1/2}p_{K} ,\Gamma)$. By writing $\partial_{\eta}p_{K}^{\tau}=\tau p_{K}^{\tau-1}\partial_{\eta}p$, we obtain
  $$
      \vert \partial_{\eta}p_{K}^{\tau}\vert \leq {\widetilde C_{N}} K^{-1/2}p_{K}^{\tau}.$$
  Then arguing as above we can use induction on $\vert \alpha\vert+\vert\beta\vert$ to obtain, for $\vert \alpha\vert+\vert\beta\vert\geq 0$,
  \begin{align*}
      \vert \partial_{v}^{\alpha}\partial_{\eta}^{\beta}\partial_{\eta}p_{K}^{\tau}\vert 
      &\leq {\widetilde C_{N}}K^{-1/2}p_{K}^{\tau},
  \end{align*}
  which gives $\partial_{\eta}p_{K}^{\tau} \in S(K^{-1/2}p_{K}^{\tau} ,\Gamma)$ uniformly in $K$, moreover we have 
  $$\Vert (\partial_{\eta}p_{K}^{\tau})\Vert_{l_{N};S(p_{K}^{\tau},\Gamma)}\leq {\widetilde C_{N}}K^{-1/2}.$$
  On the other hand we have $p_{K}^{-\tau} \in S(p_{K}^{-\tau} ,\Gamma)$, and thus
  $$\Vert \partial_{v}(p_{K}^{-\tau})\Vert_{l_{N};S(p_{K}^{-\tau},\Gamma)}\leq {\widetilde C_{N}},$$
with ${\widetilde C_{N}}$ a constant depending only on $N$ but independent of $K$. As a result,
\begin{equation}\label{M7}
\Vert (\partial_{\eta}p_{K}^{\tau})\sharp_{\theta}\big( \partial_{v}(p_{K}^{-\tau})\big)\Vert_{N;S(1,\Gamma)}\leq C_{N}{{\widetilde C_{N}}}^{2}K^{-1/2}.
\end{equation}
Similarly,
\begin{equation}\label{M8}
\Vert (\partial_{v}p_{K}^{\tau})\sharp_{\theta}\big( \partial_{\eta}(p_{K}^{-\tau})\big) \Vert_{N;S(1,\Gamma)}\leq C_{N}{{\widetilde C_{N}}}^{2}K^{-1/2}.
\end{equation}
Using the estimates  (\fcolorbox{red}{ white}{\ref{M7}}), (\fcolorbox{red}{ white}{\ref{M8}})  we will estimate the semi-norm $\Vert R_{K} \Vert_{N;S(1,\Gamma)}$. Indeed,
\begin{align*}
 \Vert R_{K} \Vert_{N;S(1,\Gamma)}\leq \int_{0}^{1}\Vert (\partial_{v}p_{K}^{\tau})\sharp_{\theta}\big( \partial_{\eta}(p_{K}^{-\tau})\big) \Vert_{N;S(1,\Gamma)} \, \mathrm{d}\theta + \int_{0}^{1} \Vert (\partial_{\eta}p_{K}^{\tau})\sharp_{\theta}\big( \partial_{v}(p_{K}^{-\tau})\big)\Vert_{N;S(1,\Gamma)}\, \mathrm{d}\theta.
\end{align*}
Then
$$\Vert R_{K} \Vert_{N;S(1,\Gamma)}\leq 2 C_{N}{{\widetilde C_{N}}}^{2}K^{-1/2},$$
and thus by (\fcolorbox{red}{ white}{\ref{M4}}) 
$$\Vert R_{K}^{w} \Vert_{\mathcal{B}(L^{2})}\leq 2 C C_{N}{{\widetilde C_{N}}}^{2}K^{-1/2}$$
with $C$ a constant depending only on the dimension. Taking $K_{1}={\big(4 C C_{N}{{\widetilde C_{N}}}^{2}}\big)^{2}$, so we get for all $K\geq K_{1}$ $$\Vert R_{K}^{w} \Vert_{\mathcal{B}(L^{2})}\leq \frac{1}{2}< 1,$$ this implies $\text{Id}- R_{K}^{w}$ is invertible in the space $\mathcal{B}(L^{2})$.
In addition, its inverse is given by
\begin{equation}\label{M9}
     (\text{Id}- R_{K}^{w})^{-1}=\sum_{i=0}^{\infty}{(R_{K}^{w})}^{i} \in \mathcal{B}(L^{2}).
     \end{equation}
Based on (\fcolorbox{red}{ white}{\ref{M5}}) we obtain 
$$ (p_{K}^{\tau})^{w}\big((p_{K}^{-\tau})^{w}(\text{Id}- R_{K}^{w})^{-1}\big)=\text{Id}.$$
Similarly we can find a ${\hat{ R}_{K}}\in S(1,\Gamma)$ such that
$$ \big((\text{Id}- {\hat{ R}_{K}}^{w})^{-1}(p_{K}^{-\tau})^{w}\big)(p_{K}^{\tau})^{w}=\text{Id}.$$

Based on the above $ (p_{K}^{\tau})^{w}$ is invertible and its inverse $[ {(p_{K}^{\tau})^{w}]}^{-1}$ is written in the form 
$$ {[ (p_{K}^{\tau})^{w}]}^{-1}=(p_{K}^{-\tau})^{w}(\text{Id}- R_{K}^{w})^{-1}=(\text{Id}- {\hat{ R}_{K}}^{w})^{-1}(p_{K}^{-\tau})^{w}.$$
We have proved the conclusion in (I).
\item[II.] According to (I), we have ${[ (p_{K}^{\tau})^{w}]}^{-1}$
has the form
$${[ (p_{K}^{\tau})^{w}]}^{-1}=(p_{K}^{-\tau})^{w}(\text{Id}- R_{K}^{w})^{-1}.$$
By taking $H_{K}=(\text{Id}- R_{K}^{w})^{-1}$ and using  (\fcolorbox{red}{ white}{\ref{M9}}), we have  $ H_{K} $ is a continuous operator in $ L ^{2} $ uniformly in $ K $. So based on Theorem \fcolorbox{red}{ white}{\ref{M10}}, we have  $H_{K}$ is a pseudo-differential operator. We note by  $\delta(H_{K})$ its symbol. We have    $\delta(H_{K})$ belongs to $S(1,\Gamma)$ uniformly in $ K $.
From the above, we have $[ (p_{K}^{\tau})^{w}]^{-1}$ is a pseudo-differential operator (compositions of pseudo-differential operators). We denote $ h_{K} $ its symbol, $ h_{K} $ has the form $h_{K}=p_{K}^{-\tau}\sharp \delta(H_{K})$ and belongs to $S(p_{K}^{-\tau},\Gamma)$ uniformly in $K$.\\
We have proved the conclusion in (II).
\item[III.] Using the Theorem \fcolorbox{red}{ white}{\ref{M10}}, we have  $[\big (p_{K}^{\kappa}\big)^{w}]^{-1}$ is a pseudo-differential operator. Taking into account that the composition of the pseudo-differential operators is a pseudo-differential operator, we obtain
\begin{equation}\label{M30}
 \forall n \in \mathbb{Z},\hspace{0.2cm} {[ (p_{K}^{\kappa})^{w}]}^{n}\in \Psi(p_{K}^{\kappa n},\Gamma).
 \end{equation}
Now consider the case of the exponents $\tau \in \mathbb{R}$. By pseudo-differential calculus the problem will be reduced to
$$ \forall \tau  \in I,\hspace{0.2cm} {[  (p_{K}^{\kappa})^{w}]}^{\tau }\in \Psi(p_{K}^{\kappa\tau },\Gamma),$$ where $I$ an open interval of $\mathbb{R}$. 
Note that the operator $ (p_{K}^{\kappa})^{w}$ is self-adjoint because its symbol $p_{K}^{\kappa}$ is real. Now we will show that $ (p_{K}^{\kappa})^{w}$  is a positive operator for $ K $ sufficiently large,
wich is equivalent to show that
\begin{equation}\label{M25}
\big( (p_{K}^{\kappa})^{w}u,u\big)_{L^{2}}\geq 0,\hspace{0.2cm} \forall u \in  \mathcal{S}(\mathbb{R}^{n}).
\end{equation}
Using again the formula   (\fcolorbox{red}{ white}{\ref{M3}}), we can write
  \begin{align}\label{M11}
 (p_{K}^{\kappa/2})^{w}(p_{K}^{\kappa/2})^{w}= (p_{K}^{\kappa})^{w}- R_{K}^{w},
\end{align}
  where 
  $$ R_{K}=-\int_{0}^{1} (\partial_{\eta}p_{K}^{\kappa/2})\sharp_{\theta}\big( \partial_{v}(p_{K}^{\kappa/2})\big)\, \mathrm{d}\theta + \int_{0}^{1} (\partial_{v}p_{K}^{\kappa/2})\sharp_{\theta}\big( \partial_{\eta}(p_{K}^{\kappa/2})\big)\, \mathrm{d}\theta$$
  with $g\sharp_{\theta} h$ defined in (\fcolorbox{red}{ white}{\ref{M12}}). Based on the proof of (I), we have
  $$\partial_{\eta}p_{K}^{\kappa/2} \in S(K^{-1/2}p_{K}^{\kappa/2} ,\Gamma)$$ uniformly in $K$. Moreover we have $\partial_{v}p_{K}^{\kappa/2} \in S(p_{K}^{\kappa/2},\Gamma)$, so we get 
  $$ (\partial_{\eta}p_{K}^{\kappa/2})\sharp_{\theta}\big( \partial_{v}(p_{K}^{\kappa/2})\big), (\partial_{v}p_{K}^{\kappa/2})\sharp_{\theta}\big( \partial_{\eta}(p_{K}^{\kappa/2})\big)\hspace{0.2cm} \text{and}\hspace{0.1cm} R_{K} \in S(K^{-1/2}p_{K}^{\kappa} ,\Gamma)$$
  uniformly in $K$. Writing $R_{K}^{w}$ in the following form
  $$ R_{K}^{w} =
   {K}^{-1/2} (p_{K}^{\kappa/2} )^{w}\underbrace{{K}^{1/2} [(p_{K}^{\kappa/2} )^{w}]^{-1}R_{K}^{w}[ (p_{K}^{\kappa/2} )^{w}]^{-1}}_{\in\mathcal{B}(L^{2})\hspace{0.1cm} \text{uniformly in  } K}  (p_{K}^{\kappa/2} )^{w},$$
   we obtain 
   $$\vert ( R_{K}^{w}u,u)_{L^{2}}\vert \leq C_{0} {K}^{-1/2}\Vert (p_{K}^{\kappa/2})^{w}u \Vert_{L^{2}}^{2},$$
   with $ C_{0} $ some constant independent of $ K $. Let $K_{2}=16C_{0}^{2}$, then using the relation  (\fcolorbox{red}{ white}{\ref{M11}}) we get for all $K\geq K_{2}$
   $$( (p_{K}^{\kappa})^{w}u,u)_{L^{2}}\geq \frac{3}{4}\Vert (p_{K}^{\kappa/2})^{w} u\Vert_{L^{2}}^{2}\geq 0.$$
  Then, using the following formula (see for example \cite{Yosida1968}) we can write 
   \begin{equation}\label{M900}
{[ (p_{K}^{\kappa})^{w}]}^{\tau}=-\frac{\text{sin}(\pi \tau)}{\pi}\int_{0}^{\infty}s^{\tau}(s+(p_{K}^{\kappa})^{w})^{-1}\, \mathrm{d}s,\hspace{0.2cm} \tau \in ({-1,0}).
   \end{equation}
 First, $s+ (p_{K}^{\kappa})^{w}$ is a pseudo-differential operator and its  symbol $a_{K,s}$  verifies 
    $$\forall s \in ({0,1}) ,\hspace{0.2cm} a_{K,s}\in S(p_{K}^{\kappa} ,\Gamma)$$
    and $$ \forall s\geq 1,\hspace{0.2cm} a_{K,s}\in S(s\hspace{0.025cm}(p_{K}^{\kappa}) ,\Gamma)
    \text{ uniformly  in}\hspace{0.1cm} K\hspace{0.1cm} \text{and}\hspace{0.1cm} s.$$
     So using the Theorem \fcolorbox{red}{ white}{\ref{M10}}, $(s+ (p_{K}^{\kappa})^{w})^{-1}$   is a pseudo-differential operator and its  symbol $b_{K,s}$  verifies $$\forall s \in ({0,1}) ,\hspace{0.2cm} b_{K,s}\in S((p_{K}^{\kappa})^{-1} ,\Gamma)$$
   and $$ \forall s\geq 1,\hspace{0.2cm} b_{K,s}\in S(s^{-1} (p_{K}^{\kappa})^{-1} ,\Gamma)
    \text{ uniformly  in}\hspace{0.1cm} K\hspace{0.1cm} \text{and}\hspace{0.1cm} s.$$
   Then, taking  $u \in  \mathcal{S}(\mathbb{R}^{n})$ we have
   $${[ (p_{K}^{\kappa})^{w}]}^{\tau}u=-\frac{\text{sin}(\pi \tau)}{\pi}\int_{0}^{\infty}s^{\tau}(s+ (p_{K}^{\kappa})^{w})^{-1}u\, \mathrm{d}s,$$
   using the  formula (\fcolorbox{red}{ white}{\ref{M27}}), we get
   \begin{align*}
 {[ (p_{K}^{\kappa})^{w}]}^{\tau}u &=-\frac{\text{sin}(\pi \tau)}{\pi}\int_{0}^{\infty}s^{\tau} \left(\dfrac{1}{(2\pi)^{n}}\int_{\mathbb{R}^{2n}}e^{i(v-v').\eta}b_{K,s}(\dfrac{v+v'}{2},\eta)u(v')\hspace{0.1cm}dv'd\eta \right) \, \mathrm{d}s,\\
  &=\dfrac{1}{(2\pi)^{n}}\int_{\mathbb{R}^{2n}}e^{i(v-v').\eta}\left(-\frac{\text{sin}(\pi \tau)}{\pi}\int_{0}^{\infty}s^{\tau}  b_{K,s}(\dfrac{v+v'}{2},\eta)\, \mathrm{d}s\right) u(v')\hspace{0.1cm}dv'd\eta. 
   \end{align*}
  So ${[ (p_{K}^{\kappa})^{w}]}^{\tau}$ is a pseudo-differential operator and its  symbol $d_{K}$
 given by 
   $$ d_{K}=-\frac{\text{sin}(\pi \tau)}{\pi}\int_{0}^{\infty}s^{\tau}  b_{K,s}\, \mathrm{d}s.$$
   Then we will show that $d_{K}\in S(p_{K}^{\kappa\tau },\Gamma)$. Indeed, let $ \alpha,\beta \in \mathbb{N}^{n}, (v,\eta)\in \mathbb{R}^{2n},$
   \begin{align*}
   \vert \partial_{v}^{\alpha}\partial_{\eta}^{\beta}d_{K}(v,\eta)\vert&\leq C \left( \int_{0}^{1}s^{\tau}\vert \partial_{v}^{\alpha}\partial_{\eta}^{\beta}b_{K,s}\vert\, \mathrm{d}s + \int_{1}^{\infty}s^{\tau}\vert \partial_{v}^{\alpha}\partial_{\eta}^{\beta}b_{K,s}\vert\, \mathrm{d}s\right),\\
   &\leq C_{\alpha,\beta} \left( \int_{0}^{1}s^{\tau}(p_{K}^{\kappa})^{-1}\, \mathrm{d}s + \int_{1}^{\infty}s^{\tau-1}(p_{K}^{\kappa})^{-1}\, \mathrm{d}s\right),\\
   & \leq \Tilde{C}_{\alpha,\beta}\hspace{0.1cm} {(p_{K}^{\kappa})}^{-1},
   \end{align*}
which gives that $d_{K}\in S(p_{K}^{\kappa\tau },\Gamma)$. Take $I=({-1,0})$, then we get \begin{equation}\label{M31}
   \forall \tau \in I,\hspace{0.2cm}  {[ (p_{K}^{\kappa})^{w}]}^{\tau}\in \Psi(p_{K}^{\kappa\tau },\Gamma). \end{equation}
  In addition,  ${[ (p_{K}^{\kappa})^{w}]}^{\tau}$  is a bounded operator in $ L^{2} $ uniformly in $ K $. Indeed,\\ $(s+{ (p_{K}^{\kappa})^{w}})^{-1} \in \Psi(1,\Gamma)$ uniformly in $ K $, then there exists a constant $ C_{1}> 0 $ such that
   \begin{equation}\label{M34}
      \forall s \in ({0,1}) ,\hspace{0.2cm} \Vert (s+{
      (p_{K}^{\kappa})^{w}
      })^{-1} \Vert_{\mathcal{B}(L^{2})}\leq C_{1}. 
   \end{equation}
 On the other hand we have   $$\forall u \in  \mathcal{S}(\mathbb{R}^{n}),\hspace{0.2cm} ((s+(p_{K}^{\kappa})^{w})u,u)_{L^{2}} \geq s\Vert u \Vert_{L^{2}}^{2},$$
  which gives 
   \begin{equation}\label{M33}
       \forall s \geq 1 ,\hspace{0.2cm} \Vert (s+{
       (p_{K}^{\kappa})^{w}
       })^{-1} \Vert_{\mathcal{B}(L^{2})}\leq \frac{1}{s}.
   \end{equation}
   Using  (\fcolorbox{red}{ white}{\ref{M34}}),  (\fcolorbox{red}{ white}{\ref{M33}}) and the formula  (\fcolorbox{red}{ white}{\ref{M900}}), we obtain
   \begin{equation}
      \Vert {[ (p_{K}^{\kappa})^{w}]}^{\tau}\Vert_{\mathcal{B}(L^{2})}\leq C_{2},
   \end{equation}
  with $ C_{2} $ some constant independent  of $K$. \\
   Then based on (\fcolorbox{red}{ white}{\ref{M30}}), (\fcolorbox{red}{ white}{\ref{M31}}) we obtain for all  $\tau \in I$,
   \begin{equation}\label{M32}
 \forall n \in \mathbb{Z},\hspace{0.2cm} \left[ {[ (p_{K}^{\kappa})^{w}]}^{\tau}\right]^{n}\in \Psi(p_{K}^{\kappa n\tau},\Gamma),
 \end{equation}
 so we have 
 \begin{equation}
 \forall \tau  \in \mathbb{R},\hspace{0.2cm} {[ (p_{K}^{\kappa})^{w}]}^{\tau}\in \Psi(p_{K}^{\kappa\tau},\Gamma).
 \end{equation}
  \item[IV.] We will show that
\begin{equation}\label{N122}
\forall u \in  \mathcal{S}(\mathbb{R}^{n}),\hspace{0.2cm} \Vert{[ (p_{K}^{\kappa})^{w}]}^{\tau}u \Vert_{L^{2}}^{2}\sim \Vert (p_{K}^{\kappa\tau})^{w}u \Vert_{L^{2}}^{2},
\end{equation}
otherwise we will show that there are two constants   $c,C>0$ independent of $ K $ such that for all $u \in  \mathcal{S}(\mathbb{R}^{n})$, we have: 
\begin{equation}\label{N120}
    \Vert (p_{K}^{\kappa\tau})^{w}u \Vert_{L^{2}}^{2}\leq c\hspace{0.1cm} \Vert  {[ (p_{K}^{\kappa})^{w}]}^{\tau}u \Vert_{L^{2}}^{2},
   \end{equation}
and 
\begin{equation}\label{N121}
 \Vert  {[ (p_{K}^{\kappa})^{w}]}^{\tau}u \Vert_{L^{2}}^{2}\leq C\Vert (p_{K}^{\kappa\tau})^{w}u \Vert_{L^{2}}^{2}.
   \end{equation}
   We will start with inequality (\fcolorbox{red}{ white}{\ref{N120}}), using the conclusion  (III), we have that  $ {[(p_{K}^{\kappa})^{w}]}^{\tau}$ is a pseudo-differential operator belongs to  $\Psi(p_{K}^{\kappa\tau },\Gamma)$ uniformly in $K$. Taking\\ $v= {[ (p_{K}^{\kappa})^{w}]}^{-\tau}u$,  show inequality (\fcolorbox{red}{ white}{\ref{N120}}) is equivalent to showing that
    \begin{equation}\label{M22}
\Vert (p_{K}^{\kappa\tau})^{w}{[ (p_{K}^{\kappa})^{w}]}^{-\tau}v \Vert_{L^{2}}^{2} \leq C_{3} \Vert v \Vert_{L^{2}}^{2}\quad \forall v \in L^{2}.
\end{equation}
    Using the Theorem \fcolorbox{red}{ white}{\ref{M2}}, we have $ (p_{K}^{\kappa\tau})^{w}{[ (p_{K}^{\kappa})^{w}]}^{-\tau} \in \Psi(1,\Gamma)\hookrightarrow \mathcal{B}(L^{2})$ uniformly in $K$, hence the inequality (\fcolorbox{red}{ white}{\ref{N120}}).\\
   Similarly, using the fact that  ${[ (p_{K}^{\kappa})^{w}]}^{\tau}[{(p_{K}^{\kappa\tau})^{w}]}^{-1} \in \Psi(1,\Gamma)\hookrightarrow \mathcal{B}(L^{2})$ uniformly in $K$, then taking  $v=[(p_{K}^{\kappa\tau})^{w}]^{-1}u$, we get the inequality (\fcolorbox{red}{ white}{\ref{N121}}), so the estimate (\fcolorbox{red}{ white}{\ref{N122}})  is true. 
   \item[V.] We will first show that  \begin{equation}\label{M15}
\forall u \in S(\mathbb{R}^{n}),\hspace{0.2cm} \Vert (p_{K}^{\tau})^{w}u \Vert_{L^{2}}^{2}\sim \Vert (q_{K}^{\tau})^{w}u \Vert_{L^{2}}^{2},
\end{equation}
otherwise we will show that there are two constants   $C_{1},C_{2}>0$ independent of $K$ such that for all $u \in  \mathcal{S}(\mathbb{R}^{n})$ 
\begin{equation}\label{M13}
    \Vert (p_{K}^{\tau})^{w}u \Vert_{L^{2}}^{2}\leq C_{1}\Vert (q_{K}^{\tau})^{w}u \Vert_{L^{2}}^{2},
   \end{equation}
and
\begin{equation}\label{M14}
 \Vert ( q_{K}^{\tau})^{w}u \Vert_{L^{2}}^{2}\leq C_{2}\Vert (p_{K}^{\tau})^{w}u \Vert_{L^{2}}^{2}.
   \end{equation}
 We will start with inequality (\fcolorbox{red}{ white}{\ref{M13}}), using hypothesis (iii) we have that\\  $(q_{K}^{\tau})^{w} \in \Psi(p_{K}^{\tau },\Gamma)$ uniformly in $ K $. Using the conclusion (II), we have $(q_{K}^{\tau})^{w}$ is an invertible operator, moreover its inverse $[ (q_{K}^{\tau})^{w}]^{-1}$ belongs to $\Psi(p_{K}^{-\tau },\Gamma)$ uniformly in $K$.\\ Taking $v=[(q_{K}^{\tau})^{w}]^{-1}u$,  show inequality (\fcolorbox{red}{ white}{\ref{M13}}) is equivalent to showing that
   \begin{equation}\label{M18}
\Vert (p_{K}^{\tau})^{w}[(q_{K}^{\tau})^{w}]^{-1}v \Vert_{L^{2}}^{2} \leq C_{1} \Vert v \Vert_{L^{2}}^{2}\quad \forall v \in L^{2}.
\end{equation}
Using the Theorem \fcolorbox{red}{ white}{\ref{M2}}, we have  $ (p_{K}^{\tau})^{w}{[(q_{K}^{\tau})^{w}]}^{-1} \in \Psi(1,\Gamma)\hookrightarrow \mathcal{B}(L^{2})$ uniformly in $K$, hence the inequality (\fcolorbox{red}{ white}{\ref{M18}}).\\
Similarly, using the fact that $(q_{K}^{\tau})^{w}[(p_{K}^{\tau})^{w}]^{-1} \in \Psi(1,\Gamma)\hookrightarrow \mathcal{B}(L^{2})$ uniformly in $K$,  then taking $v=[(p_{K}^{\tau})^{w}]^{-1}u$, we get the inequality (\fcolorbox{red}{ white}{\ref{M14}}), so the estimate (\fcolorbox{red}{ white}{\ref{M15}}) is true.\\
In the same way we have
\begin{equation}\label{M23}
\forall u \in  \mathcal{S}(\mathbb{R}^{n}),\hspace{0.2cm} \Vert (q_{K}^{\tau})^{w}u \Vert_{L^{2}}^{2}\sim \Vert(q_{K}^{w})^{\tau}u   \Vert_{L^{2}}^{2}.
\end{equation}
Finally, using conclusion (IV) with $\kappa=1$, we get that
\begin{equation}\label{M19}
\forall u \in  \mathcal{S}(\mathbb{R}^{n}),\hspace{0.2cm} \Vert  (p_{K}^{w})^{\tau}u \Vert_{L^{2}}^{2}\sim \Vert (p_{K}^{\tau})^{w}u \Vert_{L^{2}}^{2}.
\end{equation}
 \item[VI.] We will show that
 \begin{equation}\label{M24}
     \forall u \in  \mathcal{S}(\mathbb{R}^{n}),\hspace{0.2cm} (p_{K}^{w}u,u)_{L^{2}}\sim (q_{K}^{w}u,u)_{L^{2}}.
 \end{equation}
 Indeed, let $ u \in  \mathcal{S}(\mathbb{R}^{n})$ we have 
 $$ (p_{K}^{w}u,u)_{L^{2}}=((p_{K}^{w})^{1/2}u,(p_{K}^{w})^{1/2}u)_{L^{2}}=\Vert  (p_{K}^{w})^{1/2}u \Vert_{L^{2}}^{2},$$
using the result (IV) with $\tau=\frac{1}{2}$,  we obtain that
 $$ \Vert   (p_{K}^{w})^{1/2}u \Vert_{L^{2}}^{2}\sim \Vert (p_{K}^{1/2})^{w}u \Vert_{L^{2}}^{2}\sim \Vert (q_{K}^{1/2})^{w}u \Vert_{L^{2}}^{2} \sim \Vert  (q_{K}^{w})^{1/2}u \Vert_{L^{2}}^{2} \sim (q_{K}^{w}u,u)_{L^{2}}.$$
 So the estimate (\fcolorbox{red}{ white}{\ref{M24}}) is true.
 \item[VII.] We will show that
 $$\forall u \in  \mathcal{S}(\mathbb{R}^{n}),\hspace{0.2cm}
 (p_{K}^{w}u,u)_{L^{2}}\geq \Vert u \Vert_{L^{2}}^{2}.$$ 
 Indeed, let $ u \in  \mathcal{S}(\mathbb{R}^{n})$
 $$ (p_{K}^{w}u,u)_{L^{2}}=(p_{K-1}^{w}u,u)_{L^{2}}+({\langle v \rangle}^{M}u,u)_{L^{2}},$$
based on the proof of  (\fcolorbox{red}{ white}{\ref{M25}}), we can show that there  exists a positive constant $K_{3}$ such that for all  $K\geq K_{3}$ we have $$(p_{K-1}^{w}u,u)_{L^{2}}\geq 0,$$ Which gives
 $$(p_{K}^{w}u,u)_{L^{2}}= \underbrace{(p_{K-1}^{w}u,u)_{L^{2}}}_{\geq 0}+({\langle v \rangle}^{M}u,u)_{L^{2}}\geq \Vert {\langle v \rangle}^{M/2}u \Vert_{L^{2}}^{2}.$$
With the condition  $M\geq 0$ we finally obtain 
 $$(p_{K}^{w}u,u)_{L^{2}}\geq \Vert u \Vert_{L^{2}}^{2}.$$
Finally, for a symbol $p$ which verifies the hypotheses of Theorem \fcolorbox{red}{ white}{\ref{M36}}, we can fix \begin{align}\label{M71}
    K\geq\underbrace{\text{max}(K_{1},K_{2},K_{3})}_{K_{0}}
\end{align} and we apply the results of Theorem \fcolorbox{red}{ white}{\ref{M36}} to the operator $p_{K}^{w}.$ 
\end{proof}
\subsection{Wick quantization}\label{P2}

Finally let us recall some basic properties of the Wick quantization, which is also called
anti-Wick in \cite{shubin1987pseudodifferential}. The importance in studying the Wick quantization lies in the facts
that positive symbols give rise to positive operators.\\
Let $Y=(v,\eta)$  be a point in $\mathbb{R}^{6}$. The Wick quantization of a symbol $q$ is given by $$q^{\text{Wick}}= (2\pi)^{-3}\int_{\mathbb{R}^{6}}q(Y)\Pi_{Y}\hspace{0.1cm}\mathrm{d}Y, $$ 
where $\Pi_{Y}$ is the projector associated to the Gaussian $\varphi_{Y}$ which is defined by
$$\varphi_{Y}(z)=\pi^{-3/4}e^{-\frac{1}{2}{\vert z-v \vert^{2}}}e^{iz\cdot\eta/2},\quad \forall z \in \mathbb{R}^{3}.$$
The main property of the Wick quantization is its positivity, i.e.,
\begin{align}\label{P7}
q(v,\eta)\geq0,\hspace{0.2cm} \forall(v,\eta)\in \mathbb{R}^{6}\hspace{0.2cm}\text{implies}\hspace{0.2cm} q^{\text{Wick}}\geq0.
\end{align}
According to Theorem 24.1 in \cite{shubin1987pseudodifferential}, the Wick and Weyl quantizations of a symbol $q$  are linked by the following identities
\begin{align}\label{P3}
    q^{\text{Wick}}=\big(q*\pi^{-3}e^{-\vert \cdot\vert^{2}}\big)^{w}=q^{w}+r^{w}
\end{align}
with  \begin{align}\label{P4}
r(Y)=\pi^{-3}\int_{0}^{1}\int_{\mathbb{R}^{6}} (1-\theta)q"(Y+\theta Y_{1})\vert Y_{1}\vert^{2} e^{-\vert Y_{1}\vert^{2}}\mathrm{d}Y_{1} \, \mathrm{d}\theta,
\end{align}
 where $q"(Y)$ is the Hessian of $q$ at the point $Y$.
Therefore, according to (\fcolorbox{red}{ white}{\ref{M4}}), if  $q\in S(1,\Gamma)$ then $ q^{\text{Wick}}$ is a bounded operator in $L^{2}$ .\\
We also recall the following composition formula obtained in the proof of Proposition 3.4 in \cite{Lerner}
\begin{align}\label{P6}
    q_{1}^{\text{Wick}}  q_{2}^{\text{Wick}}=[q_{1}q_{2}-q'_{1}\cdot q'_{2}+\frac{1}{i}\left\{q_{1},q_{2}\right\}]^{\text{Wick}} +T,
\end{align}
with $ T $ a bounded operator in $L^{2}(\mathbb{R}^{6})$ and $q'$ is the gradient of $q$ with respect to $Y$, when $ q_{1}\in L^{\infty}(\mathbb{R}^{6}) $
 and $q_{2}$ is a smooth symbol

whose derivatives of order $\geq 2$ are bounded on $\mathbb{R}^{6}$.
The notation $\left\{q_{1},q_{2}\right\}$ denotes by the Poisson bracket defined by
\begin{align}
  \left\{q_{1},q_{2}\right\}=  \frac{\partial{q_{1}}}{\partial{\eta}}\cdot\frac{\partial{q_{2}}}{\partial{v}}-\frac{\partial{q_{1}}}{\partial{v}}\cdot\frac{\partial{q_{2}}}{\partial{\eta}}.
\end{align}
\end{appendix}
\addcontentsline{toc}{section}{Acknowledgement}
  \textbf{Acknowledgement.} I would like to thank my supervisors Frédéric Hérau and\\ Isabelle Tristani  for their encouragements, helpful discussions and support during the maturation of this paper.
\bibliographystyle{acm}
\bibliography{Nadine}

\begin{thebibliography}{10}

\bibitem{alexandre_global_2012}
{\sc Alexandre, R., H{\'e}rau, F., and Li, W.-X.}
\newblock Global hypoelliptic and symbolic estimates for the linearized
  \textsc{B}oltzmann operator without angular cutoff.
\newblock {\em Journal de Math{\'e}matiques Pures et Appliqu{\'e}es 126\/}
  (2019), 1--71.

\bibitem{Alexandre02onthe}
{\sc Alexandre, R., and Villani, C.}
\newblock On the \textsc{B}oltzmann equation for long-range interactions.
\newblock {\em Comm. Pure Appl. Math 55\/} (2002), 30--70.

\bibitem{SEDP_1998-1999____A3_0}
{\sc Bony, J.-M.}
\newblock Sur l’in{\'e}galit{\'e} de \textsc{F}efferman-\textsc{P}hong.
\newblock {\em S{\'e}minaire {\'E}quations aux d{\'e}riv{\'e}es partielles
  (Polytechnique)\/} (1998), 1--14.

\bibitem{carrapatoso:hal-01143343}
{\sc Carrapatoso, K., Tristani, I., and Wu, K.-C.}
\newblock Cauchy problem and exponential stability for the inhomogeneous
  \textsc{L}andau equation.
\newblock {\em Archive for Rational Mechanics and Analysis 221}, 1 (2016),
  363--418.

\bibitem{degond1997dispersion}
{\sc Degond, P., and Lemou, M.}
\newblock Dispersion relations for the linearized \textsc{F}okker-
  \textsc{P}lanck equation.
\newblock {\em Archive for Rational Mechanics and Analysis 138}, 2 (1997),
  137--167.

\bibitem{Eckmann2000NonEquilibriumSM}
{\sc Eckmann, J., and Hairer, M.}
\newblock Non-\textsc{E}quilibrium \textsc{S}tatistical \textsc{M}echanics of
  \textsc{S}trongly \textsc{A}nharmonic \textsc{C}hains of
  \textsc{O}scillators.
\newblock {\em Communications in Mathematical Physics 212\/} (2000), 105--164.

\bibitem{book}
{\sc Engel, K.-J., and Nagel, R.}
\newblock {\em One-parameter semigroups for linear evolution equations},
  vol.~194.
\newblock Springer Science \& Business Media, 1999.

\bibitem{Guo2002}
{\sc Guo, Y.}
\newblock The \textsc{L}andau equation in a periodic box.
\newblock {\em Communications in \textsc{M}athematical \textsc{P}hysics 231}, 3
  (2002), 391--434.

\bibitem{article}
{\sc Helffer, B., and Nier, F.}
\newblock {\em Hypoelliptic Estimates and Spectral Theory for Fokker-Planck
  Operators and Witten Laplacians}.
\newblock Lecture Notes in Mathematics. Springer Berlin Heidelberg, 2005.

\bibitem{herau:hal-00637834}
{\sc H{\'e}rau, F., and Li, W.-X.}
\newblock Global hypoelliptic estimates for \textsc{L}andau-type operators with
  external potential.
\newblock {\em Kyoto Journal of Mathematics 53}, 3 (2013), 533--565.

\bibitem{herau_isotropic_2004}
{\sc H{\'e}rau, F., and Nier, F.}
\newblock Isotropic hypoellipticity and trend to equilibrium for the
  \textsc{F}okker-\textsc{P}lanck equation with a high-degree potential.
\newblock {\em Archive for Rational Mechanics and Analysis 171}, 2 (2004),
  151--218.

\bibitem{herau_anisotropic_2011}
{\sc H{\'e}rau, F., and Pravda-Starov, K.}
\newblock Anisotropic hypoelliptic estimates for \textsc{L}andau-type
  operators.
\newblock {\em Journal de math{\'e}matiques pures et appliqu{\'e}es 95}, 5
  (2011), 513--552.

\bibitem{herau:hal-01596009}
{\sc H{\'e}rau, F., Tonon, D., and Tristani, I.}
\newblock {Short time diffusion properties of inhomogeneous kinetic equations
  with fractional collision kernel}.
\newblock working paper or preprint, Apr. 2018.

\bibitem{hormander1967hypoelliptic}
{\sc H{\"o}rmander, L.}
\newblock Hypoelliptic second order differential equations.
\newblock {\em Acta Mathematica 119\/} (1967), 147--171.

\bibitem{hormander_analysis_2007}
{\sc H{\"o}rmander, L.}
\newblock {\em \hspace{-0.5cm}The analysis of linear partial differential
  operators. III, $\mathrm{volume\hspace{0.15cm} 275 \hspace{0.15cm} of}$
  Grundlehren der Mathematischen Wissenschaften.}
\newblock Springer-Verlag, Berlin, 1985.

\bibitem{Lerner}
{\sc Lerner, N.}
\newblock The \textsc{W}ick calculus of pseudo-differential operators and some
  of its applications.
\newblock {\em CUBO, A Mathematical Journal 5}, 1 (2003), 210--233.

\bibitem{Lerner2008}
{\sc Lerner, N.}
\newblock Some facts about the \textsc{W}ick calculus.
\newblock In {\em Pseudo-differential operators}. Springer, 2008, pp.~135--174.

\bibitem{lerner_metrics_2011}
{\sc Lerner, N.}
\newblock {\em Metrics on the phase space and non-selfadjoint
  pseudo-differential operators}, vol.~3.
\newblock Springer Science \& Business Media, 2011.

\bibitem{pazy_semigroups_2012}
{\sc Pazy, A.}
\newblock {\em Semigroups of linear operators and applications to partial
  differential equations}, vol.~44.
\newblock Springer Science \& Business Media, 2012.

\bibitem{shubin1987pseudodifferential}
{\sc Shubin, M.~A.}
\newblock {\em Pseudodifferential operators and spectral theory}, vol.~200.
\newblock Springer, 1987.

\bibitem{strain2008exponential}
{\sc Strain, R.~M., and Guo, Y.}
\newblock Exponential decay for soft potentials near \textsc{M}axwellian.
\newblock {\em Archive for Rational Mechanics and Analysis 187}, 2 (2008),
  287--339.

\bibitem{Wu2013ExponentialTD}
{\sc Wang, H., and Wu, K.-C.}
\newblock Solving linearized \textsc{L}andau equation pointwisely.
\newblock {\em arXiv preprint arXiv:1709.00839\/} (2017).

\bibitem{Yosida1968}
{\sc Yosida, K.}
\newblock {\em Analytical Theory of Semi-groups}.
\newblock Springer Berlin Heidelberg, Berlin, Heidelberg, 1968, pp.~231--274.

\end{thebibliography}

\end{document}